\numberwithin{equation}{section}
\definecolor{popblue}{RGB}{55,115,255}
\definecolor{lightbl}{RGB}{155,205,255}
\definecolor{depthbl}{RGB}{145,215,255}
\definecolor{fancyre}{RGB}{225,55,115}
\definecolor{darkblu}{RGB}{15,75,185}
\definecolor{mellowy}{RGB}{225,225,35}
\renewcommand{\tilde}[1]{\widetilde{#1}}
\renewcommand{\Bar}{\overline}
\newcommand{\R}{\mathbb{R}}
\newcommand{\N}{\mathbb{N}}
\newcommand{\Z}{\mathbb{Z}}
\newcommand{\C}{\mathbb{C}}
\newcommand{\X}{\mathbb{X}}
\newcommand{\Y}{\mathbb{Y}}
\newcommand{\F}{\mathbb{F}}
\newcommand{\m}{\mathrm}
\newcommand{\lv}{\lVert}
\newcommand{\rv}{\rVert}
\newcommand{\al}{\alpha}
\newcommand{\be}{\beta}
\newcommand{\es}{\varnothing}
\newcommand{\ep}{\varepsilon}
\newcommand{\f}{\frac}
\newcommand{\sig}{\sigma}
\newcommand{\gam}{\gamma}
\newcommand{\del}{\delta}
\newcommand{\pd}{\partial}
\newcommand{\grad}{\nabla}
\newcommand{\bpm}{\begin{pmatrix}}
\newcommand{\epm}{\end{pmatrix}}
\newcommand{\loc}{\m{loc}}
\renewcommand{\bar}{\overline}
\newcommand{\emb}{\hookrightarrow}
\renewcommand{\le}{\leqslant}
\renewcommand{\ge}{\geqslant}
\newcommand{\jump}[1]{\left\llbracket#1\right\rrbracket}
\newcommand{\tjump}[1]{\llbracket#1\rrbracket}
\newcommand{\norm}[1]{\left\lv#1\right\rv}
\newcommand{\bnorm}[1]{\Big\lv#1\Big\rv}
\newcommand{\tnorm}[1]{\lv#1\rv}
\newcommand{\p}[1]{\left(#1\right)}
\newcommand{\bp}[1]{\Big(#1\Big)}
\renewcommand{\sp}[1]{\big(#1\big)}
\newcommand{\tp}[1]{(#1)}
\newcommand{\tfloor}[1]{\lfloor #1\rfloor}
\newcommand{\abs}[1]{\left|#1\right|}
\newcommand{\babs}[1]{\Big|#1\Big|}
\newcommand{\tabs}[1]{|#1|}
\newcommand{\bsb}[1]{\Big[{#1}\Big]}
\newcommand{\ssb}[1]{\big[{#1}\big]}
\newcommand{\tsb}[1]{[{#1}]}
\newcommand{\bcb}[1]{\Big\{{#1}\Big\}}
\newcommand{\tcb}[1]{\{{#1}\}}
\newcommand{\br}[1]{\left\langle #1 \right\rangle}
\providecommand{\bbr}[1]{\Big\langle #1 \Big\rangle}
\providecommand{\sbr}[1]{\big\langle #1 \big\rangle}
\providecommand{\tbr}[1]{\langle #1 \rangle}
\renewcommand{\bf}[1]{\mathbf{#1}}
\newcommand{\ii}{\m{i}}
\DeclareMathOperator{\supp}{supp}
\DeclareMathOperator*{\esssup}{ess\,sup}
\newtheorem{prop}{\color{popblue}{Proposition}}[section]
\newtheorem{thm}[prop]{\color{popblue}{Theorem}}
\newtheorem{defn}[prop]{\color{popblue}{Definition}}
\newtheorem{lem}[prop]{\color{popblue}{Lemma}}
\newtheorem{coro}[prop]{\color{popblue}{Corollary}}
\newtheorem{rmk}[prop]{\color{popblue}{Remark}}
\newenvironment{customthm}[1]
{\innercustomthm}
{\endinnercustomthm}
\newenvironment{customcoro}[1]
{\innercustomcoro}
{\endinnercustomcoro}
\author{Noah Stevenson}
\address{
	Department of Mathematics\\
	Princeton University\\
	Princeton, NJ 08544, USA
}
\email[N. Stevenson]{stevenson@princeton.edu}
\thanks{N. Stevenson was supported by an NSF Graduate Research Fellowship}
\author{Ian Tice}
\address{
	Department of Mathematical Sciences\\
	Carnegie Mellon University\\
	Pittsburgh, PA 15213, USA
}
\email[I. Tice]{iantice@andrew.cmu.edu}
\thanks{I. Tice was supported by an NSF Grant (DMS \#2204912). }
\title[Stationary and slowly traveling waves]{
    Well-posedness of the stationary and slowly traveling wave problems for the free boundary incompressible Navier-Stokes equations
 }
\subjclass[2020]{Primary 35Q30, 35R35, 35S05; Secondary 35C07, 76D33, 76D03}
\keywords{Free boundary incompressible Navier-Stokes, stationary waves, traveling waves}
\begin{document}
\maketitle

\begin{abstract}
We establish that solitary stationary waves in three dimensional viscous incompressible fluids are a generic phenomenon and that every such solution is a vanishing wave-speed limit along a one parameter family of traveling waves. The setting of our result is a horizontally-infinite fluid of finite depth with a flat, rigid bottom and a free boundary top. A constant gravitational field acts normal to bottom, and the free boundary experiences surface tension. In addition to these gravity-capillary effects, we allow for applied stress tensors to act on the free surface region and applied forces to act in the bulk.  These are posited to be in either stationary or traveling form.

In the absence of any applied stress or force, the system reverts to a quiescent equilibrium; in contrast, when such sources of stress or force are present, stationary or traveling waves are generated.  We develop a small data well-posedness theory for this problem by proving that there exists a neighborhood of the origin in stress, force, and wave speed data-space in which we obtain the existence and uniqueness of stationary and traveling wave solutions that depend continuously on the stress-force data, wave speed, and other physical parameters. To the best of our knowledge, this is the first proof of well-posedness of the solitary stationary wave problem and the first continuous embedding of the stationary wave problem into the traveling wave problem. Our techniques are based on vector-valued harmonic analysis, a novel method of indirect symbol calculus, and the implicit function theorem.
\end{abstract}
\section{Introduction}

\subsection{The free boundary Navier-Stokes system}

Our goal in this paper is to study stationary and slowly traveling solutions to the free boundary incompressible Navier-Stokes equations in three dimensions.  These equations govern the dynamics of a finite-depth layer of viscous, incompressible fluid lying between a fixed, rigid, flat bottom and an unknown (free) top that evolves with the fluid.  In order to properly phrase the equations, we first establish some notation for describing the unknown fluid domain. 

The fluids we study will always be assumed to occupy  three-dimensional sets of the form
\begin{equation}
    \Omega[\eta]=\tcb{(x,y)\in\R^{2}\times\R\;:\;0<y<b+\eta(x)},
\end{equation}
where $b\in\R^+$ is a fixed parameter giving the equilibrium depth of the fluid, and $\eta:\R^2\to(-b,\infty)$ is the unknown free surface function.  We will always have that $\eta$ is continuous so that the fluid domain $\Omega[\eta]$ is open and connected.  The upper free boundary and the fixed lower boundary will be denoted by 
\begin{equation}
    \Sigma[\eta]=\tcb{(x,y)\in\R^2\times\R\;:\;y=b+\eta(x)}
    \text{ and }
    \Sigma_0=\R^2\times\tcb{0}.
\end{equation}
Throughout the paper we will also denote the equilibrium sets with the short-hand
\begin{equation}
    \Omega=\Omega[0]=\R^2\times(0,b)\text{ and }\Sigma=\Sigma[0]=\R^2\times\tcb{b}.
\end{equation}

The motion of the fluid domain is encoded through the use of a time-dependent free surface function $\zeta\p{t,\cdot}:\R^{2}\to\R$ satisfying $\zeta\p{t,\cdot}+b>0$, which then generates the moving fluid domain $\Omega[\zeta\p{t,\cdot}]\subset \R^3$ and the free upper boundary $\Sigma[\zeta\p{t,\cdot}]$ as above.  The fluid is described by its velocity vector field $w\p{t,\cdot}:\Omega[\zeta\p{t,\cdot}]\to\R^n$ and its scalar pressure $r\p{t,\cdot}:\Omega[\zeta\p{t,\cdot}]\to\R$. The viscous stress tensor within the fluid is the symmetric tensor
\begin{equation}
    S_\mu(r,w)=r{I_{3\times 3}}-\mu \mathbb{D}w,
\end{equation}
where $\mathbb{D}w=\grad w+\grad w^{\m{t}}$ is the symmetrized gradient, and $\mu \in \R^+$ is the fluid viscosity.

In this paper we will assume that there are two sources of bulk force that act on the fluid through vector fields defined on $\Omega[\zeta\p{t,\cdot}]$ for all $t$, as well as three sources of stress that act on the fluid through vector fields defined on $\Sigma[\zeta\p{t,\cdot}]$.  The first bulk force is a uniform gravitational field $-\rho \mathfrak{g} e_3$, where $\rho \in \R^+$ is the constant fluid density, $\mathfrak{g} \in \R^+$ is the gravitational constant, and $e_3 = (0,0,1) \in \R^3$.  The second is a spatially-varying generic bulk force $\mathcal{F}\p{t,\cdot}:\Omega[\zeta\p{t,\cdot}]\to\R^3$.  The first of the stresses is due to a constant external pressure $P_{\m{ext}} \in \R$, which then acts via the vector field $P_{\m{ext}} \nu_{\zeta\p{t,\cdot}}$, where $\nu_{\zeta\p{t,\cdot}}$ is the unit normal to the free surface at time $t$.  The second is generated by a generic spatially-varying stress tensor $\mathcal{T}\p{t,\cdot}:\Sigma[\zeta\p{t,\cdot}]\to\R^{3\times 3}$, which then defines the stress vector $\mathcal{T}\p{t,\cdot} \nu_{\zeta\p{t,\cdot}}$.  We note that in continuum mechanics it is usually the case that $\mathcal{T}\p{t,\cdot}$ is symmetric, but this condition plays no role in our analysis, so we have allowed for the most general case. The third, and final, source of stress is due to surface tension and is given by the vector field $\kappa \mathscr{H}(\zeta\p{t,\cdot}))\nu_{\zeta\p{t,\cdot}}$, where $\kappa \in \R^+$ is the coefficient of surface tension and the mean curvature operator is 
\begin{equation}\label{mean_curvature_def} 
    \mathscr{H}(\zeta) = \grad_{\|} \cdot ( (1+ \abs{\grad_{\|} \zeta}^2)^{-1/2}    \grad_{\|}   \zeta).
\end{equation}
Here we have written $\grad_{\|}=(\pd_1,\pd_2)$ to refer to the `tangential gradient.'

The free boundary incompressible Navier-Stokes equations then dictate how $\zeta$, $w$, and $r$ evolve in time as the result of applied stresses and forces:
\begin{equation}\label{formulation in Eulerian coordinates}
    \begin{cases}
    \rho(\pd_tw+w\cdot\grad w) + \grad\cdot S_\mu(r,w)=-\rho \mathfrak{g} e_3+\mathcal{F}&\text{in }\Omega[\zeta(t,\cdot)],\\
    \grad\cdot w=0&\text{in }\Omega[\zeta(t,\cdot)]\\
    -S_\mu(r,w)\nu_\zeta+(P_{\m{ext}}-\kappa \mathscr{H}(\zeta))\nu_\zeta=\mathcal{T}\nu_\zeta&\text{on }\Sigma[\zeta(t,\cdot)],\\
    \pd_t\zeta+w\cdot(\grad_{\|}\zeta,-1)=0&\text{on }\Sigma[\zeta(t,\cdot)],\\
    w=0&\text{on }\Sigma_0.
    \end{cases}
\end{equation}
The first equation in~\eqref{formulation in Eulerian coordinates} is the momentum equation, and it requires a Newtonian balance of forces in the fluid bulk. Next is the incompressibility constraint, which asserts conservation of mass. After this is the dynamic boundary condition, which enforces a balance of stresses acting on the free surface.  The penultimate equation is the kinematic boundary condition, which determines how the free surface evolves according to the fluid velocity.  The final equation in~\eqref{formulation in Eulerian coordinates} is simply the no-slip boundary condition for the velocity on the rigid bottom.  For the sake of simplicity, we will henceforth assume that $\rho=1$.  This is no loss of generality, as we will continue to track the generic constants $(\mathfrak{g},\mu,\kappa) \in (\R^+)^3$ as well as generic sources of force and stress.

\subsection{Equilibria, stationary and traveling reformulations, and the role of stresses and forces}

The free boundary incompressible Navier-Stokes equations admit a flat, stationary solution in the absence of external stress or forces, i.e. $\mathcal{F}=0$ and $\mathcal{T}=0$; namely, fluid domain $\Omega = \Omega[0]$ and
\begin{equation}
    (r_{\m{eq}},w_{\m{eq}},\zeta_{\m{eq}})=(P_{\m{ext}}+\mathfrak{g}(b-\m{id}_{\R^3}\cdot e_3),0,0).
\end{equation}
In this paper, we work perturbatively around this equilibrium to study stationary and slowly traveling solutions.  To describe these, we let $\gam\in\R$ denote a fixed speed and make the ansatz that $\mathcal{F}$ and $\mathcal{T}$ are time-independent in the frame moving at velocity $\gam e_1$.  In turn, we assume that $\zeta(t,\cdot)=\eta(\cdot-\gam te_1)$, $w(t,\cdot)=v(\cdot-\gam te_1)$, $r(t,\cdot)= P_{\m{ext}}+\mathfrak{g}(b-\m{id}_{\R^3}\cdot e_3)  + q(\cdot-\gam te_1)+ \mathfrak{g} \eta(\cdot-\gam te_1)$, for new unknowns $\eta:\R^2\to(-b,\infty)$, $v:\Omega[\eta]\to\R^3$, and $q:\Omega[\eta]\to\R$. Rewriting the system~\eqref{formulation in Eulerian coordinates} under this ansatz yields:
\begin{equation}\label{the stationarity ansatz}
    \begin{cases}
        (v-\gam e_1)\cdot\grad v+\grad\cdot S_\mu(q+\mathfrak{g} \eta,v)=\mathcal{F}&\text{in }\Omega[\eta],\\
        \grad\cdot v=0&\text{in }\Omega[\eta],\\
        -S_\mu(q,v)\mathcal{N}_\eta- \kappa \mathscr{H}(\eta)\mathcal{N}_\eta=\mathcal{T}\mathcal{N}_\eta&\text{on }\Sigma[\eta],\\
        \gam\pd_1\eta+v\cdot\mathcal{N}_\eta=0&\text{on }\Sigma[\eta],\\
        v=0&\text{on }\Sigma_0,
    \end{cases}
\end{equation}
where $\mathcal{N}_\eta=(-\grad_{\|}\eta,1)$.  We emphasize three key features of this reformulation.  First, the external pressure, $P_{\m{ext}}$, only appears in the hydrostatic background that has been subtracted off and will play no further role in the analysis of \eqref{the stationarity ansatz}.  Second, the constant gravitational force field has been shifted into the term $\mathfrak{g} \eta$ in the first equation.  Third, while the set $\Omega[\eta]$ is determined by $\eta$, only derivatives of $\eta$ appear in the equations themselves.

The problem~\eqref{the stationarity ansatz} lies at the confluence of two distinct lines of inquiry in the mathematical fluid mechanics literature. The first line of inquiry treats the dynamic problem~\eqref{formulation in Eulerian coordinates} as an initial value problem.  In this context, the stationary problem ($\gamma =0$ in~\eqref{the stationarity ansatz}) arises naturally as a special type of global-in-time solution with stationary sources of force and stress.  One then expects solutions to the stationary problem to play an essential role in the study of long-time asymptotics or attractors for the dynamic problem (see, for instance, Robinson \cite{robinson_2001}). The second line of inquiry, which dates back essentially to the beginning of mathematical fluid mechanics, concerns the search for traveling wave solutions moving with speed $\gamma \neq 0$. In this context, a huge literature exists for the corresponding inviscid problem, but progress on the viscous problem was initiated much more recently in the work of Leoni and Tice~\cite{leoni2019traveling}, and further developed by Stevenson and Tice~\cite{MR4337506,stevenson2023wellposedness}, Koganemaru and Tice~\cite{koganemaru2022traveling}, and Nguyen and Tice~\cite{nguyen_tice_2022}. The analysis in~\cite{koganemaru2022traveling,leoni2019traveling,nguyen_tice_2022,MR4337506,stevenson2023wellposedness} crucially relies on the condition $\gamma \neq 0$ to provide an estimate for the free surface function in a scale of anisotropic Sobolev spaces.  When $\gamma =0$, this estimate degenerates, and~\cite{leoni2019traveling} fails to construct solutions within their functional framework.  Thus, a natural question is whether there exists an alternate functional framework in which solutions can be constructed for all $\gamma$ in a neighborhood of $0$.

Our main goal in the paper can now be roughly summarized as follows.  For every $(\mathfrak{g},\mu,\kappa) \in (\R^+)^3$ and $\gamma$ in an open set containing $0$ we wish to identify an open set of force and stress data that give rise to locally unique nontrivial solutions.  Moreover, we aim to prove well-posedness in the sense of continuity of the solution triple with respect to the force-stress data as well as the various physical parameters and wave speed. 

The stated goal suggests that the force and stress should play an essential role in the construction of solutions.  This is indeed the case, as we now aim to justify.  An elementary formal calculation yields the following balance between dissipation and power for solutions to~\eqref{the stationarity ansatz}:
\begin{equation}\label{she said I know what its like to be dead}
    \int_{\Omega[\eta]}\frac{\mu}{2} |\mathbb{D}v|^{2}=\int_{\Omega[\eta]}\mathcal{F}\cdot v+\int_{\Sigma[\eta]}\mathcal{T}\nu_\eta\cdot v.
\end{equation}
The physical interpretation of this identity is that if a stationary or traveling wave solution exists, then the power supplied by the forces and stress (the right side of~\eqref{she said I know what its like to be dead}) must be in exact balance with the energy dissipation rate due to viscosity (the left side of~\eqref{she said I know what its like to be dead}).  Identity~\eqref{she said I know what its like to be dead} tells us even more if we take $\mathcal{F}=0$ and $\mathcal{T}=0$, in which case the $L^2$-norm of $\mathbb{D} v$ vanishes in $\Omega[\eta]$.  By a version of the Korn inequality, this implies that $v=0$, and in turn, this implies that $q=0$ and $\eta$ is constant.  Thus,  we only expect to be able to generate non-trivial stationary or traveling wave solutions (in a Sobolev-type framework in which~\eqref{she said I know what its like to be dead} is valid) via the application of  nontrivial $\mathcal{F}$ or $\mathcal{T}$.

\subsection{Previous work}

We now turn our attention to a brief survey of the mathematical literature associated to~\eqref{formulation in Eulerian coordinates} and~\eqref{the stationarity ansatz}.  This is vast, so we will restrict our focus to those results most closely related to ours.

For a thorough review of the fully dynamic problem~\eqref{formulation in Eulerian coordinates} in various geometries we refer to the surveys of Zadrzy\'{n}ska \cite{Zadrynska_2004} and Shibata and Shimizu \cite{SS_2007}.  Beale \cite{Beale_1981} established local well-posedness with surface tension neglected.  With surface tension accounted for, Beale \cite{Beale_1983} established the existence of global solutions and derived their decay properties with Nishida \cite{BN_1985}.  Solutions with surface tension were also constructed in other settings by Allain \cite{allain_1987}, Tani \cite{tani_1996}, Bae \cite{Bae_2011}, and Shibata and Shimizu \cite{SS_2011}.  Solutions without surface tension were also constructed in various settings by Abels \cite{Abels_2005_3},  Guo and Tice \cite{GT_2013_inf,GT_2013_lwp},  and Wu \cite{Wu_2014}.  Related analysis of linearized and resolvent problems can be found in the work of  Abe and Shibata \cite{AS_2003,AS_2003_2}, Abels \cite{Abels_2005,Abels_2005_2,Abels_2006},  Abels and Wiegner \cite{AW_2005}, and Abe and Yamazaki \cite{AY_2010}.

The inviscid analog of the traveling wave problem, \eqref{the stationarity ansatz} with $\gamma \neq 0$, which is also known as the traveling water wave problem, has been the subject of intense work for more than a century.  The survey articles of  Toland~\cite{Toland_1996}, Groves~\cite{Groves_2004}, Strauss~\cite{Strauss_2010}, and Haziot, Hur, Strauss, Toland, Wahl\'en, Walsh, and Wheeler~\cite{MR4406719} contain a thorough review.  A significant portion of this work concerns spatially periodic solutions.  The only result for the non-periodic (solitary) stationary ($\gamma=0$) inviscid problem we are aware of is the recent construction by Ehrnstr\"{o}m, Walsh, and Zeng \cite{EWZ_2023} of stationary gravity-capillary water waves with localized vorticity.

In contrast, progress on the viscous traveling wave problem has only recently commenced. Leoni and Tice~\cite{leoni2019traveling} developed a well-posedness theory for small forcing and stress data, provided $\gamma \neq 0$.  This was generalized to multi-layer, inclined, and periodic configurations by Stevenson and Tice~\cite{MR4337506} and Koganemaru and Tice~\cite{koganemaru2022traveling}.  The corresponding well-posedness theory for the compressible analog of \eqref{the stationarity ansatz} was developed by Stevenson and Tice \cite{stevenson2023wellposedness}.  Traveling waves for the Muskat problem were constructed with similar techniques by Nguyen and Tice \cite{nguyen_tice_2022}.  There are also experimental studies of viscous traveling waves; for details, we refer to the work of Akylas, Cho, Diorio, and Duncan \cite{CDAD_2011,DCDA_2011}, Masnadi and Duncan \cite{MD_2017}, and Park and Cho \cite{PC_2016,PC_2018}.

We now turn our attention to the viscous, stationary $(\gam=0)$ literature. To the best of our knowledge, the precise configuration we study in~\eqref{the stationarity ansatz} - including bulk force, a surface stress, and a non-compact free surface -  has not yet appeared in the literature for either the three-dimensional or two-dimensional problem.  However, numerous models of similar physical scenarios have been considered.   

The non-compactness of the free boundary presents a fundamental difficulty in studying~\eqref{the stationarity ansatz}, as it creates a low-mode degeneracy that simply is not present in, say, the spatially periodic variant or related problems with compact free boundaries.  As such, we only briefly review the stationary literature for compact free boundaries.   Benjamin~\cite{MR134999} studied periodic disturbances to steady flow along an inclined plane in two dimensions.  Periodic solutions in two dimensions were also studied by Puhna\v{c}ev~\cite{MR0308618}.   Solonnikov~\cite{MR525947,MR737234}, Jean~\cite{MR591221}, and Ja Jin~\cite{MR2170526} studied various compact free surface problems with sources and sinks or inflow and outflow conditions in a bounded container with an applied force.  In three dimensions, Bemelmans~\cite{MR727877,MR637857,MR929474} studied various stationary droplet problems, considering both the cases with and without surface tension. Abergel~\cite{MR1215410} gave a geometric approach for studying various configurations in both two and three dimensions, which was expanded on in Abergel and Rouy~\cite{abergel1995interfaces}. We refer also to Solonnikov and Denisova~\cite{MR3916796} for more references regarding the bounded free surface stationary literature.

Next, we discuss the literature involving unbounded domains and non-compact free surfaces.  Much of the attention of the existing work is devoted to steady flows driven by gravity down inclined planes with possibly non-uniform structure.  In two dimensions, this configuration was considered by Socolescu~\cite{MR0564661}, Nazarov and Pileckas~\cite{MR1215650}, Pelickas and Socolowsky~\cite{MR1936348,MR2128433}, Socolowsky~\cite{MR3032463}, and Pileckas and Solonnikov~\cite{MR3208795}. In three dimensions  Gellrich~\cite{MR1245932} studied stationary flows with large viscosity and small localized bulk force.

The remaining non-compact literature in two dimensions is primarily devoted to more complicated geometries. For instance, Pileckas
~\cite{MR1083808,MR741910,MR789999,MR643983} considered boundary inflows, moving lower boundaries, gliding plates, and flow down a plane making a corner, while Socolowsky~\cite{MR1246506} described fluid flowing out of a pipe, driven by gravity. In three dimensions the situation is similar: Pileckas
~\cite{MR966116}  studied liquid coming out of a narrow channel onto an incline plane, and Solonnikov~\cite{MR1638135,MR1692348} studied the flow generated by the slow rotation of an immersed rod and flow out of a circular tube.

To conclude, we again emphasize that, to the best of our knowledge, there are no results in the literature that study either: (1) the well-posedness in all parameter regimes of the three dimensional, non-compact stationary wave problem with applied bulk force and surface stress, i.e. system~\eqref{the stationarity ansatz} with $\gam=0$; or (2)  the continuous connection between the recent developments in the viscous free boundary traveling wave literature and the stationary wave problem.  We address both of these in this paper.

\subsection{Flattened reformulation}

It will be convenient to reformulate the system~\eqref{the stationarity ansatz} in the stationary domain $\Omega=\R^2\times(0,b)$.  To this end, we construct a flattening map from $\eta$ by way of $\mathfrak{F}_\eta:\Omega\to\Omega[\eta]$ defined via
\begin{equation}\label{definition of the flattening map}
     \mathfrak{F}_\eta(x,y)=(x,y+\mathcal{E}\eta(x,y)), 
\end{equation}
where $\mathcal{E}$ is the extension operator considered in Proposition~\ref{prop on extension operator}. Note that in Proposition~\ref{prop on properties of the flattening map} we show that the above flattening map is well-defined and enjoys a collection of useful properties on the class of free surface functions considered in this paper.

Given $\eta$ in an appropriate function space (which will be specified later), and hence  $\mathfrak{F}_\eta$, we define two related quantities: the Jacobian $J_\eta:\Omega\to\R^+$  and (when $J_\eta$ is nowhere vanishing) the geometry matrix $\mathcal{A}_\eta:\Omega\to\R^{3\times 3}$, defined  respectively via
 \begin{equation}\label{geometry_and_jacobian_def}
 J_\eta=\det(\grad\mathfrak{F}_\eta)=1+\pd_3\mathcal{E}\eta = \pd_3(\mathfrak{F}_\eta\cdot e_3) 
 \text{ and }
 \mathcal{A}_\eta=(\grad\mathfrak{F}_\eta)^{-\m{t}}.
 \end{equation}
 Provided that $J_\eta>0$  and $J_\eta,1/J_\eta\in L^\infty(\Omega)$, we then have that  $\mathfrak{F}_\eta(\Omega)=\Omega[\eta]$ and $\mathfrak{F}_\eta$ is a  homeomorphism  from $\Bar{\Omega}$ to $\Bar{\Omega[\eta]}$ such that its restriction to $\Omega$ defines a smooth diffeomorphism to $\Omega[\eta]$,  $\mathfrak{F}_\eta(\Sigma) = \Sigma[\eta]$, and $\mathfrak{F}_\eta$ is the identity on  $\Sigma_0$.   It will also be useful to introduce the map
 \begin{equation}\label{Mississippi}		M_{\eta}=J_\eta\mathcal{A}_{\eta}^{\m{t}}=\bpm(1+\pd_3\mathcal{E}\eta) I_{2\times2}&0_{2\times 1}\\-\mathcal{E}(\grad_{\|}\eta)&1\epm:\Omega \to \R^{3 \times 3}
\end{equation}
when reformulating~\eqref{the stationarity ansatz}.

We then introduce the new unknowns $p=q\circ\mathfrak{F}_\eta:\Omega\to\R$ and $u=M_\eta(v\circ\mathfrak{F}_\eta):\Omega\to\R^3$.   The problem~\eqref{the stationarity ansatz} then transforms to the following system:
\begin{equation}\label{final nonlinear equations}
\begin{cases}
    M_\eta^{-\m{t}}((u-\gam M_\eta e_1)\cdot\grad(M_\eta^{-1}u))+\grad(p+\mathfrak{g} \eta) - \mu M_\eta^{-\m{t}}(\grad\cdot((\mathbb{D}_{\mathcal{A}_\eta}(M_\eta^{-1}u))M_\eta^{\m{t}}))=J_\eta M_\eta^{-\m{t}}\mathcal{F}\circ\mathfrak{F}_\eta&\text{in }\Omega,\\
    \grad\cdot u=0&\text{in }\Omega,\\
    -(pI-\mu\mathbb{D}_{\mathcal{A}_\eta}(M_\eta^{-1}u))M_\eta^{\m{t}}e_3 - \kappa \mathscr{H}(\eta)M_\eta^{\m{t}}e_3=\mathcal{T}\circ\mathfrak{F}_\eta M_\eta^{\m{t}}e_3&\text{on }\Sigma,\\
    u\cdot e_3+\gam\pd_1\eta=0&\text{on }\Sigma,\\
    u=0&\text{on }\Sigma_0.
\end{cases}
\end{equation}
Here we have used the notation $\mathbb{D}_{\mathcal{M}}w=\grad w\mathcal{M}^{\m{t}}+\mathcal{M}\grad w^{\m{t}}$, for $\mathcal{M}=\mathcal{A}_\eta$ and $w=M_\eta^{-1}u$.

\subsection{Statement of main result and discussion}\label{section on statement of the main theorem and discussion}

In order to state and discuss our principal results, we must first introduce the function spaces we will employ in our analysis.  We will do so rapidly here, emphasizing that these spaces are more thoroughly developed in Section \ref{section on novel Sobolev spaces}.   Fix $1<r<2$.  For $I$ denoting either the set $(0,b)$ or else $\R$ and $U=\R^2\times I$, we define $L_{r,2}(U)=L^r(\R^2;L^2(I))$ to be the mixed-type Lebesgue space. For $s\in\N$ we define the mixed-type Sobolev spaces $H^s_{r,2}(U)$ modeled on $L_{r,2}(U)$ in the natural way (see Definition \ref{mixed-type sobolev def}).   For $t\in[0,\infty)$, we let $H^{t,r}(\R^2) = \{f \in L^r(\R^2) \;:\; \tbr{D}^{t} f \in L^r(\R^2)\}$ denote the standard Bessel potential Sobolev space (see, e.g., Section 1.3.1 in Grafakos~\cite{MR3243741} or Section 6.2 in Berg and L\"ofstr\"om~\cite{MR0482275}) and let $\tilde{H}^{1+t,r}(\R^2)$ denote the space of $L^{2r/(2-r)}(\R^2)$ functions whose distributional derivatives belong to $H^{t,r}(\R^2)$ (see Definition \ref{defn of subcritical gradient spaces}).  

For $s\in\N$, $1<r<2$ we set $\bf{X}_{s,r}=H^{1+s}_{r,2}(\Omega)\times{H^{2+s}_{r,2}(\Omega;\R^3)}\times\tilde{H}^{5/2+s,r}(\Sigma)$ and $\bf{W}_{s,r}=H^{1+s}_{r,2}(\R^3;\R^{3\times 3})\times H^s_{r,2}(\R^3;\R^3)$.   With the notation established, we come to our main theorem.

\begin{customthm}{1}[Proved in Section~\ref{section on nonlinear analysis}: see Theorem~\ref{coro fourth times the charm for well-posedness}, Proposition~\ref{prop on properties of the flattening map}, and Corollary~\ref{coro on some further conclusions}]\label{the main theorem}
    Let  $1<r<2$ and $\N\ni s>3/r+1$.  Then there exist open sets $W_s\subset\R\times(\R^+)^3\times\bf{W}_{s,r}$ and $\tcb{V_s(\bf{v})}_{\bf{v}\in(\R^+)^3}\subset\bf{X}_{s,r}$, satisfying
    \begin{equation}\label{non degeneracy conditions}
        \tcb{0}\times(\R^+)^3\times\tcb{0}\subset W_s
        \text{ and }
        0\in \bigcap_{\bf{v}\in(\R^+)^3}V_s(\bf{v}),
    \end{equation}
    and a continuous map
    \begin{equation}\label{the solution operator}
        W_s\ni(\gam,\mathfrak{g},\mu,\kappa,\mathcal{T},\mathcal{F})\mapsto(p,u,\eta)\in \bigcup_{\bf{v}\in(\R^+)^3}V_s(\bf{v}) \subset\bf{X}_{s,r}
    \end{equation}
    such that the following hold.
    \begin{enumerate}

        \item \textbf{Classical regularity and flattening map diffeomorphism:}  Let $k=s-2-\tfloor{3/r}\in\N$.  Then $\bf{X}_{s,r} \emb   C^{2+k}_0(\Omega)\times C^{3+k}_0(\Omega;\R^3)\times C^{4+k}_0(\Sigma)$.  Moreover, for every $(p,u,\eta)\in \bigcup_{\bf{v}\in(\R^+)^3}V_s(\bf{v})$, the associated flattening map $\mathfrak{F}_\eta$ defined in~\eqref{definition of the flattening map} is a smooth diffeomorphism from $\Omega$ to $\Omega[\eta]$ that extends to a  $C^{4+k}$ diffeomorphism from $\Bar{\Omega}$ to $\Bar{\Omega[\eta]}$.
        
        \item \textbf{Solution operator:} The map~\eqref{the solution operator} is a solution operator to the flattened system~\eqref{final nonlinear equations} in the sense that for each  $(\gam,\mathfrak{g},\mu,\kappa,\mathcal{T},\mathcal{F})\in W_s$ the corresponding pressure, velocity, and free surface $(p,u,\eta)\in V_s(\mathfrak{g},\mu,\kappa)$ is  
        the unique triple in $V_s(\mathfrak{g},\mu,\kappa)$ classically solving~\eqref{final nonlinear equations} with stress-force data $(\mathcal{T},\mathcal{F})$,  wave speed $\gam$, and physical parameters $(\mathfrak{g},\mu,\kappa)$.  Moreover, the free surface $\eta$ obeys an extra `degenerating anisotropic estimate' in the sense that the composition map
        \begin{equation}\label{degen_aniso}
            W_s\ni(\gam,\mathfrak{g},\mu,\kappa,\mathcal{T},\mathcal{F})\mapsto(p,u,\eta)\mapsto\gam\mathcal{R}_1\eta\in L^r(\Sigma)
        \end{equation}
        is well-defined and continuous. Here $\mathcal{R}_1$ refers to the Riesz transform in the $e_1$-direction.

        \item\textbf{Eulerian transfer:} Each solution to the flattened system~\eqref{final nonlinear equations} produced by~\eqref{the solution operator} gives rise to a classical solution to the stationary-traveling Eulerian formulation of the problem given by system~\eqref{the stationarity ansatz} by undoing the change of unknowns that led from~\eqref{the stationarity ansatz} to~\eqref{final nonlinear equations}.

    \end{enumerate}
\end{customthm}

The theorem statement packages several results together in a fairly concise form, so we will now pause to unpack the content with a few comments and remarks.   The high-level summary of the theorem is that stationary solutions to the free boundary incompressible Navier-Stokes equations are generic, and moreover, every such solution lies along a one parameter family of slowly traveling waves.  Indeed, the theorem guarantees that for every choice of positive physical parameters $\mathfrak{g}$, $\mu$, and $\kappa$ there exists a non-empty open neighborhood of the origin in wave-speed, stress, and force data $(\gam,\mathcal{T},\mathcal{F})$-space for which we can uniquely solve~\eqref{final nonlinear equations}, and the solution depends continuous on the data and wave speed, as well as the physical parameters.

It is worth highlighting both the superfluous and concrete boundaries of our main theorem. We choose to work in three spatial dimensions for the following two reasons.  First, our methods here simply do not work for the two-dimensional variant of~\eqref{final nonlinear equations}. This is due to the fact that in two spatial dimensions the interface is one dimensional, and hence the container space for the free surface function is degenerate for all choices for tangential integrability parameters $1<r$. The choice $r=1$ would be an adequate replacement, but the harmonic analysis methods employed here are unavailable in that setting.  The second reason we chose to study the three dimensional problem is physical relevance.  The entirety of the theorem can be generalized to handle dimension four and higher, but in this case it would actually be possible to construct solutions in a simpler functional framework utilizing only $L^2-$based spaces.   Another hard boundary in our theorem is seen in the signs of the physical parameters.  We crucially use the strict positivity of the coefficient of gravity $\mathfrak{g}$, the viscosity $\mu$, and the surface tension coefficient $\kappa$ and are  simply unable to relax any of these parameters to zero. On the other hand, we believe that the lower regularity threshold of $\N\ni s>3/r+1$ in Theorem~\ref{the main theorem} is a soft boundary. We chose this numerology in an effort to minimize the complexity of the nonlinear analysis, but it could potentially be improved upon with sufficient additional work.

The final remark is on the qualitative nature of the waves produced by Theorem~\ref{the main theorem}. Thanks to the embedding guaranteed by the first item of this theorem, we see that the free surface perturbation $\eta$ decays to zero at infinity.  This means that the waves we construct are solitary waves, to borrow a phrase from the traveling wave literature. Due to the level of generality of our main result, there is not much more we can say about the qualitative nature of our solutions; however, our well-posedness result opens to the door to more detailed qualitative studies given a fixed wave speed and applied stress and force data.

We now state a couple consequences of the main theorem that formalize the above discussion.  The first clarifies what we know for a fixed choice of physical parameters $(\mathfrak{g},\mu,\kappa)\in(\R^+)^3$.

\begin{customcoro}{2}[Proved in the third item of Corollary~\ref{coro on some further conclusions}]\label{corollary 2}
    Let $r$ and $s$ be as in Theorem~\ref{the main theorem}.  Then for each $(\mathfrak{g},\mu,\kappa)\in(\R^+)^3$ there exists an open set $(0,0,0)\in W_s(\mathfrak{g},\mu,\kappa)\subset\R\times\bf{W}_{s,r}$ with the property that for every triple of wave-speed, stress, and force data $(\gam,\mathcal{T},\mathcal{F})\in W_s(\mathfrak{g},\mu,\kappa)$ there exists a unique $(p,u,\eta)\in V_s(\mathfrak{g},\mu,\kappa)$ such that system~\eqref{final nonlinear equations} is satisfied classically.
\end{customcoro}

The second corollary elucidates how we formulate well-posedness of the stationary wave problem.

\begin{customcoro}{3}[Proved in the fourth item of Corollary~\ref{coro on some further conclusions}]\label{corollary 3}
Let $r$ and $s$ as in Theorem~\ref{the main theorem}.  Then there exists an open set
\begin{equation}\label{open set conditions for Zs}
    (\R^+)^3\times\tcb{0}\subset Z_s\subset(\R^+)^3\times\bf{W}_{s,r}
\end{equation}
and a continuous mapping
\begin{equation}
    Z_s\ni(\mathfrak{g},\mu,\kappa,\mathcal{T},\mathcal{F})\mapsto(p,u,\eta)\in\bigcup_{\bf{v}\in(\R^+)^3}V_s(\bf{v}) \subset\bf{X}_{s,r}
\end{equation}
with the property that for each $(\mathfrak{g},\mu,\kappa,\mathcal{T},\mathcal{F})\in Z_s$ there exists a unique $(p,u,\eta)\in V_s(\mathfrak{g},\mu,\kappa)$ such that the stationary free boundary incompressible Navier-Stokes equations, system~\eqref{final nonlinear equations} with $\gam=0$, is satisfied classically.
\end{customcoro}

We now aim to summarize the principal difficulties in proving Theorem~\ref{the main theorem} and our strategies for overcoming them.  This discussion also serves as an outline of the paper.

As is the case for the traveling wave problems studied in \cite{koganemaru2022traveling,leoni2019traveling,nguyen_tice_2022,MR4337506,stevenson2023wellposedness}, the stationary boundary value problem~\eqref{final nonlinear equations} lies in an unbounded domain of infinite measure and possesses a non-compact free boundary.  The equations are quasilinear and do not enjoy a variational formulation.  Consequently, compactness, Fredholm, and variational techniques are unavailable.  This suggests that the production of solutions ought to proceed via a perturbative argument, such as the implicit function theorem, which has proved successful in the aforementioned work on traveling waves. As such, we begin our discussion by stating the linearization of~\eqref{final nonlinear equations} at zero-wave speed around the equilibrium solution:   
\begin{equation}\label{linearization of the nonlinear problem, introduction friendly}
\begin{cases}
    \grad (p+\mathfrak{g}\eta) -\mu \Delta u=f&\text{in }\Omega,\\
    \grad\cdot u=0&\text{in }\Omega,\\
    -(pI - \mu \mathbb{D} u)e_3-\kappa \Delta_{\|}\eta e_3=k&\text{on }\Sigma,\\
    u\cdot e_3=0&\text{on }\Sigma,\\
    u=0&\text{on }\Sigma_0.
\end{cases}
\end{equation}

The most natural linear theory for system~\eqref{linearization of the nonlinear problem, introduction friendly} lies within $L^2$-based Sobolev spaces. In Section~\ref{section on the basic linear theory} we prove that for every choice of $s\in\N$, $f\in H^s(\Omega;\R^3)$, and $k\in H^{1/2+s}(\Sigma;\R^3)$, there exists a solution $p\in H^{1+s}(\Omega)$, $u\in H^{2+s}(\Omega;\R^3)$, and $\eta\in\tilde{H}^{5/2+s}(\Sigma)$ (meaning $\grad\eta\in H^{3/2+s}(\Sigma;\R^2)$) that is unique up to modifications of $\eta$ by constants.  Moreover, we have an estimate of the solution $(p,u,\eta)$ in terms of the data $(f,k)$.

While this basic $L^2$-based linear theory is encouraging, it is ill-suited for the actual task at hand.  The problem is two-fold.  First, there is no canonical choice of $\eta$, as it is only determined up to a constant, and this is highly problematic in using $\eta$ to generate the set $\Omega[\eta]$ in which the nonlinear problem~\eqref{the stationarity ansatz} is posed.  Second, and more severe, is that  the inclusion $\grad\eta\in H^{3/2+s}(\Sigma;\R^2)$ can never provide an estimate of $\eta\in L^\infty(\Sigma)$ for any choice of $s\in\N$.  This is due to the nature of the critical Sobolev embedding in two dimensions, since $\grad\eta\in L^2(\Sigma;\R^2)$ only guarantees that $\eta\in\m{BMO}(\Sigma)$.  The potential unboundedness of $\eta$ is an even more severe obstruction in building $\Omega[\eta]$.  It is worth noting that for the traveling problem with $\gamma \neq 0$, the papers~\cite{koganemaru2022traveling,leoni2019traveling,nguyen_tice_2022,MR4337506,stevenson2023wellposedness} exploit an essential auxiliary estimate of $\gamma \mathcal{R}_1 \eta \in L^2(\Sigma)$, where $\mathcal{R}_1$ is the Riesz transform in the $e_1$ direction, in order to guarantee $\eta$ belongs to a special anisotropic Sobolev space that embeds into  $C^0_0(\Sigma;\R)$; this then overcomes the criticality problem and allows for the construction of solutions in the anisotropic space.  We see from \eqref{degen_aniso} that we obtain an analogous estimate here when $\gamma \neq 0$, but when $\gamma=0$ there is simply no auxiliary estimate available, and so the anisotropic space is of no use.

Our path forward begins with the following observation.  If we could develop a theory for~\eqref{linearization of the nonlinear problem, introduction friendly} that ensured the inclusion $\grad\eta\in W^{\ell,r}(\Sigma;\R^2)$ for some $1\le r<2$ and $\N\ni\ell>2/r-1$, then the subcritical Sobolev embedding would provide a canonical way to modify $\eta$ by a constant to guarantee the inclusion $\eta\in L^{2r/(2-r)}(\Sigma)$.  Due to the embedding $(L^{2r/(2-r)}\cap\dot{W}^{\ell,r})(\Sigma)\emb C^0_0(\Sigma)$, free surface functions in this space are not only admissible for the nonlinear problem, but also enjoy a wealth of nonlinear properties that permit the transition from~\eqref{linearization of the nonlinear problem, introduction friendly} to the full nonlinear problem~\eqref{the stationarity ansatz}.

We thus arrive at the principal task of  developing a linear well-posedness theory for~\eqref{linearization of the nonlinear problem, introduction friendly} that yields $L^r$-estimates (for $r<2$) on the gradient of the free surface and its derivatives. One possible strategy for this would be to pose the problem in purely $L^r$-based Sobolev spaces. There are existing techniques in the literature (for instance, \cite{AS_2003,AS_2003_2,AY_2010,Abels_2005,Abels_2005_2,Abels_2006,AW_2005}) that provide an $L^r$-based well-posedness theory for the Stokes problems with various boundary conditions but not with the coupling to a free surface function $\eta$. Rather than start with this $L^r$-Stokes theory and attempt to build in a coupling to the free surface function, we have instead identified an alternate approach that is more deeply connected to the symmetries of the equilibrium domain and the natural $L^2$-energy structure of the problem.  This technique allows for the simultaneous construction of the solution triple $(p,u,\eta)$, has clear connections to the relatively simple $L^2$-existence theory, and has the potential for generalization to other problems with similar symmetries.

Our approach aims only to develop the $L^r$-theory in the horizontal variables, while maintaining an $L^2$-theory in the vertical variable.  More concretely, we utilize mixed-type Sobolev spaces modeled on the mixed-type Lebesgue spaces $L_{r,2}(\Omega)=L^r(\R^2;L^2(0,b))$ for the bulk unknowns $p$ and $u$ and the bulk data, and we use Bessel potential Sobolev spaces $H^{s,r}(\R^2)$ for the (gradient of the) boundary unknown $\eta$ and the boundary data.  While mixed-type spaces are often used in parabolic problems, we are unaware of their previous uses in elliptic fluid problems.

At first glance it might seem that the mixed nature of these spaces will make them cumbersome to work with, but in fact they are a natural and streamlined choice of a functional framework to satisfy our stated goals, as we now aim to justify.  First, we observe that the domain $\Omega$ is invariant under translations in the two horizontal variables and that the solution operator to system~\eqref{linearization of the nonlinear problem, introduction friendly}, denoted
\begin{equation}\label{solution operator for linearization of the nonlinear problem, introduction friendly}
    T:L^2(\Omega;\R^3)\times H^{1/2}(\Sigma;\R^3)\to H^1(\Omega)\times H^2(\Omega;\R^3)\times\tp{\tilde{H}^{5/2}(\Sigma)/\R},
\end{equation}
with $(p,u,\eta) = T(f,k)$ solving the PDE, commutes with all horizontal translations.  By making the identification $\Sigma\simeq \R^2$ and employing the factorization (see Lemma~\ref{lemma on equivalent norm on the mixed type spaces})
\begin{equation}\label{the factorization identity}
H^s(\Omega;\R^\ell)=H^s(\R^2;L^2((0,b);\R^\ell))\cap L^2(\R^2;H^s((0,b);\R^\ell)) \text{ for } s,\ell\in\N,
\end{equation}
we see that $T$ is a translation-commuting linear operator acting between certain infinite-dimensional Hilbert-valued Sobolev spaces.  Building on some well-established tools in harmonic analysis (see Section~\ref{appendix on translation commuting linear maps}), we deduce from this that $T$ is diagonalized by the Fourier transform in the two horizontal variables.  More precisely, this grants us the existence of an operator-valued symbol
\begin{equation}
    m:\R^2\to\mathcal{L}(L^2((0,b);\C^3)\times\C^3;H^1((0,b);\C)\times H^2((0,b);\C^3)\times\C)
\end{equation}
such that $T=m(D)$ and the operator norm of $T$ is equivalent to a certain weighted $L^\infty$-type norm on the symbol $m$. Harmonic analysis provides numerous frameworks for extending Fourier multiplication operators, such as $m(D)$, from $L^2$-based spaces to $L^r$-based spaces for $1<r<\infty$.  A celebrated tool in this area is the Mikhlin-H\"ormander multiplier theorem; briefly, this result says that if the derivatives of the symbol obey certain estimates, then the corresponding multiplication operator can be uniquely extended from $L^2$-based spaces to $L^r$-based spaces for every $1<r<\infty$.  In our context, with the symbol $m$ and the map $T$, there is an appropriate vector-valued version of this result to which we appeal and subsequently generalize (see Theorems~\ref{hormander mikhlin multiplier theorem} and~\ref{second HM multiplier theorem}).  Taking for granted, for the moment, that $m$ satisfies the necessary symbol estimates, we then learn that 
\begin{multline}\label{factored mixed type}
    T:L^r(\R^2;L^2((0,b);\R^3))\times H^{1/2,r}(\Sigma;\R^3)\to \tp{H^{1,r}(\R^2;L^2((0,b);\R))\cap L^r(\R^2;H^1((0,b);\R))}\\\times\tp{H^{2,r}(\R^2;L^2((0,b);\R^3))\cap L^r(\R^2;H^2((0,b);\R^3))}\times\tp{\tilde{H}^{5/2,r}(\Sigma;\R)/\R},
\end{multline}
is a bounded linear extension of~\eqref{solution operator for linearization of the nonlinear problem, introduction friendly} for any $1<r<\infty$. The mixed-type Sobolev spaces now simply show up by undoing the factorization~\eqref{the factorization identity}, i.e.
\begin{equation}\label{shove together}
    H^{s,r}(\R^2;L^2((0,b);\R^\ell))\cap L^r(\R^2;H^s((0,b);\R^\ell))=H^s_{r,2}(\Omega;\R^\ell) \text{ for }  s,\ell\in\N,\;r\in(1,\infty),
\end{equation}
which means that~\eqref{factored mixed type} rewrites as 
\begin{equation}
    T:H^0_{r,2}(\Omega;\R^3)\times H^{1/2,r}(\Sigma;\R^3)\to H^1_{r,2}(\Omega)\times H^2_{r,2}(\Omega;\R^3)\times\tp{\tilde{H}^{5/2,r}(\Sigma)/\R}.
\end{equation}
In a similar manner, the mixed-spaces admit a simple Hilbert-valued Littlewood-Paley theory that allows for a rapid development of their properties.

Further evidence of the utility of the mixed-type Sobolev spaces and the tangential-$L^r$ framework is seen in the fact that it allows us to verify that the vector-valued symbol $m$ satisfies the necessary  Mikhlin-H\"ormander hypotheses in a surprisingly effective and efficient manner. Our proof requires no more than the vector-valued harmonic analysis toolbox of Section~\ref{appendix on vector-valued harmonic analysis}, paired with the identification of a certain recursive structure present in the $L^2$ theory for~\eqref{linearization of the nonlinear problem, introduction friendly}. In fact, this technique does not rely on any explicit formula for $m$, nor any specific fluid-dynamical structure of the equations themselves, and so we expect it can serve as a general method for other problems posed in domains with a partial translation symmetry. The main idea of our technique is that derivatives of the symbol are obtained from its difference quotients, which can be computed explicitly in terms of compositions of the solution operator $T$ with modulation operators (see Proposition~\ref{proposition on symbol translation}). The solution operator $T$ interacts with modulation in a very simple manner due to the product rule, and this allows us to deduce differentiability properties of the symbol $m$ by recursively employing $T$ itself and the correspondence between its operator norm and $L^\infty$-type norms of $m$. This not only yields the estimates needed to invoke Mikhlin-H\"ormander, but also yields analyticity of $m$ away from the origin (see Theorem~\ref{thm on analyticity of the symbol}).

Section~\ref{section on novel Sobolev spaces} records a number of properties, linear and nonlinear, about the mixed-type Sobolev spaces and the subcritical gradient spaces. We combine these with the $L^2$-linear theory and the above vector-valued harmonic analysis ideas in Section~\ref{section on linear analysis in the mized type spaces}, which culminates in the linear well-posedness result of Theorem~\ref{thm on well-posedness of the linearization in mixed-type Sobolev spaces, II}. In Section~\ref{section on nonlinear analysis} we then formulate the nonlinear system \eqref{final nonlinear equations} as a nonlinear mapping between appropriate mixed-type spaces and then produce solutions via the implicit function theorem. The proofs of Theorem \ref{the main theorem} and Corollaries \ref{corollary 2} and \ref{corollary 3} are recorded in Section \ref{subsection on well-posedness}.

The above discussion has focused entirely on the stationary ($\gamma =0$) problem, so we conclude with a couple comments about the slowly traveling problem ($\gamma \asymp 0)$.  Previous work on the traveling problem \cite{koganemaru2022traveling,leoni2019traveling,nguyen_tice_2022,MR4337506,stevenson2023wellposedness} considered linearized operators with general $\gamma \in \R\setminus\tcb{0}$, but here we only study the case $\gamma =0$.  This explains how our result only ends up handing slowly traveling waves: the solutions with $\gamma \neq 0$ are obtained perturbatively from the $\gamma=0$ analysis.  Based on the $L^2$ theory, one would expect the free surface function to belong to the obvious $L^r-$analog of the anisotropic $L^2$-based Sobolev spaces mentioned above, and this is indeed the case.  To handle the mismatch between these anisotropic spaces with $\gamma \neq 0$ and the isotropic space with $\gamma=0$, we employ a special $\gamma-$dependent Fourier multiplier (see Definition \ref{the gamma parameterization operator}) that reparameterizes the anisotropic function spaces in terms of the stationary isotropic function space.  Inverting this operator (see Proposition \ref{properties of the anisotroptic paramterization operators}) then shows the anisotropic inclusion that is recorded in \eqref{degen_aniso}.

We emphasize that our work establishes continuity of the solution map into the fixed isotropic space used for the stationary problem, even though the free surface function belongs to a strict subspace (determined by the anisotropic estimate \eqref{degen_aniso})  when $\gamma \neq 0$.  The limit $\gamma \to 0$ can then be understood as a singular limit, in the sense that this extra anisotropic estimate degenerates when $\gamma =0$, resulting in a change in the topology of the container space.  Another impact of this singular limit is that the anisotropic parameterization operators we use are at best continuous with respect to $\gamma$ and not differentiable at $\gamma =0$.

\subsection{Notation}\label{notation subsection}

The set $\tcb{0,1,2,\dots}$ is denoted by $\N$;  $\N^+=\N\setminus\tcb{0}$. The positive real numbers are $\R^+=(0,\infty)$. $\F$ denotes either $\R$ or $\C$. The notation $\al\lesssim\be$ means that there exists $C\in\R^+$, depending only on the parameters that are clear from context, for which $\al\le C\be$. To highlight the dependence of $C$ on one or more particular parameters $a,\dots, b$, we will occasionally write $\al\lesssim_{a,\dots,b}\be$. We also express that two quantities $\al$, $\be$ are equivalent, written $\al\asymp\be$ if both $\al\lesssim\be$ and $\be\lesssim\al$. We shall also employ the bracket notation
\begin{equation}\label{bracket notation}
    \tbr{x}=\sqrt{1+|x_1|^2+\dots+|x_\ell|^2} \text{ for } x\in\C^\ell.
\end{equation}
If $\tcb{X_i}_{i=1}^\ell$ are normed spaces and $X$ is their product, endowed with any choice of product norm $\tnorm{\cdot}_X$, then we shall write
\begin{equation}
    \tnorm{x_1,\dots,x_\ell}_{X}=\tnorm{(x_1,\dots,x_\ell)}_{X}  \text{ for } (x_1,\dots,x_\ell)\in X.
\end{equation}

We identify the dual of a complex Banach space $X$, denoted $X^\ast$, as the set of antilinear and continuous functionals, so that the dual pairing is sesquilinear (i.e. linear in the right argument) and, in the case that $X$ is Hilbert, the Riesz map is linear.

If $\mathcal{H}$ is a separable Hilbert space, we will denote the Fourier and inverse Fourier transforms (normalized to be unitary on $L^2$) in the space of $\mathcal{H}$-valued tempered distributions over $\R^d$, $\mathscr{S}^\ast(\R^d;\mathcal{H})$, via $\mathscr{F}$ and $\mathscr{F}^{-1}$, respectively. For functions defined in the equilibrium domain $\Omega$, we view them as vector-valued tempered distributions on $\R^2$ in the natural way; for example, $L^2(\Omega;\C)=L^2(\R^2;L^2((0,b);\C))\emb\mathscr{S}^\ast(\R^2;L^2((0,b);\C))$. It is in these sense that we are to interpret the Fourier transform acting on functions defined on $\Omega$. We frequently make the natural identification $\Sigma\simeq \R^2$ when performing Fourier analysis for functions defined on $\Sigma$.

We write $\grad=(\pd_1,\dots,\pd_d)$ to denote the gradient on $\R^d$ for $d\in\N^+$. We refer to dimensions $2$ and $3$ simultaneously, in which case the $\R^2$-gradient is denoted by $\grad_{\|}=(\pd_1,\pd_2)$, while the $\R^3$ gradient obeys the aforementioned notation. In $\R^2$ we denote the rotated-gradient operator as $\grad_{\|}^\perp=(-\pd_2,\pd_1)$. We also let $D=\grad/2\pi\ii$ or $D=\grad_{\|}/2\pi\ii$, depending on context. The divergence and tangential divergence operators are written $\grad\cdot f=\sum_{j=1}^3\pd_j(f\cdot e_j)$ and $(\grad_{\|},0)\cdot f=\sum_{j=1}^2\pd_j(f\cdot e_j)$, for appropriate $\R^3$-valued functions $f$.

If $\mathcal{H}$ and $\mathcal{K}$ are Hilbert spaces and $m:\R^d\to\mathcal{L}(\mathcal{H};\mathcal{K})$ is a sufficiently nice symbol, we will write $m(D)$ for the linear operator, acting on certain subspaces of tempered distributions, defined via $\mathscr{F}^{-1}[m\mathscr{F}[\cdot]]$. In other words, $m(D)$ is the Fourier multiplication operator corresponding to the symbol $m$. If $\zeta\in\R^d$, we also let $m(D+\zeta)=(m(\cdot+\zeta))(D)$. The vector of Riesz transforms is $\mathcal{R}=(\mathcal{R}_1,\dots,\mathcal{R}_d)$, where $\mathcal{R}_i=|D|^{-1}\pd_i/2\pi$, $i\in\tcb{1,\dots,d}$.

Given an open set $\Gamma \subseteq \R^d$, $k \in \N$, and a normed vector space $V$, we will write $C^k(\Gamma;V)$ for the $k-$times continuously differentiable maps from $\Gamma$ to $V$.  The notation $C^k_0(\Gamma;V)$ denotes the (possible) subspace of functions $f \in C^k(\Gamma;V)$ for which $\sup_{\abs{\alpha} \le k} \abs{\partial^\alpha f(x_n)} \to 0$ along any sequence $\{x_n\}_{n \in \N} \subset \Gamma$ for which $\abs{x_n} \to \infty$ as $n\to\infty$.

We now turn our attention to the notation for various types of standard Sobolev spaces employed in this paper. First, we address certain negative homogeneous type Spaces. For $1<p<2$ we define the space
\begin{equation}\label{definition of the negative homogeneous Sobolev space}
    \dot{H}^{-1,p}(\R^2;\F)=\tcb{f\in H^{-1,p}(\R^2;\F)\;:\;|D|^{-1}f\in L^p(\R^2;\F)},
\end{equation}
which is equipped with the norm $\tsb{f}_{\dot{H}^{-1,p}}=\tnorm{|D|^{-1}f}_{L^p}$. We also define
\begin{equation}\label{definition of the negative homogeneous Sobolev space, 2}
    \dot{H}^{-1}(\R^2;\F)=\tcb{f\in H^{-1}(\R^2;\F)\;:\;|\cdot|^{-1}\mathscr{F}[f]\in L^2(\R^2;\C)}
\end{equation}
and endow it the norm $\tsb{f}_{\dot{H}^{-1}}=\tnorm{|\cdot|^{-1}\mathscr{F}[f]}_{L^2}$. We will sometimes write $\dot{H}^{-1,2}=\dot{H}^{-1}$. We then denote
\begin{equation}
    \hat{H}^s(\Omega;\F)=\bcb{g\in H^s(\Omega;\F)\;:\;\int_0^bg(\cdot,y)\;\m{d}y\in\dot{H}^{-1}(\R^2;\F)}
\end{equation}
which has the norm $\tnorm{g}_{\hat{H}^s}=\sp{\tnorm{g}_{H^s}^2+\ssb{\int_0^bg(\cdot,y)\;\m{d}y}_{\dot{H}^{-1}}^2}^{1/2}$.

For $\R\ni s\ge 1$ we also define the gradient spaces:
\begin{equation}\label{definition of the critical and subcritical gradient spaces}
    \tilde{H}^{s,p}(\R^2;\F)=\begin{cases}
        \tcb{f\in L^1_{\m{loc}}(\R^2;\F)\;:\;\grad f\in H^{s-1,p}(\R^2;\F^2)}&\text{if }p\ge 2,\\
        \tcb{f\in L^{2p/(2-p)}(\R^2;\F)\;:\;\grad f\in H^{s-1,p}(\R^2;\F^2)}&\text{if }1<p<2.
    \end{cases}
\end{equation}
The norm (seminorm if $p\ge 2$) is given by $\tnorm{f}_{\tilde{H}^{s,p}}=\tnorm{\grad f}_{H^{s-1,p}}$. When $p=2$, we shall again write $\tilde{H}^{s}(\R^2;\F)$ in place of $\tilde{H}^{s,2}(\R^2;\F)$. The spaces $\tilde{H}^{s,p}(\R^2;\F)$ are complete for $p<2$ (see Section \ref{section on properties of subcritical gradient spaces} for this and other properties), while the quotient $\tilde{H}^{s,p}(\R^2;\F)/\F$ is complete for $p\ge 2$.

Now we consider the classical Bessel-potential Sobolev spaces. Given $\mathcal{H}$ a separable Hilbert space and $\R\ni s\ge0$ we write
\begin{equation}
    H^{s,p}(\R^2;\mathcal{H})=\tcb{f\in L^p(\R^2;\mathcal{H})\;:\;\tbr{D}^sf\in L^p(\R^2;\mathcal{H})}
\end{equation}
and equip this space with the standard norm $\tnorm{f}_{H^{s,p}\mathcal{H}}=\tnorm{\tbr{D}^sf}_{L^p\mathcal{H}}$. When $p=2$, we simply write $H^s(\R^2;\mathcal{H})$ in place of $H^{s,2}(\R^2;\mathcal{H})$. Since we are considering the Hilbert-valued case, the theory follows from straightforward adaptations of the scalar theory; for more information on the general Banach-valued cases, we refer the reader to Amann~\cite{amann_1997} or Section 5.6 in Hyt\"onen, Neerven, Veraar, and Weis~\cite{MR3617205}.

The trace operators on to the hypersurfaces $\Sigma$ and $\Sigma_0$, acting on functions defined on $\Omega$, are denoted by $\m{Tr}_{\Sigma}$ and $\m{Tr}_{\Sigma_0}$, respectively. We will utilize the following closed subspace of $H^1(\Omega;\F^3)$:
\begin{equation}
    {_0}H^1(\Omega;\F^3)=\tcb{u\in H^1(\Omega;\F^3)\;:\;\m{Tr}_{\Sigma_0}u=0}.
\end{equation}
For functions like $\eta:\Sigma\to\F$ we can view them as defined on $\Omega$ in the natural way, e.g. $\eta(x,y)=\eta(x)$ for $(x,y)\in\Omega$. In particular, the expression of $\grad\eta$ in the bulk equations of say~\eqref{final nonlinear equations} refers to the $\R^3$-vector $(\pd_1\eta,\pd_2\eta,0)$.

\section{Basic linear theory}\label{section on the basic linear theory}

In this section we are concerned with the well-posedness of the following linear system of equations in the framework of $L^2$-based Sobolev spaces:
\begin{equation}\label{linearization of the nonlinear problem}
\begin{cases}
    \grad(p+\mathfrak{g} \eta)-\mu\grad\cdot\mathbb{D}u=f&\text{in }\Omega,\\
    \grad\cdot u=g&\text{in }\Omega,\\
    -(pI-\mu\mathbb{D}u)e_3-\kappa \Delta_{\|}\eta e_3=k&\text{on }\Sigma,\\
    u\cdot e_3=h&\text{on }\Sigma,\\
    u=0&\text{on }\Sigma_0.
\end{cases}
\end{equation}
Here the (complex) data are $f:\Omega\to\C^3$, $k:\Sigma\to\C^3$, $h:\Sigma\to\C$, and $g:\Omega\to\C$ while the (complex) unknowns are $u:\Omega\to\C^3$, $p:\Omega\to\C$, and $\eta:\Sigma\to\C$. One of the minor technical issues with this system of equations is that the lowest order term appearing for $\eta$ is its gradient thus there is a kernel for the differential operator consisting of $p=0$, $u=0$, and $\eta=\text{constant} \in \C$. We get around this issue in the following two ways. First, in Sections~\ref{section on weak solutions} and~\ref{section on strong solutions}, we work in a semi-normed space functional framework, rather than a normed one. More precisely, we utilize the $\tilde{H}^s(\R^2)$ spaces as in~\eqref{definition of the critical and subcritical gradient spaces} as the containers for the linearized free surface variable.

As it turns out, working in seminormed spaces is not ideally suited for the next stage of our linear analysis, Section~\ref{section on vector-valued symbol calculus for the solution map}, in which we perform operator-valued symbol calculus on a solution operator to the linear problem. Thus, our second way of dealing with the kernel of~\eqref{linearization of the nonlinear problem} is that in the latter half of Section~\ref{section on strong solutions}, we use an equivalent reformulation of~\eqref{linearization of the nonlinear problem} for data and solutions both belonging to normed spaces. The reformulation is given by:
\begin{equation}\label{curl formulation of the linearization}
        \begin{cases}
            \mathfrak{g} (\chi,0)+\grad p-\mu \grad\cdot\mathbb{D}u=f&\text{in }\Omega,\\
            \grad\cdot u=g&\text{in }\Omega,\\
            -(pI-\mu \mathbb{D}u)e_3-\kappa \grad_{\|}\cdot\chi e_3=k&\text{on }\Sigma,\\
            \grad_{\|}^\perp\cdot\chi=\omega&\text{on }\Sigma,\\
            u\cdot e_3=h&\text{on }\Sigma,\\
            u=0&\text{on }\Sigma_0.
        \end{cases}
    \end{equation}
    Here the data $f$, $k$, $h$ are the same as before, and $\omega:\Sigma\to\C$ is a new datum. The solution is $(p,u,\chi)$, with $p$ and $u$ as before and $\chi:\Sigma\to\C^2$.

\subsection{Weak solutions}\label{section on weak solutions}

The strategy for the theory of weak solutions is to prove a priori estimates and then handle existence via a sequence of approximate problems. The initial bounds allow us to deduce that this approximating sequence is Cauchy and has a limit that solves the equations.

The following definition sets the notation for the weak solution theory.
\begin{defn}[Weak formulation operators]\label{defn of weak formulation operators}
We define the linear map
    \begin{equation}
        \mathscr{I}:L^2(\Omega;\C)\times{_0}H^1(\Omega;\C^3)\times\tilde{H}^{3/2}(\Sigma;\C)\to({_0}H^1(\Omega;\C^3))^\ast,
    \end{equation}
    through the action
    \begin{equation}\label{the definition of the weak formulation functional}
        \tbr{\mathscr{I}(p,u,\eta),v}_{({_0}H^1)^\ast,{_0}H^1} = \int_{\Omega}\frac{\mu}{2} \mathbb{D}u: \Bar{\mathbb{D}v}-p \Bar{\grad\cdot v}+ \mathfrak{g}\grad\eta\cdot \Bar{v}- \kappa \tbr{\Delta_{\|}\eta,\m{Tr}_{\Sigma}v\cdot e_3}_{H^{-1/2},H^{1/2}}.
    \end{equation}
    Recall that our notational convention is that the $(\cdot)^\ast$ of a Banach space is its anti-dual and the bracket pairing is antilinear in the right argument.
    
    We also define the following closed subspaces of $H^1(\Omega;\C^3)$:
    \begin{equation}
        {_\natural}H^1(\Omega;\C^3)=\tcb{u\in {_0}H^{1}(\Omega;\C^3)\;:\;\grad\cdot u=0,\;\m{Tr}_{\Sigma}u\cdot e_3=0},
    \end{equation}
    and for $\ep\in(0,1)$ 
    \begin{equation}\label{the weird guy space}
        {_\natural}H^1_\ep(\Omega;\C^3)=\tcb{u\in{_\natural}H^1(\Omega;\C^3)\;:\;\supp\mathscr{F}[u]\subseteq\R^2\setminus B(0,\ep)}.
    \end{equation}
    Note that in the above we are interpreting $\m{supp}\mathscr{F}[u]\subseteq\R^2$ as the support of the vector-valued tempered distribution $\mathscr{F}[u]\in\mathscr{S}^\ast(\R^2;L^2((0,b);\C^3))$.
\end{defn}

We quote the following construction of a solution operator to the divergence equation with Dirichlet boundary conditions. Recall that a linear map $T$ on a vector space of functions defined on $\Omega$ is said to be translation commuting, or tangentially translation commuting, if $(TX)(\cdot+h)=T(X(\cdot+h))$ for all functions $X$ and all $h\in \R^3$ with $h\cdot e_3=0$.

\begin{lem}[Solution operators to divergence equations]\label{lem on right inverse to the divergence operator}The following hold.
\begin{enumerate}
    \item There exists a bounded, linear, and translation commuting map $\mathcal{B}$ such that for $\ell\in\N$ we have
\begin{equation}
    \mathcal{B}:\hat{H}^\ell(\Omega;\C)\to H^1_0(\Omega;\C^3)\cap H^{1+\ell}(\Omega;\C^3)
\end{equation}
and for all $f\in \hat{H}^0(\Omega;\C)$ we have
\begin{equation}
    \grad\cdot\mathcal{B}f=f\quad\text{and}\quad\m{Tr}_{\pd\Omega}\mathcal{B}f=0.
\end{equation}
\item There exists a bounded, linear, and translation commuting map $\Bar{\mathcal{B}}$ such that for $\ell\in\N$ we have
\begin{equation}
    \Bar{\mathcal{B}}:H^\ell(\Omega;\C)\to {_0}H^1(\Omega;\C^3)\cap H^{1+\ell}(\Omega;\C^3)
\end{equation}
and for all $f\in H^0(\Omega;\C)$ we have $\grad\cdot\Bar{\mathcal{B}}f=f$.
\item There exists a bounded, linear, and translation commuting map $\mathcal{B}_0$ such that for $\ell\in\N$ we have
\begin{equation}
    \mathcal{B}_0:H
    ^{1/2+\ell}(\Sigma;\C)\cap\dot{H}^{-1}(\Sigma;\C)\to{_0}H^1(\Omega;\C^3)
\end{equation}
and for all $\varphi\in H^{1/2}(\Sigma;\C)\cap\dot{H}^{-1}(\Sigma;\C)$ we have
\begin{equation}
    \grad\cdot\mathcal{B}_0\varphi=0\quad\text{and}\quad\m{Tr}_{\Sigma}\mathcal{B}_0\varphi=\varphi e_3.
\end{equation}
\end{enumerate}
\end{lem}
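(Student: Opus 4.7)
The plan is to give explicit, tangentially translation-commuting constructions for each of the three operators using only horizontal Fourier multipliers (Riesz transforms and $(-\Delta_{\|})^{-1}$) paired with elementary vertical primitives and cutoff functions, so that translation-commutation is manifest and boundedness reduces to standard $L^2$ horizontal-multiplier estimates plus Minkowski's inequality in $y$. Fix once and for all a smooth $\psi:[0,b]\to\R$ with $\psi(0)=\psi(b)=0$ and $\int_0^b\psi(y)\,\m{d}y=1$, and a smooth $\phi:[0,b]\to\R$ with $\phi(0)=0$, $\phi(b)=1$, and $\phi'(0)=\phi'(b)=0$. For part (1), given $f\in\hat{H}^\ell(\Omega;\C)$, let $\bar f(x)=\int_0^b f(x,y)\,\m{d}y$; by the defining condition of $\hat{H}^\ell$ we have $\bar f\in\dot{H}^{-1}(\R^2;\C)$, so $F:=-\grad_{\|}(-\Delta_{\|})^{-1}\bar f\in L^2(\R^2;\C^2)$ satisfies $\grad_{\|}\cdot F=\bar f$. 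Put
\begin{equation*}
    \mathcal{B} f(x,y) = \p{\psi(y)F(x),\;\int_0^y\sp{f(x,z)-\psi(z)\bar f(x)}\,\m{d}z}.
\end{equation*}
A direct computation verifies $\grad\cdot\mathcal{B}f=f$, while the vanishing of $\psi$ at $y\in\tcb{0,b}$ together with the normalization $\int_0^b\psi=1$ yields $\m{Tr}_{\pd\Omega}\mathcal{B}f=0$. The $H^{1+\ell}$ bound follows from the defining $L^2$-estimate for $\grad_{\|}(-\Delta_{\|})^{-1}:\dot{H}^{-1}\to L^2$ and Minkowski's inequality in $y$, with higher regularity obtained by commuting tangential derivatives through the operator.

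For part (2), the strategy is to reduce to (1) by extending $f$ across $\Sigma$. I would employ a translation-commuting Stein-type higher-order reflection extension to embed $f\in H^\ell(\Omega)$ into $\tilde f_0\in H^\ell(\R^2\times(0,2b))$ simultaneously across every Sobolev scale, then correct by a $y$-only multiplier to force $\int_0^{2b}\tilde f(x,y)\,\m{d}y=0$ identically, so that the corrected extension $\tilde f$ belongs to $\hat{H}^\ell$ on the doubled strip trivially. Applying $\mathcal{B}$ from (1) there and restricting to $\Omega$ then yields a $\grad\cdot$-inverse vanishing on $\Sigma_0$. The divergence mismatch coming from the vertical-average subtraction is patched by a horizontal Fourier-multiplier correction (essentially a Bogovskii-style multiplier in the horizontal variables), which is needed to avoid losing one horizontal derivative when $\ell=0$; this $\ell=0$ horizontal-smoothing step is the main technical obstacle in the lemma, and its translation-commutativity and $L^r$-boundedness rely only on standard vector-valued Calder\'on-Zygmund theory.

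For part (3), explicitly set
\begin{equation*}
    \mathcal{B}_0\varphi(x,y) = \p{\phi'(y)\grad_{\|}(-\Delta_{\|})^{-1}\varphi(x),\;\phi(y)\varphi(x)}.
\end{equation*}
The hypothesis $\varphi\in\dot{H}^{-1}(\Sigma)$ guarantees $\grad_{\|}(-\Delta_{\|})^{-1}\varphi\in L^2(\Sigma;\C^2)$. Using the identity $\Delta_{\|}(-\Delta_{\|})^{-1}=-\m{id}$, a direct computation gives $\grad\cdot\mathcal{B}_0\varphi=-\phi'(y)\varphi(x)+\phi'(y)\varphi(x)=0$; the choices $\phi(0)=\phi'(0)=\phi'(b)=0$ and $\phi(b)=1$ yield $\m{Tr}_{\Sigma_0}\mathcal{B}_0\varphi=0$ and $\m{Tr}_{\Sigma}\mathcal{B}_0\varphi=\varphi e_3$. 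Tangential translation commutation is immediate from the construction, and all $H^{1+\ell}$ bounds follow from the $L^2$-boundedness of the horizontal Riesz operators combined with the $H^{1/2+\ell}$ hypothesis on $\varphi$, which transfers to the bulk via the smooth factor $\phi$.
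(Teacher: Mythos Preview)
Your explicit constructions in parts (1) and (3) do not achieve the required $H^1$ regularity; the issue in both places is that a pure tensor product (or a vertical primitive) of the data cannot gain the missing horizontal half-derivative or full derivative.

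For part (1), the third component of your $\mathcal{B}f$ is $G(x,y)=\int_0^y\bigl(f(x,z)-\psi(z)\bar f(x)\bigr)\,\m{d}z$. Minkowski in $y$ controls $\tnorm{G}_{L^2}$ and $\tnorm{\pd_y G}_{L^2}$, but for $j\in\{1,2\}$ one has $\pd_j G(x,y)=\int_0^y\pd_j\bigl(f-\psi\bar f\bigr)\,\m{d}z$, and the vertical integral does nothing to improve horizontal regularity. Concretely, take $f(x,y)=g(x)h(y)$ with $g\in L^2(\R^2)\setminus H^1(\R^2)$ and $h\in C^\infty_c((0,b))$ with $\int_0^b h=0$; then $\bar f=0$, $f\in\hat H^0$, but $\pd_j G=\pd_j g(x)\int_0^y h(z)\,\m{d}z\notin L^2(\Omega)$. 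Thus $\mathcal{B}f\notin H^1_0(\Omega;\C^3)$. A correct construction must couple the horizontal frequency to the vertical variable (for instance via a Poisson-type extension or a genuine Bogovski\u{\i} operator on vertical fibers combined with horizontal smoothing), not just take a primitive in $y$.

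The same defect appears in part (3): the third component $\phi(y)\varphi(x)$ satisfies $\pd_j(\phi\varphi)=\phi(y)\pd_j\varphi(x)$, which lies in $L^2(\Omega)$ only if $\varphi\in H^1(\Sigma)$, whereas the hypothesis is merely $\varphi\in H^{1/2}(\Sigma)$. Again one needs a lifting that trades vertical decay for horizontal smoothing (e.g.\ replace $\phi(y)$ by a horizontal Fourier multiplier like $\exp\bigl((y-b)\tbr{\grad_\|}\bigr)$ cut off near $\Sigma_0$), so that the standard trace estimate $H^1(\Omega)\to H^{1/2}(\Sigma)$ is matched by its right inverse. Since part (2) is built on part (1), it inherits the same gap. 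The paper itself does not give an explicit construction here but instead invokes operators built in Proposition~C.2 and Corollaries~C.3--C.4 of Stevenson and Tice~\cite{stevenson2023wellposedness}, which are constructed with exactly this horizontal--vertical coupling; the only additional observation needed is that those operators are manifestly tangentially translation commuting.
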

\begin{proof}
    The $\R-$valued variants of these operators are constructed in Proposition C.2 and Corollaries C.3 and C.4 in Stevenson and Tice~\cite{stevenson2023wellposedness}. Inspection of the proof shows that the solution operators are indeed translation commuting.  The $\C$-valued assertions above follow from separate considerations of real and imaginary parts.
\end{proof}

We now prove a priori estimates for system~\eqref{linearization of the nonlinear problem} in the reduced case that $g=0$ and $h=0$.

\begin{prop}[A priori estimates for weak solutions]\label{proposition on a priori estimates for weak solutions}
Suppose that
\begin{equation}
    (p,u,\eta)\in L^2(\Omega;\C)\times{_\natural}H^1(\Omega;\C^3)\times\tilde{H}^{3/2}(\Sigma;\C) \text{ and } F\in({_0}H^1(\Omega;\C^3))^\ast
\end{equation}
satisfy the equation
\begin{equation}\label{weak solution identity}
    \mathscr{I}(p,u,\eta)=F,
\end{equation}
or in other words, we have a weak solution to~\eqref{linearization of the nonlinear problem}. Then we have the a priori estimate
\begin{equation}\label{weak solution estimate}
\tnorm{p,u,\eta}_{L^2\times H^1\times\tilde{H}^{3/2}}\lesssim\tnorm{F}_{({_0}H^1)^\ast},
\end{equation}
with an implicit constant depending on $\mathfrak{g}$, $\kappa$, and $\mu$.
\end{prop}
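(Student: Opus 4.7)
My plan follows the classical test-function method, probing $u$, $p$, and $\eta$ in sequence using the three divergence-lift operators $\mathcal{B}$, $\Bar{\mathcal{B}}$, $\mathcal{B}_0$ of Lemma~\ref{lem on right inverse to the divergence operator}. \textbf{Velocity.} Test with $v = u \in {_\natural}H^1$: the condition $\grad\cdot u = 0$ kills the pressure term, the boundary condition $u\cdot e_3|_\Sigma = 0$ kills the $\tbr{\Delta_{\|}\eta,\m{Tr}_\Sigma v\cdot e_3}$ pairing, and the remaining bulk contribution $\mathfrak{g}\int_\Omega \grad\eta\cdot\bar u$ integrates by parts horizontally into $-\mathfrak{g}\int_\Omega \eta\, \grad_{\|}\cdot\bar u_{\|} = \mathfrak{g}\int_\Omega \eta\, \pd_3 \bar u_3$, which in turn vanishes after an additional vertical integration thanks to $u_3|_{\Sigma_0} = u_3|_\Sigma = 0$. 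The identity collapses to $(\mu/2)\int |\mathbb{D}u|^2 = \tbr{F,u}$, and a Korn inequality on ${_0}H^1$ (applicable since $u$ vanishes on $\Sigma_0$) produces $\tnorm{u}_{H^1}\lesssim \tnorm{F}_{({_0}H^1)^\ast}$.

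\textbf{Pressure.} Split $p = p_0 + \bar p$, where $\bar p(x) = b^{-1}\int_0^b p(x,y)\,dy$. The remainder $p_0$ has identically vanishing vertical average, so $p_0\in \hat H^0(\Omega)$ and $w_0 = \mathcal{B} p_0 \in H^1_0$ satisfies $\grad\cdot w_0 = p_0$. Testing with $w_0$ kills the boundary $\eta$-term by the Dirichlet data and kills the bulk $\eta$-term after integration by parts because the resulting $-\int_{\R^2}\eta(x) \int_0^b p_0(x,y)\,dy\,dx = 0$. This yields $\tnorm{p_0}_{L^2}\lesssim \tnorm{F}$. For $\bar p$, I first extract $\dot H^{-1}(\R^2)$ control on $\bar p$ from the vertically-integrated momentum equation together with the already-established $u$ and $p_0$ bounds, and then construct a test function $v_1 \in {_0}H^1$ with $\grad\cdot v_1 = \bar p$ by composing the 2D divergence solver $W = \grad_{\|}(-\Delta_{\|})^{-1}\bar p$ with suitable vertical profiles. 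Testing with $v_1$, the residual $\eta$-contributions pair $\grad\eta \in H^{1/2}$ against terms controlled by the $\dot H^{-1}$ regularity of $\bar p$, and rearrangement produces $\tnorm{\bar p}_{L^2}\lesssim \tnorm{F}$.

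\textbf{Free surface and main difficulty.} For arbitrary $\phi \in H^{1/2}(\Sigma;\C)\cap \dot H^{-1}(\Sigma;\C)$, test with $v = \mathcal{B}_0\phi$. Since $\grad\cdot v = 0$ the pressure drops out; integrating the bulk $\eta$-term by parts and using $v\cdot e_3|_\Sigma = \phi$ yields
\[
\mathfrak{g}\int_\Sigma \eta\,\bar\phi + \kappa\int_\Sigma \grad_{\|}\eta\cdot\overline{\grad_{\|}\phi} = \tbr{F,v} - \int_\Omega \tfrac{\mu}{2}\mathbb{D}u:\overline{\mathbb{D}v},
\]
whose right side is dominated by $(\tnorm{F} + \tnorm{u}_{H^1})\tnorm{\phi}_{H^{1/2}\cap \dot H^{-1}}$. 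Equivalently, $(\mathfrak{g} - \kappa\Delta_{\|})\eta$ is a continuous functional on $H^{1/2}\cap \dot H^{-1}$; decomposing it in the dual space $H^{-1/2} + \dot H^1$ and exploiting the symbol bound $\mathfrak{g} + \kappa|\xi|^2 \gtrsim \min(\mathfrak{g},\kappa|\xi|^2)$, a Fourier-side calculation converts this into $\tnorm{\grad\eta}_{H^{1/2}}\lesssim \tnorm{F}$. The principal obstacle is precisely this last duality step: aligning the constant-mode ambiguity of $\eta \in \tilde H^{3/2}$, the $\dot H^{-1}$ compatibility inherent to the divergence-lift operators used to estimate $\bar p$, and the $\dot H^{-1}$ regularity of $\phi$ needed in the $\eta$-duality. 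The strict positivity of both $\mathfrak{g}$ and $\kappa$ is precisely what ensures that these low-frequency defects align and the duality closes.
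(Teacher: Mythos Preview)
Your velocity step and your free-surface duality are both sound (once the pairing $\int_\Sigma \eta\bar\phi$ is read as the $\dot H^1$--$\dot H^{-1}$ pairing, which is legitimate since $\phi\in\dot H^{-1}$ and $\nabla_{\|}\eta\in L^2$; choosing $\hat\phi=\tfrac{|\xi|^2\langle\xi\rangle}{\mathfrak g+\kappa|\xi|^2}\hat\eta$ closes the duality exactly as you suggest). The genuine gap is in your pressure step: you estimate $p$ \emph{before} $\eta$, and to handle the vertical average $\bar p$ you invoke a ``vertically-integrated momentum equation'' to extract $\dot H^{-1}$ control. No such equation is available here---you only have the weak identity $\mathscr{I}(p,u,\eta)=F$, and any attempt to isolate $\bar p$ by testing with $y$-independent tangential fields fails the ${_0}H^1$ constraint at $\Sigma_0$, while cutoff approximations blow up the $\mathbb{D}v$ term. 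Moreover, even granting $\bar p\in\dot H^{-1}$, your follow-up test with $v_1$ leaves $\eta$-contributions that you propose to bound using $\nabla\eta\in H^{1/2}$, which is precisely the quantity not yet estimated.

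The fix is simply to reorder: since your free-surface test function $\mathcal{B}_0\phi$ is divergence-free, the $\eta$-estimate does not use $p$ at all, so do it second. Then the pressure becomes a one-line affair---test with $\bar{\mathcal B}p$ (which only needs $\nabla\cdot v=p$ and $v|_{\Sigma_0}=0$), and the resulting $\eta$-terms are now harmless because $\|\eta\|_{\tilde H^{3/2}}$ is already controlled. This is exactly the paper's ordering. The paper also streamlines the $\eta$-step: rather than a duality over all $\phi$, it tests with the single choice $v_\lambda=-\mathcal{B}_0\bigl(\langle\nabla_{\|}\rangle^{-1}\Delta_{\|}\eta_\lambda\bigr)$, where $\eta_\lambda$ has low frequencies excised so that $\eta_\lambda\in L^2$ and all integrations by parts are immediate; this yields $\langle(\mathfrak g-\kappa\Delta_{\|})\eta_\lambda,\,-\langle\nabla_{\|}\rangle^{-1}\Delta_{\|}\eta_\lambda\rangle\asymp\|\eta_\lambda\|_{\tilde H^{3/2}}^2$ directly, bypassing your dual-space decomposition entirely.
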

\begin{proof}

Fix $\lambda\in(0,1)$, and let $\eta_\lambda=\mathscr{F}^{-1}[\mathds{1}_{\R^2\setminus B(0,\lambda)}\mathscr{F}[\eta]]\in H^{3/2}(\Sigma;\C)$. Then $(p,u,\eta_\lambda)$ solves the equation
\begin{equation}\label{mod_weak_form}
    \mathscr{I}(p,u,\eta_\lambda)=F-\mathscr{I}(0,0,\eta-\eta_\lambda).
\end{equation}
Testing this with $u$ and integrating by parts, we acquire the identity
\begin{equation}
    \tbr{F-\mathscr{I}(0,0,\eta-\eta_{\lambda}),u}_{({_0}H^1)^\ast,{_0}H^1}=\int_{\Omega}\frac{\mu}{2}|\mathbb{D}u|^2-p  \Bar{\grad\cdot u}+ \mathfrak{g} \grad\eta_\lambda\cdot \Bar{u}=\int_{\Omega}\frac{\mu}{2} |\mathbb{D}u|^2.
\end{equation}
Thus, by applying Korn's inequality (see, for instance, Proposition A.3 in Stevenson and Tice~\cite{stevenson2023wellposedness}) and sending $\lambda\to0$, we obtain the estimate
\begin{equation}\label{the control on w}
    \tnorm{u}_{H^1}\lesssim\tnorm{F}_{({_0}H^1)^\ast}.
\end{equation}

We now derive an estimate on $\eta$. With $\eta_\lambda$ as before, we define  $v_\lambda\in{_0}H^1(\Omega;\C^3)$ via $v_\lambda=-\mathcal{B}_0(\tbr{\grad_{\|}}^{-1}\Delta_{\|}\eta_{\lambda})$, with $\mathcal{B}_0$ from Lemma~\ref{lem on right inverse to the divergence operator}. The lemma provides the bound $\tnorm{v_\lambda}_{H^1}\lesssim\tnorm{\eta}_{\tilde{H}^{3/2}}$. With the understanding that duality pairings are antilinear in the right argument, we then test \eqref{mod_weak_form} with $v_\lambda$ and integrate by parts to learn that
\begin{multline}
    \tbr{F-\mathscr{I}(0,0,\eta-\eta_\lambda),v_\lambda}_{({_0}H^1)^\ast,{_0}H^1}=\int_{\Omega} \frac{\mu}{2} \mathbb{D}u: \Bar{\mathbb{D}v_\lambda} + \mathfrak{g}\grad\eta_{\lambda}\cdot \Bar{v_\lambda} + \kappa \tbr{\Delta_{\|}\eta_\lambda,\tbr{\grad_{\|}}^{-1}\Delta_{\|}\eta_\lambda}_{H^{-1/2},H^{1/2}}\\
    =\int_{\Omega}\frac{\mu}{2} \mathbb{D}u:\Bar{\mathbb{D}v_\lambda} +  \tbr{(\mathfrak{g}-\kappa \Delta_{\|})\eta_{\lambda},-\tbr{\grad_{\|}}^{-1}\Delta_{\|}\eta_\lambda}_{H^{-1/2},H^{1/2}},
\end{multline}
from which we deduce the estimate
\begin{equation}
    \tnorm{\eta_\lambda}_{\tilde{H}^{3/2}}^2\lesssim\tnorm{\eta}_{\tilde{H}^{3/2}}\tp{\tnorm{u}_{H^1}+\tnorm{F-\mathscr{I}(0,0,\eta-\eta_\lambda)}}_{({_0}H^1)^\ast}.
\end{equation}
By sending $\lambda\to0$ and combining with the already established estimate on $u$, we then derive the bound $\tnorm{\eta}_{\tilde{H}^{3/2}}\lesssim\tnorm{F}_{({_0}H^1)^\ast}$.

Finally, we derive an estimate on $p$. For this, we test \eqref{weak solution identity} with $\Bar{\mathcal{B}}p\in{_0}H^1(\Omega;\C^3)$, where $\Bar{\mathcal{B}}$ is again from Lemma~\ref{lem on right inverse to the divergence operator}, to see that
\begin{equation}
    \tbr{F,v}_{({_0}H^1)^\ast,{_0}H^1}=\int_{\Omega}\f{\mu}{2}\mathbb{D}u: \Bar{\mathbb{D}v}+\mathfrak{g}\grad\eta\cdot \Bar{v}-\abs{p}^2,
\end{equation}
which then implies the estimate
\begin{equation}
    \tnorm{p}_{L^2}\lesssim\tnorm{u,\eta,F}_{H^1\times\tilde{H}^{3/2}\times({_0}H^1)^\ast}\lesssim\tnorm{F}_{({_0}H^1)^\ast}.
\end{equation}
Synthesizing the above estimates then completes the proof.
\end{proof}

Our next result examines the existence of weak solutions.  

\begin{prop}[Existence and uniqueness of weak solutions]\label{proposition on existence of weak solutions}
    For any $F\in({_0}H^1(\Omega;\C^3))^\ast$ there exists a $(p,u,\eta)\in L^2(\Omega;\C)\times{_\natural}H^1(\Omega;\C^3)\times\tilde{H}^{3/2}(\Sigma;\C)$ satisfying~\eqref{weak solution identity}.  The triple $(p,u,\eta)$ is unique modulo changes of $\eta$ by constant functions.
\end{prop}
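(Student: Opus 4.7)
The plan is to split the argument into uniqueness and existence, with the former immediate from the a priori estimate of Proposition~\ref{proposition on a priori estimates for weak solutions} and the latter proceeding by constructing $u$, $\eta$, and $p$ in that order. For \textbf{uniqueness modulo constants}, setting $F=0$ in~\eqref{weak solution estimate} forces $\tnorm{p}_{L^2}=\tnorm{u}_{H^1}=\tnorm{\nabla\eta}_{H^{1/2}}=0$, so $p=0$, $u=0$, and $\eta$ is constant (since $\tilde{H}^{3/2}$ is a seminormed space measuring only $\nabla\eta$).

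For \textbf{existence}, I would first restrict the weak identity to the test class $v\in{_\natural}H^1$. Using $\nabla\cdot v=0$, $\m{Tr}_\Sigma v\cdot e_3=0$, $\m{Tr}_{\Sigma_0}v=0$, and the vertical constancy of $\eta$, integration by parts yields $\int_\Omega\nabla\eta\cdot\bar v=0$ and $\tbr{\Delta_\|\eta,\m{Tr}_\Sigma v\cdot e_3}_{H^{-1/2},H^{1/2}}=0$, so the equation on ${_\natural}H^1$ collapses to the Stokes-type identity $\int_\Omega\tfrac{\mu}{2}\mathbb{D}u:\overline{\mathbb{D}v}=\tbr{F,v}_{({_0}H^1)^\ast,{_0}H^1}$. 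By Korn's inequality (which uses the no-slip condition on $\Sigma_0$) this bilinear form is coercive on the Hilbert space ${_\natural}H^1$, so the complex Lax-Milgram lemma supplies a unique $u\in{_\natural}H^1$ satisfying it. Next, I examine the residual functional $L(v):=\tbr{F,v}-\int_\Omega\tfrac{\mu}{2}\mathbb{D}u:\overline{\mathbb{D}v}$, which is bounded on ${_0}H^1$ and annihilates ${_\natural}H^1$. Composing $L$ with the solenoidal lifting $\mathcal{B}_0$ of Lemma~\ref{lem on right inverse to the divergence operator} yields a bounded conjugate-linear functional on $H^{1/2}(\Sigma;\C)\cap\dot{H}^{-1}(\Sigma;\C)$; I identify it via Riesz representation with some $\sigma$ in the dual space. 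The strictly elliptic scalar equation $(\mathfrak{g}-\kappa\Delta_\|)\eta=\sigma$ on $\Sigma$ is then uniquely solvable modulo constants, because its Fourier symbol $\mathfrak{g}+4\pi^2\kappa|\xi|^2$ is bounded below by $\mathfrak{g}>0$, and a direct Fourier-side computation shows that the gradient of the solution belongs to $H^{1/2}(\Sigma;\C^2)$, so $\eta\in\tilde{H}^{3/2}(\Sigma;\C)$. Finally, setting $\tilde{L}(v):=L(v)-\mathfrak{g}\int_\Omega\nabla\eta\cdot\bar v+\kappa\tbr{\Delta_\|\eta,\m{Tr}_\Sigma v\cdot e_3}_{H^{-1/2},H^{1/2}}$, a direct calculation using the defining relation of $\eta$ (on lifts $v=\mathcal{B}_0 h$) together with the previous IBP reduction (on $v\in{_\natural}H^1$) shows $\tilde{L}$ annihilates the full solenoidal subspace $\tcb{v\in{_0}H^1\;:\;\nabla\cdot v=0\}$. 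Invoking the right inverse $\bar{\mathcal{B}}$ of Lemma~\ref{lem on right inverse to the divergence operator}, I define $p\in L^2(\Omega;\C)$ through $\tbr{p,\phi}_{L^2}:=-\tilde{L}(\bar{\mathcal{B}}\phi)$; the decomposition $v=\bar{\mathcal{B}}(\nabla\cdot v)+(v-\bar{\mathcal{B}}(\nabla\cdot v))$ together with the vanishing of $\tilde{L}$ on solenoidal fields then yields $\tilde{L}(v)=-\int_\Omega p\overline{\nabla\cdot v}$ for every $v\in{_0}H^1$. Assembling these three pieces produces $\mathscr{I}(p,u,\eta)=F$ on ${_0}H^1$.

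The principal obstacle is the middle stage of identifying $\eta$ from $\sigma$: since the dual of $H^{1/2}\cap\dot{H}^{-1}$ is the nonstandard sum space $H^{-1/2}+\dot{H}^1$, one must split $\sigma=\sigma_1+\sigma_2$ into these two components and verify, piece by piece on the Fourier side, that $\nabla(\mathfrak{g}-\kappa\Delta_\|)^{-1}$ maps each of them into $H^{1/2}(\Sigma;\C^2)$, which ultimately reduces to the uniform boundedness of the multipliers $\tbr{\xi}^2|\xi|^2(\mathfrak{g}+|\xi|^2)^{-2}$ and $\tbr{\xi}(\mathfrak{g}+|\xi|^2)^{-2}$ on $\R^2$. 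The $\dot{H}^{-1}$ factor in the codomain of $\mathcal{B}_0$ encodes precisely the low-frequency degeneracy that forces $\eta$ to be determined only up to a constant, in exact accord with the uniqueness assertion.
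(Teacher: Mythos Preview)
Your approach is correct and takes a genuinely different route from the paper. The paper proceeds by an approximation argument: for each $\ep\in(0,1)$ it applies Lax--Milgram on the frequency-truncated space ${_\natural}H^1_\ep$, recovers $(q_\ep,\zeta_\ep)$ by Riesz representation on $L^2_\ep\times H^{1/2}_\ep$, sets $\eta_\ep=\tbr{D}(-\mathfrak{g}+\kappa\Delta_\|)^{-1}\zeta_\ep$ and $p_\ep=q_\ep-\mathfrak{g}\eta_\ep$, and then uses the a priori estimate of Proposition~\ref{proposition on a priori estimates for weak solutions} to show that $\{(p_{\ep_n},u_{\ep_n},\eta_{\ep_n})\}$ is Cauchy as $\ep_n\to 0$. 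Your proof instead works directly at $\ep=0$: Lax--Milgram on all of ${_\natural}H^1$ gives $u$, the residual functional composed with $\mathcal{B}_0$ is inverted through the boundary operator $\mathfrak{g}-\kappa\Delta_\|$ to produce $\eta$, and the pressure is recovered last via $\bar{\mathcal{B}}$.

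What each buys: the paper's truncation keeps every intermediate object in a standard $L^2$-based space (so the Riesz step is on $L^2_\ep\times H^{1/2}_\ep$, where no $\dot H^{-1}$ tail appears), and the low-frequency issue is handled in one stroke by the Cauchy limit and the already-proved a priori bound. Your route is more constructive and avoids the limiting argument entirely, at the price of the dual-space computation $(H^{1/2}\cap\dot H^{-1})^\ast=H^{-1/2}+\dot H^1$ and the multiplier checks you identify. One step worth making explicit in your write-up is the identity $\int_\Omega\nabla\eta\cdot\overline{\mathcal{B}_0 h}=\tbr{\eta,h}$, obtained by integrating $\nabla\cdot v=0$ vertically and observing that the curl part of $\int_0^b(v_1,v_2)\,\m{d}y$ is $L^2$-orthogonal to $\nabla_\|\eta$; this is what makes the bulk gravity term collapse to a boundary pairing and allows $(\mathfrak{g}-\kappa\Delta_\|)\eta=\sigma$ to exactly encode $\tilde L(\mathcal{B}_0 h)=0$.
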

\begin{proof}
    Uniqueness, modulo constants in the linearized free surface variable, is a consequence of estimate~\eqref{weak solution estimate} from Proposition~\ref{proposition on a priori estimates for weak solutions}. To prove existence let $\ep\in(0,1)$ and consider the sesquilinear form $B:{{_\natural}H_\ep^1(\Omega;\C^3)}\times {{_\natural}H_\ep^1(\Omega;\C^3)}\to\C$ given by 
    \begin{equation}
         B(u,v)=\int_{\Omega}\frac{\mu}{2} \mathbb{D}u:\Bar{\mathbb{D}v}.
    \end{equation}
    $B$ is bounded and also coercive thanks to the Korn inequality (see, e.g., Proposition A.3 in~\cite{stevenson2023wellposedness}).  Thus, the Lax-Milgram lemma shows that for every $F\in({_0}H^1(\Omega;\C^3))^\ast$ (which defines an element of $({_\natural}H_\ep^1(\Omega;\C^3))^\ast$ via restriction), there exists a unique $u_\ep\in {{_\natural}H_\ep^1(\Omega;\C^3)}$ such that $B(u_\ep,v)=\tbr{F,v}$ for all $v\in{{_\natural}H_\ep^1(\Omega;\C^3)}$.

    Next, we introduce the Hilbert spaces
    \begin{equation}
    \begin{split}
        L^2_\ep(\Omega;\C) &= \tcb{g\in L^2(\Omega;\C)\;:\;\supp\mathscr{F}[g]\subseteq\R^2\setminus B(0,\ep)}, \\
        {_0}H^1_\ep(\Omega;\C^3) &= \tcb{v\in H^1_0(\Omega;\C^3)\;:\;\supp\mathscr{F}[v]\subseteq\R^2\setminus B(0,\ep)}, \\
        H^s_{\ep}(\Sigma;\C) &= \tcb{h\in H^s(\Sigma;\C)\;:\;\supp\mathscr{F}[h]\subseteq\R^2\setminus B(0,\ep)} \text{ for }  s\in\R,
    \end{split}
    \end{equation}
    which are all endowed with the inner-product from their defining container spaces. Clearly, we have the embeddings $L^2_\ep(\Omega;\C)\emb\hat{H}^0(\Omega;\C)$,  and  $H^s_\ep(\Sigma;\C)\emb\dot{H}^{-1}(\Sigma;\C)$ for any  $s\ge-1$. Hence, Lemma~\ref{lem on right inverse to the divergence operator} allows us to consider the bounded antilinear functional
    \begin{equation}
        L^2_\ep(\Omega;\C)\times H^{1/2}_\ep(\Sigma;\C)\ni(g,h)
        \mapsto G_\ep(g,h) = B(u_\ep,\mathcal{B}g+\mathcal{B}_0h)-\tbr{F,\mathcal{B}g+\mathcal{B}_0h} \in \C
    \end{equation}
    and apply the Riesz-representation theorem to acquire $(q_\ep,\zeta_\ep)\in L^2_\ep(\Omega;\C)\times H_\ep^{1/2}(\Sigma;\C)$ such that 
     \begin{equation} \label{riesz rep id}
        G_\ep(g,h)= (q_\ep,g)_{L^2} +   (\tbr{D}^{1/2}\zeta_\ep, \tbr{D}^{1/2} h)_{L^2(\Sigma)}  \text{ for all } (g,h)\in L^2_\ep(\Omega;\C)\times H^{1/2}_\ep(\Sigma;\C).
    \end{equation}   
    For any $v\in{_0}H^1_\ep(\Omega;\C^3)$ we can use Lemma~\ref{lem on right inverse to the divergence operator} to decompose $v=\mathcal{P}v+\mathcal{Q}v$ via
    \begin{equation}
         \mathcal{P}v=v-\mathcal{B}(\grad\cdot v)-\mathcal{B}_0(\m{Tr}_\Sigma v\cdot e_3)
         \text{ and } \mathcal{Q}v=\mathcal{B}(\grad\cdot v)+\mathcal{B}_0(\m{Tr}_\Sigma v\cdot e_3),
    \end{equation}
    for bounded, linear, and translation commuting maps
    \begin{equation}
        \mathcal{P}:{_0}H^1_\ep(\Omega;\C^3)\to{_\natural}H^1_\ep(\Omega;\C^3)
        \text{ and }
        \mathcal{Q}:{_0}H^1_\ep(\Omega;\C^3)\to{_0}H^1_\ep(\Omega;\C^3).
    \end{equation}

    Now, by the construction of $u_\ep$ we know that for any $v\in{_0}H^1_\ep(\Omega;\C^3)$ we have the identity $B(u_\ep,\mathcal{P}v)-\tbr{F,\mathcal{P}v}=0$, and hence, by the definition of $G_\ep$ and identity~\eqref{riesz rep id}, we have
    \begin{multline}
        B(u_\ep,v)-\tbr{F,v}=B(u_\ep,\mathcal{Q}v)-\tbr{F,\mathcal{Q}v}=G_\ep(\grad\cdot v,\m{Tr}_\Sigma v\cdot e_3)\\
        =\int_{\Omega}q_\ep \Bar{\grad\cdot v} +   (\tbr{D}^{1/2} \zeta_\ep,\tbr{D}^{1/2} \m{Tr}_{\Sigma}v\cdot e_3)_{L^{2}(\Sigma)}.
    \end{multline}
    We then set
    \begin{equation}
        \eta_\ep =  \tbr{D} (-\mathfrak{g} + \kappa \Delta_{\|})^{-1} \zeta_\ep \in H^{3/2}(\Sigma;\C),
    \end{equation}
    $p_\ep=q_\ep-\mathfrak{g}\eta_\ep\in L^2_\ep(\Omega;\C)$, and $F_\ep=\mathds{1}_{\R^2\setminus B(0,\ep)}(D)F\in({_0}H^1(\Omega;\C^3))^\ast$ to learn from this identity and  Definition~\ref{defn of weak formulation operators} that 
    \begin{equation}\label{weak_form_ep}
        \mathscr{I}(p_\ep,u_\ep,\eta_\ep)=F_\ep.
    \end{equation}

    Pick a sequence $\{\ep_n\}_{n\in \N} \subset (0,1)$ such that $\ep_n \to 0$ as $n \to \infty$.  We claim that $\tcb{(p_{\ep_n},u_{\ep_n},\eta_{\ep_n})}_{n \in \N} \subseteq L^2(\Omega;\C)\times{_\natural}H^1(\Omega;\C^3)\times\tilde{H}^{3/2}(\Sigma;\C)$ is Cauchy.       To this end, we first note that~\eqref{weak_form_ep} shows that
    \begin{equation}
        \mathscr{I}(p_{\ep_n}-p_{\ep_m},u_{\ep_n}-u_{\ep_m},\eta_{\ep_n}-\eta_{\ep_m})=(\mathds{1}_{\R^2\setminus B(0,\ep_n)}-\mathds{1}_{\R^2\setminus B(0,\ep_m)})(D)F.
    \end{equation}
    Then the a priori estimates for weak solutions in Proposition~\ref{proposition on a priori estimates for weak solutions} grant the estimate
    \begin{multline}
        \tnorm{p_{\ep_1}-p_{\ep_0},u_{\ep_1}-u_{\ep_0},\eta_{\ep_1}-\eta_{\ep_0}}_{L^2\times H^1\times\tilde{H}^{3/2}}\lesssim\tnorm{(\mathds{1}_{\R^2\setminus B(0,\ep_n)}-\mathds{1}_{\R^2\setminus B(0,\ep_m)})(D)F}_{({_0}H^1)^\ast}\\
        \lesssim\tnorm{(\mathds{1}_{\R^2\setminus B(0,\ep_n)}-\mathds{1}_{\R^2\setminus B(0,\ep_m)})(D)(-\Delta)^{-1}F}_{{_0}H^1},
    \end{multline}
    where $(-\Delta)^{-1}$ is the (translation commuting) inverse to the $\Sigma_0$-Dirichlet $\Sigma$-Neumann Laplacian in $\Omega$.    The claim is then proved by noting that
    \begin{equation}
        \limsup_{n,m \to\infty }\tnorm{(\mathds{1}_{\R^2\setminus B(0,\ep_n)}-\mathds{1}_{\R^2\setminus B(0,\ep_m)})(D)(-\Delta)^{-1}F}_{{_0}H^1}=0,
    \end{equation}
    which follows from Lemma~\ref{lemma on equivalent norm on the mixed type spaces}, Plancherel's theorem, and the monotone convergence theorem.  The claim is proved.

    With the claim in hand, we send $n \to \infty$ to obtain $(p,u,\eta)$ belonging to the same space as the sequence. Testing against $v \in {_0}H^1(\Omega;\C^3)$ in identity~\eqref{weak_form_ep}, and sending $n \to \infty$, we then conclude that the limit $(p,u,\eta)$ satisfies~\eqref{weak solution identity}.
    
\end{proof}

\subsection{Strong solutions}\label{section on strong solutions}

The purpose of this subsection is to obtain estimates in strong norms of solutions to the equations~\eqref{linearization of the nonlinear problem} and~\eqref{curl formulation of the linearization}.  We begin with the former.

\begin{thm}[Analysis of strong solutions, I]\label{theorem on analysis of strong solutions}
Let $s\in\N$. For every
\begin{equation}\label{label data}
    (g,f,k,h)\in H^{1+s}(\Omega;\C)\times H^s(\Omega;\C^3)\times H^{1/2+s}(\Sigma;\C^3)\times H^{3/2+s}(\Sigma;\C)
\end{equation}
satisfying
\begin{equation}\label{divergence compatibility condition}
    h-\int_0^bg(\cdot,y)\;\m{d}y\in\dot{H}^{-1}(\Sigma;\C)
\end{equation}
there exists a unique (again, with the understanding that $\eta$ is only unique modulo constant functions)
\begin{equation}\label{label solution}
    (p,u,\eta)\in H^{1+s}(\Omega;\C)\times H^{2+s}(\Omega;\C^3)\times\tilde{H}^{5/2+s}(\Sigma;\C)
\end{equation}
such that system~\eqref{linearization of the nonlinear problem} is solved with data~\eqref{label data} and solution~\eqref{label solution}; moreover, we have the estimate
\begin{equation}\label{the estimate on strong solutions}
\tnorm{p,u,\eta}_{H^{1+s}\times H^{2+s}\times\tilde{H}^{5/2+s}}\lesssim\tnorm{g,f,k,h}_{H^{1+s}\times H^s\times H^{1/2+s}\times H^{3/2+s}}+\bsb{h-\int_0^bg(\cdot,y)\;\m{d}y}_{\dot{H}^{-1}}.
\end{equation}
\end{thm}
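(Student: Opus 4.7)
The plan is to reduce to the homogeneous case $g = 0, h = 0$ by a lifting argument, apply Proposition~\ref{proposition on existence of weak solutions} to obtain a weak solution, and then upgrade to strong regularity via elliptic regularity for the Stokes problem coupled with an elliptic equation for $\eta$ on the surface.

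For the reduction, I would construct a single lift $u_0 \in H^{2+s}(\Omega; \C^3)$ satisfying $\m{Tr}_{\Sigma_0} u_0 = 0$, $\grad \cdot u_0 = g$, and $\m{Tr}_\Sigma u_0 \cdot e_3 = h$, with norm controlled by the right-hand side of~\eqref{the estimate on strong solutions}. A concrete recipe: choose a right inverse $\tilde h \in H^{2+s}(\Omega; \C)$ of $\m{Tr}_\Sigma$ which vanishes on $\Sigma_0$, set $u_0^{(1)} = \tilde h\, e_3$ so that $u_0^{(1)} \cdot e_3 = h$ on $\Sigma$ and $\grad \cdot u_0^{(1)} = \pd_3 \tilde h \in H^{1+s}(\Omega)$, and define $u_0^{(2)} = \mathcal{B}(g - \pd_3 \tilde h)$ using Lemma~\ref{lem on right inverse to the divergence operator}. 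The compatibility condition~\eqref{divergence compatibility condition} ensures $\int_0^b(g - \pd_3 \tilde h)(\cdot, y) \, dy = \int_0^b g(\cdot, y) \, dy - h \in \dot H^{-1}(\Sigma;\C)$, placing $g - \pd_3 \tilde h$ in $\hat H^{1+s}(\Omega;\C)$ so that $\mathcal{B}$ applies. Replacing $u$ by $u + u_0$ yields an equivalent system with data $(\tilde f, \tilde k, 0, 0)$ whose norms in $H^s(\Omega;\C^3)\times H^{1/2+s}(\Sigma;\C^3)$ are controlled by those of the original data.

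Next, I would package $(\tilde f, \tilde k)$ into a functional $F \in ({_0}H^1(\Omega;\C^3))^\ast$ via $F(v) = \int_\Omega \tilde f \cdot \bar v + \int_\Sigma \tilde k \cdot \bar v$ and invoke Proposition~\ref{proposition on existence of weak solutions} to obtain a weak solution $(p, u, \eta) \in L^2(\Omega;\C) \times {_\natural}H^1(\Omega;\C^3) \times \tilde H^{3/2}(\Sigma;\C)$, controlled via Proposition~\ref{proposition on a priori estimates for weak solutions}. To bootstrap, I would decompose the stress boundary condition into its tangential and normal parts. The two tangential components read $-\mu (\mathbb{D} u\, e_3)\cdot e_j = \tilde k \cdot e_j$ for $j = 1, 2$, which, together with $u \cdot e_3 = 0$ on $\Sigma$ and $u = 0$ on $\Sigma_0$, constitute a slip/no-slip boundary condition for the Stokes system with forcing $\tilde f - \mathfrak{g}\grad\eta$. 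Treating $\mathfrak{g}\grad\eta$ as a lower-order source (initially in $H^{1/2}(\Omega;\C^3)$ from the weak estimate on $\eta$) and applying the standard elliptic regularity theory for Stokes in the strip with such mixed boundary conditions (provable either via tangential Fourier analysis or by the Agmon--Douglis--Nirenberg formalism), one gains half-derivatives of regularity for $(p, u)$ per step. Once $(p, u) \in H^{1+s}(\Omega) \times H^{2+s}(\Omega;\C^3)$ is reached, the normal stress BC becomes the elliptic equation on $\Sigma \simeq \R^2$
\[
    \kappa \Delta_{\|} \eta = -\tilde k \cdot e_3 - \m{Tr}_\Sigma p + \mu \m{Tr}_\Sigma (\mathbb{D} u\, e_3)\cdot e_3,
\]
whose right-hand side lies in $H^{1/2 + s}(\Sigma;\C)$, so inverting $-\Delta_{\|}$ on $\R^2$ gives $\grad\eta \in H^{3/2+s}(\Sigma;\C^2)$, i.e., $\eta \in \tilde H^{5/2 + s}(\Sigma;\C)$ modulo constants. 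Chaining the estimates across the reduction, weak construction, Stokes regularity, and surface tension solve produces~\eqref{the estimate on strong solutions}, and uniqueness modulo constants in $\eta$ is inherited from Proposition~\ref{proposition on existence of weak solutions} applied to the difference of any two strong solutions.

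The main obstacle is the bootstrap step, because $\eta$ appears in the momentum equation through $\mathfrak{g}\grad\eta$ while its regularity is recovered only from the normal stress BC after $(p, u)$ are improved; the argument must therefore proceed iteratively, alternately upgrading $(p, u)$ by Stokes regularity (with the current $\eta$ treated as source data) and $\eta$ by inverting $\Delta_{\|}$ (with the updated traces of $p$ and $\mathbb{D} u$). The surface tension term $\kappa \Delta_{\|}\eta$ is crucial: without it there is no mechanism for improving $\eta$ beyond the weak theory, in line with the paper's assertion that $\kappa > 0$ is indispensable.
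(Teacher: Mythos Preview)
Your reduction step and invocation of the weak theory match the paper, but your regularity bootstrap is genuinely different from what the paper does. You propose to decouple the system into a Navier-slip/Dirichlet Stokes problem for $(p,u)$ (with $\mathfrak{g}\grad\eta$ as a source) and a surface Poisson equation for $\eta$, then iterate using external Stokes regularity theory (ADN or Fourier-based). The paper instead exploits tangential translation invariance directly: it applies the tangential multiplier $|D|\mathds{1}_{A_\lambda}(D)$ to the weak identity $\mathscr{I}(p,u,\eta)=F$, observes that this commutes with $\mathscr{I}$, and reapplies the weak a priori estimate of Proposition~\ref{proposition on a priori estimates for weak solutions} to gain one tangential derivative on the whole triple simultaneously. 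Normal regularity is then read off algebraically from the identities $\pd_3 p = \mu\Delta_\| w\cdot e_3 - \mu(\grad_\|,0)\cdot\pd_3 w + \tilde f\cdot e_3$ and $\mu\pd_3^2 w = -\mu\Delta_\| w + \grad(p+\mathfrak{g}\eta) - \tilde f$, and one inducts on $s$.

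Your approach would work, but it imports a Stokes regularity result for mixed slip/no-slip conditions in the infinite strip that the paper never states or proves, and it requires an alternating iteration whose step-size you describe only vaguely (``half-derivatives per step'' is not quite right). The paper's argument is entirely self-contained---it never decouples $(p,u)$ from $\eta$, never invokes ADN, and gains a full derivative on all unknowns at once by reusing only Proposition~\ref{proposition on a priori estimates for weak solutions}. This self-contained tangential-differencing method is also what sets up the operator-valued symbol calculus in Section~\ref{section on vector-valued symbol calculus for the solution map}, so it is worth internalizing.
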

\begin{proof}
     We begin by introducing a useful linear operator.  Let
     \begin{equation}
         \tilde{\mathcal{B}}:\tcb{(g,h)\in H^{1+s}(\Omega;\C)\times H^{3/2+s}(\Omega;\C)\;:\;\eqref{divergence compatibility condition}\text{ is satisfied}}\to{_0}H^{2+s}(\Omega;\C^3)
     \end{equation}
     be defined via
     \begin{equation}
         \tilde{\mathcal{B}}(g,h)=\Bar{\mathcal{B}}g+\mathcal{B}_0(h-\m{Tr}_\Sigma\Bar{\mathcal{B}}g\cdot e_3),
     \end{equation}
    where $\Bar{\mathcal{B}}$ and $\mathcal{B}_0$ are from Lemma~\ref{lem on right inverse to the divergence operator}; $\tilde{\mathcal{B}}$ is well-defined thanks to the lemma and the fact that
     \begin{equation}
         h-\m{Tr}_\Sigma\Bar{\mathcal{B}}g\cdot e_3=\bp{h-\int_0^bg(\cdot,y)\;\m{d}y}+\bp{\int_0^bg(\cdot,y)\;\m{d}y-\m{Tr}_\Sigma\Bar{\mathcal{B}}g\cdot e_3}\in\dot{H}^{-1}(\Sigma;\C).
     \end{equation}
    Given a data tuple~\eqref{label data}, we set $\tilde{f}=f+\mu\grad\cdot\mathbb{D}\tilde{\mathcal{B}}(g,h)\in H^s(\Omega;\C^3)$ and $\tilde{k}=k-\mu \m{Tr}_\Sigma\mathbb{D}\tilde{\mathcal{B}}(g,h)e_3\in H^{1/2+s}(\Sigma;\C^3)$.  Thanks to the mapping properties of $\tilde{\mathcal{B}}$, the reduced data satisfy the estimate
\begin{equation}\label{estimate on the reduced data}
    \tnorm{\tilde{f},\tilde{k}}_{H^s\times H^{1/2+s}}\lesssim\tnorm{g,f,k,h}_{H^{1+s}\times H^s\times H^{1/2+s}\times H^{3/2+s}}+\bsb{h-\int_0^bg(\cdot,y)\;\m{d}y}_{\dot{H}^{-1}}.
\end{equation}

    We then consider the reduced problem of finding 
\begin{equation}\label{dizzy_data}
        (p,w,\eta)\in H^{1+s}(\Omega;\C)\times \tp{ {_\natural}H^1(\Omega;\C^3)\cap H^{2+s}(\Omega;\C^3) }\times\tilde{H}^{5/2+s}(\Sigma;\C)
\end{equation}
solving 
\begin{equation}\label{linearization of the nonlinear problem, reduced}
\begin{cases}
    \grad(p+\mathfrak{g} \eta)-\mu \grad\cdot\mathbb{D}w=\tilde{f}&\text{in }\Omega,\\
    \grad\cdot w=0&\text{in }\Omega,\\
    -(pI-\mu \mathbb{D}w)e_3-\kappa \Delta_{\|}\eta e_3=\tilde{k}&\text{on }\Sigma,\\
w\cdot e_3=0&\text{on }\Sigma,\\
    w=0&\text{on }\Sigma_0,
\end{cases}
\end{equation}
where the reduced data  $(\tilde{f},\tilde{k})$ are determined as above by a data tuple~\eqref{label data}.  We claim that the data $(g,f,k,h)$ uniquely determine solutions to the reduced problem, provided they exist.  Indeed, if $(p,w,\eta)$ solve the reduced problem with data  $(g,f,k,h)=0$, then $(\tilde{f},\tilde{k})=0$ and $(p,w,\eta)$ satisfy  $\mathscr{I}(p,w,\eta)=0$; then the a priori estimates of Proposition~\ref{proposition on a priori estimates for weak solutions} imply that $(p,w,\eta)=0$.  This proves the claim.

The connection between the reduced problem and the original is as follows.  Given $(p,u,\eta)$ as in~\eqref{label solution} solving~\eqref{linearization of the nonlinear problem}, then upon setting  $w=u-\tilde{\mathcal{B}}(g,h)\in {_\natural}H^1(\Omega;\C^3)\cap H^{2+s}(\Omega;\C^3)$ we arrive at a solution $(p,w,\eta)$ to the reduced system.  Conversely, if $(p,w,\eta)$ as in \eqref{dizzy_data} solve the reduced problem, then we obtain a solution to the original problem by setting $u=w+\tilde{\mathcal{B}}(g,h)$; moreover, 
\begin{equation}
    \tnorm{u}_{H^{2+s}}   \lesssim     \tnorm{w}_{H^{2+s}}  + \tnorm{g,f,k,h}_{H^{1+s}\times H^s\times H^{1/2+s}\times H^{3/2+s}}+\bsb{h-\int_0^bg(\cdot,y)\;\m{d}y}_{\dot{H}^{-1}}.
\end{equation}
We thus reduce to solving the reduced problem and deriving the high regularity bounds
    \begin{equation}\label{dig_that_reg}
        \tnorm{p,w,\eta}_{H^{1+s}\times H^{2+s}\times\tilde{H}^{5/2+s}}\lesssim\tnorm{\tilde{f},\tilde{k}}_{H^s\times H^{1/2+s}}.
    \end{equation}

Now, with $(\tilde{f},\tilde{k})$ in hand, we define  $F\in\tp{{_0}H^1(\Omega;\C^3)}^\ast$ via
    \begin{equation}\label{weak forcing}
        \br{F,v}  =  \int_{\Omega}\tilde{f}\cdot \Bar{v}+\int_{\Sigma}\tilde{k}\cdot \Bar{v}\in \C,
    \end{equation}
and use Proposition~\ref{proposition on existence of weak solutions} to obtain a weak solution $(p,w,\eta)$ to the reduced system.  To complete the proof, it is thus sufficient to prove that for every $s\in\N$ and every $(\tilde{f},\tilde{k})\in H^s(\Omega;\C^3)\times H^{1/2+s}(\Sigma;\C)$ the associated unique weak solution $(p,w,\eta)\in L^2(\Omega;\C)\times{_\natural}H^1(\Omega;\C^3)\times \tilde{H}^{3/2}(\Sigma;\C)$ to~\eqref{linearization of the nonlinear problem, reduced} satisfies the higher regularity bounds \eqref{dig_that_reg}.

    We proceed via induction. The case $s=0$ is handled first. We let $\lambda\in(0,1)$ and apply $|D|\mathds{1}_{A_\lambda}(D)$ to weak solution identity (here $D=\grad_{\|}/2\pi\ii$ and $A_\lambda=B(0,\lambda^{-1})\setminus\Bar{B(0,\lambda)}$) and obtain that
    \begin{equation}
        \mathscr{I}(|D|\mathds{1}_{A_\lambda}(D)p,|D|\mathds{1}_{A_\lambda}(D)u,|D|\mathds{1}_{A_\lambda}(D)\eta)=|D|\mathds{1}_{A_\lambda}(D)F,
    \end{equation}
    where $F$ is as in~\eqref{weak forcing}. Thus we may invoke the a priori estimates of Proposition~\ref{proposition on a priori estimates for weak solutions} to bound
    \begin{equation}\label{tangential regularity}
        \tnorm{|D|p,|D|w,|D|\eta}_{L^2\times H^1\times\tilde{H}^{3/2}}\lesssim\limsup_{\lambda\to 0}\tnorm{|D|\mathds{1}_{A_\lambda}(D)F}_{({_0}H^1)^\ast}\lesssim\tnorm{\tilde{f},\tilde{k}}_{L^2\times H^{1/2}},
    \end{equation}
    which is the desired tangential regularity. To establish normal regularity, we note that
    \begin{equation}\label{normal system 1}
    \pd_3p=\mu \Delta_{\|}w\cdot e_3-\mu (\grad_{\|},0)\cdot\pd_3w+\tilde{f}\cdot e_3
\end{equation}
and
\begin{equation}\label{normal system 2}
    \mu \pd_3^2w=-\mu \Delta_{\|}w+\grad(p+\mathfrak{g} \eta)-\tilde{f}.
\end{equation}
Identity~\eqref{normal system 1} (paired with~\eqref{tangential regularity}) establishes that $\pd_3p\in L^2(\Omega;\C)$. Then we use identity~\eqref{normal system 2} to establish that $\pd_3^2w\in L^2(\Omega;\C^3)$ as well. This completes the proof of the base case.

Now suppose that $s\in\N$ and assume the induction hypothesis at $s$. Further suppose
\begin{equation}
    (\tilde{f},\tilde{k})\in H^{1+s}(\Omega;\R^3)\times H^{3/2+s}(\Sigma;\R^3).
\end{equation}
By the induction hypothesis and a tangential regularity argument similar to the one used in the base case, we obtain the estimate
\begin{equation}
    \tnorm{p,w,\eta}_{H^{1+s}\times H^{2+s}\times\tilde{H}^{5/2+s}}+\sum_{j=1}^{2}\tnorm{\pd_jp,\pd_jw,\pd_j\eta}_{H^{1+s}\times H^{2+s}\times\tilde{H}^{5/2+s}}\lesssim\tnorm{\tilde{f},\tilde{k}}_{H^{1+s}\times H^{3/2+s}}.
\end{equation}
To complete the proof, we once more employ identities~\eqref{normal system 1} and~\eqref{normal system 2} to estimate $\pd_3p$ and $\pd_3^2w$ as before, which then proves the induction hypothesis at level $s+1$.  
\end{proof}

Our final result of this subsection reformulates the previous result in an equivalent way that avoids the use of seminormed spaces.  This will be the main take away of our linear analysis for utilization in the next section.

\begin{thm}[Analysis of strong solutions, II]\label{thm on analysis of strong solutions, II}
    Let $s\in\N$. For every 
    \begin{equation}\label{this space is too long dear liza}
        (g,f,k,h,\omega)\in H^{1+s}(\Omega;\C)\times H^s(\Omega;\C^3)\times H^{1/2+s}(\Sigma;\C^3)\times H^{3/2+s}(\Sigma;\C)\times H^{1/2+s}(\Sigma;\C)
    \end{equation}
    satisfying
    \begin{equation}\label{compatibility conditions}
        h-\int_0^bg(\cdot,y)\;\m{d}y\in\dot{H}^{-1}(\Sigma;\C)
        \text{ and }
        \omega\in\dot{H}^{-1}(\Sigma;\C)
    \end{equation}
    there exists a unique
    \begin{equation}
        (p,u,\chi)\in H^{1+s}(\Omega;\C)\times H^{2+s}(\Omega;\C^3)\times H^{3/2+s}(\Sigma;\C^2)
    \end{equation}
    such that the equations~\eqref{curl formulation of the linearization} are satisfied. Moreover, we have the estimate
    \begin{multline}\label{curl form est}
        \tnorm{p,u,\chi}_{H^{1+s}\times H^{2+s}\times H^{3/2+s}}\lesssim\tnorm{g,f,k,h,\omega}_{H^{1+s}\times H^{s}\times H^{1/2+s}\times H^{3/2+s}\times H^{1/2+s}}\\+\bsb{h-\int_0^bg(\cdot,y)\;\m{d}y,\omega}_{\dot{H}^{-1}\times\dot{H}^{-1}}
    \end{multline}
\end{thm}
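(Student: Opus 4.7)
The plan is to reduce to Theorem~\ref{theorem on analysis of strong solutions} by a Biot--Savart-type elimination of the new datum $\omega$. The only place $\omega$ enters the system~\eqref{curl formulation of the linearization} is through $\grad_\|^\perp \cdot \chi = \omega$, and since $\grad_\|^\perp \cdot \grad_\| = 0$ this equation is invisible to the gradient part of $\chi$. Thus the natural ansatz is $\chi = \chi_0 + \grad_\|\eta$, where $\chi_0$ is a fixed vector field driven entirely by $\omega$ and $\eta : \Sigma \to \C$ is a new scalar unknown whose existence is to be supplied by the previous theorem.

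Specifically, define $\chi_0 := \grad_\|^\perp \Delta_\|^{-1}\omega$ as a Fourier multiplier on $\R^2$; its symbol decays like $|\xi|^{-1}$, so the high-frequency hypothesis $\omega \in H^{1/2+s}(\Sigma;\C)$ produces the high-frequency regularity $\chi_0 \in H^{3/2+s}$, while the $L^2$-integrability at low frequencies is exactly the $\dot{H}^{-1}$-compatibility condition on $\omega$. Together these yield
\begin{equation*}
    \tnorm{\chi_0}_{H^{3/2+s}(\Sigma;\C^2)} \lesssim \tnorm{\omega}_{H^{1/2+s}} + \ssb{\omega}_{\dot{H}^{-1}}, \qquad \grad_\|^\perp\cdot\chi_0 = \omega, \qquad \grad_\|\cdot\chi_0 = 0.
\end{equation*}
Since $b < \infty$, the trivial $y$-independent extension $(\chi_0,0) : \Omega \to \C^3$ belongs to $H^s(\Omega;\C^3)$ with equivalent norm. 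Substituting $\chi = \chi_0 + \grad_\|\eta$ into~\eqref{curl formulation of the linearization}: the curl equation becomes automatic; the identity $\grad_\|\cdot\chi = \Delta_\|\eta$ reduces the dynamic boundary condition to the form appearing in~\eqref{linearization of the nonlinear problem}; and the bulk momentum equation rearranges to $\grad(p+\mathfrak{g}\eta) - \mu\grad\cdot\mathbb{D}u = f - \mathfrak{g}(\chi_0,0) =: \tilde{f}$, an element of $H^s(\Omega;\C^3)$ controlled by $\tnorm{f}_{H^s} + \tnorm{\omega}_{H^{1/2+s}} + \ssb{\omega}_{\dot{H}^{-1}}$. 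The reduced data $(g,\tilde{f},k,h)$ satisfy the compatibility~\eqref{divergence compatibility condition} because $g$ and $h$ are untouched, so Theorem~\ref{theorem on analysis of strong solutions} produces $(p,u,\eta)\in H^{1+s}\times H^{2+s}\times \tilde{H}^{5/2+s}$, unique modulo a constant in $\eta$. Setting $\chi := \chi_0 + \grad_\|\eta$ then gives the desired triple, and $\chi$ is genuinely unambiguous because the constant ambiguity in $\eta$ is annihilated by $\grad_\|$; combining the two norm bounds yields~\eqref{curl form est}.

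For uniqueness, suppose $(p,u,\chi)$ solves the homogeneous version of~\eqref{curl formulation of the linearization}. The equation $\grad_\|^\perp\cdot\chi = 0$ with $\chi \in H^{3/2+s}(\Sigma;\C^2)\subset L^2$ forces $\mathscr{F}[\chi](\xi)$ to be parallel to $\xi$ for a.e. $\xi\in\R^2$, so $\chi = \grad_\|\tilde{\eta}$ for $\tilde{\eta}$ defined by Fourier inversion of $\mathscr{F}[\tilde{\eta}](\xi) := (2\pi\ii|\xi|^2)^{-1}\xi\cdot\mathscr{F}[\chi](\xi)$; the regularity of $\chi$ guarantees $\tilde{\eta} \in \tilde{H}^{5/2+s}(\Sigma;\C)$. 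Then $(p,u,\tilde{\eta})$ solves the homogeneous version of~\eqref{linearization of the nonlinear problem}, and the uniqueness clause of Theorem~\ref{theorem on analysis of strong solutions} forces $p = 0$, $u = 0$, and $\tilde{\eta}$ constant, hence $\chi = 0$. The only genuinely analytic step in this program is the Biot--Savart estimate for $\omega\mapsto\chi_0$, which is where both the tangential regularity and the $\dot{H}^{-1}$-compatibility on $\omega$ are used; everything else is an algebraic rearrangement that defers the heavy lifting to Theorem~\ref{theorem on analysis of strong solutions}.
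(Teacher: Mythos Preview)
Your proof is correct and takes essentially the same approach as the paper: both set $\chi=\grad_\|^\perp\Delta_\|^{-1}\omega+\grad_\|\eta$, absorb the $\chi_0$-term into a modified bulk force $\tilde f=f-\mathfrak{g}(\chi_0,0)$, and invoke Theorem~\ref{theorem on analysis of strong solutions} for $(p,u,\eta)$; uniqueness follows from the same Helmholtz decomposition you describe. The paper's write-up differs only in presentation, first displaying the identity $\chi=\Delta_\|^{-1}\grad_\|\grad_\|\cdot\chi+\Delta_\|^{-1}\grad_\|^\perp\grad_\|^\perp\cdot\chi$ to motivate the reduction before carrying it out.
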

\begin{proof}

    For any $\chi \in H^{3/2+s}(\Sigma;\C^2)$ we have that
    \begin{equation}
        \chi=\Delta_{\|}^{-1}\grad_{\|}\grad_{\|}\cdot\chi+\Delta_{\|}^{-1}\grad_{\|}^\perp\grad_{\|}^\perp\cdot\chi.
    \end{equation}
    Hence, given a solution to  \eqref{curl formulation of the linearization},   
    we can set $\eta=\Delta_{\|}^{-1}\grad_{\|}\cdot\chi\in\tilde{H}^{5/2+s}(\Sigma;\C)$ and observe that
    \begin{equation}\label{equivalent curl formulation of the linearization}
        \begin{cases}
            \grad(p+\mathfrak{g} \eta)-\mu \grad\cdot\mathbb{D}u=f-\mathfrak{g}(\Delta^{-1}_{\|}\grad_{\|}^\perp\omega,0)&\text{in }\Omega,\\
            \grad\cdot u=g&\text{in }\Omega,\\
            -(pI-\mu \mathbb{D}u)e_3-\kappa \Delta_{\|}\eta e_3=k&\text{on }\Sigma,\\
            u\cdot e_3=h&\text{on }\Sigma,\\
            u=0&\text{on }\Sigma_0.
        \end{cases}
    \end{equation}

    On the other hand, given a data tuple $(g,f,k,h,\omega)$, we may use Theorem~\ref{theorem on analysis of strong solutions} to obtain the existence of  a solution triples $(p,u,\eta)$ to~\eqref{equivalent curl formulation of the linearization}, which is unique modulo constants in the free surface variable.  The solution also obeys the estimate 
    \begin{multline}\label{curl form est eta}
        \tnorm{p,u,\eta}_{H^{1+s}\times H^{2+s}\times \tilde{H}^{5/2+s}}\lesssim\tnorm{g,f,k,h,\omega}_{H^{1+s}\times H^{s}\times H^{1/2+s}\times H^{3/2+s}\times H^{1/2+s}}\\+\bsb{h-\int_0^bg(\cdot,y)\;\m{d}y,\omega}_{\dot{H}^{-1}\times\dot{H}^{-1}}.
    \end{multline}
    We then obtain the unique solution to \eqref{curl formulation of the linearization} upon setting $\chi=\grad_{\|}\eta+\Delta_{\|}^{-1}\grad_{\|}^{\perp}\omega$.  The bound \eqref{curl form est} follows easily from \eqref{curl form est eta}.
\end{proof}

\section{Vector-valued harmonic analysis}\label{appendix on vector-valued harmonic analysis}

This section is a necessary step back from the main PDE line of the story into abstract, vector-valued harmonic analysis. Our goal moving forward is to take the solution operator to the reformulated linear system~\eqref{curl formulation of the linearization} granted by Theorem~\ref{thm on analysis of strong solutions, II} and prove that we can extend it from its domain of $L^2$-based Sobolev spaces to some kind of $L^r$-based Sobolev spaces for an integrability parameter $1<r<2$. The reason for doing so is potentially opaque at this point, but it is exactly this change in integrability parameter to below the threshold $2$ that makes it possible to come back to system~\eqref{linearization of the nonlinear problem} and pose it in normed spaces, rather than seminormed ones in such a way that the linkage with the nonlinear theory of Section~\ref{section on nonlinear analysis} becomes possible.

Implementing the above program requires both old and new ideas in vector-valued harmonic analysis. It is thus the goal of this section of the document to record variants of classical results in harmonic analysis adapted to the vector-valued setting relevant for this paper and to showcase our new tools in the subject, which are Theorem~\ref{thm on translation commuting linear maps}, Corollary~\ref{corc more on translation commuting linear maps}, and Theorem~\ref{second HM multiplier theorem}. We make an effort to include as many abbreviated proofs and external references as possible, striving for a concise treatment.

\subsection{Translation commuting linear maps}\label{appendix on translation commuting linear maps}

This section is devoted to the diagonalization, via the Fourier transform, of vector-valued translation commuting linear maps on $L^2$-based Sobolev spaces. In the finite-dimensional vector-valued case, we have the following formulation.

\begin{thm}[Translation commuting linear maps, finite dimensional case]\label{translation commuting linear maps, finite dimensional case}
    Let $V_0,V_1$ be finite dimensional complex Hilbert spaces. The following are equivalent for a bounded linear map $T:L^2(\R^d;V_0)\to L^2(\R^d;V_1)$.
    \begin{enumerate}
        \item $T$ commutes with translations in the sense that $(Tf)(\cdot+h)=T(f(\cdot+h))$ for all $f\in L^2(\R^d;V_0)$ and all $h\in\R^d$.
        \item There exists $m\in L^\infty(\R^d;\mathcal{L}(V_0,V_1))$ such that $T=m(D)$ in the sense that $\mathscr{F}^{-1}T\mathscr{F}=m$, where the right hand side is simply a multiplication operator.
    \end{enumerate}
    In either case, we have that the operator norm of $T$ coincides with the essential supremum of $m$, i.e.
    \begin{equation}
        \tnorm{T}_{\mathcal{L}(L^2V_0;L^2V_1)}=\tnorm{m}_{L^\infty\mathcal{L}(V_0;V_1)}
    \end{equation}
\end{thm}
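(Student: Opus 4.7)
My approach is to handle the two implications separately and to reduce the nontrivial direction to the classical scalar translation invariance theorem. Throughout, the finite dimensionality of $V_0$ and $V_1$ is used to obviate measurable selection issues via a countable density argument.

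\textbf{Direction $(2)\Rightarrow(1)$ with upper norm bound.} First I would verify that any Fourier multiplier $m(D)$ with $m \in L^\infty(\R^d;\mathcal{L}(V_0,V_1))$ commutes with translations by a direct Fourier-side computation: translation becomes modulation by a scalar character, and modulation commutes with the fibrewise linear action of $m(\xi)$. Plancherel's identity in the Hilbert-valued setting (which is immediate since $V_0$ and $V_1$ are finite dimensional) then yields
\begin{equation*}
    \|m(D) f\|_{L^2(\R^d;V_1)}^2 = \int_{\R^d} \|m(\xi) \mathscr{F}[f](\xi)\|_{V_1}^2\,d\xi \leq \|m\|_{L^\infty\mathcal{L}}^2 \|f\|_{L^2(\R^d;V_0)}^2,
\end{equation*}
establishing both the forward implication and the inequality $\|T\|_{\mathcal{L}(L^2V_0;L^2V_1)} \leq \|m\|_{L^\infty\mathcal{L}}$.

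\textbf{Direction $(1)\Rightarrow(2)$.} To realize $T$ as a multiplication operator, I would reduce to the scalar case by testing against orthonormal bases. Fix orthonormal bases $\{e_i\}_{i=1}^{\dim V_0}$ of $V_0$ and $\{f_j\}_{j=1}^{\dim V_1}$ of $V_1$, and define the scalar operators $T_{ji}: L^2(\R^d;\C) \to L^2(\R^d;\C)$ by $T_{ji} g = \ip{T(g\, e_i)}{f_j}_{V_1}$. Each $T_{ji}$ is bounded and translation-commuting, so the classical scalar translation invariance theorem (see e.g. Theorem 2.5.10 in Grafakos~\cite{MR3243741}) yields scalar symbols $m_{ji} \in L^\infty(\R^d;\C)$ with $T_{ji} = m_{ji}(D)$. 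Assembling $m(\xi)$ as the matrix $(m_{ji}(\xi))$ relative to the chosen bases produces a measurable $m:\R^d \to \mathcal{L}(V_0,V_1)$ with $T = m(D)$ on elements of the form $g\, e_i$, and hence on all of $L^2(\R^d;V_0)$ by linearity and density. For $L^\infty$-membership of $m$, I would observe that if $\{v_k\} \subset V_0$ is a countable dense subset of the unit sphere, then $\|m(\xi)\|_\mathcal{L} = \sup_k \|m(\xi) v_k\|_{V_1}$ is a countable supremum of measurable functions, hence measurable.

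\textbf{Lower norm bound and main obstacle.} To complete the norm identity, I would concentrate test functions in Fourier space where $\|m(\cdot)\|_{\mathcal{L}}$ is nearly maximal. Set $M = \|m\|_{L^\infty\mathcal{L}}$. For $\epsilon \in (0,M)$ the set $E_\epsilon = \{\xi : \|m(\xi)\|_\mathcal{L} > M - \epsilon\}$ has positive measure, and decomposing $E_\epsilon$ according to which $v_k$ comes within $\epsilon$ of achieving the operator norm produces a measurable $F \subset E_\epsilon$ of positive finite measure and an index $k_0$ such that $\|m(\xi) v_{k_0}\|_{V_1} > M - \epsilon$ for all $\xi \in F$. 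Choosing $g \in L^2(\R^d;V_0)$ with $\mathscr{F}[g] = \chi_F v_{k_0}$ and computing $\|Tg\|_{L^2 V_1}^2 / \|g\|_{L^2 V_0}^2$ gives $\|T\| \geq M - \epsilon$, and letting $\epsilon \to 0$ yields the reverse inequality and hence the stated equality. The main technical point is the measurability of the operator-norm function $\xi \mapsto \|m(\xi)\|_\mathcal{L}$ and the associated selection of a worst direction, which is precisely why I would exploit the finite dimensionality of $V_0$ through the countable dense family $\{v_k\}$; once that subtlety is addressed, the rest of the argument is a direct vectorization of the scalar theory.
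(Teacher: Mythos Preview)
Your proposal is correct and follows essentially the same approach as the paper: reduce to the scalar case (Grafakos, Theorem 2.5.10) via orthonormal bases of $V_0$ and $V_1$. The paper's proof is in fact just a two-sentence pointer to exactly this reduction, so your write-up simply fills in the details the authors left implicit, including the lower norm bound via Fourier-side concentration.
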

\begin{proof}
  The proof with $V_0 = V_1 = \C$ is standard; see, for instance, Theorem 2.5.10 in Grafakos~\cite{MR3243734}.  The general finite dimensional case follows easily from this using orthonormal bases.  
\end{proof}

We require an infinite dimensional generalization of Theorem~\ref{translation commuting linear maps, finite dimensional case}.  To formulate this we first need an appropriate notion of measurable maps taking values in a space of bounded linear operators.  This can be found in Hille and Phillips~\cite{MR0423094} (Definition 3.5.4, the subsequent remark applied for $\sigma-$finite measure spaces, and Definition 3.5.5), and we record it now in the second item in the definition below.

\begin{defn}[Some notions of measurability]\label{definition of strong measurability}
Let $X$ be a complete and $\sig$-finite measure space.
\begin{enumerate}
    \item \textbf{Bochner measurability:} Let $Y$ be a Banach space. We say that a function $g:X\to Y$ is Bochner measurable if it is the almost everywhere limit of a sequence of finitely-valued measurable (simple) functions.
    \item \textbf{Operator-valued strong measurability:} Let $V_0,V_1$ be Banach spaces over $\mathbb{F} \in \{\R,\C\}$. We say that a function $f:X\to\mathcal{L}(V_0;V_1)$ is strongly measurable if for all $v\in V_0$ we have that the map $fh:X\to V_1$ is Bochner measurable.
\end{enumerate}    
\end{defn}

Next we record some essential properties of this notion of measurability.

\begin{thm}[Properties of operator-valued strongly measurable functions]\label{characterization of operator-valued strong measurability}
    Let $V_0$ and $V_1$ be separable Hilbert spaces over $\mathbb{F} \in \{\R,\C\}$ and $X$ be a complete and $\sig$-finite measure space. Let $f:X\to\mathcal{L}(V_0;V_1)$.     
    Then the following hold.
    \begin{enumerate}
        \item $f$ is strongly measurable in the sense of the second item of Definition~\ref{definition of strong measurability} if and only if for all $v_0 \in V_0$ and $v_1 \in V_1$ we have that $\tbr{fv_0,v_1}:X\to\mathbb{F}$ is measurable.
        \item If $f$ is strongly measurable and $g : X \to V_0$ is Bochner measurable in the sense of the first item of Definition~\ref{definition of strong measurability}, then $fg : X \to V_1$ is also Bochner measurable.
        \item If $f$ is strongly measurable and $g : X \to \mathcal{L}(V_0;V_1)$ is such that $f =g$ almost everywhere, then $g$ is strongly measurable.
    \end{enumerate}
\end{thm}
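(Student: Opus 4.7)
The plan is to lean on the Pettis measurability theorem, which characterizes Bochner measurability in a separable Banach space $Y$ as weak measurability: a map $\varphi : X \to Y$ is Bochner measurable if and only if $\tbr{\varphi(\cdot), y}$ is measurable for every continuous linear functional $y \in Y^*$ (identified with $Y$ via the Riesz map in the Hilbert setting). I would quote this from a standard reference such as Hille and Phillips~\cite{MR0423094} or Section 1.1 of Hyt\"onen, Neerven, Veraar, and Weis~\cite{MR3617205}, and use it as the main workhorse throughout. The separability assumptions on $V_0$ and $V_1$ ensure that all relevant codomains fall within Pettis's scope.

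For item (1), the forward direction is immediate: strong measurability of $f$ means $f v_0 : X \to V_1$ is Bochner measurable for every $v_0 \in V_0$, so Pettis yields measurability of $\tbr{f v_0, v_1}$ for every $v_1 \in V_1$. Conversely, fixing $v_0 \in V_0$, the hypothesis furnishes the weak measurability of $f v_0 : X \to V_1$, and the separability of $V_1$ promotes this back to Bochner measurability via Pettis.

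For item (2), the idea is to approximate $g$ almost everywhere by simple functions $g_n = \sum_{i=1}^{N_n} \mathbf{1}_{A_i^n}\, v_i^n$ with $v_i^n \in V_0$; this approximation is available precisely because $g$ is Bochner measurable. Strong measurability of $f$ then implies that each $f g_n = \sum_i \mathbf{1}_{A_i^n}\, f v_i^n$ is Bochner measurable as a finite sum of measurable indicators multiplied by Bochner measurable $V_1$-valued functions. Continuity of each operator $f(x) \in \mathcal{L}(V_0;V_1)$ yields $f(x) g_n(x) \to f(x) g(x)$ almost everywhere, and a.e.\ limits of Bochner measurable functions taking values in the separable space $V_1$ remain Bochner measurable on a complete measure space.

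For item (3), the hypothesis $f = g$ a.e.\ gives $f v_0 = g v_0$ a.e.\ for every $v_0 \in V_0$. Completeness of $X$ guarantees that a function equal almost everywhere to a Bochner measurable function is itself Bochner measurable, so $g v_0$ is Bochner measurable for each $v_0$, whence $g$ is strongly measurable. The main care point, if any, lies in item (2), where one must correctly set up the simple-function approximation of $g$ and verify that pointwise a.e.\ limits preserve Bochner measurability; both are routine once the Pettis framework is in place.
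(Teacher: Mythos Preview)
Your proposal is correct and matches the paper's approach essentially verbatim: the paper handles item~(1) by citing the Pettis-type characterization (Theorem~3.5.5 in Hille--Phillips~\cite{MR0423094}), item~(2) by the same simple-function approximation and a.e.-limit argument (citing Theorem~3.5.4 in~\cite{MR0423094}), and item~(3) by the identical observation that $f v_0 = g v_0$ a.e.\ for each $v_0$. The only difference is packaging: the paper outsources the details to Hille--Phillips, whereas you sketch them directly.
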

\begin{proof}
The first item follows from Theorem 3.5.5 in~\cite{MR0423094}.  To prove the second item, we note that Theorem 3.5.4 in~\cite{MR0423094} shows that if $\psi : X \to V_0$ is simple, then $f\psi : X \to V_1$ is Bochner measurable.   Thus, $fg$ is the almost everywhere limit of a sequence of Bochner measurable functions and is then measurable, again by Theorem 3.5.4 in~\cite{MR0423094}.  The third item follows by noting that if $h \in V_0$, then $f(x) h = g(x) h$ for almost every $x \in X$, and hence $g(\cdot)h$ is Bochner measurable.
\end{proof}

The notion of operator-valued strong measurability leads us to the following space of essentially bounded functions (see Blasco and van Neerven~\cite{blasco_vanNeerven_2010} for the generalization to $p < \infty$ and applications in the study of multiplication operators between vector-valued Lebesgue spaces).

\begin{defn}[Space of essentially bounded and strongly measurable operator-valued functions]\label{definition of Linfty}
    Let $V_0$ and $V_1$ be separable Hilbert spaces over $\mathbb{F} \in \{\R,\C\}$ and $X$ be a complete and $\sig$-finite a measure space.  We define
    \begin{equation}
        L^\infty_\ast(X;\mathcal{L}(V_0;V_1))\\=\tcb{ [f] \;|\;         
        f:X\to\mathcal{L}(V_0;V_1)\text{ is strongly measurable and essentially bounded}}
     \end{equation}
     where $[f]$ denotes the usual equivalence class formed via almost everywhere equality.  Note, though, that as per usual we will dispense with the equivalence class notation in what follows.   For $f\in L^\infty_\ast(X;\mathcal{L}(V_0;V_1))$ we write
    \begin{equation}\label{what we mean by Linfty}
        \tnorm{f}_{L^\infty_\ast \mathcal{L}(V_0;V_1)}= \esssup_{x \in X} \norm{f(x)}_{\mathcal{L}(V_0;V_1)}  = \inf\tcb{C\in\R^+\;:\;\tnorm{f}_{\mathcal{L}(V_0;V_1)}\le C\text{ a.e.}}.
    \end{equation}
    We emphasize that although the notion of almost everywhere is used in the definition of the essential supremum, it does not actually require the map $f$ to be measurable; consequently, the essential supremum is well-defined even on the space of strongly measurable $\mathcal{L}(V_0;V_1)$-valued maps.
\end{defn}

We can now state and prove our first infinite dimensional generalization of Theorem~\ref{translation commuting linear maps, finite dimensional case}. To the best of our knowledge, the following result does not appear in the literature.

\begin{thm}[Translation commuting linear maps, infinite dimensional case I]\label{thm on translation commuting linear maps}
    Suppose that $V_0$ and $V_1$ are separable infinite dimensional Hilbert spaces over $\C$ and that 
    \begin{equation}
        T:L^2(\R^d;V_0)\to L^2(\R^d;V_1)
    \end{equation}
    is a bounded linear map.  Then the following are equivalent.
    \begin{enumerate}
        \item $T$ commutes with translations in the sense that $(Tf)(\cdot+h)=T(f(\cdot+h))$ for all $h\in\R^d$ and $f\in L^2(\R^d;V_0)$.
        \item  There exists $m\in L^\infty_\ast(\R^d;\mathcal{L}(V_0;V_1))$  such that
    \begin{equation} \label{mult form}
        Tf=\mathscr{F}^{-1}[m\mathscr{F}[f]] \text{ for every } f\in L^2(\R^d;V_0).
    \end{equation}
    \end{enumerate}
    In either case, we have the equality
    \begin{equation}\label{operator norm estimate}
        \tnorm{m}_{L^\infty_\ast\mathcal{L}(V_0,V_1)}=\tnorm{T}_{\mathcal{L}(L^2V_0;L^2V_1)}.
    \end{equation}
\end{thm}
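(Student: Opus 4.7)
The plan is to prove the easy direction $(2) \Rightarrow (1)$ first, then reduce the hard direction $(1) \Rightarrow (2)$ to the scalar Fourier multiplier theorem via a sesquilinear-form construction, and finally extract the operator-norm equality.

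For $(2) \Rightarrow (1)$: given $m$ as in \eqref{mult form}, note that strong measurability of $m$ and Bochner measurability of $\mathscr{F}[f]$ combine, via the second item of Theorem~\ref{characterization of operator-valued strong measurability}, to make $m\mathscr{F}[f]$ a genuine $L^2(\R^d;V_1)$ element, so $Tf$ is well-defined. Translation invariance then follows from the computation $\mathscr{F}[f(\cdot+h)](\xi)=e^{2\pi \ii h\cdot\xi}\mathscr{F}[f](\xi)$, since pointwise multiplication by the scalar $e^{2\pi \ii h\cdot\xi}$ commutes with the pointwise action of $m(\xi)\in\mathcal{L}(V_0;V_1)$.

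For the hard direction $(1) \Rightarrow (2)$, the idea is to reduce to the finite-dimensional case, Theorem~\ref{translation commuting linear maps, finite dimensional case}, by probing $T$ with scalar functions tensored against vectors. For each $u\in V_0$ and $v\in V_1$, define the scalar operator $T_{u,v}: L^2(\R^d;\C) \to L^2(\R^d;\C)$ by $T_{u,v}(g) = \br{T(g\cdot u), v}_{V_1}$; this is bounded linear with $\tnorm{T_{u,v}}\le \tnorm{T}\tnorm{u}_{V_0}\tnorm{v}_{V_1}$ and commutes with translations, so Theorem~\ref{translation commuting linear maps, finite dimensional case} supplies $m_{u,v}\in L^\infty(\R^d;\C)$ with $T_{u,v}=m_{u,v}(D)$ and $\tnorm{m_{u,v}}_{L^\infty}\le \tnorm{T}\tnorm{u}\tnorm{v}$. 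The challenge is to assemble a single symbol $m:\R^d\to\mathcal{L}(V_0;V_1)$ whose matrix coefficients recover $m_{u,v}$; the naive pointwise construction fails because the null set on which $m_{u,v}$ is ill-defined depends on $(u,v)$.

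To overcome this, I would use separability of $V_0,V_1$ to fix countable $\Q+\ii\Q$-linear dense subsets $D_0\subset V_0$ and $D_1\subset V_1$. For the countable collection of pairs $(u,v)\in D_0\times D_1$, the $L^\infty$ symbols $m_{u,v}$ are jointly defined outside a single null set $N\subset \R^d$. On $\R^d\setminus N$, linearity of $T$ forces the map $(u,v)\mapsto m_{u,v}(\xi)$ to be sesquilinear on the rational spans, and the uniform estimate $|m_{u,v}(\xi)|\le \tnorm{T}\tnorm{u}\tnorm{v}$ allows continuous extension to a bounded sesquilinear form on all of $V_0\times V_1$; Riesz representation then produces a unique $m(\xi)\in \mathcal{L}(V_0;V_1)$ with $\br{m(\xi)u,v}=m_{u,v}(\xi)$ for each $(u,v)\in D_0\times D_1$ and $\tnorm{m(\xi)}_{\mathcal{L}(V_0;V_1)}\le \tnorm{T}$. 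We set $m\equiv 0$ on $N$. Strong measurability is automatic: for fixed $u,v$, the function $\xi\mapsto\br{m(\xi)u,v}_{V_1}$ agrees a.e.\ with $m_{u,v}$ (by density and continuity in $(u,v)$) and so is measurable, so the first item of Theorem~\ref{characterization of operator-valued strong measurability} applies.

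To verify the multiplier identity \eqref{mult form}, I would first check it on the dense subspace of $L^2(\R^d;V_0)$ spanned by elementary tensors $g\cdot u$ with $g\in L^2(\R^d;\C)$ and $u\in D_0$. For such a function, $\mathscr{F}[g\cdot u](\xi)=\mathscr{F}[g](\xi)u$, and by construction $\br{T(g\cdot u),v}=m_{u,v}(D)g = \br{\mathscr{F}^{-1}[m(\cdot)u\mathscr{F}[g]],v}_{L^2 V_1}$ for all $v\in D_1$; density of $D_1$ and Plancherel close the identification. Extending by linearity and continuity (using the bound $\tnorm{\mathscr{F}^{-1}[m\mathscr{F}[f]]}_{L^2 V_1}\le \tnorm{T}\tnorm{f}_{L^2 V_0}$) yields \eqref{mult form} on all of $L^2(\R^d;V_0)$. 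Finally, \eqref{operator norm estimate} follows from combining the pointwise bound $\tnorm{m(\xi)}_{\mathcal{L}(V_0;V_1)}\le \tnorm{T}$ (giving $\tnorm{m}_{L^\infty_\ast}\le \tnorm{T}$) with the Plancherel estimate
\begin{equation}
    \tnorm{Tf}_{L^2 V_1}^2 = \int_{\R^d}\tnorm{m(\xi)\mathscr{F}[f](\xi)}_{V_1}^2\,\m{d}\xi \le \tnorm{m}_{L^\infty_\ast\mathcal{L}(V_0;V_1)}^2\tnorm{f}_{L^2 V_0}^2,
\end{equation}
which gives the reverse inequality. The main obstacle is the measure-theoretic bookkeeping in the middle step: producing a single null set outside of which the sesquilinear form is defined and bounded, and then verifying that the resulting operator-valued function satisfies the (weak) notion of strong measurability in Definition~\ref{definition of strong measurability}.
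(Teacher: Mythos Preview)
Your approach is correct and takes a genuinely different route from the paper. The paper constructs $m$ by choosing increasing finite-rank orthogonal projections $\Pi_N^i$ on $V_i$, applying Theorem~\ref{translation commuting linear maps, finite dimensional case} to the compressions $\Pi_N^1 T\,\Pi_N^0$ to obtain symbols $m_N\in L^\infty(\R^d;\mathcal{L}(V_0;V_1))$, and then passing to a pointwise weak operator limit as $N\to\infty$. Your matrix-entry approach---extracting scalar multipliers $m_{u,v}$ for $(u,v)$ ranging over countable $(\Q+\ii\Q)$-linear dense sets and assembling $m(\xi)$ from the resulting bounded sesquilinear form via Riesz---is equally valid and arguably more direct, since it reduces immediately to the scalar case rather than through a monotone family of finite-dimensional subproblems; the price is exactly the null-set bookkeeping you already flag (absorbing countably many a.e.\ identities for well-definedness, rational sesquilinearity, and the bound $|m_{u,v}(\xi)|\le\tnorm{T}\tnorm{u}\tnorm{v}$ into a single exceptional set). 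One small point: your argument yields $\tnorm{m}_{L^\infty_\ast}\le\tnorm{T}$ only for the $m$ you construct, whereas the theorem asserts~\eqref{operator norm estimate} for \emph{any} $m$ realizing item~(2). The paper handles this with an independent Lebesgue-differentiation argument applied to $\xi\mapsto\tnorm{m(\xi)x}_{V_1}^2$ against normalized indicators of shrinking balls; you can instead close the gap by noting that the multiplier is unique modulo null sets (via the same countable-density device you already employ), so that any representing $m$ agrees a.e.\ with your constructed one.
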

\begin{proof}
    If $T$ is given by \eqref{mult form}, then it is a trivial matter to verify that it commutes with translations, so we only need to prove the converse and~\eqref{operator norm estimate}.   We begin with the proof of the latter, assuming that $T=m(D)$ for $m\in L^\infty_\ast(\R^d;\mathcal{L}(V_0;V_1))$.  
    
    Let $f \in L^2(\R^d;V_0)$.  Since for any null set $E\subset\R^d$ and $\xi\in\R^d\setminus E$ we have $\tnorm{m(\xi)\mathscr{F}[f](\xi)}_{V_1}\le\sup_{\R^d\setminus E}\tnorm{m}_{\mathcal{L}(V_0;V_1)}\tnorm{\mathscr{F}[f](\xi)}_{V_0}$, we are free to integrate the square of this, take the infimum over such $E$, and apply Plancherel's theorem to deduce that $\tnorm{Tf}_{L^2 V_1}^2 =\tnorm{m f}_{L^2V_1}^2\le\tnorm{m}_{L^\infty_\ast}^2\tnorm{f}_{L^2V_0}^2$.  Thus, $\tnorm{T}\le\tnorm{m}$.
        
    For the opposite inequality, we let $\varphi_\lambda =  \mathds{1}_{B(0,\lambda)} / \sqrt{\abs{B(0,\lambda)}} \in L^2(\R^d;\R)$.  Then for any $x\in V_0$, $\xi_0 \in \R^d$, and $\lambda >0$ we have that
    \begin{equation}
    \f{1}{\abs{B(0,\lambda)}}\int_{B(\xi_0,\lambda)} \tnorm{m(\xi)x}_{V_1}^2\m{d}\xi
   =  \int_{\R^d}\tnorm{m(\xi)x}_{V_1}^2|\varphi_\lambda(\xi-\xi_0)|^2\;\m{d}\xi=\tnorm{T\mathscr{F}^{-1}(x\varphi_\lambda(\cdot - \xi_0))}_{L^2V_1}^2\le\tnorm{T}^2\tnorm{x}^2_{V_0}.
    \end{equation}
    Since $m\in L^\infty_\ast(\R^d;\mathcal{L}(V_0;V_1))$, the map $\xi\mapsto \norm{m(\xi)x}_{V_1}^2$ is locally integrable; thus, by Lebesgue's differentiation theorem, for each $x\in V_0$ we obtain a full measure set $E_x\subset\R^d$ such that $\xi_0\in E_x$ implies that $\tnorm{m(\xi_0)x}_{V_1}\le\tnorm{T}\tnorm{x}_{V_0}$. Since $V_0$ is separable, we can let $\tcb{x_n}_{n\in\N}$ be a dense subset of $V_0$ and set $E=\bigcap_{n\in\N}E_{x_n}\subseteq\R^d$. It follows that $E$ has full measure and $\xi_0\in E$ implies that $\tnorm{m(\xi_0)}_{\mathcal{L}(V_0,V_1)}\le\tnorm{T}$. Thus,  $\tnorm{m}\le\tnorm{T}$, and the proof of \eqref{operator norm estimate} is complete.
    
    We now turn to the construction of the multiplier $m$ from the map $T$, assuming it commutes with translations.  Since $V_0$ and $V_1$ are separable Hilbert spaces, we can find sequences $\tcb{\Pi^i_N}_{N=0}^\infty$ of orthogonal projection operators on $V_i$ such that for $i\in\tcb{0,1}$ we have that $\m{dim}(\Pi^i_N V_i)=N$ and $\Pi^i_N V_i\subset\Pi^i_{N+1}V_{i}$ for all $N \in \N$, and $\lim_{N\to\infty}\Pi_N^i x=$ for all $x \in V_i$.
    
    For $N \in \N$, define the  maps $S_N : L^2(\R^d; \Pi^0_N V_0) \to L^2(\R^d; \Pi^1_N V_1)$ via $S_N f= \Pi^1_N T f$.  It is a simple matter to check that each $S_N$ commutes with translations.  Since  $\Pi^0_N V_0$ and $\Pi^1_N V_1$ are finite dimensional, Theorem~\ref{translation commuting linear maps, finite dimensional case} then provides $\mu_N \in L^\infty(\R^d; \mathcal{L}(\Pi^0_N V_0; \Pi^1_N V_1))$ such that $S_N = \mu_N(D)$ and $\norm{S_N}_{\mathcal{L}} = \norm{\mu_N}_{L^\infty}$.  We then define the symbols $\tcb{m_N}_{N\in\N}\subset L^\infty(\R^{d};\mathcal{L}(V_0,V_1))$ via $m_N = \mu_N \Pi^0_N$, which means that
    \begin{equation}
        m_N(D)=S_N \Pi^0_N = \Pi^1_N T \Pi^0_N,
        \quad \tnorm{m_N}_{L^\infty\mathcal{L}(V_0,V_1)} \le \tnorm{T}, \text{ and  } \Pi^1_Nm_{N+1}\Pi^0_N=m_N.
    \end{equation}
    Note that we are free to modify each symbol in the sequence on a set of measure zero and obtain the pointwise inequality $\tnorm{m_N(\xi)}_{\mathcal{L}(V_0,V_1)}\le\tnorm{T}$ for all $\xi\in\R^d$.
    
    Given $\xi_0\in\R^d$, $x\in V_0$ , $y\in V_1$, and $N,M\in\N$, we have that
    \begin{equation}
        \tbr{y,(m_{N+M}(\xi_0)-m_{N}(\xi_0))x}=\tbr{y,m_{N+M}(\xi_0)(1-\Pi^0_N)x}+\tbr{y,(1-\Pi_N^1)m_{N+M}(\xi_0)\Pi_N^0x},
    \end{equation}
    and hence
    \begin{equation}
        \limsup_{N,M\to\infty}\tabs{\tbr{y,(m_{N+M}(\xi_0)-m_{N}(\xi_0))x}}\le\lim_{N\to\infty}\tnorm{T}\tp{\tnorm{y}_{V_1}\tnorm{(1-\Pi_N^0)x}_{V_0}+\tnorm{(1-\Pi_N)y}_{V_1}\tnorm{x}_{V_0}}=0.
    \end{equation}
    Thus, $\tcb{\tbr{y,m_{N}(\xi_0)x}}_{N\in\N}\subset\mathcal{L}(V_0,V_1)$ is Cauchy, and hence convergent.  Using this and the established bounds on $m_N$ together with Theorem VI.1 of Reed and Simon~\cite{MR0493419}, we acquire $m(\xi_0)\in\mathcal{L}(V_0,V_1)$ such that $\tnorm{m(\xi_0)}\le\tnorm{T}$ and $\tbr{y,m_N(\xi_0)x}\to\tbr{y,m(\xi_0)x}$ as $N\to\infty$ for all $x\in V_0$ and $y\in V_1$.  It then follows from Theorem~\ref{characterization of operator-valued strong measurability} that $\xi_0\mapsto m(\xi_0)$ is strongly measurable, since $\tbr{y,mx}$ is the pointwise limit of measurable functions for every $y\in V_1$ and $x\in V_0$.  Synthesizing this information, we find that $m\in L^\infty_\ast(\R^d;\mathcal{L}(V_0;V_1)).$

    To complete the proof, it only remains to check that $m(D) =T$.  For this, we  use Parseval's theorem for fixed $f\in L^2(\R^d;V_0)$ and $g\in L^2(\R^d;V_1)$ to write 
        \begin{equation}
        \tbr{g,Tf}=\int_{\Omega}\tbr{\mathscr{F}[g](\xi),m_{N}(\xi)\mathscr{F}[f](\xi)}\;\m{d}\xi+\tbr{(1-\Pi_N^1)g,T\Pi_N^0f}+\tbr{g,T(1-\Pi_N^0)f}.
    \end{equation}
    We then apply the dominated convergence theorem while sending $N \to \infty$ to deduce that $\tbr{g,Tf}=\tbr{\mathscr{F}[g],m\mathscr{F}[f]}$.   Hence, $m(D) =T$.
\end{proof}

The following generalization will also be of use to us. In fact, this  corollary is the main workhorse of Section~\ref{section on vector-valued symbol calculus for the solution map} in that it is the dictionary that allows us to transfer operator bounds derived from solving PDEs to bounds on the derivatives of the symbol of a special vector-valued Fourier multiplication operator.

\begin{coro}[Translation commuting linear maps, infinite dimensional case II]\label{corc more on translation commuting linear maps}
Suppose that $W,V_0,V_1$ are separable Hilbert spaces over $\C$ with $V_1\emb V_0$, and suppose that for some $s_0,s_1,s\in\R$ we have a translation commuting and continuous linear map
\begin{equation}\label{TILO BOUNDS}
    T:H^{s_0}(\R^d;V_0)\cap H^{s_1}(\R^d;V_1)\to H^{s}(\R^d;W).
\end{equation}
Then, there exists a unique (up to modification on sets of measure zero) locally essentially bounded and strongly measurable (in the sense of the second item of Definition~\ref{definition of strong measurability}) function $m:\R^d\to\mathcal{L}(V_1;W)$ such that $T=m(D)$; moreover, $m$ obeys the estimate
\begin{equation}\label{estimate we need on the multiplier}
    \tbr{\xi}^s\tnorm{m(\xi)x}_{W}\lesssim\tnorm{T}\tp{\tbr{\xi}^{s_0}\tnorm{x}_{V_0}+\tbr{\xi}^{s_1}\tnorm{x}_{V_1}},\quad \forall\;x\in V_1
\end{equation}
for almost every $\xi\in\R^d$. The above implicit constant only depends on $d$, $s$, $s_0$, and $s_1$.
\end{coro}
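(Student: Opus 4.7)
The plan is to reduce to Theorem~\ref{thm on translation commuting linear maps} by a Bessel-potential rescaling to produce the symbol $m$, and then to sharpen the resulting crude operator bound into the refined pointwise estimate \eqref{estimate we need on the multiplier} by applying the hypothesis on $T$ to Fourier-localized test functions together with a Lebesgue-differentiation argument.

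\textbf{Step 1 (Construction of $m$).} Let $\sig = \max(s_0, s_1)$. Because $V_1 \emb V_0$ and $\sig \ge s_0, s_1$, the composition $\tilde T := \tbr{D}^s \circ T \circ \tbr{D}^{-\sig}: L^2(\R^d; V_1) \to L^2(\R^d; W)$ is bounded, linear, and translation-commuting: indeed, $\tbr{D}^{-\sig}$ maps $L^2(\R^d;V_1)$ isometrically to $H^{\sig}(\R^d; V_1)$, which embeds continuously into $H^{s_0}(\R^d; V_0) \cap H^{s_1}(\R^d; V_1)$. Theorem~\ref{thm on translation commuting linear maps} then supplies a strongly measurable $\tilde m \in L^\infty_\ast(\R^d; \mathcal{L}(V_1; W))$ with $\tilde T = \tilde m(D)$. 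Setting $m(\xi) := \tbr{\xi}^{\sig - s} \tilde m(\xi)$ produces a strongly measurable map (scalar multiplication by a measurable scalar preserves strong measurability), and unwinding the Fourier definitions yields $T f = m(D) f$ for every $f \in H^\sig(\R^d; V_1)$. A standard Fourier-truncation argument shows that $H^\sig(\R^d; V_1)$ is dense in $H^{s_0}(V_0) \cap H^{s_1}(V_1)$.

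\textbf{Step 2 (Pointwise estimate via Lebesgue differentiation).} To upgrade the crude bound inherited from $\tilde m$, fix $x \in V_1$ and $\xi_0 \in \R^d$ and test with $g_\lambda := \mathscr{F}^{-1}[x \cdot \varphi_{\lambda,\xi_0}]$ where $\varphi_{\lambda,\xi_0}(\xi) := \abs{B(0,\lambda)}^{-1/2} \mathds{1}_{B(\xi_0, \lambda)}(\xi)$. These belong to $H^\sig(\R^d; V_1)$ by compactness of their Fourier support, and the Hilbert-valued Plancherel identities give
\[
    \|T g_\lambda\|_{H^s W}^2 = \f{1}{\abs{B(0,\lambda)}} \int_{B(\xi_0, \lambda)} \tbr{\xi}^{2s} \|m(\xi) x\|_W^2 \, \m{d}\xi
\]
and
\[
    \|g_\lambda\|_{H^{s_0} V_0 \cap H^{s_1} V_1}^2 = \f{1}{\abs{B(0,\lambda)}} \int_{B(\xi_0, \lambda)} \p{\tbr{\xi}^{2s_0} \|x\|_{V_0}^2 + \tbr{\xi}^{2s_1} \|x\|_{V_1}^2} \, \m{d}\xi.
\]
Inserting these into the operator bound from \eqref{TILO BOUNDS} and letting $\lambda \to 0$ via Lebesgue's differentiation theorem (local integrability of the left-hand integrand follows from Step 1) produces, on a full-measure set $E_x \subset \R^d$,
\[
    \tbr{\xi_0}^{2s} \|m(\xi_0) x\|_W^2 \le \|T\|^2 \p{\tbr{\xi_0}^{2s_0} \|x\|_{V_0}^2 + \tbr{\xi_0}^{2s_1} \|x\|_{V_1}^2}.
\]
Intersecting these full-measure sets over a countable dense subset of $V_1$ (separability) and extending the resulting inequality by continuity of $m(\xi_0) \in \mathcal{L}(V_1; W)$ produces \eqref{estimate we need on the multiplier} uniformly in $x$, with an implicit constant depending only on $d$, $s$, $s_0$, $s_1$. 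Local essential boundedness of $m$ is then immediate.

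\textbf{Step 3 and main obstacle.} With the pointwise estimate in hand, $m(D)$ extends to a bounded map from $H^{s_0}(V_0) \cap H^{s_1}(V_1)$ to $H^s(W)$; since $T = m(D)$ on the dense subspace $H^\sig(V_1)$ by Step 1, the identity propagates by continuity to the entire intersection. Uniqueness is routine: any second candidate $m'$ satisfies $(m - m')(\xi) x = 0$ for a.e. $\xi$ for each $x$ in a countable dense subset of $V_1$, whence $m = m'$ a.e. The principal subtlety of the argument lies in Step 2: the Lebesgue-differentiation procedure naturally produces the desired inequality only outside a null set that depends on $x$, so separability of $V_1$ and continuity of $m(\xi_0)$ must be carefully exploited to make the bound uniform in $x$. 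A secondary care point is the density of $H^\sig(V_1)$ in the intersection when $s_0 \neq s_1$, which is precisely what necessitates the two-stage construction rather than a one-shot application of Theorem~\ref{thm on translation commuting linear maps}.
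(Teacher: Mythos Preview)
Your proof is correct and takes a genuinely different route from the paper. The paper argues by dyadic frequency decomposition: it sets $A_\ell = B(0,2^\ell)\setminus B(0,2^{\ell-1})$ (with $A_0=B(0,1)$), equips $V_1$ and $W$ with the $\ell$-dependent weighted norms $\tnorm{x}_{V_1^{(\ell)}}^2 = \tbr{2^\ell}^{2s_0}\tnorm{x}_{V_0}^2 + \tbr{2^\ell}^{2s_1}\tnorm{x}_{V_1}^2$ and $\tnorm{y}_{W^{(\ell)}} = \tbr{2^\ell}^s\tnorm{y}_W$, observes that $T_\ell = T\mathds{1}_{A_\ell}(D):L^2(\R^d;V_1^{(\ell)})\to L^2(\R^d;W^{(\ell)})$ has operator norm $\lesssim\tnorm{T}$ uniformly in $\ell$, applies Theorem~\ref{thm on translation commuting linear maps} on each annulus to obtain $m_\ell$, and sets $m=\sum_\ell m_\ell$. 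The pointwise estimate then follows from the operator-norm identity in Theorem~\ref{thm on translation commuting linear maps} together with $\tbr{\xi}\asymp\tbr{2^\ell}$ on $A_\ell$.

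Your approach instead applies Theorem~\ref{thm on translation commuting linear maps} once (after conjugating by Bessel potentials) to produce $m$, and then recovers the sharp pointwise bound by Lebesgue differentiation against frequency-localized bumps, exactly mirroring the technique used inside the proof of Theorem~\ref{thm on translation commuting linear maps} for the norm identity. This is slightly more economical and yields a better implicit constant (essentially $1$ rather than a dyadic-comparison constant), at the cost of the extra density step $H^\sigma(\R^d;V_1)\hookrightarrow H^{s_0}(\R^d;V_0)\cap H^{s_1}(\R^d;V_1)$ densely, which the paper's construction sidesteps. Both arguments are clean; the paper's has the minor advantage of being self-contained once Theorem~\ref{thm on translation commuting linear maps} is granted, while yours makes the mechanism behind the pointwise inequality more transparent.
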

\begin{proof}
     For $\ell\in\N$ we let
     \begin{equation}
         A_\ell=\begin{cases}
             B(0,1)&\text{if }\ell=0,\\
             B(0,2^{\ell})\setminus B(0,2^{\ell-1})&\text{if }\ell\ge 1,
         \end{cases}
     \end{equation}
     $T_\ell=T\mathds{1}_{A_\ell}(D)$, and 
     \begin{equation}
         \tnorm{y}_{W^{(\ell)}}=\tbr{2^\ell}^s\tnorm{y}_W,\quad\tnorm{x}_{V_1^{(\ell)}}=\sqrt{\tbr{2^\ell}^{2s_0}\tnorm{x}_{V_0}^2+\tbr{2^\ell}^{2s_1}\tnorm{x}_{V_1}^2}
     \end{equation}
     for $y\in W$ and $x\in V_1$. We consider $W$ equipped with the norm $\tnorm{\cdot}_{W^{(\ell)}}$, denoted $W^{(\ell)}$, and $V_1$ equipped with the norm $\tnorm{\cdot}_{V_1^{(\ell)}}$, similarly denoted $V_1^{(\ell)}$, and apply Theorem~\eqref{thm on translation commuting linear maps} to $T_\ell$, viewed  as a map $T_\ell:L^2(\R^d;W^{(\ell)}_1)\to L^2(\R^d;W^{(\ell)})$. The  hypothesis~\eqref{TILO BOUNDS} and the usual Fourier characterization of $H^r$ Sobolev norms then provide the estimate
     \begin{equation}
         \tnorm{T_\ell}_{\mathcal{L}(L^2V_1^{(\ell)};L^2W^{(\ell)})}\lesssim\tnorm{T}_{\mathcal{L}(H^{s_0}V_0\cap H^{s_1}V_1;H^s W)},
     \end{equation}
     where the implicit constant depends on $d$, $s$, $s_0$, and $s_1$ but not on $\ell$.
     Then the associated multiplier $m_\ell\in L_\ast^\infty(\R^d;\mathcal{L}(V_1^{(\ell)},W^{(\ell)}))$ granted from Theorem~\ref{thm on translation commuting linear maps} obeys the bounds
     \begin{equation}\label{annular bounds}
         \tbr{2^\ell}^s\tnorm{m_\ell(\xi)x}_{W}\lesssim\tnorm{T}\tp{\tbr{2^\ell}^{s_0}\tnorm{x}_{H^{s_0}}+\tbr{2^\ell}^{s_1}\tnorm{x}_{V_1}} 
     \end{equation}
     for every $x\in V_1$ and  almost every $\xi\in A_\ell$. Note that $m_\ell$ can be modified on a set of measure zero and made to have support contained in $A_\ell$. To conclude, we take $m=\sum_{\ell=0}^\infty m_\ell$. It is then straightforward to check that $m(D)=T$ and, by using~\eqref{annular bounds}, that estimate~\eqref{estimate we need on the multiplier} holds. 
\end{proof}

\subsection{Classical results in vector-valued Harmonic analysis}\label{section on classical results in vector-valued Harmonic analysis}

We now turn our attention to a collection of classical results in vector-valued harmonic analysis.  We showcase these here for two reasons.  First, in the subsequent subsection we will develop some variants and generalizations that will play a crucial role in our study of the linear PDEs~\eqref{linearization of the nonlinear problem} and~\eqref{curl formulation of the linearization}.  Second, we will need them  in our development of some Sobolev-type function spaces in Section~\ref{appendix on properties of mixed-type Sobolev spaces}. 

We begin by recording a pair of well-known multiplier theorems. The first up is the scalar-valued Marcinkiewicz theorem, for which a proof can be found in Corollary 6.25 of Grafakos~\cite{MR3243734}.

\begin{thm}[Marcinkiewicz]\label{thm on Marcinkiewicz multiplier theorem}
    Let $m : \R^d \to \C$ be a bounded function that is $d$-times continuously differentiable away from the coordinate axes in $\R^d$.  Assume that there exists a constant $A\ge 0$ such that for all $k\in\tcb{1,\dots,d}$, each choice of distinct $j_1,\dots,j_k\in\tcb{1,\dots,d}$, and every $\xi\in \R^d$ such that $\xi_r \neq 0$ for $r \notin \tcb{j_1,\dotsc,j_k}$ we have that
    \begin{equation}\label{Marcinkiewicz ineqs}
        |(\pd_{j_1}\cdots\pd_{j_k}m)(\xi)| \le A |\xi_{j_1}|^{-1}\cdots|\xi_{j_k}|^{-1}.
    \end{equation}
    Then the map $m(D) = L^2(\R^d;\C) \to L^2(\R^d;\C)$ uniquely extends to a bounded linear map  $m(D) : L^p(\R^d;\C) \to L^p(\R^d;\C)$ for every $1 < p < \infty$, and 
    \begin{equation}
        \norm{m(D)}_{\mathcal{L}(L^p)} \le C_{p,d}(A + \norm{m}_{L^\infty})
    \end{equation}
    for a constant $C_{p,d} >0$ depending only on $d$ and $p$. If, in addition, we have that $m(-\xi)=\Bar{m(\xi)}$ for a.e. $\xi\in\R^d$, then $m$ is reality preserving in the sense that $m(D):L^p(\R^d;\R)\to L^p(\R^d;\R)$.
\end{thm}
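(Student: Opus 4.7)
The plan is to use a product Littlewood-Paley decomposition adapted to the mixed-partial structure encoded in the hypothesis \eqref{Marcinkiewicz ineqs}. For each $j\in\tcb{1,\dots,d}$ and each $k\in\Z$, I would define the one-dimensional dyadic projection $\Delta_k^{(j)}$ to be the Fourier multiplier with symbol $\mathds{1}_{\tcb{2^{k-1}\le|\xi_j|<2^k}}$, and for $\vec{k}\in\Z^d$ set $\Delta_{\vec{k}}=\Delta_{k_1}^{(1)}\cdots\Delta_{k_d}^{(d)}$. These operators orthogonally decompose $L^2(\R^d;\C)$ up to the coordinate axes (which have Lebesgue measure zero), and $m(D)$ commutes with each $\Delta_{\vec{k}}$, so the task reduces to estimating $m(D)$ uniformly on each dyadic box.

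The central analytic input is the product Littlewood-Paley inequality
\begin{equation*}
    \tnorm{f}_{L^p}\asymp\bnorm{\bp{\sum_{\vec{k}\in\Z^d}|\Delta_{\vec{k}}f|^2}^{1/2}}_{L^p},
\end{equation*}
which is obtained by iterating the classical one-dimensional Littlewood-Paley inequality in its vector-valued ($\ell^2$-valued) form. The vector-valued iteration is available because the one-dimensional square function arising from $\tcb{\Delta_k^{(j)}}_{k\in\Z}$ fits into the Calder\'on-Zygmund framework and admits Hilbert-valued extensions, which allows the inequality to be applied coordinatewise while retaining previous coordinates.

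The crux is then a uniform-in-$\vec{k}$ bound for $m(D)$ on each dyadic box. Fixing $\vec{k}\in\Z^d$ and a sign vector $\vec{\ep}\in\tcb{\pm 1}^d$, consider the box $R_{\vec{k},\vec{\ep}}=\prod_{j=1}^d[\ep_j 2^{k_j-1},\ep_j 2^{k_j})$. Applying the fundamental theorem of calculus iteratively in all $d$ variables, starting from a corner of $R_{\vec{k},\vec{\ep}}$, I would write $m\mathds{1}_{R_{\vec{k},\vec{\ep}}}(\xi)$ as a finite sum indexed by subsets $S\subseteq\tcb{1,\dots,d}$, where the $S$-term is an $|S|$-fold iterated integral of the mixed partial $\pd^S m=\prod_{j\in S}\pd_j m$ against the product of indicators $\prod_{j\in S}\mathds{1}_{\tcb{t_j<\xi_j}}$, with the variables $j\notin S$ frozen at the corner coordinates. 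The hypothesis \eqref{Marcinkiewicz ineqs} combined with $|\xi_j|\asymp 2^{k_j}$ on the box yields $|\pd^S m|\lesssim A\prod_{j\in S}2^{-k_j}$, so after integration $m(D)\Delta_{\vec{k}}f$ is represented as a Bochner integral, against a measure of total mass $\lesssim A+\tnorm{m}_{L^\infty}$, of tensor-product operators, each a product of sharp one-dimensional frequency-interval projections in a subset of variables and the identity in the rest.

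To finish, I would apply Minkowski's integral inequality to bring this Bochner integral past the $\ell^2$-square function over $\vec{k}$, and invoke the vector-valued $L^p$-bound for the tensor-product truncated frequency projections (a consequence of the M. Riesz theorem together with tensorization of $L^p(\R;\ell^2)$-bounded operators) to reduce the right-hand side to $(\sum_{\vec{k}}|\Delta_{\vec{k}} f|^2)^{1/2}$. The product Littlewood-Paley inequality, applied to both $f$ and $m(D)f$, then yields the claim. For the second assertion, I would note that $m(-\xi)=\Bar{m(\xi)}$ a.e. is equivalent to the convolution kernel $\mathscr{F}^{-1}[m]$ being a real-valued tempered distribution, which immediately forces $m(D)$ to preserve real-valuedness. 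The main technical obstacle is arranging the tensor-product truncated-interval projections so that their vector-valued $L^p$-constants are independent of the truncation levels and the sign configuration $\vec{\ep}$; this is classical Calder\'on-Zygmund material but requires careful orchestration of the vector-valued extensions.
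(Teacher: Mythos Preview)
The paper does not prove this theorem; it records it as a classical result and refers to Corollary 6.2.5 in Grafakos~\cite{MR3243734} for a proof. Your outline is essentially the classical argument found there (and in Stein): product Littlewood--Paley, iterated fundamental theorem of calculus on dyadic rectangles using \eqref{Marcinkiewicz ineqs}, and vector-valued bounds for the resulting tensor-product interval projections, so there is nothing to compare beyond noting that you have sketched precisely the standard proof the paper is citing.
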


We next record a vector-valued version of the celebrated Mikhlin-H\"ormander multiplier theorem, originally due to  Schwartz \cite{schwartz_1961}.  For a proof of the following formulation we refer to Proposition 6.16 in Bergh and L\"ofstr\"om~\cite{MR0482275}.

\begin{thm}[Mikhlin-H\"ormander]\label{hormander mikhlin multiplier theorem}
    Let $V_0$ and $V_1$ be two separable complex Hilbert spaces and let $\N\ni\ell>d/2$. Suppose that $m\in C^\ell(\R^d\setminus\tcb{0};\mathcal{L}(V_0,V_1))$ satisfies
    \begin{equation}
        \max_{|\al| \le \ell } \sup_{\xi\neq 0}|\xi|^{|\al|}\tnorm{\pd^\al m(\xi)}_{\mathcal{L}(H_0,H_1)}\le C_\ell
    \end{equation}
    for a constant $C_\ell\in\R^+$.  Then the map $m(D) : L^2(\R^d;V_0)\to L^2(\R^d;V_1)$ uniquely extends to a bounded linear map $m(D):L^p(\R^d;V_0)\to L^p(\R^d;V_1)$, and 
    \begin{equation}
        \tnorm{m(D)}_{\mathcal{L}(L^p V_0,L^pV_1)}\lesssim C_\ell,
    \end{equation}
     where the implicit constant depends only on $d$ and $p$.
\end{thm}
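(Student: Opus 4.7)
The plan is to deduce the theorem from a dyadic decomposition of the symbol combined with the Hilbert-valued extension of Calder\'on--Zygmund theory. Since $L^2$-boundedness with operator norm $\lesssim C_\ell$ follows directly from Plancherel's theorem and the $|\alpha|=0$ symbol estimate, the remaining task is to propagate this bound to all $L^p$ for $1<p<\infty$, with operator norms measured in $\mathcal{L}(V_0,V_1)$.

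I would first introduce a standard dyadic partition of unity $\tcb{\varphi_j}_{j\in\Z}$ with each $\varphi_j$ supported in the annulus $\tcb{2^{j-1}\le|\xi|\le 2^{j+1}}$, and decompose $m=\sum_j m_j$ with $m_j=m\varphi_j$. Setting $K_j=\mathscr{F}^{-1}[m_j]$, integration by parts in the Fourier inversion formula combined with the symbol estimates $\tnorm{\pd^\alpha m(\xi)}_{\mathcal{L}(V_0,V_1)}\lesssim C_\ell|\xi|^{-|\alpha|}$ produces the pointwise bounds
\begin{equation*}
\tnorm{K_j(x)}_{\mathcal{L}(V_0,V_1)}\lesssim C_\ell\min\tcb{2^{jd},\;2^{j(d-\ell)}|x|^{-\ell}},
\end{equation*}
together with analogous bounds on $\nabla K_j$. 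Summing in $j$ by splitting at the dyadic scale nearest $|x|^{-1}$ yields a distributional convolution kernel $K$ on $\R^d\setminus\tcb{0}$ that coincides with $m(D)$ off the diagonal and satisfies the standard Calder\'on--Zygmund pointwise estimates
\begin{equation*}
\tnorm{K(x)}_{\mathcal{L}(V_0,V_1)}\lesssim C_\ell|x|^{-d},\qquad\tnorm{\nabla K(x)}_{\mathcal{L}(V_0,V_1)}\lesssim C_\ell|x|^{-d-1},
\end{equation*}
which in turn imply the H\"ormander integral condition
\begin{equation*}
\sup_{y\neq 0}\int_{|x|>2|y|}\tnorm{K(x-y)-K(x)}_{\mathcal{L}(V_0,V_1)}\;\m{d}x\lesssim C_\ell.
\end{equation*}
The hypothesis $\ell>d/2$ is what guarantees both the absolute convergence of the dyadic sums and the necessary number of integrations by parts.

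With the $L^2$-bound and H\"ormander kernel condition in hand, I would invoke the Hilbert-valued version of the Calder\'on--Zygmund theorem to extract the weak-$(1,1)$ estimate $\tnorm{m(D)}_{L^1V_0\to L^{1,\infty}V_1}\lesssim C_\ell$. Marcinkiewicz interpolation against the $L^2$-bound then furnishes the full range $1<p\le 2$. For $2\le p<\infty$, I would argue by duality: the adjoint symbol $\xi\mapsto m(\xi)^\ast\in\mathcal{L}(V_1,V_0)$ obeys the same hypotheses, so applying the previous argument to $m^\ast(D)$ and dualizing (using that $V_0,V_1$ are Hilbert, hence canonically reflexive) yields the desired $L^p$-bound for $m(D)$.

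The principal technical hurdle lies in verifying that the scalar Calder\'on--Zygmund machinery transfers cleanly to the Hilbert-valued setting. All the standard ingredients---the Calder\'on--Zygmund decomposition of an $L^1$-function, the good-$\lambda$ inequality, and the kernel-based estimate of the bad part---go through once scalar absolute values are consistently replaced by $\mathcal{L}(V_0,V_1)$-operator norms, provided one carefully tracks strong measurability of the operator-valued kernel $K$ in the sense of Definition~\ref{definition of strong measurability} rather than norm-measurability, since the latter can fail when $V_0,V_1$ are infinite-dimensional. Separable Hilbert spaces enjoy the UMD property in the strongest possible sense, so no additional geometric hypotheses on $V_0,V_1$ beyond the pointwise operator-norm symbol estimates are required for the argument to close.
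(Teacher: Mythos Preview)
The paper does not prove this theorem at all: it records it as a classical result due to Schwartz and refers to Proposition~6.16 in Bergh and L\"ofstr\"om for a proof. So there is no in-paper argument to compare against, and your task is simply to give a correct proof.

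Your outline is the standard Calder\'on--Zygmund route and is conceptually sound, but there is a genuine gap in the kernel step. The pointwise bounds you claim,
\[
\tnorm{K(x)}_{\mathcal{L}(V_0,V_1)}\lesssim C_\ell|x|^{-d},\qquad\tnorm{\nabla K(x)}_{\mathcal{L}(V_0,V_1)}\lesssim C_\ell|x|^{-d-1},
\]
do \emph{not} follow from the hypothesis $\ell>d/2$; they require $\ell>d$. Concretely, your dyadic piece satisfies $\tnorm{K_j(x)}\lesssim C_\ell\,2^{j(d-\ell)}|x|^{-\ell}$ after $\ell$ integrations by parts, and summing this over $2^j>|x|^{-1}$ diverges unless $d-\ell<0$. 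For $d\ge 2$ the theorem allows values of $\ell$ with $d/2<\ell\le d$, and in that range your sum does not converge.

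The fix is to bypass pointwise kernel estimates and establish the H\"ormander integral condition directly via $L^2$ methods. From Plancherel on each dyadic shell one gets
\[
\int_{\R^d}\tnorm{K_j(x)}^2|x|^{2\ell}\;\m{d}x\lesssim C_\ell^2\,2^{j(d-2\ell)},
\]
and then Cauchy--Schwarz gives $\int_{|x|>R}\tnorm{K_j(x)}\;\m{d}x\lesssim C_\ell\,(2^jR)^{(d-2\ell)/2}$, which \emph{does} sum for $\ell>d/2$. Applying the analogous estimate to $K_j(\cdot-y)-K_j(\cdot)$ with $R=2|y|$ yields the H\"ormander condition with constant $\lesssim C_\ell$, and from there your weak-$(1,1)$, interpolation, and duality steps go through as written. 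This is exactly how the cited reference handles the sharp exponent $\ell>d/2$.
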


 We now record an important maximal inequality due to Fefferman and Stein~\cite{MR284802} (see also Theorem 1 in Chapter 2 of Stein~\cite{MR1232192}).  In what follows $\mathcal{M}$ denotes the usual Hardy-Littlewood maximal function.

\begin{thm}[Fefferman-Stein maximal inequality]\label{thm on fefferman stein maximal inequality}
    Suppose that $\tcb{f_\ell}_{\ell\in\Z}\subseteq L^1_\m{loc}(\R^d)$.  Then for $1<p<\infty$ we have the inequality 
    \begin{equation}
        \bnorm{\bp{\sum_{\ell\in\Z}|\mathcal{M}(f_\ell)|^2}^{1/2}}_{L^p}\lesssim\bnorm{\bp{\sum_{\ell\in\Z}|f_\ell|^2}^{1/2}}_{L^p}.
    \end{equation}
\end{thm}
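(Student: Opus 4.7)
The strategy is to view the operator $T\colon \{f_\ell\}_{\ell\in\Z}\mapsto\{\mathcal{M}(f_\ell)\}_{\ell\in\Z}$ as a sublinear operator acting on $\ell^2(\Z)$-valued Lebesgue functions and apply Marcinkiewicz interpolation between a strong $(2,2)$ and a weak $(1,1)$ estimate. The range $p>2$ I would handle separately via duality against $L^{(p/2)'}$ together with a scalar weighted maximal inequality.

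The $p=2$ endpoint is a one line consequence of Fubini-Tonelli and the standard scalar bound $\|\mathcal{M} h\|_{L^2}\lesssim\|h\|_{L^2}$:
\begin{equation*}
    \int_{\R^d}\sum_{\ell\in\Z}|\mathcal{M}(f_\ell)|^2 = \sum_{\ell\in\Z}\|\mathcal{M}(f_\ell)\|_{L^2}^2 \lesssim \sum_{\ell\in\Z}\|f_\ell\|_{L^2}^2 = \int_{\R^d}\sum_{\ell\in\Z}|f_\ell|^2.
\end{equation*}
For $p>2$, set $q=p/2>1$ and, for any test function $g\in L^{q'}(\R^d)$ with $g\ge 0$ and $\|g\|_{L^{q'}}\le 1$, invoke the scalar weighted inequality $\int(\mathcal{M} h)^2 g\lesssim\int|h|^2\mathcal{M} g$ (itself established by a Calder\'on-Zygmund good-$\lambda$ decomposition). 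Summing over $\ell$, applying H\"older to the resulting product, using that $\mathcal{M}$ is bounded on $L^{q'}$ since $q'>1$, and taking a supremum over admissible $g$, yields the claimed bound in the regime $p>2$.

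The main task is therefore the weak-$(1,1)$ estimate for $T$. I would set $F:=(\sum_\ell|f_\ell|^2)^{1/2}\in L^1(\R^d)$, fix $\lambda>0$, and perform the Calder\'on-Zygmund decomposition of $F$ at height $\lambda$ to extract a disjoint family of dyadic cubes $\{Q_j\}$ satisfying $F\le\lambda$ almost everywhere on $\R^d\setminus\bigcup_j Q_j$, $|Q_j|^{-1}\int_{Q_j}F\lesssim\lambda$, and $\sum_j|Q_j|\lesssim\lambda^{-1}\|F\|_{L^1}$. Decomposing each $f_\ell=g_\ell+b_\ell$ accordingly (so that $b_\ell$ is supported in $\bigcup_j Q_j$ with vanishing integral on each $Q_j$), the pointwise bound $(\sum_\ell|g_\ell|^2)^{1/2}\lesssim\lambda$ together with Chebyshev and the $L^2$ estimate already established controls the good part. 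For the bad part, the exceptional set $\bigcup_j 2Q_j$ has measure $\lesssim\lambda^{-1}\|F\|_{L^1}$, while away from it one exploits that a ball $B\ni x$ contributing to $\mathcal{M}(b_\ell)(x)$ either misses $Q_j$ or is comparable in size to $\m{dist}(x,Q_j)^d$; combined with the vanishing mean on $Q_j$, this yields an integrable geometric decay which is then summed across $\ell$ via Minkowski's inequality in $\ell^2$.

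The main obstacle is this $\ell^2$-aware handling of the bad part. Unlike a Calder\'on-Zygmund singular integral, whose kernel provides cancellation that naturally meshes with the vanishing-mean property, the sublinear maximal operator carries no such structural cancellation, so the geometric cube-by-cube estimate must be set up by hand before Minkowski in $\ell^2$ can glue the $\ell$-sum together. Once this step is complete, Marcinkiewicz interpolation between the weak-$(1,1)$ and strong-$(2,2)$ bounds yields the full range $1<p\le 2$, and combined with the duality argument above gives the case $p>2$, completing the proof.
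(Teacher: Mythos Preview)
The paper does not prove this theorem; it simply records it with citations to Fefferman--Stein and to Stein's monograph. So there is no in-paper argument to compare against, and your outline must stand on its own.

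Your overall architecture --- strong $(2,2)$ plus weak $(1,1)$ and Marcinkiewicz interpolation for $1<p\le 2$, and the weighted-duality route for $p>2$ --- is the standard one, and the $p=2$ and $p>2$ portions are fine. The gap is in the weak-$(1,1)$ step, specifically your treatment of the bad part. You write that the vanishing mean of $b_\ell$ on each $Q_j$, combined with the geometric fact $|B|\gtrsim\operatorname{dist}(x,Q_j)^d$, ``yields an integrable geometric decay''. It does not. For a Calder\'on--Zygmund singular integral, kernel smoothness converts the mean-zero property into an extra factor $\ell(Q_j)/\operatorname{dist}(x,Q_j)$, giving the integrable decay $\operatorname{dist}(x,Q_j)^{-d-1}$. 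The maximal operator has no such smoothness: the best pointwise bound outside $2Q_j$ is
\[
\mathcal{M}(b_\ell^{(j)})(x)\lesssim\operatorname{dist}(x,Q_j)^{-d}\int_{Q_j}|b_\ell^{(j)}|,
\]
whose integral over $(2Q_j)^c$ diverges, and the vanishing mean of $b_\ell^{(j)}$ is irrelevant since $\mathcal{M}$ depends only on $|b_\ell^{(j)}|$. You flag this obstacle yourself in the last paragraph, but the mechanism you propose to overcome it is precisely the one that fails.

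The fix abandons the good/bad split and instead replaces each $f_\ell$ by $\tilde f_\ell$, equal to $|f_\ell|$ off $\Omega=\bigcup_jQ_j$ and to the constant $|Q_j|^{-1}\int_{Q_j}|f_\ell|$ on each $Q_j$. For $x\notin\bigcup_jCQ_j$ any ball centered at $x$ that meets $Q_j$ must contain $Q_j$, which yields the pointwise domination $\mathcal{M}(f_\ell)(x)\lesssim\mathcal{M}(\tilde f_\ell)(x)$. Minkowski in $\ell^2$ gives $(\sum_\ell|\tilde f_\ell|^2)^{1/2}\lesssim\lambda$ everywhere and $\|(\sum_\ell|\tilde f_\ell|^2)^{1/2}\|_{L^1}\le\|F\|_{L^1}$; the already-established strong-$(2,2)$ bound applied to $\{\tilde f_\ell\}$ then controls the superlevel set off $\bigcup_jCQ_j$. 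This is the argument in the reference the paper cites.
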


Next, we record some vector-valued adaptations of well-known estimates from Littlewood-Paley theory. The first is based on Theorem 6.1.2 and Proposition 6.1.4 of Grafakos~\cite{MR3243734}.

\begin{thm}[Annular Littlewood-Paley, I]\label{thm on annular littlewood paley, I}
    Let $V$ be a separable complex Hilbert space, and let $s\in[0,\infty)$, $1<p<\infty$, and $m\in\N$.  Suppose that $\tcb{\varphi_j}_{j\in\Z}\in C^\infty_c(\R^d)$ are such that for all $j\in\Z$ we have that $\mathscr{F}[\varphi_j]$ is supported in the annulus $B(0,2^{m+j})\setminus\Bar{B(0,2^{-m+j})}$ and there exists a constants $\tcb{C_\al}_{\al \in \N^d}$ such that
    \begin{equation}
        |\pd^\al\varphi_j(\xi)|\le C_\al2^{-j|\al|} \text{ for all }\xi\in\R^d 
        \text{ and } \alpha \in \N^d.
    \end{equation}
    Then  for every $f\in H^{s,p}(\R^d;E)$ we have the inequality
        \begin{equation}
            \bnorm{\bp{\sum_{j\in\Z}\tbr{2^j}^{2s}\tnorm{\varphi_j(D)f}_V^2}^{1/2}}_{L^p}\lesssim\tnorm{f}_{H^{s,p}V},
        \end{equation}
        with implicit constants depending only on $\varphi$, $s$, $p$, $d$, $m$, and finitely many of the $\tcb{C_\al}_{\al}$.
\end{thm}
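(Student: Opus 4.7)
The plan is to realize the square function inequality as the $L^p$-boundedness of a single operator-valued Fourier multiplier and then invoke the vector-valued Mikhlin--H\"ormander theorem (Theorem~\ref{hormander mikhlin multiplier theorem}) already at our disposal. Set $g=\tbr{D}^s f$ so that $\tnorm{g}_{L^p V}=\tnorm{f}_{H^{s,p}V}$, and rewrite $\varphi_j(D)f=m_j(D)g$ for the symbol $m_j(\xi)=\varphi_j(\xi)\tbr{\xi}^{-s}$. The target estimate then becomes
\begin{equation*}
    \Big\lv\Big(\sum_{j\in\Z}\tbr{2^j}^{2s}\tnorm{m_j(D)g}_V^2\Big)^{1/2}\Big\rv_{L^p}\lesssim\tnorm{g}_{L^p V}.
\end{equation*}

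I would next bundle the family $\{m_j\}_{j\in\Z}$ into one operator-valued symbol $M:\R^d\to\mathcal{L}(V;\ell^2(\Z;V))$ defined by $M(\xi)v=\{\tbr{2^j}^s m_j(\xi)v\}_{j\in\Z}$. Since $\ell^2(\Z;V)$ is a separable Hilbert space, the desired inequality is exactly the statement that $M(D):L^p(\R^d;V)\to L^p(\R^d;\ell^2(\Z;V))$ is bounded. By Theorem~\ref{hormander mikhlin multiplier theorem}, this reduces to verifying the H\"ormander estimate
\begin{equation*}
    \max_{|\al|\le \ell}\sup_{\xi\ne 0}|\xi|^{|\al|}\tnorm{\pd^\al M(\xi)}_{\mathcal{L}(V;\ell^2 V)}\lesssim 1
\end{equation*}
for any fixed integer $\ell>d/2$.

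The verification of this estimate rests on two observations. First, since (interpreting the hypothesis as the natural symbol support condition) $\m{supp}\,\varphi_j\subseteq\{2^{-m+j}\le|\xi|\le 2^{m+j}\}$, for each fixed $\xi\ne 0$ at most $2m+1$ values of $j$ produce a nonzero term in $M(\xi)$. Second, Leibniz's rule combined with the scale-invariant bound $|\pd^\be\varphi_j(\xi)|\le C_\be 2^{-j|\be|}$ and the classical estimate $|\pd^\gam\tbr{\xi}^{-s}|\lesssim\tbr{\xi}^{-s-|\gam|}$ gives
\begin{equation*}
    |\pd^\al m_j(\xi)|\lesssim\sum_{\be\le\al}2^{-j|\be|}\tbr{\xi}^{-s-|\al|+|\be|}.
\end{equation*}
On $\m{supp}\,\varphi_j$ we have $|\xi|\asymp 2^j$ when $j\ge 0$, while when $j<0$ both $|\xi|$ and $\tbr{\xi}$ are bounded by $2^m$ and $\tbr{2^j}\asymp 1$. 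A short case analysis then shows that $\tbr{2^j}^s|\xi|^{|\al|}|\pd^\al m_j(\xi)|\lesssim_{m,s,\al}1$ uniformly in $j\in\Z$ on $\m{supp}\,\varphi_j$. Combining the two observations yields
\begin{equation*}
    |\xi|^{2|\al|}\tnorm{\pd^\al M(\xi)}_{\mathcal{L}(V;\ell^2 V)}^2=\sum_{j\in\Z}\tbr{2^j}^{2s}|\xi|^{2|\al|}|\pd^\al m_j(\xi)|^2\tnorm{v}_V^2\lesssim (2m+1)\tnorm{v}_V^2,
\end{equation*}
which is the required bound.

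The main subtlety is purely bookkeeping: one must carry the weight $\tbr{2^j}^{2s}$ (rather than $|\xi|^{2s}$) through the estimate, which is why absorbing $\tbr{D}^{s}$ into the symbol at the very start is convenient. The only real case to watch is $j<0$, where $\tbr{\xi}$ and $\tbr{2^j}$ decouple from $2^j$; there one uses $|\xi|^{|\al|}\le 2^{(m+j)|\al|}$ together with $|\partial^\al m_j(\xi)|\lesssim 2^{-j|\al|}$ to absorb the negative power of $2^j$. No new harmonic-analytic machinery beyond Theorem~\ref{hormander mikhlin multiplier theorem} is needed.
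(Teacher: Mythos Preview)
Your proposal is correct and essentially identical to the paper's proof: both substitute $g=\tbr{D}^s f$, package the family into the single $\ell^2(\Z;V)$-valued symbol $M(\xi)v=\{\tbr{2^j}^s\varphi_j(\xi)\tbr{\xi}^{-s}v\}_{j\in\Z}$, and invoke the vector-valued Mikhlin--H\"ormander theorem (Theorem~\ref{hormander mikhlin multiplier theorem}). You simply spell out the symbol-estimate verification and the $j<0$ bookkeeping that the paper leaves implicit.
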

\begin{proof}
    It suffices to prove that for $f\in L^p(\R^d;V)$ we have the inequality
    \begin{equation}
        \bnorm{\bp{\sum_{j\in\Z}\tbr{2^j}^{2s}\tnorm{\varphi_j(D)\tbr{D}^{-s}f}_V^2}^{1/2}}_{L^p}\lesssim\tnorm{f}_{L^{p}V}.
    \end{equation}
    But this is a direct application of the Mikhlin-H\"ormander multiplier theorem,  Theorem~\ref{hormander mikhlin multiplier theorem}, with the smooth multiplier $m:\R^d\to\mathcal{L}(V;\ell^2(\Z;V))$ given by 
    \begin{equation}
         m(\xi)x=\tcb{\tbr{2^j}^s\varphi_j(\xi)x/\tbr{\xi}^s}_{j\in\Z} \text{ for } \xi\in\R^d \text{ and } x\in V.
    \end{equation} 
\end{proof}

Our next two results are loosely based on Lemmas 2.1.F and  2.1.G from Taylor~\cite{MR1121019}.

\begin{thm}[Annular Littlewood-Paley, II]\label{thm on annular littlewood paley, II}
    Let $V$ be a separable complex Hilbert space, and fix $s\in[0,\infty)$, $1<p<\infty$, and $m\in\N^+$. For $j\in\Z$ let $A_j=B(0,2^{j+m})\setminus\Bar{B(0,2^{j-m})}$. Suppose that $\tcb{f_j}_{j\in\Z}\subset L^p(\R^d;V)$ satisfy $\supp\mathscr{F}[f_j]\subset A_j$ for every $j\in\Z$. Then
    \begin{equation}\label{littlewood paley estimate 2}
        \sup_{ \substack{F \subset \Z \\ F \text{ finite}}  } \bnorm{\sum_{j \in F} f_j}_{H^{s,p}V}\lesssim\bnorm{\bp{\sum_{j\in\Z}\tbr{2^j}^{2s}\tnorm{f_j}_V^2}^{1/2}}_{L^p},
    \end{equation}
    with the implicit constant depending only on $d$, $s$, $p$, and $m$. Moreover, if the right hand side is finite, then the series $\sum_{j \in \Z} f_j$ converges unconditionally in $H^{s,p}(\R^d;V)$.
\end{thm}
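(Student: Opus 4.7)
The plan is to proceed by duality, reducing the claim to Theorem~\ref{thm on annular littlewood paley, I}. Fix a smooth scalar cutoff $\psi\in C^\infty_c(\R^d)$ equal to $1$ on $B(0,2^m)\setminus\Bar{B(0,2^{-m})}$ and supported in $\tilde{A}=B(0,2^{m+1})\setminus B(0,2^{-m-1})$, and set $\psi_j(\xi)=\psi(2^{-j}\xi)$. Since $\psi_j\equiv 1$ on $A_j$, we have $\psi_j(D)f_j=f_j$, and hence
\begin{equation*}
    \tbr{D}^s f_j=\tbr{2^j}^s\phi_j(D)f_j \quad\text{where}\quad \phi_j(\xi)=\tbr{2^j}^{-s}\tbr{\xi}^s\psi_j(\xi).
\end{equation*}
A routine application of the product rule, combined with the support constraint $\xi\in 2^j\tilde{A}$ (on which $\tbr{\xi}\asymp\tbr{2^j}$), yields $\tabs{\pd^\al\phi_j(\xi)}\lesssim_\al 2^{-j\tabs{\al}}$ uniformly in $j\in\Z$, with $\supp\phi_j\subset 2^j\tilde{A}$.

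Next, I pair $\sum_{j\in F}f_j$ against test functions $g\in L^{p'}(\R^d;V)$ via the sesquilinear duality $(L^pV)^\ast\simeq L^{p'}V$ realised through $(h,g)\mapsto\int_{\R^d}\tbr{h(x),g(x)}_V\;\m{d}x$. Moving each $\phi_j(D)$ across the pairing (its $L^2$-adjoint is $\Bar{\phi_j}(D)$, a scalar Fourier multiplier enjoying the same symbol bounds as $\phi_j$), then applying Cauchy--Schwarz pointwise in $V$, Cauchy--Schwarz in $\ell^2(\Z)$, and finally H\"older in $L^p/L^{p'}$ in the $x$-variable in succession, I obtain
\begin{equation*}
    \babs{\bbr{\tbr{D}^s\sum\nolimits_{j\in F}f_j,\;g}_{L^2 V}}\lesssim\bnorm{\bp{\sum_{j\in\Z}\tbr{2^j}^{2s}\tnorm{f_j}_V^2}^{1/2}}_{L^p}\bnorm{\bp{\sum_{j\in\Z}\tnorm{\Bar{\phi_j}(D)g}_V^2}^{1/2}}_{L^{p'}}.
\end{equation*}
Theorem~\ref{thm on annular littlewood paley, I} applied at regularity $s=0$ to the sequence $\{\Bar{\phi_j}\}$ dominates the second factor by $\tnorm{g}_{L^{p'}V}$, and taking the supremum over $g$ in the unit ball of $L^{p'}(\R^d;V)$ and over finite $F$ yields~\eqref{littlewood paley estimate 2}.

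For the unconditional convergence, suppose the right side of~\eqref{littlewood paley estimate 2} is finite, so that the envelope $\Phi=\bp{\sum_{j\in\Z}\tbr{2^j}^{2s}\tnorm{f_j}_V^2}^{1/2}$ belongs to $L^p(\R^d)$. For nested finite subsets $F_1\subset F_2\subset\Z$, the already-established inequality applied to the subfamily $\{f_j\}_{j\in F_2\setminus F_1}$ gives
\begin{equation*}
    \bnorm{\sum\nolimits_{j\in F_2\setminus F_1}f_j}_{H^{s,p}V}\lesssim\bnorm{\bp{\sum_{j\notin F_1}\tbr{2^j}^{2s}\tnorm{f_j}_V^2}^{1/2}}_{L^p},
\end{equation*}
and the dominated convergence theorem (with dominating envelope $\Phi$) drives the right side to $0$ as $F_1$ exhausts $\Z$. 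The net of finite partial sums is therefore Cauchy in $H^{s,p}(\R^d;V)$, proving unconditional convergence.

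The main technical hurdle is verifying the uniform symbol estimate $\tabs{\pd^\al\phi_j}\lesssim_\al 2^{-j\tabs{\al}}$, which requires a case split between $j\ge 0$ --- where $\tbr{\xi}^s$ acts as the homogeneous function $\tabs{\xi}^s$ on $2^j\tilde{A}$ and scales naturally --- and $j\le 0$ --- where $\tbr{\xi}^s$ is smooth on the relatively compact region $2^j\tilde{A}$, so the localisation of $\psi_j$ alone produces the required scaling while the inequality $2^{-j\tabs{\al}}\ge 1$ provides ample slack. Once this multiplier factorisation is in place, the remainder of the argument is a routine application of vector-valued Littlewood--Paley theory.
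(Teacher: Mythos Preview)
Your argument is correct and follows essentially the same duality strategy as the paper: introduce a cutoff equal to $1$ on $A_j$, absorb the weight $\tbr{\xi}^s/\tbr{2^j}^s$ into the multiplier, move it onto the test function, and invoke Theorem~\ref{thm on annular littlewood paley, I} at regularity zero on the $L^{p'}$ side. The paper's proof is terser about the symbol estimate for $\phi_j$ (it just asserts the family satisfies the hypotheses of Theorem~\ref{thm on annular littlewood paley, I}), whereas you spell out the $j\geqslant 0$ versus $j\leqslant 0$ dichotomy; this is harmless extra care.
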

\begin{proof}
    Let $1 < q < \infty$ satisfy $1/p + 1/q =1$, and let $\varphi\in C^\infty_c(A_{-1}\cup A_0\cup A_1)$ be such that $\varphi=1$ on $\Bar{A_0}$. Given $g\in L^{q}(\R^d;V)$ and a finite set $F \subset \Z$, we compute
    \begin{equation}
        \int_{\R^d}\bbr{\tbr{D}^s\sum_{ j \in F}f_j,g}=\sum_{j \in F}\int_{\R^d}\sbr{\tbr{2^j}^sf_j,\tilde{\varphi}_j(D)g}
    \end{equation}
    where $\tilde{\varphi}_j(\xi)=\tbr{\xi}^s\tbr{2^j}^{-s}\varphi(\xi/2^j)$. Hence, we can apply Cauchy-Schwartz, H\"older, Theorem~\ref{thm on annular littlewood paley, I}, and duality (See, e.g., Theorem 5 of Section 4 in Chapter 12 of Dinculeanu~\cite{MR0206190}) to acquire the bound
    \begin{equation}
        \bnorm{\sum_{j \in F} f_j}_{H^{s,p}V}
        =\sup_{\tnorm{g}_{L^{q}V}\le 1}\babs{\int_{\R^d}\bbr{\tbr{D}^s\sum_{j \in F}f_j,g}}
        \lesssim\bnorm{\bp{\sum_{j \in \Z}\tbr{2^j}^{2s}\tnorm{f_j}_V^2}^{1/2}}_{L^p}.
    \end{equation}
    Estimate~\eqref{littlewood paley estimate 2} follows.  A similar strategy shows that if the right hand side of \eqref{littlewood paley estimate 2} is finite, and $\{F_n\}_{n \in \N}$ is any increasing sequence of finite subsets of $\Z$ such that $\Z = \bigcup_{n \in \N} F_n$, then $\{\sum_{j \in F_n} f_j \}_{n \in \N}$ is Cauchy (and hence convergent) in $H^{s,p}(\R^d;V)$, and the limit is independent of the choice of the sequence.  This implies the unconditional convergence of the series.  We omit further details for the sake of brevity.
\end{proof}

Finally, we record a non-annular Littlewood-Paley estimate.  Note that in this result it is important that we are considering spaces of positive regularity.

\begin{thm}[Ball Littlewood-Paley]\label{thm on ball littlewood paley}
    Let $V$ be a separable complex Hilbert space, and fix $s\in\R^+$, $1<p<\infty$, and $m\in\N^+$. For $j\in\N$ set $B_j=B(0,2^{j+m})$. Suppose that $\tcb{f_j}_{j\in\N}\subset L^p(\R^d;V)$ satisfy $\supp\mathscr{F}[f_j]\subset B_j$ for every $j\in\N$. Then
    \begin{equation}\label{littlewood paley estimate 3}
        \sup_{ \substack{F \subset \N \\ F \text{ finite}}  } \bnorm{\sum_{j\in F} f_j}_{H^{s,p}V}\lesssim\bnorm{\bp{\sum_{j\in\N}4^{js}\tnorm{f_j}_V^2}^{1/2}}_{L^p},
    \end{equation}
    with the implicit constant depending only on $d$, $s$, $p$, and $m$. Moreover, if the right hand side is finite, then the series $\sum_{j \in \N} f_j$ converges unconditionally in $H^{s,p}(\R^d;V)$.
\end{thm}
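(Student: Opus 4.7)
The plan is to decompose each $f_j$ via a standard Littlewood-Paley partition of unity into one low-frequency ball piece plus a sequence of annular pieces, reindex the double sum, and treat the two resulting parts separately: the annular portion using Theorem~\ref{thm on annular littlewood paley, II} and the leftover low-frequency piece using Bernstein's inequality. In both cases a weighted Cauchy-Schwarz (whose geometric weights converge precisely because $s>0$) together with the Fefferman-Stein maximal inequality (Theorem~\ref{thm on fefferman stein maximal inequality}) will convert the pointwise triangle inequalities into the target $\ell^2$-type bound.

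To execute this, I will fix radial cutoffs $\phi_0,\phi\in C_c^\infty(\R^d)$ with $\phi_0=1$ on $B(0,1)$, $\supp\phi_0\subset B(0,2)$, $\supp\phi\subset\{1/2\le|\xi|\le 2\}$, and $\phi_0(\xi)+\sum_{k\ge 1}\phi(\xi/2^k)=1$, and set $\Delta_0=\phi_0(D)$, $\Delta_k=\phi(2^{-k}D)$ for $k\ge 1$. The support hypothesis $\supp\mathscr{F}[f_j]\subset B(0,2^{j+m})$ forces $\Delta_k f_j=0$ for $k>j+m+1$, so for any finite $F\subset\N$,
\begin{equation*}
\sum_{j\in F}f_j=g_0+\sum_{k\ge 1}g_k,\qquad g_k=\Delta_k\sum_{\substack{j\in F\\ j\ge k-m-1}}f_j.
\end{equation*}
Each $g_k$ with $k\ge 1$ is Fourier-supported in the annulus $\{2^{k-1}\le|\xi|\le 2^{k+1}\}$, so Theorem~\ref{thm on annular littlewood paley, II} supplies $\bnorm{\sum_{k\ge 1}g_k}_{H^{s,p}V}\lesssim\bnorm{(\sum_{k\ge 1}4^{ks}\tnorm{g_k}_V^2)^{1/2}}_{L^p}$, while $g_0$ is Fourier-supported in the fixed ball $B(0,2)$ so Bernstein's inequality (applied to the smooth, compactly-supported multiplier $\tbr{\xi}^s\phi_0(\xi)$) delivers $\tnorm{g_0}_{H^{s,p}V}\lesssim\tnorm{g_0}_{L^pV}$.

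To control both right-hand sides with a common mechanism, I will dominate $\tnorm{\Delta_k f_j(x)}_V\lesssim\mathcal{M}(\tnorm{f_j(\cdot)}_V)(x)$ using the rescaled $L^1$-kernel of $\Delta_k$, and apply Cauchy-Schwarz with weight $2^{-s(j-k+m+1)}$ on the restricted sum $\sum_{j\ge k-m-1}$ (and, for $g_0$, the weight $2^{-2sj}$ on $\sum_{j\ge 0}$). This is where $s>0$ enters decisively: it is precisely what makes $\sum_{j\ge k-m-1}2^{-s(j-k+m+1)}$ and $\sum_{j\ge 0}4^{-sj}$ converge, and, after swapping the $k$- and $j$-sums, what makes $\sum_{1\le k\le j+m+1}2^{sk}\asymp 2^{sj}$. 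The net effect is that both $(\sum_{k\ge 1}4^{ks}\tnorm{g_k}_V^2)^{1/2}$ and $\tnorm{g_0}_V$ are pointwise dominated by $\bigl(\sum_{j\in F}4^{js}\mathcal{M}(\tnorm{f_j}_V)^2\bigr)^{1/2}$. Taking $L^p$ norms and invoking Fefferman-Stein (Theorem~\ref{thm on fefferman stein maximal inequality}) then strips the maximal functions and yields~\eqref{littlewood paley estimate 3}.

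The unconditional convergence of the series $\sum_{j\in\N}f_j$ in $H^{s,p}(\R^d;V)$, under the assumption that the right-hand side of~\eqref{littlewood paley estimate 3} is finite, will follow by a standard tail argument: for any exhausting sequence of finite sets $F_n\nearrow\N$, applying~\eqref{littlewood paley estimate 3} to $F_n\triangle F_m$ gives $\bnorm{\sum_{F_n}f_j-\sum_{F_m}f_j}_{H^{s,p}V}\lesssim\bnorm{(\sum_{j\in F_n\triangle F_m}4^{js}\tnorm{f_j}_V^2)^{1/2}}_{L^p}$, and dominated convergence in $L^p$ (against the full square function) sends the right-hand side to zero, with independence of the limit from the sequence. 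The main technical obstacle is the separate treatment of the low-frequency piece $g_0$: Theorem~\ref{thm on annular littlewood paley, II} genuinely needs annular support, so one cannot avoid the Bernstein step, and it is exactly there, as well as in the Cauchy-Schwarz geometric weights, that the positivity of $s$ is indispensable.
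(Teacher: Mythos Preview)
Your proof is correct but follows a different route from the paper's. The paper argues by duality: it fixes $g\in L^{q}(\R^d;V)$ with $1/p+1/q=1$, takes an inhomogeneous Littlewood-Paley partition $\{\varphi_k\}_{k\ge 0}$, and writes
\[
\int_{\R^d}\Big\langle\tbr{D}^s\sum_{j\in F}f_j,\,g\Big\rangle=\sum_{j\in F}\sum_{k=0}^{j+3+m}\int_{\R^d}\big\langle 2^{js}f_j,\,2^{(k-j)s}\tilde\varphi_k(D)g\big\rangle,
\]
then applies Cauchy--Schwarz in $V$, Young's convolution inequality on the discrete triangular sum (this is where $s>0$ enters), and H\"older in $L^p$--$L^q$; the $g$-side square function is finally controlled by $\tnorm{g}_{L^qV}$ via Theorem~\ref{thm on annular littlewood paley, I}. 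No maximal functions appear, and there is no separate low-frequency case.

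Your approach instead decomposes the $f_j$ directly, invokes Theorem~\ref{thm on annular littlewood paley, II} for the annular pieces $g_k$ ($k\ge 1$), handles $g_0$ by a Bernstein step, and closes with pointwise maximal-function domination plus Fefferman--Stein (Theorem~\ref{thm on fefferman stein maximal inequality}). This is a perfectly standard alternative and the weighted Cauchy--Schwarz you describe indeed works; the paper's duality argument is a bit shorter and avoids both the Fefferman--Stein machinery and the explicit splitting into low and high frequencies, at the cost of being slightly less transparent about where the estimate comes from pointwise.
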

\begin{proof}
     Let $\tcb{\varphi_k}_{k=0}^\infty$ be an inhomogeneous Littlewood-Paley partition of unity with $\sum_{k=0}^\infty\varphi_k=1$, $\varphi_k=\varphi_1(\cdot/2^k)$ for $k\ge 1$, $\supp\varphi_0\subseteq B(0,2)$, $\supp\varphi_k\subseteq B(0,2^{k+2})\setminus\Bar{B(0,2^{k-2})}$.  We also write $\tilde{\varphi}_k=2^{-ks}\tbr{\cdot}^s\varphi_k$. 

    Let $1 < q < \infty$ satisfy $1/p + 1/q =1$, and let $g\in L^{q}(\R^d;V)$.  Then for  any finite $F \subset \N$ we have that
    \begin{equation}
        \int_{\R^d}\bbr{\tbr{D}^s\sum_{j\in F} f_j,g} 
        = \sum_{j\in F} \sum_{k=0}^{j+3+m}\int_{\R^d}\tbr{\tbr{D}^sf_j,\varphi_k(D)g}
        =\sum_{j \in F} \sum_{k=0}^{j+3+m}\int_{\R^d}\tbr{2^{js}f_j,2^{(k-j)s}\tilde{\varphi}_k(D)g}.
    \end{equation}
    Hence, we may use Cauchy-Schwarz, Young's convolution inequality, and H\"older's inequality to bound
    \begin{multline}\label{applies just as well}
        \babs{\int_{\R^d}\bbr{\tbr{D}^s\sum_{j\in F}f_j,g}}
        \le\int_{\R^d}\bp{\sum_{j\in F} 4^{js}\tnorm{f_j}_V^2}^{1/2}\bp{\sum_{j\in F}\bp{\sum_{k=0}^{j+3+m}2^{s(k-j)}\tnorm{\tilde{\varphi}_k(D)g}_V}^2}^{1/2}\\
        \lesssim\bnorm{\bp{\sum_{j=0}^\infty 4^{js}\tnorm{f_j}_V^2}^{1/2}}_{L^p}\bnorm{\bp{\sum_{k=0}^\infty\tnorm{\tilde{\varphi}_k(D)g}_V^2}^{1/2}}_{L^{q}}.
    \end{multline}
    Theorem~\ref{thm on annular littlewood paley, I} provides the bound
    \begin{equation}
        \bnorm{\bp{\sum_{k=0}^\infty\tnorm{\tilde{\varphi}_k(D)g}_V^2}^{1/2}}_{L^{q}}\lesssim\tnorm{g}_{L^{q}V}.
    \end{equation}
    Taking the supremum over $g$ such that $\tnorm{g}_{L^{q}V}\le 1$ in~\eqref{applies just as well} then gives the desired estimate.  The unconditional convergence of the series then follows from a variant of this bound as in the proof of Theorem \ref{thm on annular littlewood paley, II}. 
\end{proof}

\subsection{On a novel variant of the Mikhlin-H\"ormander multiplier theorem}\label{section on the new MH multiplier theorem}

We now return to the topic of multipliers, with the aim of deriving a generalized version of Mikhlin-H\"ormander (the classical vector-valued version is Theorem~\ref{hormander mikhlin multiplier theorem}).  The task of generalizing Mikhlin-H\"ormander is, of course, not new, and there are many works in the existing literature that do so.  We pause here briefly to review these.  In unpublished work, Pisier showed that if a vector-valued Mikhlin-H\"ormander theorem holds for an $\mathcal{L}(E)$-valued symbol, then $E$ is isomorphic to a Hilbert space.  Note that Lancien, Lancien, and Le Merdy \cite{lancien_lancien_lemerdy_1998} provide a proof of Pisier's unpublished result in Remark 6.4, as a consequence of another result.  The takeaway is that there is an obstruction to versions of Mikhlin-H\"ormander for maps valued in $\mathcal{L}(B_0;B_1)$ if the $B_i$ spaces are only Banach.  The work of Bourgain \cite{bourgain_1983}, Burkholder \cite{burkholder_1981}, McConnell \cite{mcconnell_1984},  Zimmermann \cite{zimmermann_1989} found a workaround for scalar-valued symbol multipliers if the Banach space is an unconditional martingale difference (UMD) space and the Mikhlin-H\"ormander hypotheses  are strengthened a bit.  However, it is known that UMD spaces are reflexive.  The scalar UMD extension was later strengthened to operator-valued symbols  by Amann \cite{amann_1997}, Hieber \cite{hieber_1999},  Haller, Heck, and Noll \cite{haller_heck_noll_2002},  and  Giardi and Weis \cite{girardi_weis_2003_lp}.  We also refer also to Chapter 4 of Pr\"uss and Simonett \cite{pruss_simonett_2016} and the survey of Giardi and Weis \cite{girardi_weis_2003_survey} for more information.  

The Banach obstruction can also be overcome by changing from $L^p$ spaces to others.   Amann \cite{amann_1997} developed a version of Mikhlin-H\"ormander in the context of vector-valued Besov spaces, which was subsequently extended by Giardi and Weis \cite{girardi_weis_2003_besov}.  The setting of Triebel-Lizorkin spaces was considered by Bu and Kim \cite{bu_kim_2005}.

For our purposes in this paper, we can restrict to multipliers that take values in $\mathcal{L}(V;W)$, where $V$ and $W$ are separable Hilbert spaces.  However, we need a version of Mikhlin-H\"ormander  that allows us to replace a single $L^p$ space with the more general setting of $H^{s_0,p}(\R^d;V_0)\cap H^{s_1,p}(\R^d;V_1)$, the intersection of different Bessel potential spaces with values in different Hilbert spaces such that $V_1 \emb V_0$.  We will prove this new variant by combining  three main ingredients. The first is the annular Littlewood-Paley results of Theorems~\ref{thm on annular littlewood paley, I} and~\ref{thm on annular littlewood paley, II}, while the second is the Fefferman-Stein maximal inequality of Theorem~\ref{thm on fefferman stein maximal inequality}.  The final ingredient we need, which we record in the next result, gives pointwise bounds via maximal functions.  It is a generalization of Lemma 2.2 in Bahouri, Chemin, and Danchin~\cite{MR2768550}.

\begin{thm}[Pointwise bounds for spectrally localized Fourier multipliers]\label{thm on pointwise bounds for spectrally localized Fourier multipliers}
    Suppose $V$, $V_0$, and $V_1$ are separable complex Hilbert spaces such that $V_1\emb V_0$.  Let $s,s_0,s_1\in\R$ and $\mu, \ell \in \Z$ with $\mu \le 0$ and $d < \ell$.  Suppose that $m\in C^\ell(\R^d\setminus\tcb{0};\mathcal{L}(V_1,V))$ satisfies  
    \begin{equation}\label{the bounds on the multilpier}
        \tbr{\xi}^s  \max_{|\alpha|\le \ell}  |\xi|^{|\al|}\tnorm{\pd^\al m(\xi)x}_V\le C_\ell|\xi|^\mu\tp{\tbr{\xi}^{s_0}\tnorm{x}_{V_0}+\tbr{\xi}^{s_1}\tnorm{x}_{V_1}}
    \end{equation}
    for all $\xi\in\R^d\setminus\tcb{0}$ and $x \in V_1$. Let $\varphi,\tilde{\varphi}\in C^\infty_c(B(0,16)\setminus\Bar{B(0,1/16)})$ be such that $\tilde{\varphi}=1$ on the support of $\varphi$. Then for $f\in\mathscr{S}(\R^d;V_1)$, $\lambda\in\R^+$, and $z\in\R^d$ we have the pointwise estimate
    \begin{equation}
        \tbr{\lambda}^s\tnorm{m(D)\varphi(D/\lambda)f(z)}_{V}
        \lesssim C_\ell|\lambda|^\mu \left[\tbr{\lambda}^{s_0}\mathcal{M}(\tnorm{\tilde{\varphi}(D/\lambda)f}_{V_0})(z)+\tbr{\lambda}^{s_1}\mathcal{M}(\tnorm{\tilde{\varphi}(D/\lambda)f}_{V_1})(z) \right].
    \end{equation}
\end{thm}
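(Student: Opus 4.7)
The strategy mirrors the standard derivation of maximal function domination for spectrally-localized multipliers (see, e.g., Lemma~2.2 in~\cite{MR2768550}): realize $m(D)\varphi(D/\lambda)$ as convolution against an operator-valued kernel $K_\lambda$, use the support hypothesis $\tilde{\varphi}\equiv 1$ on $\supp\varphi$ to pre-apply $\tilde{\varphi}(D/\lambda)$ to $f$, derive polynomial decay of $K_\lambda$, and conclude by dominating the resulting integral by the Hardy-Littlewood maximal function via a radially-decreasing $L^1$ majorant.

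\textbf{Construction and decay of the kernel.} Define $K_\lambda:\R^d\to\mathcal{L}(V_1;V)$ pointwise by $K_\lambda(z)x=\mathscr{F}^{-1}[m(\cdot)\varphi(\cdot/\lambda)x](z)$; this is well-defined because for each $x\in V_1$ the $V$-valued integrand is continuous and supported in the annulus $\{\lambda/16\le|\xi|\le 16\lambda\}$. For $|\alpha|\le\ell$, the classical identity $(2\pi\ii z)^\alpha K_\lambda(z)x=(-1)^{|\alpha|}\mathscr{F}^{-1}[\partial^\alpha(m(\cdot)\varphi(\cdot/\lambda))x](z)$ holds. Expanding $\partial^\alpha$ by Leibniz, invoking hypothesis~\eqref{the bounds on the multilpier}, and exploiting the fact that on the support of $\varphi(\cdot/\lambda)$ we have $|\xi|\asymp\lambda$ and $\tbr{\xi}\asymp\tbr{\lambda}$, we obtain
\begin{equation*}
    \|\partial^\alpha(m(\xi)\varphi(\xi/\lambda))x\|_V \lesssim C_\ell\lambda^{\mu-|\alpha|}\bigl(\tbr{\lambda}^{s_0-s}\|x\|_{V_0}+\tbr{\lambda}^{s_1-s}\|x\|_{V_1}\bigr).
\end{equation*}
Integrating over the support (of measure $\asymp\lambda^d$) then gives $|z|^{|\alpha|}\|K_\lambda(z)x\|_V\lesssim C_\ell\lambda^{d+\mu-|\alpha|}(\tbr{\lambda}^{s_0-s}\|x\|_{V_0}+\tbr{\lambda}^{s_1-s}\|x\|_{V_1})$ for each $x\in V_1$, and combining the cases $|\alpha|=0$ and $|\alpha|=\ell$ yields the uniform envelope
\begin{equation*}
    \|K_\lambda(z)x\|_V \lesssim C_\ell \lambda^{d+\mu}(1+\lambda|z|)^{-\ell}\bigl(\tbr{\lambda}^{s_0-s}\|x\|_{V_0}+\tbr{\lambda}^{s_1-s}\|x\|_{V_1}\bigr).
\end{equation*}

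\textbf{Maximal function domination and conclusion.} Since $\tilde{\varphi}\equiv 1$ on $\supp\varphi$, we may write $m(D)\varphi(D/\lambda)f=K_\lambda\ast[\tilde{\varphi}(D/\lambda)f]$. Applying the envelope pointwise yields
\begin{equation*}
    \|m(D)\varphi(D/\lambda)f(z)\|_V \lesssim C_\ell\lambda^\mu\sum_{j=0}^1 \tbr{\lambda}^{s_j-s}\int_{\R^d}\lambda^d(1+\lambda|z-y|)^{-\ell}\|\tilde{\varphi}(D/\lambda)f(y)\|_{V_j}\,\m{d}y.
\end{equation*}
Because $\ell>d$, the function $\lambda^d(1+\lambda|\cdot|)^{-\ell}$ is a bounded, radially-decreasing, $L^1$ majorant, so the standard majorization principle (cf.~\cite{MR1232192}) bounds each integral by a universal multiple of $\mathcal{M}(\|\tilde{\varphi}(D/\lambda)f\|_{V_j})(z)$. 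Multiplying through by $\tbr{\lambda}^s$ produces the announced inequality.

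\textbf{Principal obstacle.} The main technicality is verifying the operator-valued Fourier analysis underlying the kernel construction, in particular ensuring the integration-by-parts identity and the pointwise operator-norm bounds descend cleanly from the symbol hypothesis~\eqref{the bounds on the multilpier}. Both are handled by evaluating on individual $x\in V_1$, thereby reducing every estimate to classical $V$-valued Fourier analysis on compactly supported integrable functions; the two-scale structure $(s_0,V_0)$ versus $(s_1,V_1)$ then propagates additively throughout the argument, which explains the form of the right-hand side.
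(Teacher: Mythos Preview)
Your proposal is correct and follows essentially the same approach as the paper: realize the operator as convolution with an operator-valued kernel, derive $(1+\lambda|z|)^{-\ell}$ decay via integration by parts and Leibniz on the annular support, and dominate by the maximal function. The only cosmetic differences are that the paper rescales the kernel first (writing $K_\lambda$ in terms of $\varphi(\xi)m(\lambda\xi)$ and then convolving against $\lambda^d K_\lambda(\lambda\cdot)$) and carries out the maximal-function domination by an explicit dyadic-annulus decomposition rather than invoking the radially-decreasing $L^1$-majorant principle you cite.
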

\begin{proof}
    We let $K_\lambda:\R^d\to\mathcal{L}(V_1;V)$ be defined via
    \begin{equation}
        K_\lambda(z)=\int_{\R^d}e^{2\pi\ii z\cdot\xi}\varphi(\xi)m(\lambda\xi)\;\m{d}\xi,
    \end{equation}
    and note that 
    \begin{equation}\label{the convolution formula}
        m(D)\varphi(D/\lambda)f=\lambda^dK_\lambda(\lambda(\cdot))\ast(\tilde{\varphi}(D/\lambda)f)
    \end{equation}
    for all $f\in\mathscr{S}(\R^d;V_1)$.    

    Fix $x \in V_1$.   Hypothesis~\eqref{the bounds on the multilpier} implies the rescaled form
    \begin{equation}
        \tnorm{\pd^\be(m(\lambda\cdot))(\xi)x}_{V}\le C_\ell|\lambda|^\mu\tbr{\lambda\xi}^{-s}|\xi|^{\mu-|\be|}\tp{\tbr{\lambda\xi}^{s_0}\tnorm{x}_{V_0}+\tbr{\lambda\xi}^{s_1}\tnorm{x}_{V_1}}
    \end{equation}
    for every $|\beta|\le \ell$.   This, integration by parts, and an application of the Leibniz rule  then provide the following kernel bound for $|\alpha|\le \ell$: 
    \begin{equation}\label{the kernel bounds}
        \tnorm{(2\pi\ii z)^\al K_\lambda(z)x}_{V}=\int_{\R^d}\tnorm{\pd^\al(\varphi m(\lambda(\cdot)))(\xi)x}^2_V\;\m{d}\xi 
        \lesssim C_\ell\tbr{\lambda}^{-s}|\lambda|^\mu\tp{\tbr{\lambda}^{s_0}\tnorm{x}_{V_0}+\tbr{\lambda}^{s_1}\tnorm{x}_{V_1}}.
    \end{equation}
    Summing over $|\al|\le\ell$, we deduce from this that 
    \begin{equation}
        \sup_{z \in \R^d} \tbr{z}^\ell \tnorm{K_\lambda(z)x}_{V} \lesssim C_\ell\tbr{\lambda}^{-s}|\lambda|^\mu\tp{\tbr{\lambda}^{s_0}\tnorm{x}_{V_0}+\tbr{\lambda}^{s_1}\tnorm{x}_{V_1}}.
    \end{equation}

    Now we return to formula~\eqref{the convolution formula} to obtain the claimed pointwise bounds. Let  $f\in\mathscr{S}(\R^d;V_1)$, and set $g=\tilde{\varphi}(D/\lambda)f$.   Write
    \begin{equation}
        A_j(y,\lambda)=
        \begin{cases}
            B(y,1/\lambda)&\text{if }j=0,\\
            B(y,2^j/\lambda)\setminus B(y,2^{j-1}/\lambda)&\text{if }j\ge 1.
        \end{cases}
    \end{equation}
    Then the above estimates allow us to bound
    \begin{multline}
        \tnorm{(\lambda^dK_\lambda(\lambda\cdot)\ast g)(y)}_{V}
        \le\sum_{j=0}^\infty\int_{A_j(y,\lambda)}\lambda^d\tnorm{K_\lambda(\lambda(y-z))g(z)}_{V}\;\m{d}z\\
        \lesssim C_\ell\sum_{j=0}^\infty\int_{A_j(y,\lambda)}\f{\lambda^d}{\tbr{\lambda(y-z)}^\ell}\tbr{\lambda}^{-s}|\lambda|^\mu\tp{\tbr{\lambda}^{s_0}\tnorm{g(z)}_{V_0}+\tbr{\lambda}^{s_1}\tnorm{g(z)}_{V_1}}\;\m{d}z\\
        \lesssim C_\ell\tbr{\lambda}^{-s}|\lambda|^\mu\sum_{j=0}^\infty \tbr{2^j}^{d-\ell}\bp{\f{1}{|B(y,2^j/\lambda)|}\int_{B(y,2^j/\lambda)}\tp{\tbr{\lambda}^{s_0}\tnorm{g(z)}_{V_0}+\tbr{\lambda}^{s_1}\tnorm{g(z)}_{V_1}}\;\m{d}z}\\
        \lesssim C_\ell\tbr{\lambda}^{-s}|\lambda|^\mu \left[\tbr{\lambda}^{s_0}\mathcal{M}(\tnorm{g}_{V_0})(y)+\tbr{\lambda}^{s_1}\mathcal{M}(\tnorm{g}_{V_1})(y) \right].
    \end{multline}
    This yields the desired bound upon substituting in $g=\tilde{\varphi}(D/\lambda)f$.
\end{proof}

We now have all the tools needed to prove our generalization of the Mikhlin-H\"ormander multiplier theorem.

\begin{thm}[Mikhlin-H\"ormander, novel form]\label{second HM multiplier theorem}
Suppose $V$, $V_0$, and $V_1$ are separable complex Hilbert spaces such that $V_1\emb V_0$.  Let $s,s_0,s_1\in\R$ and $\mu, \ell \in \Z$ with $\mu \le 0$ and $d < \ell$.  Suppose that $m\in C^\ell(\R^d\setminus\tcb{0};\mathcal{L}(V_1;V))$ satisfies  
    \begin{equation}\label{ziggy_returns}
        \tbr{\xi}^s  \max_{|\beta|\le \ell}  |\xi|^{|\beta|}\tnorm{\pd^\beta m(\xi)x}_V\le C_\ell|\xi|^\mu\tp{\tbr{\xi}^{s_0}\tnorm{x}_{V_0}+\tbr{\xi}^{s_1}\tnorm{x}_{V_1}}
    \end{equation}
    for all $\xi\in\R^d\setminus\tcb{0}$ and $x \in V_1$.  Let  $\al\in \N^d$ satisfy $|\al|=-\mu$. Then the bounded linear map
        \begin{equation}
            m(D)D^\al:H^{s_0}(\R^d;V_0)\cap H^{s_1}(\R^d;V_1)\to H^{s}(\R^d;V),
        \end{equation}
        which is well-defined and bounded in light of the $\beta=0$ estimate from \eqref{ziggy_returns},  uniquely extends to a bounded linear map 
        \begin{equation}
            m(D)D^\al:H^{s_0,p}(\R^d;V_0)\cap H^{s_1,p}(\R^d;V_1)\to H^{s,p}(\R^d;V)
        \end{equation}
        for every $1<p<\infty$.
\end{thm}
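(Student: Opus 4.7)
The plan is to absorb $D^\al$ into the multiplier and then run a standard Littlewood--Paley/Fefferman--Stein argument using the tools of Section~\ref{appendix on vector-valued harmonic analysis} in sequence. Set $\tilde m(\xi):=\xi^\al m(\xi)$, so that $m(D)D^\al=\tilde m(D)$. A Leibniz expansion combined with the hypothesis \eqref{ziggy_returns} and the identity $|\al|=-\mu$ gives, for every $|\beta|\le\ell$ and every $x\in V_1$,
\begin{equation*}
    \tnorm{\pd^\beta\tilde m(\xi)x}_V\lesssim C_\ell|\xi|^{-|\beta|}\tbr{\xi}^{-s}\sp{\tbr{\xi}^{s_0}\tnorm{x}_{V_0}+\tbr{\xi}^{s_1}\tnorm{x}_{V_1}},\quad\xi\in\R^d\setminus\tcb{0}.
\end{equation*}
Thus $\tilde m$ is a symbol of exactly the type considered in Theorem~\ref{thm on pointwise bounds for spectrally localized Fourier multipliers} with $\mu$ replaced by $0$, and it suffices to show that $\tilde m(D):H^{s_0,p}(\R^d;V_0)\cap H^{s_1,p}(\R^d;V_1)\to H^{s,p}(\R^d;V)$ is bounded.

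Next, fix a smooth homogeneous Littlewood--Paley partition $\tcb{\varphi_j=\varphi(\cdot/2^j)}_{j\in\Z}$ with $\varphi\in C^\infty_c(B(0,2)\setminus\Bar{B(0,1/2)})$ and $\sum_{j\in\Z}\varphi_j=1$ off the origin, together with a thickening $\tcb{\tilde\varphi_j=\tilde\varphi(\cdot/2^j)}_{j\in\Z}$ with $\tilde\varphi\in C^\infty_c(B(0,16)\setminus\Bar{B(0,1/16)})$ and $\tilde\varphi\equiv 1$ on $\supp\varphi$. For $f\in\mathscr{S}(\R^d;V_1)$, applying Theorem~\ref{thm on pointwise bounds for spectrally localized Fourier multipliers} to $\tilde m$ at $\lambda=2^j$ yields the pointwise bound
\begin{equation*}
    \tbr{2^j}^s\tnorm{\tilde m(D)\varphi_j(D)f(z)}_V\lesssim C_\ell\sp{\tbr{2^j}^{s_0}\mathcal{M}(\tnorm{\tilde\varphi_j(D)f}_{V_0})(z)+\tbr{2^j}^{s_1}\mathcal{M}(\tnorm{\tilde\varphi_j(D)f}_{V_1})(z)}
\end{equation*}
for every $j\in\Z$ and $z\in\R^d$.

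To assemble the estimate, note that each $\tilde m(D)\varphi_j(D)f$ has Fourier support in the dyadic annulus $\supp\varphi_j$; Theorem~\ref{thm on annular littlewood paley, II} thus bounds $\tnorm{\tilde m(D)f}_{H^{s,p}V}$ by the $L^p$-norm of the weighted $\ell^2$ square function of $\tcb{\tilde m(D)\varphi_j(D)f}_{j\in\Z}$. Inserting the previous pointwise estimate, using $(a+b)^2\le 2(a^2+b^2)$, applying Fefferman--Stein (Theorem~\ref{thm on fefferman stein maximal inequality}) to commute the $\ell^2$-sum past the maximal function, and concluding with Theorem~\ref{thm on annular littlewood paley, I} applied to the thickened pieces $\tcb{\tilde\varphi_j(D)f}_{j\in\Z}$ then produces
\begin{equation*}
    \tnorm{\tilde m(D)f}_{H^{s,p}V}\lesssim C_\ell\sp{\tnorm{f}_{H^{s_0,p}V_0}+\tnorm{f}_{H^{s_1,p}V_1}}.
\end{equation*}
Schwartz density in the intersection $H^{s_0,p}(\R^d;V_0)\cap H^{s_1,p}(\R^d;V_1)$ (via simultaneous mollification and smooth cutoff) then promotes this a-priori bound to a unique bounded extension.

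The conceptual hinge, and the one genuine obstacle, is the opening algebraic observation: the singular factor $|\xi|^\mu$ of~\eqref{ziggy_returns} is \emph{exactly}---not merely approximately---cancelled by differentiating $\xi^\al$ when $|\al|=-\mu$, upgrading $\tilde m$ to a bona fide H\"ormander-type symbol. Once this matching is identified, the remainder is a careful but routine assembly of vector-valued harmonic analysis, with no further PDE input required.
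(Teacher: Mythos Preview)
Your proof is correct and uses the same core ingredients as the paper (the annular Littlewood--Paley estimates of Theorems~\ref{thm on annular littlewood paley, I}--\ref{thm on annular littlewood paley, II}, the pointwise maximal bound of Theorem~\ref{thm on pointwise bounds for spectrally localized Fourier multipliers}, and Fefferman--Stein), but it is organized slightly differently. The paper keeps $m$ and $D^\al$ separate throughout: it first applies Theorem~\ref{thm on pointwise bounds for spectrally localized Fourier multipliers} to $m$ at scale $2^j$ with the original $\mu$, obtaining a maximal function of $\tnorm{D^\al\tilde\varphi(D/2^j)f}_{V_i}$, and then invokes Theorem~\ref{thm on pointwise bounds for spectrally localized Fourier multipliers} a second time (with the trivial multiplier and $s=s_0=s_1=0$) to trade $D^\al$ for a factor $2^{j|\al|}$ and a further maximal function of a doubly-thickened piece $\tilde{\tilde\varphi}(D/2^j)f$. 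This produces a nested $\mathcal{M}\mathcal{M}$ and requires two applications of Fefferman--Stein. Your route---absorbing $D^\al$ into the symbol via the Leibniz identity to reduce to the case $\mu=0$---collapses both applications into one, needing only a single thickening $\tilde\varphi$ and a single Fefferman--Stein. The gain is purely economy; the paper's version has the minor advantage of isolating the role of $\mu$ and $D^\al$ in the statement of Theorem~\ref{thm on pointwise bounds for spectrally localized Fourier multipliers}, but your argument is otherwise equivalent and a bit cleaner.
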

\begin{proof}
    Let $\varphi,\tilde{\varphi},\tilde{\tilde{\varphi}}\in C^\infty_c(\R^d)$ be such that $\supp\varphi\subset B(0,8)\setminus \Bar{B(0,1/8)}$, $\supp\tilde{\varphi}, \supp\tilde{\tilde{\varphi}}\subset B(0,16)\setminus\Bar{B(0,1/16)}$, $\sum_{j\in\Z}\varphi(\cdot/2^j)=\mathds{1}_{\R^d\setminus\tcb{0}}$, $\tilde{\varphi}=1$ on $\supp\varphi$, and $\tilde{\tilde{\varphi}}=1$ on $\supp\tilde{\varphi}$.
    
    Suppose that $f\in\mathscr{S}(\R^d;V_1)$. We first use the Littlewood-Paley estimate of Theorem~\ref{thm on annular littlewood paley, II} to bound
    \begin{equation}\label{the first step}
        \tnorm{m(D)D^\al f}_{H^{s,p}V}\lesssim\bnorm{\bp{\sum_{j\in\Z}\tbr{2^j}^{2s}\tnorm{m(D)D^\al\varphi(D/2^j)f}^2_{V}}^{1/2}}_{L^p}.
    \end{equation}
    Next, for each $j\in\Z$ we apply Theorem~\ref{thm on pointwise bounds for spectrally localized Fourier multipliers} with $\lambda=2^j$ to see that
    \begin{equation}\label{combine 1}
        \tbr{2^j}^s\tnorm{m(D)D^\al\varphi(D/2^j)f}_V\lesssim C_\ell2^{-j|\al|}\tp{\tbr{2^j}^{s_0}\mathcal{M}(\tnorm{D^\al\tilde{\varphi}(D/2^j)f}_{V_0})+\tbr{2^j}^{s_1}\mathcal{M}(\tnorm{D^\al\tilde{\varphi}(D/2^j)f}_{V_1})}.
    \end{equation}
    For $i\in\tcb{0,1}$, we may apply Theorem~\ref{thm on pointwise bounds for spectrally localized Fourier multipliers} again (using the trivial multiplier $m=1$ and parameters $s = s_0 = s_1 =0$ with $V_i$ used for all three Hilbert spaces), to acquire the bound
    \begin{equation}\label{combine 2}
        \tnorm{D^\al\tilde{\varphi}(D/2^j)f}_{V_i}\lesssim2^{j|\al|}\mathcal{M}(\tnorm{\tilde{\tilde{\varphi}}(D/2^j)f}_{V_i}).
    \end{equation}
    We then combine the estimates~\eqref{combine 1} and~\eqref{combine 2}:
    \begin{equation}
        \f{1}{C_\ell}\tbr{2^j}^s\tnorm{m(D)D^\al\varphi(D/2^j)f}_V
        \lesssim
        \tbr{2^j}^{s_0}\mathcal{M}\mathcal{M}(\tnorm{\tilde{\tilde{\varphi}}(D/2^j)f}_{V_0})+\tbr{2^j}^{s_1}\mathcal{M}\mathcal{M}(\tnorm{\tilde{\tilde{\varphi}}(D/2^j)f}_{V_1}).
    \end{equation}
    From this, estimate~\eqref{the first step},  two applications of  the Fefferman-Stein maximal inequality of Theorem~\ref{thm on fefferman stein maximal inequality}, and an application of the Littlewood-Paley estimate of Theorem~\ref{thm on annular littlewood paley, I} we  deduce that 
    \begin{multline}
        \f{1}{C_\ell} \tnorm{m(D)D^\al f}_{H^{s,p}V} \\ \lesssim 
        \bnorm{ \bp{ \sum_{j \in \Z}\tbr{2^j}^{2s_0}[\mathcal{M}\mathcal{M}(\tnorm{\tilde{\tilde{\varphi}}(D/2^j)f}_{V_0})]^2 }^{1/2} }_{L^p}
        +
        \bnorm{ \bp{ \sum_{j \in \Z}\tbr{2^j}^{2s_1}[\mathcal{M}\mathcal{M}(\tnorm{\tilde{\tilde{\varphi}}(D/2^j)f}_{V_1})]^2 }^{1/2} }_{L^p} \\
        \lesssim 
        \bnorm{ \bp{ \sum_{j \in \Z}\tbr{2^j}^{2s_0}\tnorm{\tilde{\tilde{\varphi}}(D/2^j)f}_{V_0}^2 }^{1/2} }_{L^p}
        +
        \bnorm{ \bp{ \sum_{j \in \Z}\tbr{2^j}^{2s_1}\tnorm{\tilde{\tilde{\varphi}}(D/2^j)f}_{V_1}^2 }^{1/2} }_{L^p} \\
        \lesssim
         \tnorm{f}_{H^{s_0,p}V_0} + \tnorm{f}_{H^{s_1,p}V_1}.
    \end{multline}
    This estimate then allows us to extend $m(D) D^\al$ as stated.  
\end{proof}

\section{Vector-valued symbol calculus for the solution map}\label{section on vector-valued symbol calculus for the solution map}

The goal of this section is to prove that the solution operator associated to the linear system~\eqref{curl formulation of the linearization} is given by an operator-valued Fourier multiplier.  Once this is established, we then show that the symbol of the operator is a smooth function of frequency away from zero and satisfies bounds of the type appearing in the hypotheses of the Mikhlin-H\"ormander multiplier theorem (see Theorems~\ref{hormander mikhlin multiplier theorem} and~\ref{second HM multiplier theorem}).

\subsection{Preliminaries}

The following definition enumerates the translation commuting maps we are interested in extending to an $L^p$ theory. That the following is well-defined is a consequence of Theorem~\ref{thm on analysis of strong solutions, II}.

\begin{defn}[Spaces and translation commuting maps]\label{defn of spaces and TILOs}
We set the following notation.
\begin{enumerate}
    \item For $s\in\N$ we define the spaces
    \begin{equation}
        \X_s=H^{1+s}(\Omega;\C)\times H^{2+s}(\Omega;\C^3)\times H^{3/2+s}(\Sigma;\C^2),
    \end{equation}
    \begin{equation}
        \Y_s=H^s(\Omega;\C^3)\times H^{1/2+s}(\Sigma;\C^3)\times H^{5/2+s}(\Sigma;\C^2),
    \end{equation}
    \begin{equation}
        \quad\tilde{\Y}_s=\Big\{(g,f,k,h,\omega)\text{ as in~\eqref{this space is too long dear liza}}\;:\;\omega,\;h-\int_0^bg(\cdot,y)\;\m{d}y\in\dot{H}^{-1}(\Sigma;\C)\Big\}.
    \end{equation}

    \item We define the bounded linear map $\Phi:\tilde{\Y}_s\to\X_s$ via $\Phi(g,f,k,h,\omega)=(p,u,\chi)$, where the latter tuple is the unique solution to~\eqref{curl formulation of the linearization} with data $(g,f,k,h,\omega)$ provided by Theorem~\ref{thm on analysis of strong solutions, II}.  We also define the bounded linear map  $\Psi:\Y_s\to\X_s$ via  $\Psi(f,k,H)=\Phi(0,f,k,\grad_{\|}\cdot H,0)$.
\end{enumerate}
\end{defn}

It is clear from the existence and uniqueness result of Theorem~\ref{thm on analysis of strong solutions, II} and the fact that the equations~\eqref{curl formulation of the linearization} have constant coefficients that the operators $\Phi$ and $\Psi$ of the previous definition commute with translations of the tangential variables. Thus, we expect that they are given by  Fourier multipliers.  The following definition gives the class of the vector-valued Fourier multipliers with which we are concerned. As we will show later, $\Psi$ has a multiplier belonging to this admissible class.

\begin{defn}[Admissible class of vector-valued Fourier multipliers]\label{defn of admissable class of symbols}
We make the following definitions for $s\in\N$.
\begin{enumerate}
    \item We say that a symbol
    \begin{equation}\label{the multipliers in the admissiable class}
        m:\R^2\to\mathcal{L}\tp{H^s((0,b);\C^3)\times\C^3\times\C^2;H^{1+s}((0,b);\C)\times H^{2+s}((0,b);\C^3)\times\C^2}.
    \end{equation}
    belongs to the admissible class $\mathfrak{A}(s)$ if it is strongly measurable in the sense of the second item of Definition~\ref{definition of strong measurability}, and upon writing $m$ in `matrix form'    
    \begin{equation}
        m=\bpm m_{11}&m_{12}&m_{13}\\ m_{21}&m_{22}&m_{23}\\m_{31}&m_{32}&m_{33}\epm,
    \end{equation}
    we have that there exists a constant $C<\infty$ and a full measure set $E\subseteq\R^2$ such that for $\xi\in E$:
    \begin{enumerate}
        \item $m_{11}(\xi)\in\mathcal{L}(H^s((0,b);\C^3);H^{1+s}((0,b);\C))$ obeys the bound 
        \begin{equation}\label{m11}
            \tnorm{m_{11}(\xi)\phi}_{H^{1+s}}+\tbr{\xi}^{1+s}\tnorm{m_{11}(\xi)\phi}_{L^2}\le C\tp{\tnorm{\phi}_{H^s}+\tbr{\xi}^s\tnorm{\phi}_{L^2}}
        \end{equation}
        for $\phi\in H^s((0,b);\C^3)$;
        \item $m_{12}(\xi)\in\mathcal{L}(\C^3;H^{1+s}((0,b);\C))$ obeys the bound
        \begin{equation}\label{m12}
            \tnorm{m_{12}(\xi)}_{H^{1+s}}+\tbr{\xi}^{1+s}\tnorm{m_{12}(\xi)}_{L^2}\le C\tbr{\xi}^{1/2+s};
        \end{equation}
        \item $m_{13}\in\mathcal{L}(\C^2;H^{1+s}((0,b);\C))$
        obeys the bound
        \begin{equation}\label{m13}
            \tnorm{m_{13}(\xi)}_{H^{1+s}}+\tbr{\xi}^{1+s}\tnorm{m_{13}(\xi)}_{L^2}\le C\tbr{\xi}^{5/2+s};
        \end{equation}
        \item $m_{21}(\xi)\in\mathcal{L}(H^s((0,b);\C^3),H^{2+s}((0,b);\C^3))$ obeys the bound 
        \begin{equation}\label{m21}
            \tnorm{m_{21}(\xi)\phi}_{H^{2+s}}+\tbr{\xi}^{2+s}\tnorm{m_{21}(\xi)\phi}_{L^2}\le C\tp{\tnorm{\phi}_{H^s}+\tbr{\xi}^s\tnorm{\phi}_{L^2}}
        \end{equation}
        for $\phi\in H^s((0,b);\C^3)$;
        \item $m_{22}(\xi)\in\mathcal{L}(\C^3;H^{2+s}((0,b);\C^3))$ obeys the bound
        \begin{equation}\label{m22}
            \tnorm{m_{22}(\xi)}_{H^{2+s}}+\tbr{\xi}^{2+s}\tnorm{m_{22}(\xi)}_{L^2}\le C\tbr{\xi}^{1/2+s};
        \end{equation}
        \item $m_{23}(\xi)\in\mathcal{L}(\C^2;H^{2+s}((0,b);\C^3))$ obeys the bound
        \begin{equation}\label{m23}
            \tnorm{m_{23}(\xi)}_{H^{2+s}}+\tbr{\xi}^{2+s}\tnorm{m_{23}(\xi)}_{L^2}\le C\tbr{\xi}^{5/2+s};
        \end{equation}
        \item $m_{31}(\xi)\in\mathcal{L}(H^s((0,b);\C^3);\C^2)$ obeys the bound 
        \begin{equation}\label{m31}
            \tbr{\xi}^{3/2+s}|m_{31}(\xi)\phi|\le C\tp{\tnorm{\phi}_{H^s}+\tbr{\xi}^s\tnorm{\phi}_{L^2}}
        \end{equation}
        for  $\phi\in H^s((0,b);\C^3)$;
        \item $m_{32}(\xi)\in\mathcal{L}(\C^3;\C^2)$ obeys the bound        \begin{equation}\label{m32}
            \tbr{\xi}^{3/2+s}|m_{32}(\xi)|\le C\tbr{\xi}^{1/2+s};
        \end{equation}
        \item $m_{33}(\xi)\in\mathcal{L}(\C^2;\C^2)$ obeys the bound
        \begin{equation}\label{m33}
            \tbr{\xi}^{3/2+s}|m_{33}(\xi)|\le C\tbr{\xi}^{5/2+s}.
        \end{equation}
    \end{enumerate}
    \item If $m\in\mathfrak{A}(s)$, then we write $\jump{m}_s\in[0,\infty)$ to be the infimum over the constants $C$ for which the bounds~\eqref{m11}, \eqref{m12}, \eqref{m13}, \eqref{m21}, \eqref{m22}, \eqref{m23}, \eqref{m31}, \eqref{m32}, and~\eqref{m33} hold over a full measure set of frequencies. This makes $\mathfrak{A}(s)$ into a Banach space.
\end{enumerate}
\end{defn}

Before continuing, we remark that the $\tjump{\cdot}_s$-norm on $\mathfrak{A}(s)$ locally controls essential supremum norm.

\begin{lem}[Local essentially uniform control]\label{lemma on local esssup control}
    Fix $s\in\N$. For any $R\in\R^+$ there exists a constant $C_{R}\in\R^+$, depending only on $s$ and $R$, such that for all $m\in\mathfrak{A}(s)$ we have the estimate
    \begin{equation}\label{R-bound bound I R bound yeah R bound bound bound whoo}
    \tnorm{\mathds{1}_{B(0,R)}m}_{L^\infty_\ast\mathcal{L}}\le C_{R}\tjump{m}_{s},
    \end{equation}
    where here $\mathcal{L}$ refers to the space of linear operators on the right hand side of~\eqref{the multipliers in the admissiable class} and $L^\infty_\ast\mathcal{L}$ is the norm from Definition~\ref{definition of Linfty}.  
    Moreover, the map $R\mapsto C_R$ is bounded on bounded sets. 
\end{lem}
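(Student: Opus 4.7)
The plan is to estimate the operator norm of $m(\xi)$ pointwise for $\xi \in B(0,R)$ component by component, using the defining bounds of $\mathfrak{A}(s)$. Since the domain $H^s((0,b);\C^3)\times\C^3\times\C^2$ and codomain $H^{1+s}((0,b);\C)\times H^{2+s}((0,b);\C^3)\times\C^2$ are finite products, the operator norm of $m(\xi)$ is equivalent (up to an absolute constant) to the sum of the operator norms of the nine component operators $m_{ij}(\xi)$ entering the matrix decomposition in Definition~\ref{defn of admissable class of symbols}.

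The key observation is that for $\xi \in B(0,R)$ we have $1 \le \tbr{\xi} \le \tbr{R}$, so every power $\tbr{\xi}^\alpha$ appearing on the right of the inequalities~\eqref{m11}--\eqref{m33} is controlled by $\tbr{R}^{|\alpha|}$. Concretely, I would estimate each block as follows. For the `bulk-to-bulk' blocks~\eqref{m11} and~\eqref{m21}, I use
\begin{equation}
\tnorm{m_{i1}(\xi)\phi}_{H^{j+s}} \le \tjump{m}_s\bigl(\tnorm{\phi}_{H^s}+\tbr{R}^s\tnorm{\phi}_{L^2}\bigr)\le \tjump{m}_s\bigl(1+\tbr{R}^s\bigr)\tnorm{\phi}_{H^s},
\end{equation}
giving operator norm bounded by $\tjump{m}_s(1+\tbr{R}^s)$. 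For the finite-dimensional-to-bulk blocks, from~\eqref{m12}, \eqref{m13}, \eqref{m22}, \eqref{m23} the operator norms are dominated by $\tjump{m}_s \tbr{R}^{5/2+s}$. For the bulk-to-finite-dimensional block~\eqref{m31}, rearranging gives
\begin{equation}
\tabs{m_{31}(\xi)\phi}\le \tjump{m}_s\tbr{\xi}^{-3/2-s}\bigl(\tnorm{\phi}_{H^s}+\tbr{\xi}^s\tnorm{\phi}_{L^2}\bigr)\le 2\,\tjump{m}_s\,\tnorm{\phi}_{H^s},
\end{equation}
since $\tbr{\xi}\ge 1$. Finally, \eqref{m32} and \eqref{m33} yield operator norms bounded by $\tjump{m}_s \tbr{R}$.

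Summing these nine contributions, for almost every $\xi \in B(0,R)$ we obtain the pointwise bound $\tnorm{m(\xi)}_{\mathcal{L}} \le C_R \tjump{m}_s$ with
\begin{equation}
C_R = C\bigl(1+\tbr{R}^{5/2+s}\bigr)
\end{equation}
for some absolute constant $C$ depending only on $s$. Since $m$ is strongly measurable, Theorem~\ref{characterization of operator-valued strong measurability} ensures $\tnorm{m(\cdot)}_{\mathcal{L}}$ is measurable (or at least admits an a.e.~representative with this property), so taking the essential supremum over $B(0,R)$ yields the desired inequality~\eqref{R-bound bound I R bound yeah R bound bound bound whoo}. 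The final assertion that $R \mapsto C_R$ is bounded on bounded sets is immediate from this explicit polynomial-in-$\tbr{R}$ expression. There is no real obstacle here; the lemma is essentially a book-keeping exercise that unpacks the defining inequalities of $\mathfrak{A}(s)$ and exploits the compactness of $B(0,R)$ via the uniform lower bound $\tbr{\xi}\ge 1$ and uniform upper bound $\tbr{\xi}\le\tbr{R}$.
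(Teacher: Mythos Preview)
Your proposal is correct and takes essentially the same approach as the paper: the paper's proof is a one-sentence remark that one may replace the $\xi$-dependent quantities on the right-hand sides of \eqref{m11}--\eqref{m33} by their values at $R$, which is exactly what you have carried out block by block. Your appeal to Theorem~\ref{characterization of operator-valued strong measurability} for measurability is unnecessary (Definition~\ref{definition of Linfty} emphasizes that the essential supremum is well-defined without it), but this does no harm.
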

\begin{proof}
    An inspection of the $\jump{\cdot}_s$ norm in Definition~\ref{defn of admissable class of symbols} shows that we can replace the $\xi$-dependent bounds on the right hand sides of~\eqref{m11}--\eqref{m33} by their value at $R$ to derive~\eqref{R-bound bound I R bound yeah R bound bound bound whoo}.
\end{proof}

The utility of Definition~\ref{defn of admissable class of symbols} is seen in the next result.

\begin{prop}[Symbols and translation commuting maps]\label{prop on symbols and translation commuting maps}
    The following are equivalent for $s\in\N$ and a bounded linear map $T:\Y_s\to\X_s$:
    \begin{enumerate}
        \item $T$ commutes with the collection of tangential translation operators in the sense that for all $Y\in\Y_s$ and all $h\in\R^3$ satisfying $h\cdot e_3=0$, it holds that $(TY)(\cdot+h)=T(Y(\cdot+h))$.
        \item There exists $m\in\mathfrak{A}(s)$ such that $T=m(D)$.
    \end{enumerate}
    In either case, $m$ is unique up to modification on a set of measure zero, and  we have that 
    \begin{equation}
        \tjump{m}_s\asymp\tnorm{T}_{\mathcal{L}(\Y_s;\X_s)}
    \end{equation}
    with implicit constants depending only on $s$.
\end{prop}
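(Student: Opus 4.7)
My plan is to establish the two implications separately, in each case reducing the operator-matrix structure of Definition~\ref{defn of admissable class of symbols} to the Hilbert-valued multiplier theorem of Corollary~\ref{corc more on translation commuting linear maps}. Throughout I will use the factorization $H^r(\Omega;\C^\ell) \simeq H^r(\R^2;L^2((0,b);\C^\ell)) \cap L^2(\R^2;H^r((0,b);\C^\ell))$ (the identity \eqref{shove together}) to reexpress the bulk spaces in $\X_s$ and $\Y_s$ as intersections of $\R^2$-valued Bessel potential spaces taking values in separable Hilbert spaces in the vertical variable. Translation commutation in the statement refers to tangential translations, which is exactly what these factorizations respect.

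For the implication $(2)\Rightarrow(1)$, translation commutation of $m(D)$ is automatic. For the norm bound, given $m \in \mathfrak{A}(s)$ and $Y=(Y_1,Y_2,Y_3)\in\Y_s$, I will unpack $\tnorm{m(D)Y}_{\X_s}^2$ via Plancherel into a sum of nine integrals over $\xi\in\R^2$, each controlled by one of the estimates \eqref{m11}--\eqref{m33}, with each occurrence of $H^r$ on the output side split into a $\tbr{\xi}^r\tnorm{\cdot}_{L^2}$ contribution and an $H^r$-in-the-vertical contribution (precisely the two legs of the intersection above). The bounds in Definition~\ref{defn of admissable class of symbols} have been tuned exactly for this computation, and summing yields $\tnorm{m(D)}_{\mathcal{L}(\Y_s;\X_s)} \lesssim \jump{m}_s$.

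For the implication $(1)\Rightarrow(2)$, I will decompose $T$ as a $3\times 3$ operator matrix $(T_{ij})_{i,j=1}^3$ obtained by restricting the input to its $j$-th coordinate (zero-extended in the other slots) and projecting the output onto its $i$-th coordinate. Each $T_{ij}$ inherits tangential translation commutation. For each pair $(i,j)$ I will then invoke Corollary~\ref{corc more on translation commuting linear maps}: for $j=1$ the source decomposes as the intersection $H^s(\R^2;L^2((0,b);\C^3)) \cap L^2(\R^2;H^s((0,b);\C^3))$, while for $j\in\{2,3\}$ it is already a single Hilbert-valued Bessel potential space. For each output index $i\in\{1,2\}$ the target is a bulk intersection space, so I apply the corollary twice, once for each leg of the target intersection; the two resulting symbols must agree a.e.\ by the uniqueness in Corollary~\ref{corc more on translation commuting linear maps}, as both describe the same underlying tangential Fourier multiplier taking values in the weaker $L^2$ vertical space. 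Summing the two resulting bounds recovers precisely the defining estimates \eqref{m11}--\eqref{m23} with constants $\lesssim \tnorm{T}$; for $i=3$ a single application yields \eqref{m31}--\eqref{m33}. Assembling the nine entries gives $m : \R^2 \to \mathcal{L}(\cdots)$ with $T=m(D)$ and $\jump{m}_s \lesssim \tnorm{T}$; strong measurability of $m$ follows from strong measurability of each $m_{ij}$ and Theorem~\ref{characterization of operator-valued strong measurability}, and a.e.\ uniqueness of $m$ is inherited from the corollary.

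The main obstacle I anticipate is the bookkeeping in matching the source-target pairings in Corollary~\ref{corc more on translation commuting linear maps} with the indexed bounds \eqref{m11}--\eqref{m33}, and in particular confirming that the two symbols produced by applying the corollary to the two legs of a target bulk intersection coincide on a common full-measure set as elements of $\mathcal{L}$ into the stronger space. I expect to resolve this by testing against arbitrary elements of the smaller range space and using a.e.\ uniqueness of multipliers on $L^2(\R^2;\cdot)$ to transport the identification into the operator level pointwise in $\xi$.
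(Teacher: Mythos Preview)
Your proposal is correct and follows essentially the same route as the paper: decompose $T$ into its nine matrix entries, use the factorization of Lemma~\ref{lemma on equivalent norm on the mixed type spaces} to put each entry in the framework of Corollary~\ref{corc more on translation commuting linear maps}, and read off the bounds \eqref{m11}--\eqref{m33} from \eqref{estimate we need on the multiplier}; the reverse implication and the bound $\tnorm{T}\lesssim\jump{m}_s$ come from Plancherel. The only point on which you are more explicit than the paper is the need to apply the corollary twice when the target is a bulk intersection space and then identify the two resulting symbols via the embedding $H^{r}((0,b))\hookrightarrow L^2((0,b))$ and a.e.\ uniqueness---this is indeed the correct way to handle that step, and your proposed resolution is sound.
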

\begin{proof}
    Thanks to the norm equivalence of Lemma~\ref{lemma on equivalent norm on the mixed type spaces} with $p=2$, each of the nine components of the `matrix' of $T$ satisfies the hypotheses of Corollary~\ref{corc more on translation commuting linear maps}. Thus, by enumerating the estimates on each component, we find that we are granted a multiplier $m\in\mathfrak{A}(s)$ such that $T=m(D)$. Estimate~\eqref{estimate we need on the multiplier} implies that $\tjump{m}_s\lesssim\tnorm{T}_{\mathcal{L}(\Y_s;\X_s)}$. The opposite inequality and the fact that the second item implies the first are immediate from Plancherel's theorem.
\end{proof}

Our next result relates translations of the symbol to conjugation of the operator by complex exponentials. This is the key that allows us to recast questions of smoothness for multipliers in the language of PDE on the spatial side.

\begin{prop}[Symbol translation]\label{proposition on symbol translation}
    Let $s \in \N$, and suppose that $T:\Y_s\to\X_s$ is a tangentially translation commuting bounded linear map with associated symbol $m\in\mathfrak{A}(s)$. Then for all $Y=(f,k,H)\in\Y_s$ and all $\zeta\in\R^2$ we have the identity
    \begin{equation}\label{the translation identity}
        m(D+\zeta)Y=\bf{e}_{-\zeta}(T(\bf{e}_{\zeta}Y)),
    \end{equation}
    as an equality in the space $\X_s$, where $\bf{e}_\zeta(\xi)=e^{2\pi\ii\xi\cdot\zeta}$ for $\xi\in\R^2$, and $m(D+\zeta)=(m(\cdot+\zeta))(D)$.
\end{prop}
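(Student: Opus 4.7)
The proof is an exercise in the modulation--translation duality of the Fourier transform, transported to the operator-valued setting. My plan is to first establish continuity of both sides of~\eqref{the translation identity} as maps $\Y_s\to\X_s$, then verify the identity on a dense subspace where the Fourier-side manipulations are pointwise, and finally extend by density.

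For the continuity step, I would observe that multiplication by $\bf{e}_{\pm\zeta}$ acts on the Fourier side as translation by $\mp\zeta$, so the weight comparison $\tbr{\xi\pm\zeta}\asymp_\zeta\tbr{\xi}$ shows that $\bf{e}_{\pm\zeta}$ is bounded on each Bessel-potential component appearing in $\Y_s$ and $\X_s$ with operator norm depending only on $\zeta$. Composed with the hypothetically bounded $T$, this makes $Y\mapsto\bf{e}_{-\zeta}(T(\bf{e}_\zeta Y))$ a bounded map $\Y_s\to\X_s$. Applying the same weight comparison to each of the bounds~\eqref{m11}--\eqref{m33} in Definition~\ref{defn of admissable class of symbols} with $\xi\mapsto\xi+\zeta$ yields $m(\cdot+\zeta)\in\mathfrak{A}(s)$ with $\tjump{m(\cdot+\zeta)}_s\lesssim_\zeta\tjump{m}_s$, so Proposition~\ref{prop on symbols and translation commuting maps} supplies the boundedness of $m(D+\zeta):\Y_s\to\X_s$.

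For the verification, I would choose the dense subspace of $Y\in\Y_s$ whose tangential Fourier transform $\mathscr{F}[Y]$ is smooth, compactly supported in $\R^2$, and (where appropriate) valued in the underlying finite-dimensional or Hilbert target, these forming a dense class by routine truncation and mollification in the tangential frequency variable. For such $Y$, the action $T(\bf{e}_\zeta Y)(x)$ coincides with the genuine Bochner integral $\int_{\R^2}e^{2\pi\ii x\cdot\xi}m(\xi)\mathscr{F}[\bf{e}_\zeta Y](\xi)\,\m{d}\xi$, where the integrability and strong measurability of $\xi\mapsto m(\xi)\mathscr{F}[\bf{e}_\zeta Y](\xi)$ follow from the local essential boundedness recorded in Lemma~\ref{lemma on local esssup control} and the strong measurability of $m$ from Definition~\ref{definition of strong measurability}. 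Using the elementary identity $\mathscr{F}[\bf{e}_\zeta Y](\xi)=\mathscr{F}[Y](\xi-\zeta)$, multiplying both sides by $\bf{e}_{-\zeta}(x)=e^{-2\pi\ii x\cdot\zeta}$, and changing variables $\eta=\xi-\zeta$ yields
\begin{equation*}
    \bf{e}_{-\zeta}(T(\bf{e}_\zeta Y))(x)=\int_{\R^2}e^{2\pi\ii x\cdot\eta}m(\eta+\zeta)\mathscr{F}[Y](\eta)\,\m{d}\eta=m(D+\zeta)Y(x),
\end{equation*}
which is~\eqref{the translation identity} on the dense subspace. Density together with the continuity established at the outset then concludes the proof.

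I do not anticipate a substantive obstacle; the only care needed is to pick the dense subspace narrowly enough that the Fourier-side manipulations are literal Bochner integrals rather than distributional identities, so that the change of variable is elementary. The mild technical point of checking strong measurability of the relevant integrand falls out directly from Theorem~\ref{characterization of operator-valued strong measurability} and the regularity of $\mathscr{F}[Y]$ for $Y$ in the chosen dense class.
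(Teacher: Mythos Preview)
Your proposal is correct and uses the same core idea as the paper, namely the modulation--translation duality $\mathscr{F}[\bf{e}_\zeta Y]=\mathscr{F}[Y](\cdot-\zeta)$. The paper's proof is considerably more direct: since $\Y_s$ and $\X_s$ are $L^2$-based, it simply writes $\mathscr{F}[\bf{e}_\zeta m(D+\zeta)Y]=m\mathscr{F}[\bf{e}_\zeta Y]=\mathscr{F}[T(\bf{e}_\zeta Y)]$ at the distributional level and inverts, bypassing your density--continuity scaffolding entirely.
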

\begin{proof}
With $Y$ and $\zeta$ as in the statement, we have that the second item of Proposition~\ref{prop on symbols and translation commuting maps} applies and we have $T(\bf{e}_\zeta Y)=m(D)(\bf{e}_{\zeta}Y)$. On the other hand, it is clear by properties of the Fourier transform
\begin{equation}
    \mathscr{F}[\bf{e}_\zeta m(D+\zeta)Y]=\mathscr{F}[m(D+\zeta)Y](\cdot-\zeta)=m\mathscr{F}[Y](\cdot-\zeta)=m\mathscr{F}[\bf{e}_\zeta Y].
\end{equation}
By taking inverse Fourier transforms, we reveal identity~\eqref{the translation identity}.
\end{proof}
\subsection{Derivative estimates for the symbol}

The main symbols of concern in this subsection are given by the following definition. The notation here is from Definitions~\ref{defn of spaces and TILOs} and~\ref{defn of admissable class of symbols}.  

\begin{defn}[Main symbol]\label{defn of the main symbol}
    Let $s\in\N$. We denote by $\bf{m}\in\mathfrak{A}(s)$ the symbol associated with the translation commuting bounded linear map $\Psi:\Y_s\to\X_s$ whose existence is guaranteed by Proposition~\ref{prop on symbols and translation commuting maps}.
\end{defn}

Our aim now is to study the differentiability of $\bf{m}$ in $\R^2 \backslash \{0\}$.  In doing so, it will be very convenient to introduce some minor abuse of notation in order to make various expressions easier to read.  This abuse, the use of which we confine to this subsection, is to view any vector $\theta \in \C^2$ as being contained in $\C^3$ via $(\theta,0) \in \C^3$.  For example, this shorthand means that for $v \in \C^3$ and $\theta \in \C^2$,
\begin{equation}
    \theta \cdot v = (\theta,0)\cdot v = \sum_{j=1}^2 \theta_j v_j,
\end{equation}
and so on.  This abuse will only be used in equations that are naturally understood to be posed in $\C^3$.

Our next result explores the manifestations of symbol translation on the PDE side.

\begin{prop}[Spatial realization of symbol translation]\label{prop on spatial realization on symbol realization}
    Suppose that $s\in\N$, $(f,k,H)\in\Y_s$, and $\zeta\in\R^2$. Then $\bf{m}(D+\zeta)(f,k,H)=(p_\zeta,u_\zeta,\chi_\zeta)\in\X_s$
    obeys the following equations
\begin{equation}\label{zeta translated symbol equations}
    \begin{cases}
        \mathfrak{g} \chi_\zeta+\grad p_\zeta +  2\pi\ii\zeta p_\zeta-\mu \grad\cdot\mathbb{D}u_\zeta - \mu 2\pi\ii\grad(\zeta\cdot u_\zeta) - \mu 4\pi\ii\zeta\cdot\grad u_\zeta + \mu 4\pi^2|\zeta|^2 u_\zeta=f&\text{in }\Omega,\\
        \grad\cdot u_\zeta+2\pi\ii\zeta\cdot u_\zeta=0&\text{in }\Omega,\\
        -(p_\zeta I -\mu \mathbb{D} u_\zeta)e_3 + \mu 2\pi\ii\zeta(u_\zeta\cdot e_3) - \kappa \grad_{\|}\cdot\chi_\zeta e_3 - \kappa 2\pi\ii\zeta\cdot \chi_\zeta e_3=k&\text{on }\Sigma,\\
        \grad_{\|}^\perp\cdot\chi_\zeta+2\pi\ii\zeta^\perp\cdot\chi_\zeta=0&\text{on }\Sigma,\\
        u_\zeta\cdot e_3=\grad_{\|}\cdot H+2\pi\ii\zeta\cdot H&\text{on }\Sigma,\\
        u_\zeta=0&\text{on }\Sigma_0.
    \end{cases}
\end{equation}
\end{prop}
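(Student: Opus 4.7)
The entire proposition reduces to two ingredients: the symbol translation identity of Proposition~\ref{proposition on symbol translation}, and the Leibniz rule for the modulation $\bf{e}_\zeta$. The plan is to first apply Proposition~\ref{proposition on symbol translation} to $\Psi$ and $\bf{m}$ to rewrite
\begin{equation}
\bf{m}(D+\zeta)(f,k,H) = \bf{e}_{-\zeta}\Psi(\bf{e}_\zeta(f,k,H)) \quad\text{in } \X_s.
\end{equation}
Setting $(P,U,X) = \Psi(\bf{e}_\zeta(f,k,H))$, Definition~\ref{defn of spaces and TILOs} tells us that $(P,U,X)\in\X_s$ is the unique solution to system~\eqref{curl formulation of the linearization} with the data tuple $(g,f,k,h,\omega) = (0,\bf{e}_\zeta f,\bf{e}_\zeta k,\grad_{\|}\cdot(\bf{e}_\zeta H),0)$. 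Observing that $\grad_\|\cdot(\bf{e}_\zeta H) = \bf{e}_\zeta(\grad_\|\cdot H + 2\pi\ii\zeta\cdot H)$ accounts for the kinematic right-hand side in~\eqref{zeta translated symbol equations}. From $p_\zeta = \bf{e}_{-\zeta} P$, $u_\zeta = \bf{e}_{-\zeta} U$, and $\chi_\zeta = \bf{e}_{-\zeta} X$ we then need only translate each of the six equations of~\eqref{curl formulation of the linearization} by conjugation by $\bf{e}_\zeta$.

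The second step is to execute this conjugation. Viewing $\zeta\in\R^2$ as the $\R^3$ vector $(\zeta,0)$ when acting on bulk quantities and leaving it in $\R^2$ for surface quantities, the product rule yields the single identity $\grad(\bf{e}_\zeta v) = \bf{e}_\zeta(\grad v + 2\pi\ii\zeta v)$, from which one reads off:
\begin{align*}
    \grad\cdot(\bf{e}_\zeta v) &= \bf{e}_\zeta(\grad\cdot v + 2\pi\ii \zeta\cdot v),\\
    \mathbb{D}(\bf{e}_\zeta v) &= \bf{e}_\zeta(\mathbb{D} v + 2\pi\ii(\zeta\otimes v + v\otimes\zeta)),\\
    \grad\cdot\mathbb{D}(\bf{e}_\zeta v) &= \bf{e}_\zeta\bp{\grad\cdot\mathbb{D} v + 2\pi\ii\sp{\zeta\grad\cdot v + \zeta\cdot\grad v + \mathbb{D} v\,\zeta} - 4\pi^2\sp{\zeta(\zeta\cdot v) + |\zeta|^2 v}},\\
    \grad_{\|}^{\perp}\cdot(\bf{e}_\zeta\chi) &= \bf{e}_\zeta(\grad_\|^\perp\cdot\chi + 2\pi\ii\zeta^\perp\cdot\chi).
\end{align*}
Substituting the analogous expressions for $P$, $U$, $X$ into the six equations of~\eqref{curl formulation of the linearization} satisfied by $(P,U,X)$, and dividing through by the common factor $\bf{e}_\zeta$, yields all the equations of~\eqref{zeta translated symbol equations} except for one simplification in the momentum equation. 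There the raw substitution produces an extra term $-\mu\,2\pi\ii\zeta(\grad\cdot u_\zeta) + \mu\,4\pi^2\zeta(\zeta\cdot u_\zeta)$ and the identity $\mathbb{D} u_\zeta\,\zeta = \zeta\cdot\grad u_\zeta + \grad(\zeta\cdot u_\zeta)$. Plugging the translated incompressibility condition $\grad\cdot u_\zeta = -2\pi\ii\zeta\cdot u_\zeta$ into the former makes the two terms cancel, while the decomposition of $\mathbb{D} u_\zeta\,\zeta$ combines the remaining derivative terms into the shape $-\mu\,2\pi\ii\grad(\zeta\cdot u_\zeta) - \mu\,4\pi\ii\zeta\cdot\grad u_\zeta$, matching~\eqref{zeta translated symbol equations} exactly.

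The only genuine subtlety is organizational: keeping track of whether $\zeta$ is being treated as a $2$-vector (surface) or as a $3$-vector with trivial third component (bulk), and making sure that the convention for $\mathbb{D}$ and $\grad\cdot$ on tensors is applied consistently so that the matrix-vector product $\mathbb{D} v\,\zeta$ is correctly split into $\zeta\cdot\grad v$ plus $\grad(\zeta\cdot v)$. Aside from this bookkeeping and the single use of the divergence constraint indicated above, the verification is a routine product-rule computation with no deeper difficulty.
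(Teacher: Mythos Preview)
Your proposal is correct and follows exactly the approach of the paper: invoke Proposition~\ref{proposition on symbol translation} to write $(p_\zeta,u_\zeta,\chi_\zeta)=\bf{e}_{-\zeta}\Psi(\bf{e}_\zeta f,\bf{e}_\zeta k,\bf{e}_\zeta H)$, note that $(\bf{e}_\zeta p_\zeta,\bf{e}_\zeta u_\zeta,\bf{e}_\zeta\chi_\zeta)$ solves~\eqref{curl formulation of the linearization} with data $(0,\bf{e}_\zeta f,\bf{e}_\zeta k,\grad_{\|}\cdot(\bf{e}_\zeta H),0)$, and then expand via the Leibniz rule and cancel the common factor $\bf{e}_\zeta$. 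Your write-up is in fact more detailed than the paper's, which compresses the entire Leibniz computation (including the use of the translated divergence constraint to simplify the momentum equation) into a single sentence.
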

\begin{proof}
    We invoke Proposition~\ref{proposition on symbol translation} and Definition~\ref{defn of the main symbol} to see that $(p_\zeta,u_\zeta,\chi_\zeta)=\bf{e}_{-\zeta}\Psi(\bf{e}_\zeta f,\bf{e}_\zeta k,\bf{e}_\zeta H)$, and hence $(\bf{e}_\zeta p_\zeta,\bf{e}_\zeta u_\zeta,\bf{e}_\zeta\chi_\zeta)$ is a solution to the equations~\eqref{curl formulation of the linearization} with data $(0,\bf{e}_\zeta f,\bf{e}_\zeta k,\grad_{\|}\cdot(\bf{e}_\zeta H),0)$.  We can then derive equations for $(p_\zeta,u_\zeta,\chi_\zeta)$ by expanding with the Leibniz rule and then multiplying by $\bf{e}_{\zeta}$.  This results in the system~\eqref{zeta translated symbol equations}.
\end{proof}

The following lemma gives a simple estimate for the solution to~\eqref{zeta translated symbol equations} and, more importantly, a relationship between the translated symbol and the operators $\Phi$ and $\Psi$ from Definition~\ref{defn of spaces and TILOs}.

\begin{lem}[Estimate and identity for the translated symbol PDE]\label{lem on estimates for the translated symbol PDE}
The following hold for $s\in\N$, $(f,k,H)\in\Y_s$, $\zeta\in\R^2$, and $(p_\zeta,u_\zeta,\chi_\zeta)=\bf{m}(D+\zeta)(f,k,H)$.
\begin{enumerate}
    \item We have the estimate
    \begin{equation}\label{soft arguments}
        \tnorm{p_\zeta,u_\zeta,\chi_\zeta}_{\X_s}\le C_\zeta \tnorm{\Psi}_{\mathcal{L}}\tnorm{f,k,H}_{\Y_s},
    \end{equation}
    where the constant $\R^2\ni \zeta\mapsto C_\zeta\in\R^+$ is bounded on bounded sets.
    \item If $0\not\in\supp\mathscr{F}(f,k,H)$, then 
    \begin{equation}\label{the starting point for the jet expansion}
            (p_\zeta,u_\zeta,\chi_\zeta)
            = \Psi(f,k,H)
            + \Phi\bpm -2\pi\ii\zeta\cdot u_\zeta \\ 
            -2\pi\ii\zeta p_\zeta + \mu 2\pi\ii\grad(\zeta\cdot u_\zeta) + \mu 4\pi\ii\zeta\cdot\grad u_\zeta - \mu 4\pi^2|\zeta|^2u_\zeta \\
            \kappa 2\pi\ii\zeta\cdot\chi_\zeta e_3 - \mu 2\pi\ii\zeta(u_\zeta\cdot e_3) \\
            2\pi\ii\zeta\cdot H \\
            -2\pi\ii\zeta^\perp\cdot\chi_\zeta\epm.
    \end{equation}
\end{enumerate}
\end{lem}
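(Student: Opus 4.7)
The strategy combines the abstract symbol translation identity of Proposition~\ref{proposition on symbol translation} with the spatial PDE realization of Proposition~\ref{prop on spatial realization on symbol realization}. Recall that Definition~\ref{defn of the main symbol} identifies $\bf{m}$ with the symbol of $\Psi$.

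For the first item, I would apply Proposition~\ref{proposition on symbol translation} to write
\begin{equation*}
(p_\zeta, u_\zeta, \chi_\zeta) = \bf{e}_{-\zeta}\Psi(\bf{e}_\zeta f, \bf{e}_\zeta k, \bf{e}_\zeta H).
\end{equation*}
Multiplication by the character $\bf{e}_\zeta$ translates the spectral variable by $\zeta$, and the elementary Peetre-type inequality $\tbr{\xi+\zeta}^r \le 2^{r/2}\tbr{\xi}^r\tbr{\zeta}^r$ for $r \ge 0$ then shows that multiplication by $\bf{e}_\zeta$ is bounded on each Hilbert-valued Bessel-potential space $H^r(\R^2;V)$ with operator norm $\le 2^{r/2}\tbr{\zeta}^r$.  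Since $\bf{e}_\zeta$ depends only on the tangential variables, the same polynomial bound lifts to each of the component spaces appearing in $\Y_s$ and $\X_s$.  Chaining these bounds around the middle application of $\Psi$ then yields estimate~\eqref{soft arguments} with $C_\zeta$ explicitly a polynomial in $\tbr{\zeta}$ depending only on $s$, and hence locally bounded.

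For the second item, I would begin by rewriting the system~\eqref{zeta translated symbol equations} provided by Proposition~\ref{prop on spatial realization on symbol realization} in the format of~\eqref{curl formulation of the linearization} by moving every $\zeta$-dependent contribution to the right-hand side of its respective equation.  The resulting total data tuple is exactly $(0,f,k,\grad_{\|}\cdot H,0)$ summed with the residual column vector appearing in~\eqref{the starting point for the jet expansion}.  Provided this total data lies in $\tilde{\Y}_s$, the uniqueness part of Theorem~\ref{thm on analysis of strong solutions, II} identifies $(p_\zeta, u_\zeta, \chi_\zeta)$ with the image of the total data under $\Phi$.  The identity~\eqref{the starting point for the jet expansion} then follows from linearity of $\Phi$ and the defining relation $\Psi(f,k,H)=\Phi(0,f,k,\grad_{\|}\cdot H,0)$ from Definition~\ref{defn of spaces and TILOs}.

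The only delicate step is verifying that the residual data actually belongs to $\tilde{\Y}_s$, i.e., that the $\dot{H}^{-1}(\Sigma)$ compatibility conditions of~\eqref{compatibility conditions} hold.  Here the hypothesis $0 \notin \supp\mathscr{F}(f,k,H)$ plays the essential role: since $\bf{m}(D+\zeta)$ acts on the tangential variables purely by Fourier multiplication, the Fourier supports of $p_\zeta$, $u_\zeta$, and $\chi_\zeta$ inherit the same separation from the origin as does the input data.  Consequently every entry of the residual data (a linear combination of tangential derivatives of these quantities and of $H$ itself) has Fourier support in some set of the form $\R^2\setminus B(0,\delta)$ with $\delta>0$, and membership in $\dot{H}^{-1}(\Sigma)$ follows trivially from the pointwise bound $|\xi|^{-1}\le \delta^{-1}$ on the support.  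The remaining Sobolev regularity requirements of $\tilde{\Y}_s$ are immediate from $(p_\zeta,u_\zeta,\chi_\zeta)\in\X_s$ combined with the trace embedding $H^{2+s}(\Omega;\C^3)\emb H^{3/2+s}(\Sigma;\C^3)$, with regularity to spare.
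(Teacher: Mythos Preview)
Your proof is correct and follows essentially the same route as the paper's.  Both items are handled identically: item~1 via the identity $(p_\zeta,u_\zeta,\chi_\zeta)=\bf{e}_{-\zeta}\Psi(\bf{e}_\zeta f,\bf{e}_\zeta k,\bf{e}_\zeta H)$ together with the polynomial-in-$\tbr{\zeta}$ operator bounds for multiplication by $\bf{e}_{\pm\zeta}$, and item~2 by rearranging the $\zeta$-perturbed system~\eqref{zeta translated symbol equations} into the form~\eqref{curl formulation of the linearization} and recognizing the result as $\Phi$ applied to the total data.  Your justification that the residual data lies in $\tilde{\Y}_s$---namely, that Fourier multiplication by $\bf{m}(\cdot+\zeta)$ preserves the spectral gap at the origin, so the $\dot{H}^{-1}$ compatibility conditions are trivially satisfied---is exactly what the paper's one-line remark ``the hypotheses $0\not\in\supp\mathscr{F}(f,k,H)$ ensures that the argument of $\Phi$ \ldots\ belongs to its domain $\tilde{\Y}_s$'' is pointing to, only spelled out in more detail.
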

\begin{proof}
For the first item, we note that $(\bf{e}_\zeta p_\zeta,\bf{e}_\zeta u_\zeta,\bf{e}_\zeta\chi_\zeta)=\Psi(\bf{e}_\zeta f,\bf{e}_\zeta k,\bf{e}_\zeta H)$ and hence
\begin{equation}
    \tnorm{p_\zeta,u_\zeta,\chi_\zeta}_{\X_s}\lesssim_\zeta \tnorm{\bf{e}_\zeta p_\zeta,\bf{e}_\zeta u_\zeta,\bf{e}_\zeta\chi_\zeta}_{\X_s} \le \tnorm{\Psi}_{\mathcal{L}}\tnorm{\bf{e}_\zeta f,\bf{e}_\zeta k,\bf{e}_\zeta H}_{\Y_s}\lesssim_\zeta\tnorm{\Psi}_{\mathcal{L}}\tnorm{f,k,H}_{\Y_s}.
\end{equation}
    Next, we use the system~\eqref{zeta translated symbol equations} from Proposition~\ref{prop on spatial realization on symbol realization} along with Definition~\ref{defn of spaces and TILOs} to derive the equation~\eqref{the starting point for the jet expansion}. Note that the hypotheses $0\not\in\supp\mathscr{F}(f,k,H)$ ensures that the argument of $\Phi$ in~\eqref{the starting point for the jet expansion} belongs to its domain $\tilde{\Y}_s$. 
\end{proof}

Motivated by formula~\eqref{the starting point for the jet expansion}, we now construct the translation commuting linear maps whose symbols will turn out to be the derivatives of $\bf{m}$. In what follows $\mathcal{S}_\ell$ denotes the symmetric group on the set $\tcb{1,\dots,\ell}$.

\begin{defn}[Iterative derivative construction]\label{defn iterative jet construction}
    Let $(f,k,H)\in\Y_s$ satisfy $0\not\in\supp\mathscr{F}(f,k,H)$ and set $(p,u,\chi)=\Psi(f,k,H)\in\X_s$. For $j\in\N^+$ we define the $j$-multilinear and symmetric maps
    \begin{equation}
        (\R^2)^j\ni(\zeta_1,\dots,\zeta_j)\mapsto(p^{(j)},u^{(j)},\chi^{(j)})[\zeta_1,\dots,\zeta_j]\in\X_s
    \end{equation}
    via the following inductive procedure. If $j=1$, we set
    \begin{equation}\label{derivative definition}
        (p^{(1)},u^{(1)},\chi^{(1)})[\zeta]=\Phi\bpm -2\pi\ii\zeta\cdot u\\
        -2\pi\ii\zeta p + \mu 2\pi\ii\grad(\zeta\cdot u) + \mu 4\pi\ii\zeta\cdot\grad u\\
        \kappa 2\pi\ii\zeta\cdot\chi e_3 - \mu 2\pi\ii\zeta(u\cdot e_3)\\
        2\pi\ii\zeta\cdot H\\
        -2\pi\ii\zeta^\perp\cdot\chi
        \epm.
    \end{equation}
    If $j=2$, we set
    \begin{multline}
        (p^{(2)},u^{(2)},\chi^{(2)})[\zeta_1,\zeta_2]\\ =\sum_{\sig\in\mathcal{S}_2}\Phi\bpm -2\pi\ii\zeta_{\sig1}\cdot u^{(1)}[\zeta_{\sig2}]\\
        -2\pi\ii\zeta_{\sig 1}p^{(1)}[\zeta_{\sig 2}] + \mu 2\pi\ii\grad(\zeta_{\sig 1}\cdot u^{(1)}[\zeta_{\sig2}]) + \mu 4\pi\ii\zeta_{\sig 1}\cdot\grad u^{(1)}[\zeta_{\sig 2}] - \mu 4\pi^2(\zeta_{\sig1}\cdot\zeta_{\sig 2})u\\
        \kappa 2\pi\ii\zeta_{\sig1}\cdot\chi^{(1)}[\zeta_{\sig 2}] e_3 - \mu 2\pi\ii\zeta_{\sig 1}(u^{(1)}[\zeta_{\sig 2}]\cdot e_3)\\
        0\\
        -2\pi\ii\zeta^\perp_{\sig 1}\cdot{\chi^{(1)}}[\zeta_{\sig 2}]
        \epm
    \end{multline}
    and if $j\ge 3$, we take
    \begin{multline}
        (p^{(j)},u^{(j)},\chi^{(j)})[\zeta_1,\dots,\zeta_j] \\ = \f{1}{(j-1)!}\sum_{\sig\in\mathcal{S}_j}\Phi\bpm -2\pi\ii\zeta_{\sig1}\cdot u^{(j-1)}\\
        -2\pi\ii\zeta_{\sig 1}p^{(j-1)}+ \mu  2\pi\ii\grad(\zeta_{\sig 1}\cdot u^{(j-1)}) + \mu 4\pi\ii\zeta_{\sig 1}\cdot\grad u^{(j-1)}\\
        \kappa 2\pi\ii\zeta_{\sig1}\cdot\chi^{(j-1)} e_3 - \mu 2\pi\ii\zeta_{\sig1}(u^{(j-1)}\cdot e_3)\\0\\-2\pi\ii\zeta^\perp_{\sig1}\cdot\chi^{(j-1)}
        \epm[\zeta_{\sig 2},\dots,\zeta_{\sig j}]\\
        +\f{1}{(j-2)!}\sum_{\sig\in\mathcal{S}_j}\Phi\bpm0\\-
        \mu 4\pi^2(\zeta_{\sig 1}\cdot\zeta_{\sig 2})u^{(j-2)}\\
        0\\0\\0
        \epm[\zeta_{\sig 3},\dots,\zeta_{\sig j}].
    \end{multline}
\end{defn}

Our next result studies the previous construction more carefully.

\begin{prop}[Properties of the derivative construction]\label{proposition on jet verification}
    The following hold.
    \begin{enumerate}
        \item For every $j\in\N^+$ the map
        \begin{equation}
            \tcb{(f,k,H)\in\Y_s\;:\;0\not\in\supp\mathscr{F}(f,k,H)}\ni(f,k,H)\mapsto|D|^j(p^{(j)},u^{(j)},\chi^{(j)})\in\mathcal{L}^j(\R^2;\X_s)
        \end{equation}
        is continuous and tangentially translation commuting, and hence extends uniquely to a bounded linear map defined on all of $\Y_s$.
        \item For every $j\in\N^+$ there exists a unique multilinear mapping into the space of symbols, $(\zeta_1,\dots,\zeta_j)\mapsto\bf{m}^{(j)}[\zeta_1,\dots,\zeta_j]$, such that for all $(f,k,H)\in\Y_s$ satisfying $0\not\in\supp\mathscr{F}(f,k,H)$ we have the identity
        \begin{equation}\label{symbols yay}
            \bf{m}^{(j)}[\zeta_1,\dots,\zeta_j](D)(f,k)=(p^{(j)},u^{(j)},\chi^{(j)})[\zeta_1,\dots,\zeta_j].
        \end{equation}
        \item For all $j\in\N$ and all $(\zeta_1,\dots,\zeta_j)\in(\R^2)^j$ we have that the map  $\xi\mapsto|\xi|^j\bf{m}^{(j)}[\zeta_1,\dots,\zeta_j](\xi)$ belongs to the space $\mathfrak{A}(s)$ from Definition~\ref{defn of admissable class of symbols}; moreover, we have the estimate
        \begin{equation}\label{Hormander-Mikhlin bounds}
            \tjump{|\cdot|^j\bf{m}^{(j)}[\zeta_1,\dots,\zeta_j]}_s\lesssim j!\cdot C^j\prod_{i=1}^j|\zeta_i|,
        \end{equation}
        with an implicit constant independent of $j$.
    \end{enumerate}
\end{prop}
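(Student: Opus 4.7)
The three items will be proven together by induction on $j$, with the recursive structure of Definition~\ref{defn iterative jet construction} providing the inductive skeleton. The guiding principle is that $(p^{(j)}, u^{(j)}, \chi^{(j)})[\zeta_1, \ldots, \zeta_j]$ should be, up to a combinatorial factor, the $j$-th symmetric derivative at the origin in $(\zeta_1, \ldots, \zeta_j)$ of the translated symbol $\bf{m}(\cdot+\zeta)$ acting on $(f,k,H)$, and hence it a priori makes sense only when $\mathscr{F}[(f,k,H)]$ avoids a neighborhood of the origin. Precomposing with $|D|^j$ removes exactly the singularity at frequency zero that obstructs extension to all of $\Y_s$.

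For Item 1, I would first handle the base case $j=1$. Each of the five components in the argument of $\Phi$ in \eqref{derivative definition} is a tangentially translation commuting bounded linear function of $(f,k,H)\in\Y_s$ whose frequency support avoids the origin, and after multiplying the output by $|D|$, the $\dot{H}^{-1}$ compatibility conditions defining $\tilde{\Y}_s$ are satisfied uniformly in $(f,k,H)$ with bounds depending only on $\zeta$ and the $\Y_s$-norm. The resulting operator $T_1[\zeta](f,k,H) := |D|(p^{(1)}, u^{(1)}, \chi^{(1)})[\zeta]$ then extends by continuity to a bounded, tangentially translation commuting linear map $T_1[\zeta] : \Y_s \to \X_s$. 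The inductive step proceeds analogously, because the right-hand side of the recursion in Definition~\ref{defn iterative jet construction} only involves $(p^{(j-1)}, u^{(j-1)}, \chi^{(j-1)})$ and $(p^{(j-2)}, u^{(j-2)}, \chi^{(j-2)})$, which have already been extended by the inductive hypothesis, paired with a single application of $\Phi$ and one extra factor of $|D|$.

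For Item 2, once Item 1 is in hand, Proposition~\ref{prop on symbols and translation commuting maps} applied to $T_j[\zeta_1,\ldots,\zeta_j]$ produces a unique (up to modification on a null set) symbol $\mathfrak{n}^{(j)}[\zeta_1,\ldots,\zeta_j] \in \mathfrak{A}(s)$ with $T_j[\zeta_1,\ldots,\zeta_j] = \mathfrak{n}^{(j)}[\zeta_1,\ldots,\zeta_j](D)$. I would then define $\bf{m}^{(j)}[\zeta_1,\ldots,\zeta_j](\xi) := |\xi|^{-j}\mathfrak{n}^{(j)}[\zeta_1,\ldots,\zeta_j](\xi)$ for $\xi \neq 0$ and verify identity~\eqref{symbols yay} by applying both sides to data $(f,k,H)$ whose frequency support is bounded away from $0$, where Plancherel's theorem makes both sides unambiguous and equal. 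Multilinearity in $(\zeta_1,\ldots,\zeta_j)$ and symmetry are inherited directly from Definition~\ref{defn iterative jet construction}.

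For Item 3, I would prove the bound $\tjump{|\cdot|^{j} \bf{m}^{(j)}[\zeta_1,\ldots,\zeta_j]}_s \lesssim j! \cdot C^j \prod_{i=1}^j |\zeta_i|$ by induction. At each step, one application of $\Phi$ contributes a universal factor of $\tnorm{\Phi}_{\mathcal{L}(\tilde{\Y}_s;\X_s)}$, while the algebraic operations appearing in the argument of $\Phi$ (multiplication by $\zeta$, application of $\grad$, scalar contractions, etc.) each cost exactly one factor of $|\zeta|$ and fit precisely into the $\mathfrak{A}(s)$ scalings of Definition~\ref{defn of admissable class of symbols} when paired with the $|D|^j$ prefactor, all absorbed into the universal constant $C$. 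The symmetrization over $\sigma \in \mathcal{S}_j$ produces $j!$ terms; each is bounded via the inductive hypothesis by $(j-1)!\cdot C^{j-1}\prod_{i}|\zeta_i|$ (and similarly with $(j-2)!$ for the second sum), and after dividing by $(j-1)!$, respectively $(j-2)!$, as prescribed by the definition, the required factor $j!$ emerges. The main obstacle will be the combinatorial bookkeeping in this last step: I must verify that the symmetrization sum, the inductive hypothesis, and the $\Phi$-bound combine to yield exactly $j!$ and not a larger combinatorial expression, while simultaneously checking that each of the nine scalings in Definition~\ref{defn of admissable class of symbols} is satisfied with the correct power of $\tbr{\xi}$, since the various components of $(p^{(j)}, u^{(j)}, \chi^{(j)})$ obey different homogeneity requirements.
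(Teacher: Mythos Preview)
Your proposal is correct and follows essentially the same route as the paper. The paper's proof also proceeds by induction on $j$ through the recursive structure of Definition~\ref{defn iterative jet construction}, uses the $|D|$-prefactor to restore the $\dot{H}^{-1}$ compatibility conditions for $\Phi$, invokes Proposition~\ref{prop on symbols and translation commuting maps} to pass from the translation-commuting operators to symbols $\tilde{\bf{m}}^{(j)}\in\mathfrak{A}(s)$, and sets $\bf{m}^{(j)}=|\cdot|^{-j}\tilde{\bf{m}}^{(j)}$. One small stylistic difference: the paper derives the quantitative bound entirely on the operator side, recording the recursion $\sup\tnorm{|D|^jX^{(j)}}_{\X_s}\lesssim j\sup\tnorm{|D|^{j-1}X^{(j-1)}}_{\X_s}+j(j-1)\sup\tnorm{|D|^{j-2}X^{(j-2)}}_{\X_s}$ (the factors $j$ and $j(j-1)$ coming from $j!/(j-1)!$ and $j!/(j-2)!$), iterates it to get $j!\cdot C^j$, and only then transfers to the symbol bound via the norm equivalence $\tjump{m}_s\asymp\tnorm{T}_{\mathcal{L}(\Y_s;\X_s)}$; this makes your concern about ``not a larger combinatorial expression'' transparent, since the two-term linear recursion $a_j\le K(ja_{j-1}+j(j-1)a_{j-2})$ gives $a_j\le A\,j!\,C^j$ as soon as $C$ is chosen with $K(1+1/C)\le C$.
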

\begin{proof}
    We begin with the first item in the case $j=1$. The key point is that the operator norm of $\Phi:\tilde{\Y}_s\to\X_s$ depends on a few quantities belonging to $\dot{H}^{-1}(\Sigma;\C)$.  The appearance of the operator $|D|$ is thus crucial, as it permits the bounds
    \begin{equation}
        \bsb{2\pi\ii\zeta\cdot|D|H+2\pi\ii\int_0^b\zeta\cdot(|D|u)(\cdot,y)\;\m{d}y}_{\dot{H}^{-1}}+\tsb{2\pi\ii\zeta^\perp\cdot|D|\chi}_{\dot{H}^{-1}}\lesssim|\zeta|\tnorm{\chi,u,H}_{L^2\times L^2\times L^2}.
    \end{equation}
    With this observation in hand, we appeal  directly to Definition~\ref{defn iterative jet construction} and the mapping properties of $\Phi$ established in Theorem~\ref{thm on analysis of strong solutions, II} and Definition~\ref{defn of spaces and TILOs}, to verify that
    \begin{equation}
        \tnorm{|D|(p^{(1)},u^{(1)},\chi^{(1)})[\zeta]}_{\X_s}\lesssim\tp{\tnorm{p,u,\chi}_{\X_s}+\tnorm{f,k,H}_{\Y_s}}|\zeta|\lesssim\tnorm{f,k,H}_{\Y_s}|\zeta|.
    \end{equation}
   
    In a similar manner to the above, we may derive the estimate
    \begin{equation}
        \sup_{|\zeta_1|,|\zeta_2|\le 1}\tnorm{|D|^2(p^{(2)},u^{(2)},\chi^{(2)})[\zeta_1,\zeta_2]}_{\X_s}\lesssim\sup_{|\zeta|\le 1}\tnorm{|D|(p^{(1)},u^{(1)},\chi^{(1)})[\zeta]}_{\X_s}+\tnorm{p,u,\chi}_{\X_s}\lesssim\tnorm{f,k}_{\Y_s}
    \end{equation}
    and, for $j\ge 3$,
    \begin{multline}
        \sup_{|\zeta_1|,\dots,|\zeta_j|\le 1}\tnorm{|D|^j(p^{(j)},u^{(j)},\chi^{(j)})[\zeta_1,\dots,\zeta_j]}_{\X_s}\lesssim\\ j\sum_{\sig=1}^2((j-2)\sig+3-j)\sup_{|\zeta_1|,\dots,|\zeta_{j-\sig}|\le 1}\tnorm{|D|^{j-\sig}(p^{(j-\sig)},u^{(j-\sig)},\chi^{(j-\sig)})[\zeta_1,\dots,\zeta_{j-\sig}]}_{\X_s}.
    \end{multline}
    Upon iterating these estimates, we deduce the boundedness assertion of the first item.

    As a consequence of the first item and Proposition~\ref{prop on symbols and translation commuting maps}, we find that for each $j\in\N^+$ and $(\zeta_1,\dots,\zeta_j)\in(\R^2)^j$, the map $(f,k,H)\mapsto|D|^j(p^{(j)},u^{(j)},\chi^{(j)})[\zeta_1,\dots,\zeta_j]$ is given by a symbol $\tilde{\bf{m}}^{(j)}[\zeta_1,\dots,\zeta_j]\in\mathfrak{A}(s)$. By uniqueness, we must have that $\tilde{\bf{m}}^{(j)}$ is a $j$-multilinear and symmetric function of the $\zeta_1,\dots,\zeta_j$. We then set $\bf{m}^{(j)}=|\cdot|^{-j}\tilde{\bf{m}}^{(j)}$, which implies  that~\eqref{symbols yay} holds, and hence the second item is satisfied.

    The third item is follows since $|\cdot|^j\bf{m}^{(j)}=\tilde{\bf{m}}^{(j)}$, with the symbol on the right a $j$-multilinear function of the $\zeta_1,\dots,\zeta_j$ into $\mathfrak{A}(s)$.  
\end{proof}

We pause to note that estimate~\eqref{Hormander-Mikhlin bounds} would tell us that $\bf{m}$ 
obeys estimates of Mikhlin-H\"ormander type if we knew that $\bf{m}$ were smooth away from the origin with the $j^{\m{th}}$-derivative equal to $\bf{m}^{(j)}$.   It is thus our goal now to prove that these are, in fact the derivatives of the symbol $\bf{m}$.  We first require a definition and a technical lemma about remainders.  Note that the following is well-defined thanks to Proposition~\ref{proposition on jet verification}.

\begin{defn}[Remainders]\label{defn of the remainder terms}
     Given $(f,k,H)\in\Y_s$ satisfying $0\not\in\supp\mathscr{F}(f,k,H)$ and $\zeta\in\R^2$, we define the following elements of $\X_s$:
    \begin{equation}
        \mathcal{R}_0(f,k,H)[\zeta]=(\bf{m}(D+\zeta)-\bf{m}(D))(f,k),
    \end{equation}
    and for $j\in\N^+$
    \begin{equation}
        \mathcal{R}_j(f,k,H)[\zeta]=\mathcal{R}_0(f,k,H)[\zeta]-\sum_{i=1}^j\f{1}{i!}\bf{m}^{(i)}[\zeta^{\otimes i}](D)(f,k,H),
    \end{equation}
    where in the above we write $\zeta^{\otimes i}\in(\R^2)^i$ to refer to the $i$-tuple of vectors with each entry equal to $\zeta$.
\end{defn}

Now we derive estimates for these remainder terms.

\begin{lem}[Remainder estimates]\label{lemma on recursive jet identity}
    There exists a constant $C\ge 0$ such that for all $(f,k,H)\in\Y_s$ satisfying $0\not\in\supp\mathscr{F}(f,k,H)$ and $\zeta\in\R^2$ satisfying $|\zeta|<\min\tcb{\m{dist}(0,\supp\mathscr{F}(f,k,H)),1}$, we have the following estimates for $j\in\N$:
    \begin{equation}\label{recursive}
        \tnorm{|D|^{j+1}\mathcal{R}_j(f,k,H)[\zeta]}_{\X_s}\le C^{j+1}|\zeta|^{j+1}\tnorm{f,k,H}_{\Y_s}.
    \end{equation}
\end{lem}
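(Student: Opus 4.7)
The plan is to proceed by induction on $j$, deriving a recursive identity of the form
\begin{equation*}
    \mathcal{R}_j = \Phi(\mathcal{D}_j[\zeta,\mathcal{R}_{j-1},\mathcal{R}_{j-2}])
\end{equation*}
where $\mathcal{D}_j$ is a data tuple whose entries are either linear in $\zeta$ and built from $\mathcal{R}_{j-1}$, or quadratic in $\zeta$ and built from $\mathcal{R}_{j-2}$ (with the conventions $U_{-1}=0$ and $\mathcal{R}_{-1}=(p_\zeta,u_\zeta,\chi_\zeta)$). The starting point is identity~\eqref{the starting point for the jet expansion} from Lemma~\ref{lem on estimates for the translated symbol PDE}, and the matching identity for the truncated sum $(P,U,X)_j=\Psi(f,k,H)+\sum_{i=1}^j\frac{1}{i!}(p^{(i)},u^{(i)},\chi^{(i)})[\zeta^{\otimes i}]$ comes from summing the recursive formulas in Definition~\ref{defn iterative jet construction}. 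Subtracting these two identities yields the desired recursion for $\mathcal{R}_j=(p_\zeta,u_\zeta,\chi_\zeta)-(P,U,X)_j$.

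For the base case $j=0$, we note that $(\mathcal{R}_0)=\Phi(\mathcal{D}_0[\zeta,u_\zeta,p_\zeta,\chi_\zeta,H])$ with each component of $\mathcal{D}_0$ carrying a factor $\zeta$ or $|\zeta|^2$. Because $\Phi$ is tangentially translation commuting, it commutes with $|D|$, and applying $|D|$ to both sides converts the $\dot{H}^{-1}$ constraints in $\tilde{\Y}_s$ into $L^2$ control of $|D|^{-1}|D|(\cdot)=(\cdot)$ paired with the trivial bound $|\zeta|\cdot\|(\mathcal{R}_0)_\bullet\|_{L^2}\lesssim|\zeta|\cdot C_\zeta\|\Psi\|\|(f,k,H)\|_{\Y_s}$ coming from~\eqref{soft arguments}. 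Since $C_\zeta$ is bounded on bounded sets and $|\zeta|<1$, the quadratic term $|\zeta|^2 u_\zeta$ is absorbed into an overall $|\zeta|$ factor, giving $\||D|\mathcal{R}_0\|_{\X_s}\lesssim|\zeta|\|(f,k,H)\|_{\Y_s}$.

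For the inductive step, assume~\eqref{recursive} holds for all indices $\le j-1$. By careful accounting of the coefficients $\frac{1}{(i-1)!}$, $\frac{1}{(i-2)!}$, and the $\sum_{\sigma\in\mathcal{S}_i}$ in Definition~\ref{defn iterative jet construction} (which contribute factors $i$ and $i(i-1)$ when paired with the $\frac{1}{i!}$ normalization of the symmetric evaluation at $\zeta^{\otimes i}$), the sum telescopes into the single expression
\begin{equation*}
(P,U,X)_j-\Psi(f,k,H)=\Phi\bpm-2\pi\ii\zeta\cdot U_{j-1}\\ -2\pi\ii\zeta P_{j-1}+\mu2\pi\ii\grad(\zeta\cdot U_{j-1})+\mu4\pi\ii\zeta\cdot\grad U_{j-1}-\mu4\pi^2|\zeta|^2U_{j-2}\\ \kappa2\pi\ii\zeta\cdot X_{j-1}e_3-\mu2\pi\ii\zeta(U_{j-1}\cdot e_3)\\ 2\pi\ii\zeta\cdot H\\ -2\pi\ii\zeta^\perp\cdot X_{j-1}\epm,
\end{equation*}
which mirrors~\eqref{the starting point for the jet expansion} with the substitutions $(p_\zeta,u_\zeta,\chi_\zeta)\mapsto(P,U,X)_{j-1}$ and (only in the quadratic term) $u_\zeta\mapsto U_{j-2}$. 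Subtracting yields the claimed recursion, and applying $|D|^{j+1}$ then using that $|D|$ commutes with $\Phi$ gives
\begin{equation*}
    \||D|^{j+1}\mathcal{R}_j\|_{\X_s}\le\|\Phi\|\bp{|\zeta|\,\||D|^{j+1}\mathcal{R}_{j-1}\|_{\X_s}+|\zeta|^2\,\||D|^{j+1}(\mathcal{R}_{j-2})_u\|_{H^s}}.
\end{equation*}
Losing one derivative from $\X_s$ in the linear term gives $|\zeta|\cdot\||D|^j\mathcal{R}_{j-1}\|_{\X_s}\le|\zeta|\cdot C^j|\zeta|^j\|(f,k,H)\|_{\Y_s}$ by induction, and losing two derivatives in the quadratic term gives $|\zeta|^2\cdot\||D|^{j-1}\mathcal{R}_{j-2}\|_{\X_s}\le|\zeta|^2\cdot C^{j-1}|\zeta|^{j-1}\|(f,k,H)\|_{\Y_s}$, which closes the induction after choosing $C$ large enough (independent of $j$) to absorb $\|\Phi\|$ and the combinatorial constants.

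The main obstacle is the combinatorial verification of the telescoping identity for $(P,U,X)_j$, together with the check that $|D|^{j+1}\mathcal{D}_j[\zeta,\mathcal{R}_{j-1},\mathcal{R}_{j-2}]$ actually lies in $\tilde{\Y}_s$, i.e.\ that its $\dot{H}^{-1}$-compatibility conditions hold. Both issues are handled by the Fourier-support hypothesis $|\zeta|<\m{dist}(0,\supp\mathscr{F}(f,k,H))$: every building block is a tangentially translation-commuting function of $(f,k,H)$, shifted at most by $\zeta$, so all relevant Fourier supports stay bounded away from $0$, making $|D|^{-1}$ bounded on them and letting the $\dot{H}^{-1}$ seminorms be controlled by $L^2$ norms with an additional factor of $|\zeta|$ already extracted from the linear or quadratic coefficient.
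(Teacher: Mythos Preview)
Your approach is essentially the same as the paper's: both derive the recursive identity expressing $\mathcal{R}_j$ as $\Phi$ applied to data built linearly in $\zeta$ from $\mathcal{R}_{j-1}$ and quadratically in $\zeta$ from $\mathcal{R}_{j-2}$, then apply $|D|^{j+1}$ and iterate. The paper proves this identity by induction on $j$ (subtracting one term $\tfrac{1}{j!}\bf{m}^{(j)}[\zeta^{\otimes j}](D)(f,k,H)$ at a time), while you obtain it by first writing a closed formula for the partial sum $(P,U,X)_j$ via telescoping and then subtracting from~\eqref{the starting point for the jet expansion}; these are equivalent bookkeeping choices.

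One point deserves correction. Your displayed recursive inequality
\[
\||D|^{j+1}\mathcal{R}_j\|_{\X_s}\le\|\Phi\|\bp{|\zeta|\,\||D|^{j+1}\mathcal{R}_{j-1}\|_{\X_s}+|\zeta|^2\,\||D|^{j+1}(\mathcal{R}_{j-2})_u\|_{H^s}}
\]
is not what the boundedness of $\Phi$ gives: the right-hand side should carry the $\tilde{\Y}_s$ norm of the data, not the $\X_s$ norm of $\mathcal{R}_{j-1}$. The subsequent ``losing one derivative'' step is not a valid passage from this inequality as written. The correct mechanism (and what the paper uses) is that the data slots in $\tilde{\Y}_s$ require exactly one derivative less than the corresponding $\X_s$ components, so that e.g.\ $\||D|^{j+1}(\zeta\cdot\mathcal{R}_{j-1}u)\|_{H^{1+s}}\lesssim|\zeta|\,\||D|^{j}\mathcal{R}_{j-1}u\|_{H^{2+s}}\le|\zeta|\,\||D|^{j}\mathcal{R}_{j-1}\|_{\X_s}$; this yields directly
\[
\||D|^{j+1}\mathcal{R}_j\|_{\X_s}\lesssim|\zeta|\,\||D|^{j}\mathcal{R}_{j-1}\|_{\X_s}+|\zeta|^2\,\||D|^{j-1}\mathcal{R}_{j-2}\|_{\X_s},
\]
which is the paper's estimate~\eqref{the recursive bound}. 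Your final bounds are the right ones, so this is a presentational slip rather than a gap in the strategy.
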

\begin{proof}
    Throughout the proof, we will employ the following convenient notation:  we set $(p_\zeta,u_\zeta,\chi_\zeta)=\bf{m}(D+\zeta)(f,k,H)$, and for $j\in\N$ we set
    \begin{equation}
        \mathcal{R}_j(f,k,H)[\zeta]=(\mathcal{R}_jp[\zeta],\mathcal{R}_ju[\zeta],\mathcal{R}_j\chi[\zeta]).
    \end{equation}
    
    We first establish~\eqref{recursive} when $j=0$. From identity~\eqref{the starting point for the jet expansion}, we find that
    \begin{equation}\label{starting to look recursive}
        \mathcal{R}_0(f,k,H)[\zeta]=\Phi\bpm -2\pi\ii\zeta\cdot u_\zeta\\ -2\pi\ii\zeta p_\zeta + \mu 2\pi\ii\grad(\zeta\cdot u_\zeta) + \mu 4\pi\ii\zeta\cdot\grad u_\zeta - \mu 4\pi^2|\zeta|^2u_\zeta\\ \kappa 2\pi\ii\zeta\cdot\chi_\zeta e_3 - \mu 2\pi\ii\zeta(u_\zeta\cdot e_3)\\2\pi\ii\zeta\cdot H\\-2\pi\ii\zeta^\perp\cdot\chi_\zeta\epm.
    \end{equation}
    We then apply $|D|$ to the above and take the norm in $\X_s$. Thanks to the continuity properties of $\Phi:\tilde{\Y}_s\to\X_s$ (see the proof of Proposition~\ref{proposition on jet verification}) and Lemma~\ref{lem on estimates for the translated symbol PDE}, we acquire the bound
    \begin{equation}\label{the j=0 estimate}
        \tnorm{|D|\mathcal{R}_0(f,k,H)[\zeta]}_{\X_s}
        \lesssim|\zeta|\tp{\tnorm{p_\zeta,u_\zeta,\eta_\zeta}_{\X_s}+\tnorm{f,k,H}_{\Y_s}}\lesssim|\zeta|\tnorm{f,k,H}_{\Y_s},
    \end{equation}
    which completes the proof in the $j=0$ case.  
    
    Next up is $j=1$. We subtract $\bf{m}^{(1)}[\zeta](D)(f,k,H)$ from both sides of~\eqref{starting to look recursive} and recall~\eqref{derivative definition}. This yields the equation
    \begin{equation}\label{starting to look even more recursive}
        \mathcal{R}_1(f,k,H)[\zeta]=\Phi\bpm -2\pi\ii\zeta\cdot \mathcal{R}_0u[\zeta]\\ -2\pi\ii\zeta\mathcal{R}_0p[\zeta]+\mu 2\pi\ii\grad(\zeta\cdot \mathcal{R}_0u[\zeta]) + \mu 4\pi\ii\zeta\cdot\grad\mathcal{R}_0u[\zeta] -\mu 4\pi^2|\zeta|^2u_\zeta\\
        \kappa 2\pi\ii\zeta\cdot\mathcal{R}_0\chi[\zeta] e_3-\mu 2\pi\ii\zeta(\mathcal{R}_0u[\zeta]\cdot e_3)\\0\\-2\pi\ii\zeta^\perp\cdot\mathcal{R}_0\chi[\zeta]\epm.
    \end{equation}
    We now apply $|D|^2$ to~\eqref{starting to look even more recursive} and then take the norm in $\X_s$.  The mapping properties of $\Phi$, together with Lemma~\ref{lem on estimates for the translated symbol PDE} and estimate~\eqref{the j=0 estimate}, then show that
    \begin{equation}\label{r1 bound}
        \tnorm{|D|^2\mathcal{R}_1(f,k,H)[\zeta]}_{\X_s}\lesssim|\zeta|\tnorm{|D|\mathcal{R}_0(f,k,H)[\zeta]}_{\X_s}+|\zeta|^2\tnorm{|D|^2u_\zeta}_{H^s}
        \lesssim|\zeta|^2\tnorm{f,k}_{\Y_s}.
    \end{equation}
    This completes the proof in the case $j=1$.

    Now we claim the following identity holds for all $j\ge 2$:
    \begin{equation}\label{the most recursive identity}
        \mathcal{R}_j(f,k,H)[\zeta]=\Phi\bpm -2\pi\ii\zeta\cdot \mathcal{R}_{j-1}u[\zeta]\\ -2\pi\ii\zeta\mathcal{R}_{j-1}p[\zeta]+ \mu 2\pi\ii\grad(\zeta\cdot \mathcal{R}_{j-1}u[\zeta]) + \mu 4\pi\ii\zeta\cdot\grad\mathcal{R}_{j-1}u[\zeta] - \mu 4\pi^2|\zeta|^2\mathcal{R}_{j-2}u[\zeta]\\ \kappa 2\pi\ii\zeta\cdot\mathcal{R}_{j-1}\chi[\zeta] e_3 - \mu 2\pi\ii\zeta(\mathcal{R}_{j-1}u[\zeta]\cdot e_3)\\0\\-2\pi\ii\zeta^\perp\cdot\mathcal{R}_{j-1}\chi[\zeta]\epm.
    \end{equation}
    We prove this via induction. For the base case $j=2$, we look to Definition~\ref{defn iterative jet construction} to see that
    \begin{equation}
        \f{1}{2}\bf{m}^{(2)}[\zeta^{\otimes 2}](D)(f,k,H)=\bpm-2\pi\ii\zeta\cdot u^{(1)}[\zeta]\\
        -2\pi\ii\zeta p^{(1)}[\zeta]+\mu 2\pi\ii\grad(\zeta\cdot u^{(1)}[\zeta]) + \mu 4\pi\ii\zeta\cdot\grad u^{(1)}[\zeta] - \mu 4\pi^2|\zeta|^2u\\
        \kappa 2\pi\ii\zeta\cdot\chi^{(1)}[\zeta] e_3-\mu 2\pi\ii\zeta(u^{(1)}[\zeta]\cdot e_3)\\0\\-2\pi\ii\zeta^\perp\cdot\chi^{(1)}[\zeta]
        \epm.
    \end{equation}
    We subtract the above from~\eqref{starting to look even more recursive} to see that~\eqref{the most recursive identity} is true for $j=2$. Proceeding inductively, suppose that this identity holds for some $\N\ni j\ge 2$. We again look to Definition~\ref{defn iterative jet construction} to acquire the identity
    \begin{multline}\label{the above}
        \f{1}{(j+1)!}\bf{m}^{(j+1)}[\zeta^{\otimes (j+1)}](D)(f,k,H) \\
        = \f{1}{j!}\Phi\bpm -2\pi\ii\zeta\cdot u^{(j)}[\zeta^{\otimes j}]\\
        -2\pi\ii\zeta p^{(j)}[\zeta^{\otimes j}]+\mu 2\pi\ii\grad(\zeta\cdot u^{(j)}[\zeta^{\otimes(j)}])+\mu 4\pi\ii\zeta\cdot\grad u^{(j)}[\zeta^{\otimes(j)}]\\
       \kappa 2\pi\ii\zeta\cdot\chi^{(j)}[\zeta^{\otimes(j)}] e_3-\mu 2\pi\ii\zeta(u^{(j)}[\zeta^{\otimes j}]\cdot e_3)\\0\\-2\pi\ii\zeta^\perp\cdot\chi^{(j)}[\zeta^{\otimes j}]e_3
        \epm\\+\f{1}{(j-1)!}\Phi\bpm0\\ -\mu 4\pi^2|\zeta|^2u^{(j-1)}[\zeta^{\otimes(j-1)}]\\0\\0\\0\epm.
    \end{multline}
    We then simply subtract~\eqref{the above} from the induction hypothesis~\eqref{the most recursive identity} to prove the stated identity in the $j+1$ case. Thus~\eqref{the most recursive identity} holds for all $\N\ni j\ge 2$, and the claim is proved.
    
    With the claim in hand, we apply $|D|^{j+1}$ in the case $\N\ni j\ge 2$ to identity~\eqref{the most recursive identity}, take the norm in $\X_s$, and utilize the mapping properties of $\Phi$ to deduce the inductive estimate
    \begin{equation}\label{the recursive bound}
        \tnorm{|D|^{j+1}\mathcal{R}_j(f,k,H)[\zeta]}_{\X_s}\lesssim\sum_{\sig=1}^2|\zeta|^{\sig}\tnorm{|D|^{j-\sig+1}\mathcal{R}_{j-\sig}(f,k,H)[\zeta]}_{\X_s}.
    \end{equation}
    Iteratively applying~\eqref{the recursive bound} and employing~\eqref{r1 bound} and~\eqref{the j=0 estimate}, we then readily conclude that~\eqref{recursive} holds.
\end{proof}

At last, we prove analyticity of $\bf{m}$ away from the origin.

\begin{thm}[Analyticity of the symbol]\label{thm on analyticity of the symbol}
    The symbol $\bf{m}\in\mathfrak{A}(s)$ from Definition~\ref{defn of the main symbol} has a representative that is analytic as a mapping
    \begin{equation}\label{where the symbol lives}
        \bf{m}:\R^2\setminus\tcb{0}\to\mathcal{L}\tp{H^s((0,b);\C^3)\times\C^3\times\C^2;H^{1+s}((0,b);\C)\times H^{2+s}((0,b);\C^3)\times\C^2}.
    \end{equation}
    Moreover, the derivatives of the above symbol obey the following Mikhlin-H\"ormander type bounds: for every $\al\in \N^d$ there exists $C_\al<\infty$ such that
    \begin{equation}\label{the estimate on the dude}
       \tjump{|\cdot|^{|\al|}\pd^\al\bf{m}}_s\le C_\al.
    \end{equation}
\end{thm}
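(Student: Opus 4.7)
The plan is to define the analytic representative of $\bf{m}$ as the formal Taylor series built from the multilinear maps $\bf{m}^{(j)}$ of Proposition~\ref{proposition on jet verification}, and then use the remainder estimates of Lemma~\ref{lemma on recursive jet identity} together with the local uniform control of Lemma~\ref{lemma on local esssup control} to verify absolute convergence, analyticity, and a.e. agreement with $\bf{m}$.  Concretely, for any $\xi_0\in\R^2\setminus\tcb{0}$ and sufficiently small $\zeta\in\R^2$, I set
\begin{equation}\label{outline_taylor_series}
    \tilde{\bf{m}}(\xi_0+\zeta):=\sum_{j=0}^\infty\f{1}{j!}\bf{m}^{(j)}(\xi_0)\sb{\zeta^{\otimes j}},
\end{equation}
and plan to show that this sum defines a pointwise analytic function of $\zeta$ coinciding a.e. with $\bf{m}(\xi_0+\zeta)$.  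A patching argument then yields a single globally-defined analytic representative on $\R^2\setminus\tcb{0}$.

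For the convergence of~\eqref{outline_taylor_series}, Proposition~\ref{proposition on jet verification}(3) controls $\tjump{|\cdot|^j\bf{m}^{(j)}\sb{\zeta^{\otimes j}}}_s\le j!\,\tilde{C}^j|\zeta|^j$, so Lemma~\ref{lemma on local esssup control} applied on a ball $B(0,R)$ delivers the a.e. bound $\tnorm{|\xi|^j\bf{m}^{(j)}(\xi)\sb{\zeta^{\otimes j}}}_{\mathcal{L}}\le C_R\,j!\,\tilde{C}^j|\zeta|^j$ on that ball. Intersecting the resulting full-measure sets over all $j$ and over evaluation of each $\bf{m}^{(j)}$ on the finite family of basis tensors $e_1^{\otimes\al_1}\otimes e_2^{\otimes\al_2}$ (which, by multilinearity, determine $\bf{m}^{(j)}\sb{\zeta^{\otimes j}}$ for every $\zeta$) produces a single full-measure set $E\subset\R^2$ on which all pointwise bounds hold simultaneously.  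For $\xi_0\in E$ with $|\xi_0|<R$, setting $r:=|\xi_0|/2$ and dividing through by $|\xi_0|^j\ge r^j$ transforms the bound into $\tnorm{\bf{m}^{(j)}(\xi_0)\sb{\zeta^{\otimes j}}}_{\mathcal{L}}\le C_R\,j!\,(\tilde{C}|\zeta|/r)^j$, which gives absolute operator-norm convergence of~\eqref{outline_taylor_series} for $|\zeta|<r/(2\tilde{C})$ and hence analyticity of $\tilde{\bf{m}}$ near $\xi_0$. For the identification with $\bf{m}$, the remainder symbol $\bf{m}(\xi+\zeta)-\sum_{i=0}^j\f{1}{i!}\bf{m}^{(i)}(\xi)\sb{\zeta^{\otimes i}}$ is, on the annulus $\tcb{r\le|\xi|\le R}$, the symbol of the tangentially translation commuting map $\mathcal{R}_j[\zeta]$ acting on spectrally annular data; Lemma~\ref{lemma on recursive jet identity} bounds $|D|^{j+1}\mathcal{R}_j[\zeta]$ in operator norm by $C^{j+1}|\zeta|^{j+1}$, which via Propositions~\ref{prop on symbols and translation commuting maps} and~\ref{lemma on local esssup control}, after dividing by $|\xi|^{j+1}\ge r^{j+1}$, yields the a.e. bound $\tnorm{\bf{m}(\xi+\zeta)-\sum_{i=0}^j\f{1}{i!}\bf{m}^{(i)}(\xi)\sb{\zeta^{\otimes i}}}_{\mathcal{L}}\le C_R(C|\zeta|/r)^{j+1}$, which vanishes as $j\to\infty$ for $|\zeta|<r/(2C)$.

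The locally defined analytic functions based at different good points $\xi_0\in E$ must coincide pointwise on overlaps, since both are continuous and both equal $\bf{m}$ a.e. there; this glues them into a single globally-defined analytic representative $\tilde{\bf{m}}:\R^2\setminus\tcb{0}\to\mathcal{L}$.  Term-by-term differentiation of~\eqref{outline_taylor_series} identifies $\pd^\al\tilde{\bf{m}}(\xi)=\bf{m}^{(|\al|)}(\xi)\sb{e_1^{\otimes\al_1}\otimes e_2^{\otimes\al_2}}$, so the Mikhlin-H\"ormander bound $\tjump{|\cdot|^{|\al|}\pd^\al\tilde{\bf{m}}}_s\le|\al|!\,\tilde{C}^{|\al|}=:C_\al$ follows at once from Proposition~\ref{proposition on jet verification}(3). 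The main conceptual hurdle is the passage from the a.e. equivalence class $\bf{m}\in\mathfrak{A}(s)$ (which carries no intrinsic pointwise meaning) to a genuinely pointwise analytic representative; this is navigated by \emph{defining} the candidate representative via the Taylor series~\eqref{outline_taylor_series}, which is pointwise analytic by construction, and only then reconciling it with $\bf{m}$ via the remainder estimate.
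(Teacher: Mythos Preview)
Your outline tracks the paper's proof closely: use the $\bf{m}^{(j)}$ bounds of Proposition~\ref{proposition on jet verification} for absolute convergence of the Taylor series, use the remainder estimates of Lemma~\ref{lemma on recursive jet identity} for the a.e. identification with $\bf{m}$, and glue the local analytic pieces.  The basis-tensor device you use to intersect null sets in the convergence step is a clean alternative to the paper's countable-dense-$\zeta$ argument (both exploit multilinearity of $\bf{m}^{(j)}$ in $\zeta$).

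There is, however, one step that needs to be made explicit.  The remainder estimate from Lemma~\ref{lemma on recursive jet identity}, translated via Proposition~\ref{prop on symbols and translation commuting maps} and Lemma~\ref{lemma on local esssup control}, gives for each fixed $\zeta$ and each $j$ a bound that holds for a.e.~$\xi$; the exceptional null set depends on $\zeta$, and since the remainder contains $\bf{m}(\xi+\zeta)$, which is \emph{not} multilinear in $\zeta$, your basis-tensor trick does not apply here.  Your jump from ``the a.e. bound vanishes as $j\to\infty$'' to ``$\tilde{\bf{m}}_{\xi_0}$ equals $\bf{m}$ a.e. on $B(\xi_0,\rho)$'' therefore silently swaps quantifiers: you need ``for a.e. base point $\xi_0$, for a.e. displacement $\zeta$,'' not ``for each $\zeta$, for a.e.~$\xi$.''  This is precisely a Fubini step (joint measurability of the remainder in $(\xi,\zeta)$ follows from strong measurability of $\bf{m}$), after which one must further shrink $E$ to the full-measure subset of ``good'' base points before gluing.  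Once this is inserted, your argument works and in fact bypasses the paper's Step~2 entirely.  The paper takes a different tack at this juncture: it first mollifies $\bf{m}$ to produce a Lipschitz-continuous representative on each annulus, and then uses that continuity, together with a countable dense family $\{\zeta_n\}$, to upgrade the remainder bound from ``a.e.~$\xi$ for each $\zeta_n$'' to ``all $\zeta$ on a single full-measure set of $\xi$'s.''  Your Fubini route is shorter; the paper's route has the advantage of establishing a continuous representative of $\bf{m}$ independently, which makes the final identification step pointwise rather than almost-everywhere.
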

\begin{proof}

It suffices to prove that $\bf{m}$ is analytic as claimed and that $\grad^j\bf{m}=\bf{m}^{(j)}$ for $j \in \N^+$. Indeed, once this is established, the bound~\eqref{the estimate on the dude} is a consequence of the third item of Proposition~\ref{proposition on jet verification} and Definition~\ref{defn of the main symbol}. We divide the proof into several steps.

\emph{Step 1 - Setup and basic estimates:} 

For the sake of brevity, define the Hilbert spaces $V_0 = H^s((0,b);\C^3)\times\C^3\times\C^2$ and $V_1 = H^{1+s}((0,b);\C)\times H^{2+s}((0,b);\C^3)\times\C^2$.  We will write $\mathcal{L} = \mathcal{L}(V_0;V_1)$ throughout the proof.

Fix $\ep\in(0,1/4)$ and define the annuli $A_{2}=B(0,2\ep^{-1})\setminus\Bar{B(0,\ep/2)}$, $A_{1}=B(0,\ep^{-1})\setminus\Bar{B(0,\ep)} \subset A_2$, and $A_0=B(0,\ep^{-1}/2)\setminus\Bar{B(0,2\ep)}\subset A_1$.  Thanks to Definition~\ref{defn of the remainder terms}, for $j\in\N$ and $|\zeta|<\ep/2$ the linear map
    \begin{equation}
        \Y_s\ni(f,k,H)\overset{T_j^\ep[\zeta]}{\mapsto}\mathcal{R}_j(f,k,H)[\zeta] \mathds{1}_{A_2}(D)(f,k)\in\X_s
    \end{equation}
    is bounded, translation commuting, and satisfies $T_j^\ep[\zeta]=\bf{m}_j^\ep[\zeta](D)$ for the symbol
    \begin{equation}
        \bf{m}_j^\ep[\zeta](\xi)=\bp{\bf{m}(\xi+\zeta) - \sum_{i=0}^j\f{1}{i!}\bf{m}^{(i)}[\zeta^{\otimes i}](\xi)}\mathds{1}_{A_2}(\xi),
    \end{equation}
    with the understanding that $\f{1}{0!}\bf{m}^{(0)}[\zeta^{\otimes 0}](\xi) = \bf{m}(\xi)$.      Lemma~\ref{lemma on recursive jet identity} provides the estimate
    \begin{equation}
        \tnorm{|D|^{j+1}T^\ep_j[\zeta]}_{\mathcal{L}(\Y_s;\X_s)}\le C^{j+1}|\zeta|^{j+1},
    \end{equation}
    and so Proposition~\ref{prop on symbols and translation commuting maps} then yields multiplier bound
    \begin{equation}
        \tjump{|\cdot|^{j+1}\bf{m}^\ep_j[\zeta]}_s\le C^{j+1}|\zeta|^{j+1} \text{ for all } \abs{\zeta} < \ep/2.
    \end{equation}

    Now, since the multiplier $\bf{m}^\ep_j$ is supported in the set $A_2$, we are free to apply Lemma~\ref{lemma on local esssup control} and find that there exists a constant $c_\ep$, depending only $\ep$ and $s$, such that
    \begin{equation}\label{the all important identity}
        \esssup_{\xi \in A_2}|\xi|^{j+1}\bnorm{\bf{m}(\xi+\zeta)  -\sum_{i=0}^j\f{1}{i!}\bf{m}^{(i)}[\zeta^{\otimes i}](\xi)}_{\mathcal{L}}\le c_\ep \tjump{|\cdot|^{j+1}\bf{m}_j^\ep[\zeta]}_s\le c_\ep C^{j+1}|\zeta|^{j+1},
    \end{equation}
    where we recall that $\mathcal{L} = \mathcal{L}(V_0;V_1)$, and  we  deduce from this that if $\abs{\zeta} < \ep/2$, then 
    \begin{equation}\label{remainder estimate is bananas nabandanas while the whale of a time is a watched pot}
       \esssup_{\xi \in A_2}\bnorm{\bf{m}(\xi+\zeta) - \sum_{i=0}^j\f{1}{i!}\bf{m}^{(i)}[\zeta^{\otimes i}](\xi)}_{\mathcal{L}}\le c_\ep\bp{\f{2C|\zeta|}{\ep}}^{j+1}.
    \end{equation}
    A similar application of Definition~\ref{defn of the main symbol} and Lemma~\ref{lemma on local esssup control} shows that 
    \begin{equation}\label{basic m bound}
        \esssup_{\xi \in A_2}\norm{\bf{m}(\xi)}_{\mathcal{L}} \le c_\ep.
    \end{equation}

\emph{Step 2 -  Lipschitz continuity in $A_1$:}

We now aim to show that $\bf{m} : A_1 \to \mathcal{L}$ has a Lipschitz continuous representative.  To this end, let $\varphi \in C^\infty_c(\R^n)$ be such that $\varphi \ge 0$, $\int \varphi =1$, and $\supp(\varphi) \subseteq B(0,1)$. For $0 < \delta < \ep/4$ and $\xi \in A_1$ we define $\bf{n}_\ep(\xi) : V_0 \to V_1$ via 
\begin{equation}
    \bf{n}_\delta(\xi) v = \int_{B(0,\delta)} \frac{1}{\delta^2} \varphi(\omega/\delta) \bf{m}(\xi-\omega) v\;\m{d}\omega,
\end{equation}
which is well-defined since $B(0,\delta) \ni \omega \mapsto \bf{m}(\xi-\omega)v \in V_2$ is measurable and essentially bounded for each $v \in V_0$ and $\xi \in A_1$.  From this it's easy to see that $\bf{n}_\delta(\xi) \in \mathcal{L}$ for each $\xi \in A_1$ and that
the induced map $\bf{n}_\delta : A_1 \to \mathcal{L}$ is continuous.  In fact,  the mollified sequence is uniformly Lipschitz in $\del$.  Indeed, from~\eqref{remainder estimate is bananas nabandanas while the whale of a time is a watched pot} with $j=0$ 
we know that there exists a null set $N \subset A_2$ such that if $\xi \in A_1$ and $\omega \in B(0,\delta)$ are such that $\xi - \omega \in A_2 \backslash N$, then 
\begin{equation}
   \sup_{\norm{v}_{V_0} \le 1} \norm{\bf{m}(\xi-\omega + \eta)v - \bf{m}(\xi-\omega)v}_{V_1} 
    \le \sup_{\theta \in A_2\backslash N} \norm{\bf{m}(\theta + \eta) - \bf{m}(\theta)}_{\mathcal{L}}  
    \le \f{2c_\ep C}{\ep}|\eta|.
\end{equation}
Consequently, for any given $\xi \in A_1$ and $\abs{\eta} < \ep/2$ we have that 
\begin{equation}
    \sup_{\norm{v}_{V_0} \le 1} \norm{\bf{m}(\xi-\omega + \eta)v - \bf{m}(\xi-\omega)v}_{V_1}   \le \f{2c_\ep C}{\ep}|\eta| \text{ for a.e. }\omega \in B(0,\delta),
\end{equation}
and hence 
\begin{equation}
    \tnorm{\bf{n}_\del(\xi+\eta)-\bf{n}_\del(\xi)}_{\mathcal{L}} \le \f{2c_\ep C}{\ep}|\eta| \int_{\R^2 } \frac{1}{\delta^2} \varphi(\omega/\delta) \;\m{d}\omega = \f{2c_\ep C}{\ep}|\eta|.
\end{equation}
Since $\xi \in A_1$ was arbitrary, we deduce that 
\begin{equation}\label{lip estimate}
    \sup_{\xi \in A_1} \tnorm{\bf{n}_\del(\xi+\eta)-\bf{n}_\del(\xi)}_{\mathcal{L}}\le  \f{2c_\ep C}{\ep}|\eta|
\end{equation}
for all $\abs{\eta} < \ep/2$.  By similar considerations, we may estimate 
\begin{equation}\label{Cauchy estimate}
    \tnorm{\bf{n}_\del-\bf{m}}_{L^\infty_\ast(A_1;\mathcal{L})}\le
     \f{2c_\ep C}{\ep}\int_{B(0,\del)}\f{1}{\del^2}\varphi(\omega/\del)|\omega|\;\m{d}\omega\le\f{2c_\ep C}{\ep}\del.
\end{equation}
Sending $\delta \to 0$ and appealing to \eqref{lip estimate} and \eqref{basic m bound}, we find that the restriction of $\bf{m}$ to $A_1$ is almost everywhere equal to a Lipschitz continuous function from $A_1$ to $\mathcal{L}$. From now on, we shall use the continuous representative of $\bf{m}$ in $A_1$.

\emph{Step 3 - Local convergence of the power series in $A_2$:}

We now claim that there exists a constant $C_1 \ge 2$ and a set $\mathfrak{E}\subseteq A_2$ with $|A_2\setminus\mathfrak{E}|=0$ such that  for any $\xi\in\mathfrak{E}$ the power series
    \begin{equation}\label{surfin US and A}
        \R^2\supset B(\xi,\ep /C_1)\ni\zeta\mapsto \sum_{j=0}^\infty\bf{m}^{(j)}[\zeta^{\otimes j}](\xi)\in\mathcal{L}
    \end{equation}
    converges uniformly absolutely and thus defines an analytic $\mathcal{L}$-valued function in $B(\xi,\ep/N)$.

    To prove the claim, we first appeal to the bounds~\eqref{Hormander-Mikhlin bounds} from Proposition~\ref{proposition on jet verification} and Lemma~\ref{lemma on local esssup control} to find a constant $R>0$ such that for every $j\in\N$ we have the bound
    \begin{equation}\label{power series bananas}
        \f{1}{j!} \esssup_{\xi \in A_2} |\xi|^j\tnorm{\bf{m}^{(j)}[\zeta^{\otimes j}](\xi)}_{\mathcal{L}}\le c_\ep R^j\tabs{\zeta}^j 
         \text{ for every }|\zeta|<\f{\ep}{2}.
    \end{equation}
    This estimate can be improved by virtue of multilinearity. To see how, set $C_1 = 2(1+2R)$ and let  $\tcb{\zeta_n}_{n\in\N}\subset B(0,\ep/C_1)$ be dense. Then by~\eqref{power series bananas}, the fact that a countable union of null sets is again null,  the pointwise continuity of the multilinear maps involved, and the fact that $\m{dist}(0,A_2)=\ep/2$, we have that
\begin{equation}\label{improved bound}
    \esssup_{\xi\in A_2} \sup_{|\zeta|< \ep/C_1}
     \tnorm{\bf{m}^{(j)}[\zeta^{\otimes j}](\xi)}_{\mathcal{L}}
    \le \esssup_{\xi\in A_2} \sup_{n\in\N} \frac{2^j|\xi|^j}{\ep^j} \tnorm{\bf{m}^{(j)}[\zeta_n^{\otimes j}](\xi)}_{\mathcal{L}}\le j!\cdot c_\ep \left(\frac{2R}{C_1}\right)^j \le j! \cdot c_\ep 2^{-j}.
\end{equation}
     Summing over $j\in\N$, we may thus bound 
    \begin{equation}\label{estimates on the radius of convergence of the power series bananas}
        \sum_{j=0}^\infty\f{1}{j!}\esssup_{\xi\in A_2}\sup_{|\zeta|<\ep/C_1}\tnorm{\bf{m}^{(j)}[\zeta^{\otimes j}](\xi)}_{\mathcal{L}}\le 2 c_\ep,
    \end{equation}
    and the claim now follows directly from this.

\emph{Step 4 - Analyticity in $A_0$:}

Finally, we aim to prove that the multiplier $\bf{m}$ in~\eqref{where the symbol lives} is analytic in the annulus $A_0$.  The strategy is to combine estimates~\eqref{estimates on the radius of convergence of the power series bananas} and~\eqref{remainder estimate is bananas nabandanas while the whale of a time is a watched pot}.  Let $C_2 =\max\{C_1, 2(1+2C)\}$, where $C_1$ is from Step 3 and $C>0$ is the constant in ~\eqref{remainder estimate is bananas nabandanas while the whale of a time is a watched pot}.  By letting $\tcb{\zeta_n}_{n\in\N}\subset B(0,\ep/C_2)$ be dense, we may use~\eqref{remainder estimate is bananas nabandanas while the whale of a time is a watched pot} to produce a set $\mathfrak{F}\subseteq A_2$ with the property that $|A_2\setminus\mathfrak{F}|=0$ and if $\xi\in\mathfrak{F}$ and $j\in\N$ then
\begin{equation}
    \sup_{n\in\N}\bnorm{\bf{m}(\xi+\zeta_n)-\sum_{i=0}^j\bf{m}^{(i)}[\zeta_n^{\otimes i}](\xi)}_{\mathcal{L}}\le c_\ep 2^{-(j+1)}.
\end{equation}
In particular, if $\xi\in A_0\cap\mathfrak{E}$, then the continuity assertion of Step 2 allows us to bound 
\begin{equation}\label{get us some ice cubes please its getting cold in here}
    \sup_{|\zeta|<\ep/C_2}\bnorm{\bf{m}(\xi+\zeta)-\sum_{i=0}^j\bf{m}^{(i)}[\zeta^{\otimes i}](\xi)}_{\mathcal{L}}\le c_\ep 2^{-(j+1)}.
\end{equation}

 Let $\mathfrak{F}$ be the set from Step 3 and note that $A_0\cap\mathfrak{E}\cap\mathfrak{F}$ is a set of full measure in $A_0$.  We can then send $j \to \infty$ in \eqref{get us some ice cubes please its getting cold in here} to see that
 \begin{equation}\label{powerzig}
     \bf{m}(\xi+\zeta)= \sum_{i=0}^\infty\bf{m}^{(i)}[\zeta^{\otimes i}](\xi) \text{ for } \xi \in A_0\cap\mathfrak{E}\cap\mathfrak{F} \text{ and } \zeta\in B(0,\ep/C_2).
 \end{equation}
 In light of the continuity of $\bf{m}$ in $A_1$, we learn from this that the power series produced in Step 3 agree on the intersection of their balls of convergence.  Consequently, we may produce a single $\mathcal{L}-$valued analytic function on $A_0$ that is equal to $\bf{m}$ a.e., and since $\ep>0$ was arbitrary we conclude that $\bf{m}$ has an analytic representative in $\R^2 \backslash \{0\}$.  Finally, we learn from \eqref{powerzig} that $\nabla^j \bf{m} = \bf{m}^{(j)}$ almost everywhere.
     
    \end{proof}

\section{On some Sobolev-type spaces}\label{section on novel Sobolev spaces}

In Section~\ref{section on vector-valued symbol calculus for the solution map} we constructed an operator-valued symbol $\bf{m}$ such that the corresponding Fourier multiplication operator $\bf{m}(D)$ is a particular solution map for the PDE~\eqref{curl formulation of the linearization}.  We know from Theorem~\ref{thm on analyticity of the symbol} that $\bf{m}$ obeys certain inequalities of Mikhlin-H\"ormander type, and so we expect to be able to employ Theorem \ref{second HM multiplier theorem} to obtain the boundedness of $\bf{m}(D)$ on certain vector-valued Sobolev spaces.  The first goal of this section is to define and study these mixed-type spaces for use in this manner.  This is done in Section~\ref{appendix on properties of mixed-type Sobolev spaces}. In Section \ref{appendix on some nonlinear analysis in mized type Sobolev spaces} we record a number of nonlinear tools that we will use in working with the mixed-type spaces.

The second purpose of this section, which is the content of Section~\ref{section on properties of subcritical gradient spaces}, is to study what we call subcritical gradient spaces.  These are spaces of functions whose distributional derivatives belong to $H^{s-1,p}(\R^d)$ for $1 < p < d$ and $s \in \N^+$, and they arise naturally in our analysis of the free surface function in \eqref{final nonlinear equations}.  Their properties will play a crucial role in our subsequent PDE analysis.  

Throughout this section, we have phrased the results in a more general manner than what is precisely needed in Sections~\ref{section on linear analysis in the mized type spaces} and~\ref{section on nonlinear analysis}, as we believe that the analysis here may be of independent interest.

\subsection{Mixed-type Sobolev spaces}\label{appendix on properties of mixed-type Sobolev spaces}

Throughout this subsection we consider a generic  open interval $I\subseteq\R$ and define the set $U=\R^d\times I$ for $d \in \N^+$.   

\begin{defn}[Mixed-type Sobolev spaces]\label{mixed-type sobolev def}
Let $1<p < \infty$ and $V$ be a finite dimensional normed space  over $\F \in \{\R,\C\}$.  Let  $U=\R^d\times I$ for $d \in \N^+$.
\begin{enumerate}
    \item We define the mixed type Lebesgue space
\begin{equation}
    L_{p,2}(U;V)=L^p(\R^d;L^2(I;V)),\quad \tnorm{f}_{L_{p,2}}=\bp{\int_{\R^d}\bp{\int_I\tnorm{f(x,y)}_V^2\;\m{d}y}^{p/2}\;\m{d}x}^{1/p},
\end{equation}
which is a Banach space when endowed with the obvious norm.  Moreover, the Fubini-Tonelli theorem shows that $L_{p,2}(U;V) \hookrightarrow L^{\min\{2,p\}}_{\loc}(U;V)$. We model Sobolev spaces on these mixed Lebesgue-spaces in the natural way.
 \item For $s\in\N$ we define
\begin{equation}
    H^s_{p,2}(U;V)=\tcb{f\in L_{p,2}(U;V)\;:\; \pd^\al f\in L_{p,2}(U;V) \text{ for all } \al\in\N^3 \text{ with }|\al|\le s },
\end{equation}
and endow this space with the norm
\begin{equation}
    \norm{f}_{H^s_{p,2}} = \bp{ \sum_{\abs{\alpha} \le s} \tnorm{\partial^\alpha f}_{L_{p,2}}^p }^{1/p}.
\end{equation}
Minor variants of the usual Sobolev-theoretic arguments apply to show that these spaces are Banach and that the restrictions to $U$ of elements of $C^\infty_c(\R^{d+1};V)$ form a dense subspace.  When we write $H^s_{p,2}(U)$ the understanding is that $V = \R$.
\end{enumerate}

\end{defn}

Our first lemma about these spaces provides a useful equivalent norm obtained via factorization.

\begin{lem}[Equivalent norm on the mixed type Sobolev spaces]\label{lemma on equivalent norm on the mixed type spaces}
    Let $s \in \N$, $1 < p < \infty$, and $V$ be a finite dimensional normed space over $\F \in \{\R,\C\}$.  Then we have that
    \begin{equation}\label{the identity of funny Sobolev spaces}
        H^s_{p,2}(U;V)=L^p(\R^d;H^s(I;V))\cap H^{s,p}(\R^d;L^2(I;V))
    \end{equation}
    with norm equivalence
\begin{equation}
    \norm{f}_{H^s_{p,2}} \asymp \norm{f}_{L^p H^{s}} + \norm{f}_{H^{s,p} L^2}.
\end{equation}    
\end{lem}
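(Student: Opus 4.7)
My plan is to establish the set equality and norm equivalence in~\eqref{the identity of funny Sobolev spaces} via matching inclusions of Banach spaces. Write a general multiindex as $\alpha=(\alpha_1,\alpha_2)\in\N^d\times\N$, with $\alpha_1$ tangential (in $x\in\R^d$) and $\alpha_2$ normal (in $y\in I$).

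The forward inclusion $H^s_{p,2}(U;V)\hookrightarrow L^p(\R^d;H^s(I;V))\cap H^{s,p}(\R^d;L^2(I;V))$ is essentially bookkeeping. Given $f\in H^s_{p,2}$, the pure normal derivatives $\partial_y^{\alpha_2}f$ with $\alpha_2\le s$ lie in $L^p(\R^d;L^2(I;V))$, which by the integer characterization of Sobolev norms on an interval is equivalent to $f\in L^p(\R^d;H^s(I;V))$ with norm control. Likewise, the pure tangential derivatives $\partial_x^{\alpha_1}f$ with $|\alpha_1|\le s$ lying in $L^p(\R^d;L^2(I;V))$ says exactly that $f\in W^{s,p}(\R^d;L^2(I;V))$. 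The identification $W^{s,p}=H^{s,p}$ for integer $s$ and Hilbert target is classical: it reduces to $L^p$-boundedness of the Riesz transforms on $L^p(\R^d;\mathcal{H})$ with $\mathcal{H}=L^2(I;V)$, which is immediate from the vector-valued Mikhlin-H\"ormander theorem (Theorem~\ref{hormander mikhlin multiplier theorem}).

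For the reverse inclusion, I must show that for every mixed $\alpha=(\alpha_1,\alpha_2)$ with $|\alpha_1|+\alpha_2\le s$,
\begin{equation*}
    \tnorm{\partial_x^{\alpha_1}\partial_y^{\alpha_2}f}_{L^pL^2}\lesssim\tnorm{f}_{L^pH^s}+\tnorm{f}_{H^{s,p}L^2}.
\end{equation*}
The pure cases are immediate; the genuinely mixed case ($|\alpha_1|,\alpha_2\ge 1$) is the crux. The plan is to invoke the vector-valued Littlewood-Paley characterizations of $L^p(\R^d;\mathcal{H})$ and $H^{s,p}(\R^d;\mathcal{H})$ with Hilbert target $\mathcal{H}$, which follow from the annular and ball estimates of Section~\ref{section on classical results in vector-valued Harmonic analysis} (Theorems~\ref{thm on annular littlewood paley, I}, \ref{thm on annular littlewood paley, II}, and~\ref{thm on ball littlewood paley}). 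Denoting dyadic frequency projections in $x$ by $\tcb{\Delta_j}_{j\in\Z}$,
\begin{equation*}
    \tnorm{g}_{L^p\mathcal{H}}\asymp\bnorm{\bp{\sum_j\tnorm{\Delta_j g}_{\mathcal{H}}^2}^{1/2}}_{L^p(\R^d)},\qquad\tnorm{g}_{H^{s,p}\mathcal{H}}\asymp\bnorm{\bp{\sum_j\tbr{2^j}^{2s}\tnorm{\Delta_j g}_{\mathcal{H}}^2}^{1/2}}_{L^p(\R^d)}.
\end{equation*}
Since $\partial_y^{\alpha_2}$ commutes with $\Delta_j$ and the spectral localization of $\Delta_j f$ to $|\xi|\asymp 2^j$ gives the Bernstein-type bound $\tnorm{\Delta_j\partial_x^{\alpha_1}\partial_y^{\alpha_2}f}_{L^2(I;V)}\lesssim 2^{j|\alpha_1|}\tnorm{\partial_y^{\alpha_2}\Delta_j f}_{L^2(I;V)}$, everything reduces to controlling this last quantity. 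Using the interpolation inequality for integer Sobolev norms on $I$ together with the constraint $|\alpha_1|\le s-\alpha_2$ and weighted AM-GM,
\begin{equation*}
    2^{j|\alpha_1|}\tnorm{\partial_y^{\alpha_2}\Delta_j f}_{L^2}\le C\bp{2^{js}\tnorm{\Delta_j f}_{L^2}}^{1-\alpha_2/s}\tnorm{\Delta_j f}_{H^s(I;V)}^{\alpha_2/s}\le C\bp{2^{js}\tnorm{\Delta_j f}_{L^2}+\tnorm{\Delta_j f}_{H^s(I;V)}}.
\end{equation*}
Taking $\ell^2_j$ sums and $L^p(\R^d)$ norms, the right-hand side is controlled by $\tnorm{f}_{H^{s,p}L^2}+\tnorm{f}_{L^pH^s}$ via the Littlewood-Paley identifications applied with the two Hilbert targets $L^2(I;V)$ and $H^s(I;V)$.

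The mild obstacle I anticipate is purely bookkeeping: passing from the homogeneous dyadic decomposition to an inhomogeneous Littlewood-Paley partition of unity in order to cleanly treat the low-frequency block, and confirming the Littlewood-Paley equivalence with the two different Hilbert targets. Both points are handled by the annular and ball estimates already in the paper, so the argument is a short combination of an elementary Sobolev interpolation in $y$ with the vector-valued Littlewood-Paley machinery developed earlier.
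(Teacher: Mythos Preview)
Your proposal is correct and follows essentially the same approach as the paper: both arguments use a tangential Littlewood-Paley decomposition combined with Gagliardo-Nirenberg interpolation in the normal variable and Young's inequality, then invoke the annular Littlewood-Paley estimates (Theorems~\ref{thm on annular littlewood paley, I} and~\ref{thm on annular littlewood paley, II}) to reassemble. The only cosmetic difference is that the paper works directly with the intermediate norms $\tnorm{f}_{H^{s-k,p}(\R^d;H^k(I;V))}$ for $k\in\{1,\dots,s-1\}$ rather than individual mixed derivatives $\partial_x^{\alpha_1}\partial_y^{\alpha_2}f$, so the Bernstein step is absorbed into the Littlewood-Paley weight $\tbr{2^j}^{s-k}$ and no separate low-frequency bookkeeping is needed.
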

\begin{proof}
    Since we can always complexify a real normed space $V$, it suffices to prove the result when $\F =\C$.  Assume this. The continuous embedding of the space of the left of~\eqref{the identity of funny Sobolev spaces} into the right is obvious;  the reverse inclusion requires work. Suppose that $f$ belongs to the space on the right and $k\in\tcb{1,\dots,s-1}$.    Let $\varphi \in C^\infty_c(\R^d)$ be a generator for a homogeneous Littlewood-Paley partition of unity.  We then use Gagliardo-Nirenberg interpolation on the space $H^k(I;V)\emb H^s(I;V)$, together with  Young's inequality for products, namely $a^{1-k/s}b^{k/s}\lesssim a+b$,  the triangle inequality, and Theorem \ref{thm on annular littlewood paley, I} to estimate
    \begin{multline}
     \bnorm{\bp{\sum_{j\in\Z}\tbr{2^{j }}^{2(s-k))} \tnorm{\varphi(D/2^j)f}^2_{H^k}}^{1/2}}_{L^p}\\\lesssim\bnorm{\bp{\sum_{j\in\Z}\tbr{2^{j}}^{2(s-k)}\tnorm{\varphi(D/2^j)f}^{2(1-k/s)}_{L^2}\tnorm{\varphi(D/2^j)f}_{H^s}^{2s/k}}^{1/2}}_{L^p} \\
     \lesssim 
     \bnorm{\bp{\sum_{j\in\Z}\tbr{2^{j}}^{2s}\tnorm{\varphi(D/2^j)f}^{2}_{L^2}}^{1/2}}_{L^p}
     +
     \bnorm{\bp{\sum_{j\in\Z}  \tnorm{\varphi(D/2^j)f}_{H^s}^{2}}^{1/2}}_{L^p}
     \lesssim \tnorm{f}_{H^{s,p}L^2} + \tnorm{f}_{L^pH^s}.
    \end{multline}
    In turn, we may use  Theorem~\ref{thm on annular littlewood paley, II} to conclude that $f = \sum_{j \in \Z} \varphi(D/2^j)f$ and
    \begin{equation}
        \tnorm{f}_{H^{s-k,p}H^k}\lesssim\bnorm{\bp{\sum_{j\in\Z}\tbr{2^{j}}^{2(s-k))}\tnorm{\varphi(D/2^j)f}^2_{H^k}}^{1/2}}_{L^p}
        \lesssim  \tnorm{f}_{H^{s,p}L^2} + \tnorm{f}_{L^pH^s} .
    \end{equation}
    Summing over $k \in \{1,\dotsc,s-1\}$ then  completes the proof of the reverse embedding in~\eqref{the identity of funny Sobolev spaces}.
\end{proof}

Next we turn our attention to extension operators.

\begin{prop}[Extensions on mixed-type Sobolev spaces]\label{proposition on stein extensions}
    Let $s \in \N$, $1 < p < \infty$, and $V$ be a finite dimensional normed space over $\F \in \{\R,\C\}$.    There exists a bounded linear extension operator
    \begin{equation}\label{mapping of the extension operator}
        \mathfrak{E}_U:H^s_{p,2}(U;V)\to H^s_{p,2}(\R^{d+1};V)
    \end{equation}
    such that $\mathfrak{R}_{U}\mathfrak{E}_U=\m{id}$ on $H^s_{p,2}(U)$, where $\mathfrak{R}_U$ denotes the restriction operator.
\end{prop}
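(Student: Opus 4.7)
The plan is to reduce the construction to a pure one-variable extension in the interval variable $y$, then exploit the fact that such an operator automatically commutes with horizontal differential operators and Fourier multipliers in $x$. Combined with the factorization of Lemma~\ref{lemma on equivalent norm on the mixed type spaces}, this reduces the mixed-norm boundedness to two separate scalar one-variable bounds on the same extension.

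First I would construct a bounded linear operator $\mathcal{E}_I : H^s(I;V) \to H^s(\R;V)$ that is simultaneously bounded from $L^2(I;V)$ to $L^2(\R;V)$ and satisfies $(\mathcal{E}_I f)|_I = f$. For the half-line case $I = (0,\infty)$, this is the classical Stein higher-order reflection: pick distinct positive $c_1,\dots,c_{s+1}$ and solve the Vandermonde system
\begin{equation}
    \sum_{k=1}^{s+1} a_k (-c_k)^j = 1, \qquad 0 \le j \le s,
\end{equation}
so that $\mathcal{E}_I f(y) = f(y)$ for $y > 0$ and $\mathcal{E}_I f(y) = \sum_{k=1}^{s+1} a_k f(-c_k y)$ for $y < 0$ has matching derivatives across the origin up to order $s$. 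A direct change of variables shows $\mathcal{E}_I$ is bounded from $W^{j,q}((0,\infty);V)$ to $W^{j,q}(\R;V)$ for every $0 \le j \le s$ and $1 \le q \le \infty$. For a general open interval $I$, I would combine such half-line extensions at each finite endpoint using a smooth partition of unity in $y$, employing zero extension on the interior piece; these operations preserve both the $L^2$ and $H^s$ bounds.

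Second, I would define $(\mathfrak{E}_U f)(x,y) = (\mathcal{E}_I f(x,\cdot))(y)$, applying $\mathcal{E}_I$ pointwise in $x$. Since $\mathcal{E}_I$ acts in $y$ alone and involves only compositions with $y$-affine maps, scalar multiplications, and multiplications by smooth cutoffs in $y$, it commutes with every tangential operation: in particular $\partial_x^\alpha \mathfrak{E}_U f = \mathfrak{E}_U \partial_x^\alpha f$ and $m(D_x) \mathfrak{E}_U f = \mathfrak{E}_U (m(D_x) f)$ for any bounded tangential Fourier multiplier. The $L^2(I;V) \to L^2(\R;V)$ bound for $\mathcal{E}_I$, applied pointwise in $x$ after first hitting $f$ with $\tbr{D_x}^s$, yields
\begin{equation}
    \tnorm{\mathfrak{E}_U f}_{H^{s,p}(\R^d; L^2(\R;V))} \lesssim \tnorm{f}_{H^{s,p}(\R^d; L^2(I;V))},
\end{equation}
and analogously the $H^s(I;V) \to H^s(\R;V)$ bound yields control of the $L^p(\R^d; H^s(\R;V))$ factor. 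Invoking Lemma~\ref{lemma on equivalent norm on the mixed type spaces} on both $U$ and $\R^{d+1}$ then assembles these into $\tnorm{\mathfrak{E}_U f}_{H^s_{p,2}(\R^{d+1};V)} \lesssim \tnorm{f}_{H^s_{p,2}(U;V)}$, and the identity $\mathfrak{R}_U \mathfrak{E}_U = \m{id}$ is inherited from $(\mathcal{E}_I f)|_I = f$.

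The main subtle point, though not a real obstacle, is that the $L^2$ and $H^s$ bounds must hold for the \emph{same} operator $\mathcal{E}_I$; this is what rules out simpler choices such as extension by zero, which is an $L^2$-extension but not an $H^s$-extension for $s \ge 1$. The Stein reflection of order $s$ handles both simultaneously, so the construction goes through without further difficulty.
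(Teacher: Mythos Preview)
Your proposal is correct and follows essentially the same route as the paper: apply a one-dimensional Stein extension in the $y$-variable, note that it commutes with tangential operations (the paper phrases this as ``tangentially translation commuting''), derive separate bounds on the $L^p(\R^d;H^s(I;V))$ and $H^{s,p}(\R^d;L^2(I;V))$ factors, and assemble via Lemma~\ref{lemma on equivalent norm on the mixed type spaces}. The only difference is cosmetic: you spell out the Vandermonde reflection and the partition-of-unity treatment of a general interval, whereas the paper simply cites Stein's book for the extension $I\to\R$.
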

\begin{proof}
    Since $V$ is assumed to be finite dimensional over $\F \in \{\R,\C\}$, we may use a basis to reduce to the case $V = \F$.     The notion of a Stein extension operator is given in Section 3.1 in Chapter VI of Stein~\cite{MR0290095}. For the domain $U=\R^d\times I$, one can select a Stein-extension operator that is tangentially translation commuting. In fact, we only need the Stein-extension operator $I\to\R$ and view it as acting on functions on $\R^d\times I$ via carrying along the first $d$-variables as parameters. It is then immediate that
    \begin{equation}
        \mathfrak{E}_U:L^p(\R^d;H^s(I;\F))\to L^p(\R^d;H^s(\R;\F))
    \end{equation}
    is a bounded linear map.  From tangential translation invariance, we get
    \begin{equation}
        \tnorm{\mathfrak{E}_Uf}_{H^{s,p}L^2}=\tnorm{\mathfrak{E}_{U}\tbr{D}^sf}_{L^pL^2}\lesssim\tnorm{\tnorm{\tbr{D}^sf}_{L^2(I)}}_{L^p(\R^d)}=\tnorm{f}_{H^{s,p}L^2} \text{ for } f\in H^{s,p}(\R^d;L^2(I;\F)),
    \end{equation}
    and hence $\mathfrak{E}_U:H^{s,p}(\R^d;L^2(I;\F))\to H^{s,p}(\R^d;L^2(\R;\F))$ is bounded. Thus, \eqref{mapping of the extension operator} follows from Lemma~\ref{lemma on equivalent norm on the mixed type spaces}.
\end{proof}

Next, we discuss traces.  Note that in this result the regularity loss caused by taking a trace is $1/2$ rather than $1/p$.  This is due to the $L^2-$based regularity spaces used in the `normal' direction.

\begin{prop}[Traces of mixed-type Sobolev spaces]\label{prop on traces of mixed type Sobolev spaces}
    Let $s \in \N^+$ and $1<p< \infty$, and $V$ be a finite dimensional normed space over $\F \in \{\R,\C\}$.  For $b \in \R^+$ define $\Omega = \R^d \times (0,b)$ and $\Sigma = \R^d \times \{b\}$.  Then there exists a bounded and linear trace map
    \begin{equation}
        \m{Tr}_{\Sigma}:H^{s}_{p,2}(\Omega;V)\to H^{s-1/2,p}(\Sigma;V).
    \end{equation}
\end{prop}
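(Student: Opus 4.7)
By Proposition~\ref{proposition on stein extensions} it suffices to produce the bound $\|\m{Tr}_{\R^d\times\{b\}} g\|_{H^{s-1/2,p}(\R^d;V)}\lesssim\|g\|_{H^s_{p,2}(\R^{d+1};V)}$ for $g\in H^s_{p,2}(\R^{d+1};V)$, since the composition of extension with restriction on the half-space side leaves the trace unchanged. After a translation in the normal variable, we may assume the trace is taken at $y=0$. Since $s\ge 1$, Lemma~\ref{lemma on equivalent norm on the mixed type spaces} together with the one-dimensional embedding $H^s(\R;V)\emb C^0_0(\R;V)$ shows that the trace $g(\cdot,0)$ makes sense as an element of $L^p(\R^d;V)$, so the task is purely to upgrade this crude bound to the claimed $(s-1/2)$-Sobolev regularity in the tangential direction.

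The first key ingredient is a pointwise-in-$x$ one-dimensional interpolation inequality: for every $\phi\in H^s(\R;V)$, $y_0\in\R$, and $\lambda\ge 1$,
\begin{equation*}
    \|\phi(y_0)\|_V^2\lesssim\lambda^{-(2s-1)}\|\phi\|_{H^s(\R;V)}^2+\lambda\|\phi\|_{L^2(\R;V)}^2.
\end{equation*}
This follows from splitting the Fourier inversion formula in the normal variable at level $\lambda$, applying Cauchy--Schwarz to each piece, and using the assumption $s\ge 1$ so that the high-frequency weight $\langle\tau\rangle^{-2s}$ is integrable over $|\tau|\ge\lambda$ with the claimed decay.

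The second ingredient is a tangential Littlewood--Paley decomposition. Pick $\{\varphi_j\}_{j\in\Z}$ as in Theorem~\ref{thm on annular littlewood paley, I}, set $g_j=\varphi_j(D_x)g$, and note that the tangential multiplier commutes with the (continuous) trace, so $g_j(\cdot,0)=\varphi_j(D_x)(g(\cdot,0))$ and has tangential spectral support in the annulus $\{|\xi|\asymp 2^j\}$. Applying the interpolation inequality to $\phi=g_j(x,\cdot)$ with the dyadic choice $\lambda=\langle 2^j\rangle$ and multiplying by $\langle 2^j\rangle^{2s-1}$ yields
\begin{equation*}
    \langle 2^j\rangle^{2(s-1/2)}\|g_j(x,0)\|_V^2\lesssim\|g_j(x,\cdot)\|_{H^s(\R;V)}^2+\langle 2^j\rangle^{2s}\|g_j(x,\cdot)\|_{L^2(\R;V)}^2
\end{equation*}
for a.e.\ $x\in\R^d$. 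Summing in $j$, taking square roots, and applying the $L^p(\R^d)$ norm, the right-hand side is controlled, via Theorem~\ref{thm on annular littlewood paley, I} applied in the two Hilbert-valued settings (target $H^s(\R;V)$ with regularity $0$, and target $L^2(\R;V)$ with regularity $s$), by $\|g\|_{L^p H^s}+\|g\|_{H^{s,p}L^2}\asymp\|g\|_{H^s_{p,2}}$, where the last equivalence is Lemma~\ref{lemma on equivalent norm on the mixed type spaces}. The left-hand side dominates $\|g(\cdot,0)\|_{H^{s-1/2,p}(\R^d;V)}$ via Theorem~\ref{thm on annular littlewood paley, II} applied to $\{g_j(\cdot,0)\}_j$. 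Linearity of this construction provides the boundedness of $\m{Tr}_{\Sigma}$.

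\textbf{Main obstacle.} The only real point of subtlety is the matching of the two Littlewood--Paley characterizations coming from the factorization in Lemma~\ref{lemma on equivalent norm on the mixed type spaces} with the dyadic split in the normal interpolation inequality; the tangential frequency $\langle 2^j\rangle$ must play both roles simultaneously, producing exactly the regularity exponent $s-1/2$ and exactly the two norms $L^p(\R^d;H^s)$ and $H^{s,p}(\R^d;L^2)$ that appear on the right. Once this bookkeeping is set up, everything else reduces to routine applications of results already recorded in Section~\ref{section on classical results in vector-valued Harmonic analysis}.
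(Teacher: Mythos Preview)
Your proof is correct and follows essentially the same approach as the paper: a one-dimensional interpolation inequality in the normal variable, applied dyadically after a tangential Littlewood--Paley decomposition, with Theorems~\ref{thm on annular littlewood paley, I} and~\ref{thm on annular littlewood paley, II} and Lemma~\ref{lemma on equivalent norm on the mixed type spaces} doing exactly the work you assign them. The only cosmetic differences are that the paper works directly on $(0,b)$ and quotes the multiplicative interpolation inequality $|\phi(b)|\lesssim\|\phi\|_{L^2}^{1-1/2s}\|\phi\|_{H^s}^{1/2s}$ from Constantin--Foias (then breaks it with Young's inequality), whereas you first extend to $\R^{d+1}$ via Proposition~\ref{proposition on stein extensions} and derive the equivalent additive, $\lambda$-parameterized form by a Fourier split; choosing $\lambda=\langle 2^j\rangle$ reproduces exactly the balancing the paper gets from Young.
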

\begin{proof}
    Using a basis of $V$, we reduce to proving the result with $V= \F$.      The key observation is the following interpolation inequality for functions $\phi\in H^{s}((0,b))$:
    \begin{equation}
        |\phi(b)|\lesssim\tnorm{\phi}_{L^2}^{1-1/2s}\tnorm{\phi}_{H^s}^{1/2s},
    \end{equation}
    where the implicit constant depends on $b$ and $s$.    A proof may be found, for instance, in Lemmas 4.9 and 4.10 of Constantin and Foias~\cite{MR972259}.

    Let $\varphi \in C^\infty_c(\R^d)$ generate a homogeneous Littlewood-Paley partition of unity.  Then, given $f\in H^{s}_{p,2}(\Omega;\F)$ we use  the above interpolation inequality with Young's inequality, namely $a^{1-1/2s}b^{1/2s}\lesssim a+b$, and Theorem~\ref{thm on annular littlewood paley, I} to bound
    \begin{multline}
    \bnorm{\bp{\sum_{j\in\Z}\tbr{2^{j}}^{2s -1}|\varphi(D/2^j)\m{Tr}_{\Sigma}f|^2}^{1/2}}_{L^p} 
        \lesssim \bnorm{\bp{\sum_{j\in\Z}\tbr{2^{j}}^{2s-1} \tnorm{\varphi(D/2^j)f}_{L^2}^{2-1/s}\tnorm{\varphi(D/2^j)f}_{H^s}^{1/s}}^{1/2}}_{L^p} \\
        \lesssim
        \bnorm{\bp{\sum_{j\in\Z}\tbr{2^{j}}^{2s} \tnorm{\varphi(D/2^j)f}_{L^2}^{2}}^{1/2}}_{L^p}
        +
        \bnorm{\bp{\sum_{j\in\Z} \tnorm{\varphi(D/2^j)f}_{H^s}^{2}}^{1/2}}_{L^p} 
        \lesssim\tnorm{f}_{L^pH^s}+\tnorm{f}_{H^{s,p}L^2}.
    \end{multline}
    This, Lemma~\ref{lemma on equivalent norm on the mixed type spaces}, and Theorem~\ref{thm on annular littlewood paley, II} then provide the estimate
    \begin{equation}
        \tnorm{\m{Tr}_\Sigma f}_{H^{s-1/2,p}}\lesssim\tnorm{f}_{L^pH^s}+\tnorm{f}_{H^{s,p}L^2} \lesssim \norm{f}_{H^s_{p,2}},
    \end{equation}
    so the trace operator is bounded as stated.
\end{proof}

Now we discuss lifting maps that complement the trace map.

\begin{prop}[Lifting in mixed-type Sobolev spaces]\label{prop on lifting mixed type Sobolev spaces}
    Let $s \in \N^+$, and $1 < p < \infty$, and $V$ be a finite dimensional normed space over $\F\in \{\R,\C\}$.  For $b \in \R^+$ let $\Omega=\R^d \times (0,b)$ and $\Sigma = \R^d \times \{b\}$.   There exists a bounded linear extension map $\m{L}_\Omega:H^{s-1/2,p}(\Sigma;V)\to H^{s}_{2,p}(\Omega;V)$ such that  $\m{Tr}_{\Sigma}\m{L}_{\Omega}=\m{id}_{H^{s-1/2,p}(\Sigma)}$.
\end{prop}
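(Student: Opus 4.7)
The plan is to construct $\m{L}_\Omega$ via a dyadic Littlewood-Paley decomposition in the tangential variable, attaching to each frequency annulus at scale $2^j$ a normal profile localized in a layer of thickness $\sim 2^{-j}$ near $\Sigma$. After reducing to $V=\F$ via a basis and invoking Lemma~\ref{lemma on equivalent norm on the mixed type spaces} to split the target norm, it suffices to produce $\m{L}_\Omega g$ with $\m{Tr}_\Sigma \m{L}_\Omega g = g$ and
\begin{equation*}
    \tnorm{\m{L}_\Omega g}_{L^p H^s} + \tnorm{\m{L}_\Omega g}_{H^{s,p}L^2} \lesssim \tnorm{g}_{H^{s-1/2,p}}.
\end{equation*}
Concretely, I would fix an inhomogeneous Littlewood-Paley partition of unity $\{\varphi_j\}_{j \ge 0}$ on $\R^d$ with $\supp\varphi_j \subseteq B(0,2^{j+1})$, set $g_j = \varphi_j(D)g$, pick $\psi \in C^\infty_c(\R)$ with $\psi(0) = 1$ and $\supp\psi \subseteq [0,\delta]$ for some $\delta \in (0,b)$, and define
\begin{equation*}
    \m{L}_\Omega g(x,y) = \sum_{j\ge 0} g_j(x)\,\psi(2^j(b-y)),
\end{equation*}
so that the trace property follows immediately from the Littlewood-Paley resolution of the identity.

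For the $H^{s,p} L^2$ bound I would view the summand $f_j(x) := g_j(x)\psi(2^j(b-\cdot))$ as an $L^2(I;\F)$-valued function of $x$ with tangential Fourier support in $B(0,2^{j+1})$ and pointwise norm $\tnorm{f_j(x)}_{L^2(I)} \asymp 2^{-j/2}\tabs{g_j(x)}$ (by the change of variables $t=2^j(b-y)$). Theorem~\ref{thm on ball littlewood paley} applied with target $L^2(I;\F)$ then yields
\begin{equation*}
    \tnorm{\m{L}_\Omega g}_{H^{s,p}L^2} \lesssim \bnorm{\bp{\sum_{j\ge 0} 4^{j(s-1/2)}\tabs{g_j}^2}^{1/2}}_{L^p} \asymp \tnorm{g}_{H^{s-1/2,p}},
\end{equation*}
the final equivalence being the standard Littlewood-Paley square function characterization of Bessel potential spaces, which is a consequence of Theorems~\ref{thm on annular littlewood paley, I} and~\ref{thm on annular littlewood paley, II}.

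The hard part will be the $L^p H^s$ bound. Since $\psi(0)=1$ is forced by the trace condition, the normal profiles $\psi(2^j(b-\cdot))$ have nested rather than disjoint supports, so naive orthogonality in the normal direction fails and a direct triangle inequality in $j$ blows up. My workaround is to partition $[b-\delta,b)$ into the dyadic shells $S_m = \{y : b-y \in (2^{-m-1}\delta, 2^{-m}\delta]\}$ for $m \in \N$, observe that only indices $j \le m$ contribute nontrivially on $S_m$, and apply a weighted Cauchy--Schwarz with weights $w_j = 2^{-j\epsilon}$ for a fixed small $\epsilon \in (0,1)$ to the sum $\pd_y^k \m{L}_\Omega g(x,y) = \sum_{j\le m} g_j(x)(-2^j)^k\psi^{(k)}(2^j(b-y))$. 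This produces the shell estimate $\tnorm{\pd_y^k \m{L}_\Omega g(x,\cdot)}_{L^2(S_m)}^2 \lesssim 2^{-m(1-\epsilon)}\sum_{j \le m} 2^{-j\epsilon}4^{jk}\tabs{g_j(x)}^2$, and then summing in $m$ via $\sum_{m\ge j} 2^{-m(1-\epsilon)} \asymp 2^{-j(1-\epsilon)}$ collapses the two $\epsilon$-dependent factors to give $\tnorm{\pd_y^k \m{L}_\Omega g(x,\cdot)}_{L^2(I)}^2 \lesssim \sum_j 4^{j(k-1/2)}\tabs{g_j(x)}^2$ for each $k \le s$. Taking $L^p$ in $x$, reinvoking the square function characterization, and summing over $k \in \{0,\dots,s\}$ then completes the proof. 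The key observation is that any $\epsilon \in (0,1)$ works: positivity of $\epsilon$ makes the weighted Cauchy--Schwarz sum in $j$ converge, $\epsilon < 1$ makes the geometric series over shells converge, and the two $\epsilon$'s cancel to recover exactly the $H^{s-1/2,p}$ scaling.
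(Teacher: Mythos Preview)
Your proof is correct and takes a genuinely different route from the paper's. The paper defines the lifting via the explicit Fourier symbol $m(\xi,y) = \exp(\tbr{\xi}(y-b))$ (a Poisson-type extension with $\tbr{\xi}$ in place of $|\xi|$), verifies Mikhlin--H\"ormander-type bounds on this operator-valued symbol with values in $\mathcal{L}(\C;H^\ell((0,b);\C))$ for $\ell\in\{0,s\}$, and then invokes the paper's novel multiplier result, Theorem~\ref{second HM multiplier theorem}, together with Lemma~\ref{lemma on equivalent norm on the mixed type spaces} to conclude. Your construction instead builds the lift atomically via a Littlewood--Paley decomposition with scaled normal cutoffs and estimates the two pieces of the target norm directly by square-function methods; the dyadic shell decomposition plus weighted Cauchy--Schwarz argument for the $L^pH^s$ piece is the real content, and the cancellation of the two $\epsilon$-factors is exactly right. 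The paper's route is shorter once Theorem~\ref{second HM multiplier theorem} is already in hand and serves as an illustration of that machinery; yours is more elementary and self-contained, avoiding the operator-valued multiplier theorem altogether, and would transplant to settings where such a theorem is not available.
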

\begin{proof}
    We will prove the result when $V = \C$; the general case can be deduced from this.       Given $f\in\mathscr{S}(\Sigma;\C)$, we define $\m{L}_\Omega f : \Omega \to \C$ via
    \begin{equation}
        \mathscr{F}[\m{L}_\Omega f](\xi,y)=\exp(\tbr{\xi}(y-b))\mathscr{F}[f](\xi) \text{ for } \xi\in\R^d \text{ and } y\in(0,b).
    \end{equation}
    
    Now define $m \in C^\infty(\Omega;\C)$ via $m(\xi,y) = \exp(\tbr{\xi}(y -b))$.  Thanks to the Leibniz rule and Fa\`a di Bruno's formula, we have that
\begin{multline}
    |\pd_y^j D^k_\xi m(\xi,y)|=|D^k_\xi\tp{\tbr{\xi}^j\exp(\tbr{\xi}(y-b))}|\lesssim\sum_{i=0}^k\tbr{\xi}^{j-k+i}|D^i_\xi(\exp(\tbr{\xi}(y-b)))|\\
    \lesssim\sum_{i=0}^k\tbr{\xi}^{j-k+i}\sum_{\ell=1}^i\tbr{\xi}^{-i+\ell}|y-b|^\ell\exp(\tbr{\xi}(y-b))\lesssim\tbr{\xi}^{j-k}\max_{1\le \ell\le k }|y-b|^\ell\tbr{\xi}^\ell\exp(\tbr{\xi}(y-b)).
\end{multline}
We readily deduce from this that for any $\alpha \in \N^d$ there exists a constant $C_\alpha >0$ such that
\begin{equation}\label{curry bounds}
    \sup_{\xi\in\R^d}\tbr{\xi}^{|\al|+1/2}\tnorm{\pd^\al_\xi m(\xi,\cdot)}_{L^2}
    +
    \sup_{\xi\in\R^d}\tbr{\xi}^{|\al|+1/2-s}\tnorm{\pd^\al_\xi m(\xi,\cdot)}_{H^s}\le C_{\al}.
\end{equation}

Let $\ell \in \{0,s\}$.  Employing the canonical isometric identification $H^\ell((0,b);\C) = \mathcal{L}(\C;H^\ell((0,b);\C))$, we define the vector-valued Fourier multiplier with symbol $\mu \in C^\infty(\R^d; \mathcal{L}(\C;H^\ell((0,b);\C)))$ given by $\mu(\xi) = m(\xi,\cdot)$.  In light of \eqref{curry bounds}, we can then invoke Theorem~\ref{second HM multiplier theorem} twice to deduce that
\begin{equation}
    \mu(D):H^{s-1/2,p}(\Sigma;\C)\to H^{s,p}(\R^d;L^2((0,b);\C))\cap L^p(\R^d;H^s((0,b);\C)) = H^s_{2,p}(\Omega;\C)
\end{equation}
is a bounded linear map (in the last equality we have employed Lemma~\ref{lemma on equivalent norm on the mixed type spaces}).  To conclude, we simply note that for  $f\in\mathscr{S}(\Sigma;\C)$ we have that $\mu(D) f(x) = \m{L}_\Omega f(x,\cdot)$ for all $x \in \R^d$.
\end{proof}

Finally, we record a mixed-type Sobolev spaces variant of Proposition C.1 in Stevenson and Tice~\cite{stevenson2023wellposedness}.

\begin{prop}[Divergence compatibility in mixed-type Sobolev spaces]\label{the divergence compatibility condition is here for you to check it out}
    Let $1<p<\infty$, $b \in \R^+$, and $\Omega=\R^d\times(0,b)$.  Then there exists a constant $C$ such that for all $u\in H^1_{p,2}(\Omega;\F^{d+1})$ we have the estimate
    \begin{equation}
        \bsb{\int_0^b(\grad\cdot u)(\cdot,y)\;\m{d}y-\m{Tr}_{\Sigma}u\cdot e_{d+1}+\m{Tr}_{\Sigma_0}u\cdot e_{d+1}}_{\dot{H}^{-1,p}}\le C\tnorm{u}_{L_{p,2}}.
    \end{equation}
\end{prop}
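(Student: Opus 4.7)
The plan is to reduce the expression inside the $\dot{H}^{-1,p}$-seminorm to a pure tangential divergence, at which point the bound becomes an $L^p$-estimate for Riesz transforms applied to a vertical average.

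First I would reduce to the case of smooth compactly supported $u$, using the density of the restrictions of $C^\infty_c(\R^{d+1};\F^{d+1})$ in $H^1_{p,2}(\Omega;\F^{d+1})$ (Definition~\ref{mixed-type sobolev def}) together with the boundedness of the trace operators from Proposition~\ref{prop on traces of mixed type Sobolev spaces}. For such $u$, write $u = (u_\|, u_{d+1})$ and apply the fundamental theorem of calculus in the $(d+1)$-variable:
\begin{equation}
\int_0^b(\grad\cdot u)(\cdot,y)\,\m{d}y
= \grad_\| \cdot U + \m{Tr}_{\Sigma}u\cdot e_{d+1} - \m{Tr}_{\Sigma_0}u\cdot e_{d+1},
\end{equation}
where $U(x) := \int_0^b u_\|(x,y)\,\m{d}y \in \F^d$. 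The two boundary-trace terms cancel against those already present in the statement, leaving exactly $\grad_\| \cdot U$ as the object whose $\dot{H}^{-1,p}$-seminorm we must estimate.

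Next I would apply $|D|^{-1}$ and use the fact that $|D|^{-1}\pd_j / 2\pi = \mathcal{R}_j$ is the $j$-th Riesz transform on $\R^d$, which is bounded on $L^p$ for $1 < p < \infty$. This gives
\begin{equation}
\big\| |D|^{-1} \grad_\| \cdot U \big\|_{L^p(\R^d)}
= 2\pi \big\| {\textstyle\sum_{j=1}^d} \mathcal{R}_j U_j \big\|_{L^p(\R^d)}
\lesssim_p \sum_{j=1}^d \|U_j\|_{L^p(\R^d)}.
\end{equation}
Finally, a direct application of Hölder's inequality in the $y$-variable yields
\begin{equation}
\|U_j\|_{L^p(\R^d)}
\le \sqrt{b}\, \|u_j\|_{L^p(\R^d;L^2((0,b)))}
= \sqrt{b}\, \|u_j\|_{L_{p,2}},
\end{equation}
and summing over $j$ produces the desired estimate with $C = 2\pi\sqrt{b}\, C_p$, where $C_p$ is the $L^p$-operator norm of the Riesz transforms. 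The same bound applied to $\grad_\|\cdot U = \sum_j \pd_j U_j$ with $U_j \in L^p(\R^d;\F)$ also confirms membership in $H^{-1,p}(\R^d;\F)$, so that the $\dot{H}^{-1,p}$-seminorm is well-defined.

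There is no substantive obstacle here: the proof is driven entirely by the algebraic identity produced by the fundamental theorem of calculus and the classical $L^p$-theory of Riesz transforms. The only point to watch is the density/continuity argument that allows us to perform the integration by parts on smooth $u$ before passing to general $u \in H^1_{p,2}(\Omega;\F^{d+1})$, but both ingredients have already been established earlier in the section.
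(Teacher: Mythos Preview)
Your proof is correct and follows essentially the same approach as the paper: reduce by density, apply the fundamental theorem of calculus in the vertical variable to rewrite the quantity as a tangential divergence of the vertical average, invoke $L^p$-boundedness of Riesz transforms, and finish with Cauchy--Schwarz in the $y$-variable. The only cosmetic difference is that the paper's density step restricts to $u$ with $0\notin\supp\mathscr{F}[u]$ rather than to smooth compactly supported $u$; both serve the same purpose of making $|D|^{-1}$ unambiguous before passing to the limit.
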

\begin{proof}
By density, it suffices to consider the case that $0\not\in\m{supp}\mathscr{F}[u]$. Thanks to the fundamental theorem of calculus, we have
    \begin{equation}
        \int_0^b(\grad\cdot u)(\cdot,y)\;\m{d}y-\m{Tr}_{\Sigma}u\cdot e_{d+1}+\m{Tr}_{\Sigma_0}u\cdot e_{d+1}=(\pd_1,\dots,\pd_d,0)\cdot\int_0^b u(\cdot,y)\;\m{d}y
    \end{equation}
    Applying $|D|^{-1}$ and using the definition of $\dot{H}^{-1,p}$ from~\eqref{definition of the negative homogeneous Sobolev space} and~\eqref{definition of the negative homogeneous Sobolev space, 2} along with the boundedness of Riesz transforms yields
    \begin{equation}\label{sigma grindset}
        \bsb{\int_0^b(\grad\cdot u)(\cdot,y)\;\m{d}y-\m{Tr}_{\Sigma}u\cdot e_{d+1}+\m{Tr}_{\Sigma_0}u\cdot e_{d+1}}_{\dot{H}^{-1,p}}\lesssim\bnorm{\int_0^bu(\cdot,y)\;\m{d}y}_{L^p}.
    \end{equation}
    We conclude after noting that the embedding $L^2((0,b))\emb L^1((0,b))$ allows us to bound the right hand side of~\eqref{sigma grindset} by $\tnorm{u}_{L_{p,2}}$.
\end{proof}

\subsection{Some nonlinear analysis in mixed-type Sobolev spaces}\label{appendix on some nonlinear analysis in mized type Sobolev spaces}

The goal of this subsection is to record a series of useful results related to the  nonlinear use of mixed-type spaces. As in Section \ref{appendix on properties of mixed-type Sobolev spaces}, we will let $I\subseteq\R$ be an open interval and set $U=\R^d\times I$. 

Our first result gives a product estimates for the mixed-type Sobolev spaces.

\begin{lem}[Product estimates in mixed type Sobolev spaces]\label{product estimates in mixed type Sobolev spaces}
    Suppose $s \in \N^+$, $1<p < \infty$, and $V$ is a finite dimensional normed space over $\F \in \{\R,\C\}$.  Then we have the estimate  
    \begin{equation}
        \tnorm{fg}_{H^s_{p,2}}\lesssim\tnorm{f}_{L^\infty\cap L_{p,2}}\tnorm{g}_{H^s_{p,2}}+\tnorm{f}_{H^s_{p,2}}\tnorm{g}_{L^\infty\cap L_{p,2}}
    \end{equation}
    for all $f\in H^s_{p,2}(U;\F)\cap L^\infty(U;\F)$ and $g\in H^s_{p,2}(U;V)\cap L^\infty(U;V)$.
\end{lem}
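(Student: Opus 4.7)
The strategy is to use the factorization identity recorded in the preceding lemma,
\begin{equation*}
    \tnorm{fg}_{H^s_{p,2}} \asymp \tnorm{fg}_{L^p(\R^d;H^s(I;V))} + \tnorm{fg}_{H^{s,p}(\R^d;L^2(I;V))},
\end{equation*}
and to handle each of the two factors separately.  The first is treated with a pointwise‐in‐$x$ application of the classical one-dimensional Moser inequality, while the second is treated via a tangential Leibniz expansion combined with mixed-type Gagliardo–Nirenberg interpolation between $L^\infty\cap L_{p,2}$ and $H^s_{p,2}$.

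For the normal piece, at a.e.\ fixed $x\in\R^d$ the standard 1D product estimate gives
\begin{equation*}
    \tnorm{f(x,\cdot)g(x,\cdot)}_{H^s(I;V)} \lesssim \tnorm{f(x,\cdot)}_{L^\infty(I)}\tnorm{g(x,\cdot)}_{H^s(I;V)} + \tnorm{f(x,\cdot)}_{H^s(I)}\tnorm{g(x,\cdot)}_{L^\infty(I;V)},
\end{equation*}
which is obtained from Leibniz plus Gagliardo–Nirenberg on $I$ (together with the embedding $H^s(I)\emb L^\infty(I)$ for $s\ge 1$).  Taking the $L^p(\R^d)$-norm and applying Hölder with exponents $(\infty,p)$ on each summand, followed by the factorization lemma to absorb the $L^p H^s$ norms into $H^s_{p,2}$, yields the desired bound with $L^\infty(U)$ factors — in particular with $L^\infty\cap L_{p,2}$ factors.

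For the tangential piece, I would expand $\partial^\alpha_x(fg) = \sum_{\beta\le\alpha}\binom{\alpha}{\beta}\,\partial^\beta_x f\cdot\partial^{\alpha-\beta}_x g$ for each $|\alpha|\le s$, and then estimate each term in $L^p(\R^d;L^2(I;V))$.  The endpoint cases $\beta=0$ and $\beta=\alpha$ are immediate from $\tnorm{f}_{L^\infty(U)}$ or $\tnorm{g}_{L^\infty(U)}$ times a single tangential derivative of the other factor in $L_{p,2}$.  For the interior cases $1\le|\beta|=j$ and $1\le|\alpha-\beta|=k$, Hölder in $y$ gives the pointwise bound
\begin{equation*}
    \tnorm{(\partial^\beta_x f\,\partial^{\alpha-\beta}_x g)(x,\cdot)}_{L^2(I)} \le \tnorm{(\partial^\beta_x f)(x,\cdot)}_{L^\infty(I)}\tnorm{(\partial^{\alpha-\beta}_x g)(x,\cdot)}_{L^2(I)},
\end{equation*}
and then Hölder in $x$ with $\tfrac{1}{p}=\tfrac{j}{sp}+\tfrac{k}{sp}$ (padding the multi-indices to satisfy $j+k=s$ when $|\alpha|<s$) reduces matters to a Gagliardo–Nirenberg-type inequality of the form
\begin{equation*}
    \tnorm{\partial^\beta_x f}_{L^{sp/j}_x L^\infty_y} \lesssim \tnorm{f}_{L^\infty\cap L_{p,2}}^{1-j/s}\tnorm{f}_{H^s_{p,2}}^{j/s},
\end{equation*}
together with its analogue with $(\beta,j,f)$ replaced by $(\alpha-\beta,k,g)$ and the $L^\infty$ norm replaced by the $L^2$ norm.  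The product of these two GN bounds is then converted to the asserted right-hand side by Young's inequality $a^{k/s}b^{j/s}\le (k/s)a+(j/s)b$.

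The main obstacle is to establish the mixed-type Gagliardo–Nirenberg inequality displayed above.  The natural route is via tangential Littlewood–Paley: writing $f=\sum_\ell \Delta_\ell f$, one uses Bernstein's inequality on $\R^d$ to trade integrability in $x$ against a tangential-frequency weight $2^{\ell d(1/p-j/(sp))}$, the $1$D embedding $H^1(I)\emb L^\infty(I)$ (which converts the $L^\infty_y$ norm into an $L^2_y$ norm at the cost of one normal derivative, and here is where the $L_{p,2}$ part of $L^\infty\cap L_{p,2}$ enters) to control $\tnorm{\Delta_\ell f(x,\cdot)}_{L^\infty_y}$, and finally Hölder in the frequency index $\ell$ between the endpoints $\ell\to-\infty$ (controlled by $\tnorm{f}_{L^\infty\cap L_{p,2}}$) and $\ell\to+\infty$ (controlled by $\tnorm{f}_{H^s_{p,2}}$).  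The vector-valued Littlewood–Paley machinery of Theorems~\ref{thm on annular littlewood paley, I} and~\ref{thm on annular littlewood paley, II}, which is already in the paper, together with the Fefferman–Stein maximal inequality, gives the required reassembly of the dyadic pieces and completes the estimate.
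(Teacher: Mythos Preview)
Your treatment of the normal piece $\tnorm{fg}_{L^p H^s}$ coincides with the paper's: apply the one-dimensional high-low product estimate pointwise in $x$ and integrate.

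For the tangential piece $\tnorm{fg}_{H^{s,p}L^2}$ you take a genuinely different route. The paper uses a tangential paraproduct decomposition $fg=\pi_{\m{hl}}+\pi_{\m{hh}}+\pi_{\m{lh}}$ and handles each piece directly with the Littlewood--Paley tools already established (Theorems~\ref{thm on annular littlewood paley, I}, \ref{thm on annular littlewood paley, II}, \ref{thm on ball littlewood paley}); the $L_{p,2}$ norm enters only through the band-limited diagonal piece $\pi_{\m{hhl}}$, where Young's convolution inequality upgrades $L^p_x$ to $L^{2p}_x$ and produces the bound $\tnorm{f}_{L^pH^1}\tnorm{g}_{L^pL^2}$. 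By contrast, you expand via Leibniz and reduce everything to mixed-type Gagliardo--Nirenberg inequalities of the form $\tnorm{\pd^\beta_x f}_{L^{sp/j}_x L^\infty_y}\lesssim\tnorm{f}_{L^\infty\cap L_{p,2}}^{1-j/s}\tnorm{f}_{H^s_{p,2}}^{j/s}$.

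The gap in your proposal is precisely this inequality. It is not standard, and your sketch does not establish it: passing from $L^\infty_y$ to $H^1_y$ via the one-dimensional Sobolev embedding costs a full normal derivative, and after paying it you must interpolate $\tnorm{\pd^\beta_x f}_{L^{sp/j}_x H^1_y}$ between the stated endpoints; it is not clear this closes within the $s$-derivative budget (try $|\beta|=s-1$, where one is effectively asking for $s$ tangential plus one normal derivative). Your ``H\"older in the frequency index $\ell$'' outline does not show how both tails of the dyadic sum are controlled. Moreover, the companion inequality for $g$ with $L^2_y$ on the left naturally requires $\tnorm{g}_{L^\infty_x L^2_y}$ as an endpoint, and this is not controlled by $\tnorm{g}_{L^\infty(U)\cap L_{p,2}}$ when $I$ is unbounded. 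Even if such mixed GN inequalities are ultimately true, proving them would essentially reproduce the paraproduct analysis; the paper's route is more economical because it avoids any new interpolation inequality and uses only tools already on the shelf.
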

\begin{proof}
    It suffices to prove the result for $\F = \C$, so we will assume this in the proof. 
    To check the product belongs to the correct space, we will use the norm from Lemma~\ref{lemma on equivalent norm on the mixed type spaces}.  We first recall the well-known (see e.g. Theorem D.6 in~\cite{stevenson2023wellposedness}) high-low product estimate
    \begin{equation}
        \norm{F G}_{H^s(I;V)} \lesssim   \norm{F}_{L^\infty(I;\F)}   \norm{ G}_{H^s(I;V)} + \norm{F}_{H^s(I;\F)}   \norm{ G}_{L^\infty(I;V)}
    \end{equation}
    for all $F \in H^s(I;\F) \cap L^\infty(I;\F)$ and $G \in  H^s(I;V) \cap L^\infty(I;V)$.  Applying this almost everywhere in $\R^d$ and integrating, we then derive the bound
    \begin{equation}
        \tnorm{fg}_{L^pH^s}
        \lesssim\tnorm{f}_{L^\infty}\tnorm{g}_{L^pH^s}+\tnorm{f}_{L^pH^s}\tnorm{g}_{L^\infty}.
    \end{equation}
    The bounds for tangential derivatives are more involved.  To handle them we let $\varphi \in C^\infty_c(\R^d)$ be generator for a homogeneous Littlewood-Paley partition of unity, we set $\Phi=\sum_{j\le 0}\varphi(\cdot/2^j)$, and we introduce the  (tangential, homogeneous) paraproduct decomposition
    \begin{multline}
        fg=\sum_{j\in\Z}\varphi(D/2^j)f\;\Phi(D/2^{j-3})g+\sum_{j\in\Z}\sum_{k=j-2}^{j+2}\varphi(D/2^j)f\;\varphi(D/2^k)g+\sum_{k\in\Z}\Phi(D/2^{k-3})f\;\varphi(D/2^k)g\\=\pi_{\m{hl}}(f,g)+\pi_{\m{hh}}(f,g)+\pi_{\m{lh}}(f,g).
    \end{multline}

    For the $\pi_{\m{hl}}$ term, we use the annular Littlewood-Paley estimates from Theorems~\ref{thm on annular littlewood paley, I} and~\ref{thm on annular littlewood paley, II} together with the bound 
    \begin{equation}
        \sup_{j\in\N}\tnorm{\Phi(D/2^j)g}_{L^\infty(\Omega)}\lesssim\tnorm{g}_{L^\infty},
    \end{equation}
    which follows from Young's convolution inequality, in order to estimate
    \begin{equation}
        \tnorm{\pi_{\m{hl}}(f,g)}_{H^{s,p}L^2}\lesssim\bnorm{\bp{\sum_{j\in\Z} \tbr{2^{j}}^{2s}\tnorm{\varphi(D/2^j)f\;\Phi(D/2^{j-3})g}^2_{L^2}}^{1/2}}_{L^p}\lesssim\tnorm{f}_{H^{s,p}L^2}\tnorm{g}_{L^\infty}.
    \end{equation}
    By an entirely symmetric argument, we have that $\tnorm{\pi_{\m{lh}}(f,g)}_{H^{s,p}L^2}\lesssim\tnorm{f}_{L^\infty}\tnorm{g}_{H^{s,p}L^2}$. For the remaining high-high paraproduct, we split again:
    \begin{equation}
        \pi_{\m{hh}}(f,g)=\bp{\sum_{j\in\N}\sum_{k=j-2}^{j+2}+\sum_{j<0}\sum_{k=j-2}^{j+2}}\varphi(D/2^j)f\varphi(D/2^k)g=\pi_{\m{hhh}}(f,g)+\pi_{\m{hhl}}(f,g).
    \end{equation}
    
    We now use the ball Littlewood-Paley estimate of Theorem~\ref{thm on ball littlewood paley} to handle $\pi_{\m{hhh}}$:
    \begin{equation}
        \tnorm{\pi_{\m{hhh}}(f,g)}_{H^{s,p}L^2}\lesssim\sum_{|c|\le 2}\bnorm{\bp{\sum_{j=0}^\infty 4^{sj}\tnorm{\varphi(D/2^j)f\;\varphi(D/2^{j+c})g}^2_{L^2}}^{1/2}}_{L^p} 
        \lesssim \tnorm{f}_{H^{s,p}L^2}\tnorm{g}_{L^\infty},
    \end{equation}
    where in the final inequality we have pulled $g$ out in $L^\infty(U;V)$ and estimated $f$ in $H^{s,p}(\R^d;L^2(I;\F))$ as above.      To handle $\pi_{\m{hhl}}$ we first note that it is band limited; indeed, it is supported in $B(0,16)$ by construction.  Hence the $L^p L^2$ norm controls $H^{s,p}L^2$:
    \begin{equation}
        \tnorm{\pi_{\m{hhl}}(f,g)}_{H^{s,p}L^2}\lesssim\tnorm{\pi_{\m{hhl}}(f,g)}_{L^pL^2}\le\sum_{|c|\le 2}\bnorm{\sum_{j<0}\varphi(D/2^j)f\varphi(D/2^{j+c})g}_{L^pL^2}.
    \end{equation}
    For the final term above, we then use Cauchy-Schwartz, H\"older, and Theorem~\ref{thm on annular littlewood paley, I}:
    \begin{multline}
        \bnorm{\sum_{j<0}\varphi(D/2^j)f\varphi(D/2^{j+c})g}_{L^pL^2}\lesssim\bnorm{\bp{\sum_{j<0}\tnorm{\varphi(D/2^j)f}^2_{H^1}}^{1/2}\bp{\sum_{j<0}\tnorm{\varphi(D/2^{j+c})g}_{L^2}^2}^{1/2}}_{L^p}\\
        \lesssim\bnorm{\bp{\sum_{j<0}\tnorm{\varphi(D/2^j)\Phi(D/2^3)f}_{H^1}^2}^{1/2}}_{L^{2p}}\bnorm{\bp{\sum_{j<0}\tnorm{\varphi(D/2^j)\Phi(D/2^{3+c})g}_{L^2}^2}^{1/2}}_{L^{2p}}\\\lesssim\tnorm{\Phi(D/2^3)f}_{L^{2p}H^1}\tnorm{\Phi(D/2^{3+c})g}_{L^{2p}L^2}.
    \end{multline}
    Now we invoke Young's inequality and the fact that $\Phi(D/2^3)f$ and $\Phi(D/2^{3+c})g$ are given via (tangential) convolution with a Schwartz function to bound
    \begin{equation}
        \tnorm{\Phi(D/2^3)f}_{L^{2p}H^1}\lesssim\tnorm{f}_{L^pH^1}
        \text{ and }
        \tnorm{\Phi(D/2^{3+c})g}_{L^{2p}L^2}\lesssim\tnorm{g}_{L^pL^2}.
    \end{equation}
    Upon synthesizing these estimates, we arrive at
    \begin{equation}
        \tnorm{\pi_{\m{hhl}}(f,g)}_{H^{s,p}L^2}\lesssim\tnorm{f}_{L^pH^1}\tnorm{g}_{L^pL^2}.
    \end{equation}
    Since $\N\ni s\ge 1$, we see that this completes the proof.
\end{proof}

As a consequence of the previous result and a supercritical embedding, we find sufficient conditions under which the mixed-type Sobolev spaces are an algebra.

\begin{prop}[Supercritical Sobolev embeddings in mixed-type Sobolev spaces]\label{the embedding into the Linfty space}
    The following hold.
    \begin{enumerate}
        \item $H^s_{p,2}(U;\F)\emb C^k_0(U;\F)$ for $s>k+(d+1)/\min\tcb{2,p}$, $k\in\N$.
        \item For $s>(d+1)/\min\tcb{2,p}$, the mixed type Sobolev $H^{s}_{p,2}(U;\F)$ space is an algebra.
    \end{enumerate}
\end{prop}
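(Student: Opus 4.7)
The plan is to establish item 1 first and then deduce item 2 as an immediate corollary of item 1 and Lemma~\ref{product estimates in mixed type Sobolev spaces}. For item 1, I would first apply Proposition~\ref{proposition on stein extensions} to reduce to the case $U=\R^{d+1}$, and then observe that $\pd^\al f\in H^{s-|\al|}_{p,2}(\R^{d+1};\F)$ for $|\al|\le k$ with $s-|\al|>(d+1)/\min\tcb{2,p}$; thus it suffices to treat the case $k=0$, namely the embedding $H^s_{p,2}(\R^{d+1};\F)\emb C^0_0(\R^{d+1};\F)$.

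The heart of the argument is a fractional generalization of the factorization established in Lemma~\ref{lemma on equivalent norm on the mixed type spaces}: for every $\sig,\ell\ge 0$ with $\sig+\ell=s$, the embedding
\begin{equation*}
    H^s_{p,2}(\R^{d+1};\F)\emb H^{\sig,p}(\R^d;H^\ell(\R;\F))
\end{equation*}
holds. I would prove this by essentially repeating the argument of Lemma~\ref{lemma on equivalent norm on the mixed type spaces} but with the integer $k$ replaced by $\ell\in[0,s]$: take a tangential homogeneous Littlewood-Paley decomposition $f=\sum_j\varphi(D/2^j)f$, apply the fractional Gagliardo-Nirenberg interpolation
\begin{equation*}
\tnorm{\varphi(D/2^j)f}_{H^\ell}\lesssim\tnorm{\varphi(D/2^j)f}_{L^2}^{1-\ell/s}\tnorm{\varphi(D/2^j)f}_{H^s}^{\ell/s},
\end{equation*}
multiply by $\tbr{2^j}^\sig$, use Young's inequality $a^{1-\ell/s}b^{\ell/s}\lesssim a+b$, and then invoke Theorems~\ref{thm on annular littlewood paley, I} and~\ref{thm on annular littlewood paley, II} together with Lemma~\ref{lemma on equivalent norm on the mixed type spaces}. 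The main expected obstacle is justifying the fractional Gagliardo-Nirenberg inequality on the line for non-integer $\ell$; this is a standard interpolation result (obtainable by complex interpolation between $L^2(\R;\F)$ and $H^s(\R;\F)$) but lies slightly outside the paper's otherwise self-contained toolbox.

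With the fractional factorization in hand, I would pick $\sig>d/p$ and $\ell>1/2$ with $\sig+\ell=s$, which is possible precisely when $s>d/p+1/2$. A short computation confirms that the hypothesis $s>(d+1)/\min\tcb{2,p}$ is stronger than this: for $p\le 2$, $(d+1)/p=d/p+1/p\ge d/p+1/2$, while for $p\ge 2$, $(d+1)/2=d/2+1/2\ge d/p+1/2$. The vector-valued Sobolev embedding $H^{\sig,p}(\R^d;X)\emb C^0_b(\R^d;X)$ (valid for $\sig>d/p$ and any Banach space $X$), applied with $X=H^\ell(\R;\F)\emb C^0_b(\R;\F)$, then chains to yield $H^{\sig,p}(\R^d;H^\ell(\R;\F))\emb C^0_b(\R^{d+1};\F)$. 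To upgrade from $C^0_b$ to $C^0_0$ I would invoke the density of $C^\infty_c(\R^{d+1};\F)$ in $H^s_{p,2}(\R^{d+1};\F)$ (see Definition~\ref{mixed-type sobolev def}) together with the fact that $C^0_0$ is closed under uniform limits.

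For item 2, item 1 with $k=0$ combined with the trivial embedding $H^s_{p,2}\emb L_{p,2}$ yields $H^s_{p,2}\emb L^\infty\cap L_{p,2}$, and substituting this into the right-hand side of the product estimate of Lemma~\ref{product estimates in mixed type Sobolev spaces} immediately gives $\tnorm{fg}_{H^s_{p,2}}\lesssim\tnorm{f}_{H^s_{p,2}}\tnorm{g}_{H^s_{p,2}}$, which is the algebra property.
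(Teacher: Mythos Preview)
Your argument for item 2 matches the paper exactly. For item 1, your proof is correct, but the paper takes a much shorter route: it simply observes that on any slab $\R^d\times J$ with $J\subset I$ of finite length one has the continuous restriction $H^s_{p,2}(U;\F)\to W^{s,\min\{2,p\}}(\R^d\times J;\F)$, and then invokes the classical Sobolev embedding $W^{s,q}\hookrightarrow C^k_0$ for $s>k+(d+1)/q$ with $q=\min\{2,p\}$. That is the whole proof.

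Your strategy---extending to $\R^{d+1}$, reducing to $k=0$, proving a fractional version of the factorization in Lemma~\ref{lemma on equivalent norm on the mixed type spaces} by Littlewood--Paley plus Gagliardo--Nirenberg, and then chaining the vector-valued embeddings $H^{\sigma,p}(\R^d;H^\ell(\R))\hookrightarrow C^0_b(\R^d;H^\ell(\R))\hookrightarrow C^0_b(\R^{d+1})$---is a genuinely different approach. It stays entirely within the vector-valued framework of Section~\ref{appendix on vector-valued harmonic analysis} rather than reducing to scalar Sobolev theory, and it gives slightly sharper numerology (your argument only needs $s>d/p+1/2$, which for $p>2$ is strictly weaker than $s>(d+1)/2$). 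The cost is that it is considerably longer and requires the fractional interpolation input you flag. The paper's one-line reduction to $W^{s,\min\{2,p\}}$ buys brevity by leaning on a completely standard embedding; your route buys a more self-contained argument and a cleaner treatment of the case $p>2$, where the paper's global embedding into $W^{s,2}(\R^d\times J)$ is really only local and implicitly relies on density for the decay.
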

\begin{proof}
    For the first item we note that the restriction $H^{s}_{p,2}(U;\F)\to W^{s,\min\tcb{2,p}}(\R^d\times J;\F)$ is continuous, for every $J\subset I$ of finite length and hence the claim follows from standard Sobolev embeddings. The second item follows from the first and Lemma~\ref{product estimates in mixed type Sobolev spaces}.
\end{proof}

We now rapidly record three useful consequences of the previous results. 

\begin{prop}[Products, I]\label{proposition on smoothness of a certain product map}
    Suppose that $1<p<\infty$ and $\N\ni s>(d+1)/\min\tcb{2,p}$. Then the pointwise product map
    \begin{equation}
        H^s_{p,2}(U)\times H^s_{p,2}(U)\times W^{s,\infty}(U)\ni(f,g,h)\mapsto f\cdot (g+h)\in H^s_{p,2}(U)
    \end{equation}
    is well-defined and smooth.
\end{prop}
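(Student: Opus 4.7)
The plan is to decompose the map as $f\cdot (g+h) = fg + fh$ and check that each summand extends to a bounded bilinear map between the relevant Banach spaces; smoothness of the whole assignment then follows from the standard fact that every bounded multilinear map between Banach spaces is $C^\infty$ (indeed, real-analytic), with derivatives obtained by polarization.

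For the first summand, I would simply invoke the second item of Proposition~\ref{the embedding into the Linfty space}: since $s>(d+1)/\min\{2,p\}$, the space $H^s_{p,2}(U;\F)$ is a Banach algebra, so
\begin{equation}
    \tnorm{fg}_{H^s_{p,2}} \lesssim \tnorm{f}_{H^s_{p,2}} \tnorm{g}_{H^s_{p,2}},
\end{equation}
which is bilinear and bounded from $H^s_{p,2}(U)\times H^s_{p,2}(U)$ to $H^s_{p,2}(U)$.

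For the second summand, the subtlety is that $h\in W^{s,\infty}(U)$ need not belong to $H^s_{p,2}(U)$, so Lemma~\ref{product estimates in mixed type Sobolev spaces} does not apply as stated. However, since $h\in W^{s,\infty}(U)$ has all distributional derivatives up to order $s$ in $L^\infty$, the Leibniz rule gives, for every multi-index $\al\in\N^{d+1}$ with $|\al|\le s$,
\begin{equation}
    \pd^\al(fh) = \sum_{\be\le\al}\binom{\al}{\be}\pd^{\al-\be}f\cdot\pd^\be h,
\end{equation}
and a direct H\"older estimate in the $L_{p,2}$ norm (pulling $\pd^\be h$ out in $L^\infty$) yields
\begin{equation}
    \tnorm{\pd^\al(fh)}_{L_{p,2}} \lesssim \sum_{\be\le\al}\tnorm{\pd^{\al-\be}f}_{L_{p,2}}\tnorm{\pd^\be h}_{L^\infty} \lesssim \tnorm{f}_{H^s_{p,2}}\tnorm{h}_{W^{s,\infty}}.
\end{equation}
Summing over $|\al|\le s$ gives $\tnorm{fh}_{H^s_{p,2}}\lesssim \tnorm{f}_{H^s_{p,2}}\tnorm{h}_{W^{s,\infty}}$, so this assignment is bilinear and bounded from $H^s_{p,2}(U)\times W^{s,\infty}(U)$ to $H^s_{p,2}(U)$.

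Combining the two pieces, the map $(f,g,h)\mapsto fg + fh$ is a sum of two bounded bilinear maps on products of Banach spaces (each factoring through the natural projection onto two of the three coordinates), hence is smooth. There is no real obstacle beyond organizing the Leibniz estimate, since the algebra property and Lemma~\ref{product estimates in mixed type Sobolev spaces} do all the heavy lifting; the only mild care is in not misapplying Lemma~\ref{product estimates in mixed type Sobolev spaces} to the $fh$ term, where the direct Leibniz approach is in fact cleaner.
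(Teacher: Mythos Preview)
Your proof is correct and matches the paper's approach essentially line for line: the paper also splits $f(g+h)=fg+fh$, handles $fh$ by ``bilinearity and the Leibniz rule,'' and handles $fg$ via Lemma~\ref{product estimates in mixed type Sobolev spaces} together with Proposition~\ref{the embedding into the Linfty space} (which is exactly the algebra property you invoke). Your remark that Lemma~\ref{product estimates in mixed type Sobolev spaces} should not be applied to $fh$ directly, and that the Leibniz route is cleaner there, is spot on.
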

\begin{proof}
    That $H^s_{p,2}(U)\times W^{s,\infty}(U)\ni (f,h)\mapsto fh\in H^s_{p,2}(U)$ is well-defined and smooth is clear from bilinearity and the Leibniz rule. On the other hand, that these properties as also true for the assignment $H^s_{p,2}(U)\times H^s_{p,2}(U)\ni (f,g)\mapsto fg\in H^s_{p,2}(U)$ is a consequence of Lemma~\ref{product estimates in mixed type Sobolev spaces} and Proposition~\ref{the embedding into the Linfty space}.
\end{proof}

\begin{prop}[Products, II]\label{corollary Banach algebra of the sum space}
    Suppose that $1<p<\infty$ and $\N\ni s>(d+1)/\min\tcb{2,p}$. Then the Banach sum space
    \begin{equation}\label{space}
        (H^s_{p,2}+W^{s,\infty})(U)=\tcb{f\in L^{1}_{\loc}(U)\;:\;f=f_0+f_1,\;f_0\in H^s_{p,2}(U),\; f_1\in W^{s,\infty}(U)},
    \end{equation}
    equipped with the norm
    \begin{equation}\label{norm}
    \tnorm{f}_{H^s_{2,p}+W^{s,\infty}}=\inf\tcb{\tnorm{f_0}_{H^s_{2,p}}+\tnorm{f_1}_{W^{s,\infty}}\;:\;f=f_0+f_1},
    \end{equation}
    is a Banach algebra under pointwise multiplication.
\end{prop}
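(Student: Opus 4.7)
The plan is to verify, in order, completeness, closure under multiplication, and the submultiplicative norm inequality.

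First, completeness of $(H^s_{p,2}+W^{s,\infty})(U)$ under the norm \eqref{norm} will follow from the standard abstract fact that if two Banach spaces $X_0, X_1$ both embed continuously into a common Hausdorff topological vector space $Z$, then $X_0+X_1$ (with the infimum norm) is Banach, since it is isometrically isomorphic to the quotient of $X_0\oplus X_1$ by the closed subspace $\{(x,-x):x\in X_0\cap X_1\}$. In our setting, Definition~\ref{mixed-type sobolev def} yields $H^s_{p,2}(U)\hookrightarrow L^{\min\{2,p\}}_{\loc}(U)\hookrightarrow L^1_{\loc}(U)$, and trivially $W^{s,\infty}(U)\hookrightarrow L^1_{\loc}(U)$, so we may take $Z=L^1_{\loc}(U)$.

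Next, I will establish the algebra property via the bilinear decomposition. Given $f,g\in (H^s_{p,2}+W^{s,\infty})(U)$ and an arbitrary pair of decompositions $f=f_0+f_1$ and $g=g_0+g_1$ with $f_0,g_0\in H^s_{p,2}(U)$ and $f_1,g_1\in W^{s,\infty}(U)$, I expand
\begin{equation}
    fg=f_0g_0+f_0g_1+f_1g_0+f_1g_1.
\end{equation}
The first three terms will be controlled in $H^s_{p,2}(U)$ and the last one in $W^{s,\infty}(U)$. Specifically: Proposition~\ref{the embedding into the Linfty space} provides $H^s_{p,2}(U)\emb L^\infty(U)$, so Lemma~\ref{product estimates in mixed type Sobolev spaces} yields $\tnorm{f_0g_0}_{H^s_{p,2}}\lesssim\tnorm{f_0}_{H^s_{p,2}}\tnorm{g_0}_{H^s_{p,2}}$; Proposition~\ref{proposition on smoothness of a certain product map} (or again Lemma~\ref{product estimates in mixed type Sobolev spaces} with the trivial inclusion $W^{s,\infty}\emb L^\infty\cap L^\infty$, combined with the Leibniz rule applied to bound the $H^s_{p,2}$ norm of the product of a bounded smooth function with an $H^s_{p,2}$ function) gives both $\tnorm{f_0g_1}_{H^s_{p,2}}\lesssim\tnorm{f_0}_{H^s_{p,2}}\tnorm{g_1}_{W^{s,\infty}}$ and the symmetric bound for $f_1g_0$; and the classical fact that $W^{s,\infty}(U)$ is an algebra furnishes $\tnorm{f_1g_1}_{W^{s,\infty}}\lesssim\tnorm{f_1}_{W^{s,\infty}}\tnorm{g_1}_{W^{s,\infty}}$.

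Combining these four estimates produces a valid decomposition of $fg$ certifying that $fg\in (H^s_{p,2}+W^{s,\infty})(U)$, together with the product bound
\begin{equation}
    \tnorm{fg}_{H^s_{p,2}+W^{s,\infty}}\lesssim\p{\tnorm{f_0}_{H^s_{p,2}}+\tnorm{f_1}_{W^{s,\infty}}}\p{\tnorm{g_0}_{H^s_{p,2}}+\tnorm{g_1}_{W^{s,\infty}}}.
\end{equation}
Taking the infimum over decompositions of $f$ and $g$ separately then delivers $\tnorm{fg}_{H^s_{p,2}+W^{s,\infty}}\lesssim\tnorm{f}_{H^s_{p,2}+W^{s,\infty}}\tnorm{g}_{H^s_{p,2}+W^{s,\infty}}$, completing the proof. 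I do not anticipate any genuine obstacle here: the only subtlety is ensuring that the three mixed-type terms $f_0g_0$, $f_0g_1$, $f_1g_0$ genuinely lie in $H^s_{p,2}(U)$, and this is precisely what Lemma~\ref{product estimates in mixed type Sobolev spaces} (via the $L^\infty$ embedding of Proposition~\ref{the embedding into the Linfty space}) was designed to provide.
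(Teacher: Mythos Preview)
Your proposal is correct and follows essentially the same approach as the paper: decompose $fg$ into four terms, place $f_0g_0+f_0g_1+f_1g_0$ in $H^s_{p,2}$ via the product estimates (Lemma~\ref{product estimates in mixed type Sobolev spaces} / Proposition~\ref{proposition on smoothness of a certain product map}) and $f_1g_1$ in $W^{s,\infty}$, then take the infimum over decompositions. The paper simply declares completeness ``straightforward'' where you supply the standard quotient argument, but otherwise the arguments are the same.
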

\begin{proof}
    That this space is Banach is straightforward to see, so we only prove that it is an algebra. Let $f,g\in(H^s_{p,2}+W^{s,\infty})(U)$ and decompose $f=f_0+f_1$, $g=g_0+g_1$ with $f_0,g_0\in H^{s}_{p,2}(U)$ and $f_1,g_1\in W^{s,\infty}(U)$. Then we use that $W^{s,\infty}(U)$ is an algebra along with Proposition~\ref{proposition on smoothness of a certain product map} to estimate
    \begin{equation}
        \tnorm{fg}_{H^s_{p,2}+W^{s,\infty}}\le\tnorm{f_0g_0+f_1g_0+f_0g_1}_{H^{s}_{p,2}}+\tnorm{f_1g_1}_{W^{s,\infty}}\lesssim\tp{\tnorm{f_0}_{H^s_{p,2}}+\tnorm{f_1}_{W^{s,\infty}}}\tp{\tnorm{g_0}_{H^s_{p,2}}+\tnorm{g_1}_{W^{s,\infty}}}.
    \end{equation}
    The result follows by taking the infimum over all decompositions of $f$ and $g$.
\end{proof}

\begin{rmk}[Products, III]\label{remark on products 3}
    For $1<p<\infty$ and $\N\ni s>(d+1)/\min\tcb{2,p}$, we have that the pointwise product map
    \begin{equation}
        H^s_{p,2}(U)\times(H^{s}_{p,2}+W^{s,\infty})( U)\ni(f,g)\mapsto fg\in H^s_{p,2}(U)
    \end{equation}
    is smooth. This follows directly from Proposition~\ref{proposition on smoothness of a certain product map} and definitions~\eqref{space} and~\eqref{norm}.
\end{rmk}

Our next result is meant to handle the reciprocal Jacobian of the flattening map.

\begin{coro}[Smoothness of pointwise inversion]\label{smoothness of inversion}
    For $1<p<\infty$ and $\N\ni s>(d+1)/\min\tcb{2,p}$, we have that there exists a constant $\rho\in\R^+$, depending on $U$, $s$, $d$, and $p$, such that the map
    \begin{equation}
        (H^s_{p,2}+W^{s,\infty})(U)\supset B(0,\rho)\ni f\mapsto (1+f)^{-1}\in (H^s_{p,2}+W^{s,\infty})(U)
    \end{equation}
    is well-defined and smooth.
\end{coro}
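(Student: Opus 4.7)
The plan is to exploit the Banach algebra structure of $(H^s_{p,2}+W^{s,\infty})(U)$ provided by Proposition~\ref{corollary Banach algebra of the sum space} and to construct the inverse via a Neumann series. Note that $1 \in W^{s,\infty}(U) \subseteq (H^s_{p,2}+W^{s,\infty})(U)$, so the sum space is a unital Banach algebra under pointwise multiplication. Let $C \in \R^+$ denote the constant such that
\begin{equation}
    \tnorm{gh}_{H^s_{p,2}+W^{s,\infty}} \le C\, \tnorm{g}_{H^s_{p,2}+W^{s,\infty}}\tnorm{h}_{H^s_{p,2}+W^{s,\infty}}
\end{equation}
for all $g,h$ in the sum space (which exists by Proposition~\ref{corollary Banach algebra of the sum space}). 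The first step is to choose $\rho \in \R^+$ with $C\rho < 1/2$, and sufficiently small that the Sobolev embedding $H^s_{p,2}(U;\F)\emb L^\infty(U;\F)$ of Proposition~\ref{the embedding into the Linfty space} (which applies since $s>(d+1)/\min\tcb{2,p}$), combined with the trivial embedding $W^{s,\infty}\emb L^\infty$, forces $\tnorm{f}_{L^\infty} \le 1/2$ whenever $\tnorm{f}_{H^s_{p,2}+W^{s,\infty}} < \rho$. This ensures that $1+f$ is pointwise bounded away from zero, so its pointwise reciprocal $(1+f)^{-1}$ is a well-defined element of $L^\infty(U)$.

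The second step is to verify that for $\tnorm{f}_{H^s_{p,2}+W^{s,\infty}} < \rho$, the Neumann series
\begin{equation}
    g := \sum_{k=0}^\infty (-f)^k
\end{equation}
converges absolutely in $(H^s_{p,2}+W^{s,\infty})(U)$. Indeed, iterating the algebra bound gives $\tnorm{(-f)^k}_{H^s_{p,2}+W^{s,\infty}} \le C^{k-1}\tnorm{f}^k \le C^{-1}(1/2)^k$ for $k \ge 1$, so the partial sums form a Cauchy sequence. By continuity of pointwise multiplication in the algebra, $(1+f)g = 1$ in $(H^s_{p,2}+W^{s,\infty})(U)$. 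The embedding into $L^\infty$ then forces $g(x)(1+f(x)) = 1$ for almost every $x \in U$, so $g = (1+f)^{-1}$ pointwise a.e., and the inversion map is well-defined into the sum space.

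For smoothness, the cleanest route is to note that the Neumann series representation exhibits $f \mapsto (1+f)^{-1}$ as the sum of a convergent power series in the Banach algebra $(H^s_{p,2}+W^{s,\infty})(U)$ on the ball $B(0,\rho)$, which is then real-analytic and in particular $C^\infty$. Concretely, Cauchy's estimates for analytic functions valued in a Banach space apply directly; alternatively, one differentiates the series term-by-term using the fact that $f \mapsto f^k$ is a continuous $k$-linear form precomposed with the diagonal and verifies that the resulting series of multilinear derivatives converges uniformly on balls strictly smaller than $B(0,\rho)$.

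The main (and really only) obstacle is already handled by the Banach algebra property supplied by Proposition~\ref{corollary Banach algebra of the sum space}; once one has that, the rest is standard soft functional analysis. The subtle point to keep in mind is that algebra convergence of the Neumann series gives an inverse in the algebra, and the additional $L^\infty$ embedding is what guarantees this algebraic inverse coincides with the naive pointwise reciprocal, ruling out the possibility of the Neumann series converging to some distributional object unrelated to $(1+f)^{-1}$.
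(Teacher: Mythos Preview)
Your proof is correct and follows the same approach as the paper: invoke the Banach algebra structure from Proposition~\ref{corollary Banach algebra of the sum space}, sum the Neumann series $\sum_{j=0}^\infty(-1)^j f^j$ on a small ball to obtain an analytic (hence smooth) map, and identify the limit with the pointwise reciprocal. You are in fact more explicit than the paper about the pointwise identification via the $L^\infty$ embedding, which the paper simply calls ``elementary to verify.''
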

\begin{proof}
    Proposition~\ref{corollary Banach algebra of the sum space} established that $(H^s_{p,2}+W^{s,\infty})(U)$ is a Banach algebra.  Consequently, we can use the usual theory of power series on Banach algebras to pick $\rho$ sufficiently small such that the power series
    \begin{equation}
        B(0,\rho)\ni f\mapsto \sum_{j=0}^\infty(-1)^jf^j
    \end{equation}
    is uniformly absolutely convergent and defines an analytic map.  It is then elementary to verify that this power series is pointwise equal to $f\mapsto (1+f)^{-1}$.
\end{proof}

Now we study the composition appearing in the interaction between the flattening map and the data. The following is a modification of the main argument presented in Inci, Kappeler, and Topalov~\cite{MR3135704}.

\begin{prop}[On composition]\label{proposition on composition}
    Assume that $2 \le d \in \N$, and let $1<p<\infty$ and $\N\ni s>1+d/\min\tcb{2,p}$. There exists a constant $\lambda\in\R^+$, depending only on $p$, $d$, and $s$ such that  the following hold.
\begin{enumerate}
    \item If $f \in B_{(H^s_{p,2}+W^{s,\infty})(\R^d)}(0,\lambda)$, then the map $\m{id}_{\R^d}+fe_d$ is a $C^m$ diffeomorphism of $\R^d$ onto itself for every $\N\ni m<s-d/\min\tcb{2,p}$.
    \item If  $k\in\N$, then the map $\Lambda$ defined by
    \begin{equation}\label{the composition map mapping properties}
        H^{s+k}_{p,2}(\R^d)\times B_{(H^s_{p,2}+W^{s,\infty})(\R^d)}(0,\lambda)\ni (F,f) \mapsto         \Lambda(F,f) = F(\m{id}_{\R^d}+fe_d)\in H^{s}_{p,2}(\R^d)
    \end{equation}
    is well-defined and  $C^k$.
\end{enumerate}
 \end{prop}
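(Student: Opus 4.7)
The plan is to address the two assertions separately, handling the diffeomorphism claim directly and then proving the smoothness of $\Lambda$ by induction on $k \in \N$, bootstrapping from a base case that is primarily a boundedness and continuity assertion for the composition operator.

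For the first item, I would first invoke Proposition~\ref{corollary Banach algebra of the sum space} together with the embedding of Proposition~\ref{the embedding into the Linfty space} (applied in the splitting $\R^d = \R^{d-1} \times \R$) to see that $(H^s_{p,2} + W^{s,\infty})(\R^d) \hookrightarrow C^m_0(\R^d) \cap W^{m,\infty}(\R^d)$ for every $\N \ni m < s - d/\min\{2,p\}$. Choosing $\lambda$ small enough that $f \in B(0,\lambda)$ implies $\|\partial_d f\|_{L^\infty} \le 1/2$, the Jacobian of $\Phi_f = \m{id}_{\R^d} + fe_d$ is the triangular matrix with diagonal $(1,\dotsc,1,1+\partial_d f)$, which is bounded uniformly above and below away from $0$. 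The inverse function theorem then gives local $C^m$ diffeomorphism, and global injectivity plus surjectivity is established by noting that for each fixed $x' \in \R^{d-1}$ the one-dimensional map $y \mapsto y + f(x',y)$ is strictly increasing with derivative $\ge 1/2$ and has limits $\pm \infty$ at $\pm\infty$ (thanks to $f \in L^\infty$), hence is a $C^m$ diffeomorphism of $\R$.

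For the second item, I would proceed by induction on $k$. The base case $k=0$ amounts to showing that $\Lambda : H^s_{p,2}(\R^d) \times B(0,\lambda) \to H^s_{p,2}(\R^d)$ is continuous. Boundedness rests on the equivalent norm from Lemma~\ref{lemma on equivalent norm on the mixed type spaces}: the $L^p(\R^{d-1};H^s(\R))$ component is estimated via a one-dimensional change of variables $z = y + f(x',y)$ in the last coordinate, whose Jacobian $1 + \partial_d f$ is uniformly comparable to $1$ and whose derivatives through order $s$ are controlled in $W^{s,\infty}$ by the embedding in the previous paragraph; the $H^{s,p}(\R^{d-1};L^2(\R))$ component is handled by expanding tangential derivatives via Faà di Bruno into finite sums of terms of the form $((\partial^\beta F)\circ \Phi_f)\cdot \mathcal{P}(\partial^\gamma f)$ and then invoking the product estimate of Lemma~\ref{product estimates in mixed type Sobolev spaces} together with the algebra structure of Proposition~\ref{corollary Banach algebra of the sum space}. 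Joint continuity in $(F,f)$ then follows from the uniform bound by an approximation argument: smooth approximants of $F$ in $H^s_{p,2}$ reduce the problem to the pointwise/Dominated-Convergence level, where small variations in $f$ produce small perturbations in $\Phi_f$ that move $F \circ \Phi_f$ continuously.

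For the inductive step, I would compute the formal partial derivatives
\begin{equation}
 D_F\Lambda(F,f)[G] = G \circ \Phi_f, \qquad D_f\Lambda(F,f)[g] = \bp{(\partial_d F)\circ \Phi_f}\, g,
\end{equation}
and observe that the first is $\Lambda$ applied to $(G,f)$ while the second is the pointwise product of $\Lambda$ applied to $(\partial_d F, f) \in H^{s+k-1}_{p,2} \times B(0,\lambda)$ with $g$; the product here is controlled by Remark~\ref{remark on products 3} since $g \in (H^s_{p,2} + W^{s,\infty})(\R^d)$. The inductive hypothesis (at order $k-1$) then guarantees that both partial derivatives are $C^{k-1}$ in $(F,f)$, whence $\Lambda$ itself is $C^k$. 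Verification that these formal derivatives are the genuine Fréchet derivatives is again done by approximating $F$ by smooth functions and passing to the limit using the uniform boundedness established in the base case.

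The main obstacle will be the base case: producing a clean and uniform bound on $\|F \circ \Phi_f\|_{H^s_{p,2}}$ in the anisotropic norm when $F$ has only $H^s_{p,2}$ regularity (so the chain rule cannot be applied pointwise a priori) and when the multipliers $\partial^\gamma f$ only lie in the sum space $H^s_{p,2} + W^{s,\infty}$, where the low-mode content must be treated as a bounded multiplier while the high-mode content must be treated in the mixed-type space. Careful paraproduct-style bookkeeping organized around Lemma~\ref{product estimates in mixed type Sobolev spaces} and the equivalent norm decomposition of Lemma~\ref{lemma on equivalent norm on the mixed type spaces} will be needed to make this work.
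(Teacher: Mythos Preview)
Your approach to the first item is essentially the same as the paper's, and correct.

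For the second item, your strategy differs from the paper's in two places. For the base case $k=0$, the paper does \emph{not} attack the full $H^s_{p,2}$ norm directly via Fa\`a di Bruno. Instead it runs a secondary induction on the regularity index $j\in\{0,\dots,s\}$: the proposition $\mathbb{P}_j$ asserts that $\Lambda:H^j_{p,2}\times B(0,\lambda)\to H^j_{p,2}$ is continuous, $\mathbb{P}_0$ is proved by the fiberwise change of variables exactly as you describe, and the step $\mathbb{P}_j\Rightarrow\mathbb{P}_{j+1}$ uses only the single-derivative identity
\[
\partial_q(\Lambda(F,f))=\Lambda(\partial_q F,f)+\Lambda(\partial_d F,f)\,\partial_q f,
\]
together with the product map of Remark~\ref{remark on products 3}. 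This avoids the Fa\`a di Bruno bookkeeping entirely and sidesteps precisely the obstacle you flag at the end: in your direct approach the intermediate terms $((\partial^\beta F)\circ\Phi_f)\cdot\prod_i\partial^{\gamma_i}f$ require high--low product estimates at a range of regularity levels, and Lemma~\ref{product estimates in mixed type Sobolev spaces} as stated (both factors at the same level) does not directly supply these. Your route can be made to work, but only after proving the missing high--low product estimates; the paper's induction on $j$ is cleaner.

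For $k\ge 1$, the paper does not argue by induction on $k$ through partial derivatives. Instead it writes down explicit candidate multilinear maps $\Lambda^{(r)}$ for $r=1,\dots,k$ and an explicit integral remainder $\mathcal{R}_k$, verifies the Taylor expansion pointwise via the one-dimensional Taylor formula with integral remainder, and then invokes the converse to Taylor's theorem (as in Abraham--Marsden--Ratiu). Your inductive scheme is perfectly natural, but it hides a subtlety: to conclude that $(F,f)\mapsto D\Lambda(F,f)$ is $C^{k-1}$ as a map into $\mathcal{L}({_\diamond}\bf{X};H^s_{p,2})$, you need the derivatives of $f\mapsto\Lambda(G,f)$ to be continuous in \emph{operator norm} uniformly over $G$ in bounded sets, and this does not follow immediately from the joint $C^{k-1}$ regularity asserted by the inductive hypothesis. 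The paper's explicit Taylor remainder bypasses this issue; to close your argument you would need either the same explicit formulas or an additional uniformity argument.
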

\begin{proof}
Thanks to Corollary~\ref{smoothness of inversion}, there exists a $\lambda\in\R^+$ for which the map   
\begin{equation}
    B_{(H^{s}_{p,2}+W^{s,\infty})(\R^d)}(0,\lambda)\ni f \mapsto  K_f =  (1+\pd_d f)^{-1}\in (H^{s-1}_{p,2}+W^{s-1})(\R^d)
\end{equation}
 is  analytic, and $K_f >0$ in $\R^d$ for each $f$ in the domain of the map.  In turn, from the above and the fact that $s-1>d/\min\tcb{2,p}$, we find that the map $\mathfrak{F}_f:\R^d\to\R^d$ given by $\mathfrak{F}_f=\m{id}_{\R^d}+fe_d$ is a $C^1$ diffeomorphism; indeed the condition that $1+\pd_d f$ is bounded and bounded away from zero guarantees that for every $x\in\R^d$ the map $\R\ni y\overset{\psi_{f,x}}{\mapsto} y+f(x,y)\in\R$ is a diffeomorphism $\R\to\R$. Thus $\mathfrak{F}_f^{-1}(x,y)=(x,\psi_{f,x}^{-1}(y))$, for $(x,y)\in\R^d\times\R$.
 
 That the maps $\mathfrak{F}_f$ and $\mathfrak{F}_f^{-1}$ are class $C^m$ for every $\N\ni m<s-d/\min\tcb{2,p}$ follows from Proposition~\ref{the embedding into the Linfty space} and the inverse function theorem. This completes the proof of the first item.

With $\lambda\in\R^+$ in hand, we now turn to the proof of the second item.  First, we prove~\eqref{the composition map mapping properties} in the case $k=0$ via an induction argument. For $j\in\tcb{0,1,\dots,s}$ let $\mathbb{P}_j$ denote the proposition that
\begin{equation}
    \Lambda:H^j_{p,2}(\R^d)\times B_{(H^s_{p,2}+W^{s,\infty})(\R^d)}(0,\lambda)\to H^j_{p,2}(\R^d)
\end{equation}
is well-defined and continuous.

For $\mathbb{P}_0$, we perform a change of variables $z = y + f(x,y)$ on each fiber to estimate
\begin{equation}\label{base case estimate}
    \tnorm{\Lambda(F,f)}_{H^0_{p,2}}=\bp{\int_{\R^{d-1}}\bp{\int_\R|F(x,y+f(x,y))|^2\;\m{d}y}^{p/2}\;\m{d}x}^{1/p}\le\tnorm{K_f}_{L^\infty}^{1/2}\tnorm{F}_{H^0_{p,2}}.
\end{equation}
Thus well-definedness is established. For continuity, we estimate
\begin{equation}
    \tnorm{\Lambda(F,f)-\Lambda(G,g)}_{H^{0}_{p,2}}\le\tnorm{\Lambda(F-G,f)}_{H^0_{p,2}}+\tnorm{\Lambda(G,f)-\Lambda(G,g)}_{H^0_{p,2}}.
\end{equation}
The former term is made small when $(G,g)$ are close to $(F,f)$ via estimate~\eqref{base case estimate}, while the latter term is made small by approximating $G$ via a smooth compactly supported function and using that $g\to f$ in $(H^s_{p,2}+W^{s,\infty})(\R^d)$ implies that $\mathfrak{F}_g\to \mathfrak{F}_f$ uniformly. This gives $\mathbb{P}_0$.

Now suppose that $j\in\tcb{0,\dots,s-1}$ is such that $\mathbb{P}_\ell$ is true for all $\ell\in\tcb{1,\dots,j}$. We compute
\begin{equation}
    \pd_q(\Lambda(F,f))=\Lambda(\pd_q F,f)+\Lambda(\pd_d F,f)\pd_q f.
\end{equation}
for $q\in\tcb{1,\dotsc,d}$. By combining Remark~\ref{remark on products 3} and the induction hypothesis, we have that
\begin{equation}
    H^{j+1}_{p,2}(\R^d)\times B_{(H^s_{p,2}+W^{s,\infty})(\R^d)}(0,\lambda)\ni (F,f)\mapsto \pd_q(\Lambda(F,f))\in H^j_{p,2}(\R^d)
\end{equation}
is a well-defined and continuous map. This paired with $\mathbb{P}_0$ gives $\mathbb{P}_{j+1}$. Thus the induction is complete, and we have proved~\eqref{the composition map mapping properties} in the case $k=0$.

We now turn to the proof of~\eqref{the composition map mapping properties} for the remaining cases of $k\in\N^+$.  For this we shall use the converse to Taylor's theorem: see, for instance, Theorem 2.4.15 and Supplement 2.4B in Abraham, Marsden, and Ratiu~\cite{AbMaRa_1988}. For $r\in\tcb{1,\dots,k}$, we define
\begin{equation}
    \Lambda^{(r)}:H^{s+k}_{p,2}(\R^d)\times B_{(H^s_{p,2}+W^{s,\infty})(\R^d)}(0,\lambda)\to\mathcal{L}^r_{\m{sym}}(H^s_{p,2}(\R^d)\times(H^s_{p,2}+W^{s,\infty})(\R^d);H^s_{p,2}(\R^d))
\end{equation}
via the formula
\begin{equation}\label{derivative def}
\Lambda^{(r)}(F,f)[(F_1,f_1),\dots,(F_r,f_r)]=\Lambda(\pd_d^rF,f)\prod_{\ell=1}^rf_\ell+\sum_{m=1}^r\Lambda(\pd_d^{r-1}F_m,f)\prod_{\ell\neq m}f_\ell.
\end{equation}
In the above $\mathcal{L}^r_{\m{sym}}$ refers to the space of symmetric $r$-multilinear maps. By using the already established continuity properties of $\Lambda$ along with Remark~\ref{remark on products 3}, we see that $\Lambda^{(r)}$ is continuous for $r\in\tcb{1,\dots,k}$. By similar considerations, we find that the map
\begin{equation}
    \mathcal{R}_k:\mathcal{U}\to\mathcal{L}^k_{\m{sym}}(H^s_{p,2}(\R^d)\times(H^s_{p,2}+W^{s,\infty})(\R^d);H^s_{p,2}(\R^d))
\end{equation}
defined via
\begin{equation}\label{remainder def}
    \mathcal{R}_k((F,f),(G,g))=\int_0^1\f{(1-t)^{k-1}}{(k-1)!}\tp{\Lambda^{(k)}(F+tG,f+tg)-\Lambda^{(k)}(F,f)}\;\m{d}t
\end{equation}
is continuous, where
\begin{equation}
    \mathcal{U}=\tcb{((F,f),(G,g))\in(H^{s+k}_{p,2}\times B(0,\lambda))^2\;:\;\forall\;t\in[0,1]\;(F+tG,f+tg)\in H^{s+k}_{p,2}\times B(0,\lambda)}.
\end{equation}
Let $((F,f),(G,g))\in\mathcal{U}$. Now, according to Taylor's theorem with integral remainder (used pointwise), we have that
\begin{equation}\label{expression 1}
    \Lambda(F,f+g)-\Lambda(F,f)=\sum_{r=1}^k\f{1}{r!}\Lambda(\pd_d^rF,f)g^r+\int_0^1\f{(1-t)^{k-1}}{(k-1)!}\tp{\Lambda(\pd_d^k F,f+tg)-\Lambda(\pd_d^kF,f)}g^k\;\m{d}t.
\end{equation}
We can also apply the same result to express for $h(s)=s\Lambda(G,f+sg)$
\begin{multline}\label{expression 2}
    \Lambda(G,f+g)=h(1)-h(0)=\sum_{r=1}^k\f{1}{r!}(\pd^r h)(0)+\int_0^1\f{(1-t)^{k-1}}{(k-1)!}\tp{(\pd^kh)(t)-(\pd^kh)(0)}\;\m{d}t\\
    =\sum_{r=1}^k\f{1}{(r-1)!}\Lambda(\pd_d^{r-1}G,f)g^{r-1}\\+\int_0^1\f{(1-t)^{k-1}}{(k-1)!}\tp{t\Lambda(\pd_d^kG,f+tg)g^k+k(\Lambda(\pd_d^{k-1}G,f+tg)-\Lambda(\pd_d^{k-1}G,f))g^{k-1}}\;\m{d}t.
\end{multline}
Adding~\eqref{expression 1} and~\eqref{expression 2} and using definitions~\eqref{derivative def} and~\eqref{remainder def} yields
\begin{equation}
    \Lambda(F+G,f+g)-\Lambda(F,f)=\sum_{r=1}^k\f{1}{r!}\Lambda^{(r)}(F,f)[(G,g)^{\otimes r}]+\mathcal{R}_k((F,f),(G,g))[(G,g)^{\otimes k}].
\end{equation}
Therefore, by the converse to Taylor's theorem, we deduce that $\Lambda$ is  $C^k$ on its domain.
\end{proof}

\subsection{Subcritical gradient spaces}\label{section on properties of subcritical gradient spaces}

We now turn our attention to subcritical gradient spaces. 

\begin{defn}[Subcritical gradient spaces]\label{defn of subcritical gradient spaces}
    For $1<p<d$, $s\in\N^+$, and $\F \in \{\R,\C\}$ we define the space
    \begin{equation}\label{definition of subcritical gradient spaces}
        \tilde{H}^{s,p}(\R^d;\F)=\tcb{f\in L^{dp/(d-p)}(\R^d;\F)\;:\;\grad f\in H^{s-1,p}(\R^d;\F^d)}
    \end{equation}
    and it equip it with a norm $\tnorm{f}_{\tilde{H}^{s,p}}=\tnorm{\grad f}_{H^{s-1,p}}$.  When $\F = \R$ we will typically abbreviate $\tilde{H}^{s,p}(\R^d) = \tilde{H}^{s,p}(\R^d;\R)$.
\end{defn}

Next, we verify that these spaces are complete.

\begin{prop}[Properties of subcritical gradient spaces]\label{prop on completeness of subcritical gradient spaces}
    Let $s\in\N^+$, $1<p<d$, and $\F \in \{\R,\C\}$.  Then the space $\tilde{H}^{s,p}(\R^d;\F)$ defined in~\eqref{definition of subcritical gradient spaces} is Banach, and we have the bound
    \begin{equation}\label{hom_sob}
    \norm{f}_{H^{s-1,dp/(d-p)}} \lesssim \norm{f}_{\tilde{H}^{s,p}} \text{ for all } f \in \tilde{H}^{s,p}(\R^d;\F).
    \end{equation}
    Moreover, $\mathscr{S}(\R^d;\F) \subset \tilde{H}^{s,p}(\R^d;\F)$ is dense. 
\end{prop}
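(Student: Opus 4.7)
The plan is to establish the embedding first, then deduce completeness from it, and finally prove density of Schwartz functions via a mollify-then-truncate argument. Write $q = dp/(d-p)$ for the Sobolev conjugate exponent. For the embedding~\eqref{hom_sob}, I would apply the Gagliardo--Nirenberg--Sobolev inequality to $f$, which lies in $L^q$ with $\grad f \in L^p$ by hypothesis, giving $\norm{f}_{L^q} \lesssim \norm{\grad f}_{L^p} \le \norm{\grad f}_{H^{s-1,p}}$. For each multi-index $\al$ with $1 \le |\al| \le s-1$, write $\pd^\al f = \pd^{\al - e_j}(\pd_j f)$; this places $\pd^\al f \in H^{s - |\al|, p}$ with norm dominated by $\norm{\grad f}_{H^{s-1,p}}$, so since $s - |\al| \ge 1$ we have $\pd^\al f \in H^{1,p} = W^{1,p}$, and the classical Sobolev embedding $W^{1,p} \emb L^q$ yields $\norm{\pd^\al f}_{L^q} \lesssim \norm{\grad f}_{H^{s-1,p}}$. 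Summing over $|\al| \le s-1$ gives $\norm{f}_{W^{s-1,q}} \lesssim \norm{f}_{\tilde{H}^{s,p}}$, and the Calder\'on--Zygmund identification $W^{s-1,q}(\R^d) = H^{s-1,q}(\R^d)$, valid for $1 < q < \infty$ and $s-1 \in \N$, completes the embedding.

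From this, completeness follows readily. Let $\tcb{f_n}_{n\in\N}$ be Cauchy in $\tilde{H}^{s,p}$. Since $f_n - f_m \in \tilde{H}^{s,p}$, the GNS step above yields $\norm{f_n - f_m}_{L^q} \lesssim \norm{f_n - f_m}_{\tilde{H}^{s,p}}$, so $\tcb{f_n}$ is Cauchy in $L^q$ with limit $f \in L^q$, while $\tcb{\grad f_n}$ converges to some $g \in H^{s-1,p}(\R^d;\F^d)$. For any $\phi \in C_c^\infty(\R^d)$, passing to the limit in $\int f_n \pd_j \phi = -\int (\pd_j f_n) \phi$---justified by $f_n \to f$ in $L^q$ (with $\pd_j\phi \in L^{q'}$) and $\pd_j f_n \to g_j$ in $L^p$ (with $\phi \in L^{p'}$)---yields $\grad f = g$ distributionally. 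Hence $f \in \tilde{H}^{s,p}$ and $f_n \to f$ in the $\tilde{H}^{s,p}$-norm.

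For density I would mollify-then-truncate. Setting $f_\ep := f \ast \rho_\ep$ with a standard mollifier produces $f_\ep \in C^\infty \cap L^q$ with $\grad f_\ep \to \grad f$ in $H^{s-1,p}$ and $f_\ep \to f$ in $L^q$ as $\ep \to 0$. With $\chi \in C_c^\infty(\R^d)$ equal to $1$ near the origin and $\chi_R(x) = \chi(x/R)$, the product $\chi_R f_\ep$ lies in $C_c^\infty(\R^d) \subset \mathscr{S}(\R^d;\F)$. The convergence $\chi_R f_\ep \to f_\ep$ in $L^q$ is immediate from dominated convergence; writing $\grad(\chi_R f_\ep) - \grad f_\ep = (\chi_R - 1)\grad f_\ep + (\grad \chi_R)f_\ep$ and distributing tangential derivatives via Leibniz, the summand $(\chi_R - 1)\grad f_\ep$ vanishes in $H^{s-1,p}$ by a direct dominated-convergence argument combined with the bound $\norm{\pd^\be \chi_R}_{L^\infty} = O(R^{-|\be|})$ for $|\be| \ge 1$.

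The main obstacle is the $(\grad \chi_R)f_\ep$ term, since $\norm{\grad \chi_R}_{L^d}$ is scale invariant and does \emph{not} decay as $R \to \infty$. The essential observation is the conjugate-exponent identity $1/d + 1/q = 1/p$, which via H\"older yields $\norm{(\grad \chi_R)f_\ep}_{L^p} \le \norm{\grad \chi_R}_{L^d}\norm{f_\ep\mathds{1}_{|x| \ge R}}_{L^q}$; the first factor is uniformly bounded in $R$, and the second vanishes as $R \to \infty$ by dominated convergence since $f_\ep \in L^q$. For higher Leibniz derivatives $\pd^\be\grad\chi_R$ with $|\be|\ge 1$, the scaling $\norm{\pd^\be\grad\chi_R}_{L^d} = O(R^{-|\be|})$ supplies the decay, while the embedding proved in the first paragraph, applied to $f_\ep$, ensures $\pd^{\gam - \be}f_\ep \in L^q$ for all $|\gam - \be| \le s-1$, allowing the H\"older estimate to close. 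A diagonal argument in $\ep$ and $R$ then produces the desired Schwartz approximants.
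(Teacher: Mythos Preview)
Your proof is correct and takes a genuinely different route from the paper's. The paper reduces everything to the case $s=1$ via the isometric isomorphism $\tbr{D}^{s-1}:\tilde{H}^{s,p}\to\tilde{H}^{1,p}$, then cites the literature: Haj\l asz--Ka\l amajska for density of $C^\infty_c$ gradients in $\dot{W}^{1,p}$, and Leoni's book for the homogeneous Sobolev bound $\tnorm{f-c_f}_{L^q}\lesssim\tnorm{\grad f}_{L^p}$, after which $f\in L^q$ forces $c_f=0$. You instead work directly at level $s$, handling the zeroth-order term by the same homogeneous Sobolev fact (your ``GNS'' step is exactly the Leoni inequality plus the observation that the constant vanishes because $f\in L^q$; this is correct but worth making explicit) and the higher-order terms $\pd^\al f$ via the classical $W^{1,p}\emb L^q$. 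Your density argument is self-contained rather than cited, with the H\"older identity $1/p=1/d+1/q$ doing the essential work on the cutoff commutator. The paper's approach is shorter because of the reduction and citations; yours is more elementary and transparent about where each estimate comes from, and in particular your treatment of the $(\grad\chi_R)f_\ep$ term makes explicit the mechanism that the paper outsources.
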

\begin{proof}
    It suffices to prove the result when $\F =\R$, so we will assume this.   We first consider the case $s=1$ and note that in this case $\tilde{H}^{1,p}(\R^d)=\dot{W}^{1,p}(\R^d)\cap L^{dp/(d-p)}(\R^d)$ for the homogeneous Sobolev space   $\dot{W}^{1,p}(\R^d) = \tcb{f\in L^{1}_{\text{loc}}(\R^d) \;:\; \grad f\in L^{p}(\R^d;\R^d)}$.  Next, we note that a density result of Haj\l asz and Ka\l amajska~\cite{MR1315521} shows that if $f \in \dot{W}^{1,p}(\R^d)$, then  there exists a sequence $\{f_n\}_{n\in \N} \subset C^\infty_c(\R^d)$ such that $\norm{\nabla f_n - \nabla f}_{L^p} \to 0$ as $n \to \infty$.  Additionally, Theorem 12.9 in Leoni~\cite{MR3726909}, which is crucially based on this density assertion, provides a constant $C_p >0$ such that for each $f \in \dot{W}^{1,p}(\R^d)$ there exist a unique constant $c_f \in \R$ such that $\norm{f-c_f}_{L^{dp/(d-p)}} \le C_p \norm{\grad f}_{L^p}$.     Now, if $f \in \tilde{H}^{1,p}(\R^d)$ then upon applying this bound to $f$ and noting that $f \in L^{dp/(d-p)}(\R^d)$, we deduce that $c_{f} = 0$.  Hence, \eqref{hom_sob} holds when $s=1$, and with this bound in hand it is a routine matter to verify that $\tilde{H}^{1,p}(\R^d)$ is complete.  The above density assertion shows that $\mathscr{S}(\R^n;\R)$ is dense in $\tilde{H}^{1,p}(\R^d)$ .   To complete the proof for general $s \in \N^+$ we simply note that the map $\tbr{D}^{s-1}:\tilde{H}^{s,p}(\R^d)\to\tilde{H}^{1,p}(\R^d)$ is an isometric isomorphism that maps $\mathscr{S}(\R^n;\R)$ to itself.
\end{proof}

Next we record a frequency splitting result.

\begin{lem}[Frequency splitting in gradient spaces]\label{lemma on frequency splitting in gradient spaces}
    Let $s\in\N^+$, $1<p<d$, and $\F \in \{\R,\C\}$.     Suppose that $\varphi\in C^\infty_c(\R^d)$ is an even function that satisfies $\varphi=1$ on $B(0,1)$ and $\supp(\varphi) \subseteq B(0,2)$. Then  $(1-\varphi)(D):\tilde{H}^{s,p}(\R^d;\F)\to H^{s,p}(\R^d;\F)$ is a bounded linear map, and $\varphi(D):\tilde{H}^{s,p}(\R^d;\F)\to W^{k,dp/(d-p)}(\R^d;\F)$ is a bounded linear map for every $k \in \N$.
\end{lem}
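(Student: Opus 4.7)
My plan is to split the analysis based on the fact that $\tilde{H}^{s,p}(\R^d;\F)$ blends two different kinds of control: a gradient estimate in the space $H^{s-1,p}(\R^d;\F^d)$ and a membership condition in the critical Lebesgue space $L^{dp/(d-p)}(\R^d;\F)$.  The cutoff $\varphi$ gives a clean frequency separation, so I can handle the high-frequency piece $(1-\varphi)(D)$ using only the gradient control and handle the low-frequency piece $\varphi(D)$ using only the critical Lebesgue control.

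First, to handle the high-frequency piece, I will use the identity $|2\pi\xi|^2\hat{f}(\xi)=-\sum_{j=1}^d 2\pi i\xi_j\widehat{\partial_j f}(\xi)$ (which is an immediate consequence of $\widehat{\partial_j f}(\xi)=2\pi i\xi_j\hat f(\xi)$) to produce the factorization
\begin{equation}
(1-\varphi)(D)f=\sum_{j=1}^d m_j(D)\partial_j f,\qquad m_j(\xi)=\frac{-2\pi i\xi_j(1-\varphi(\xi))}{|2\pi\xi|^2}.
\end{equation}
Because $1-\varphi$ vanishes on $B(0,1)$, each $m_j$ is smooth on all of $\R^d$, and a direct inspection shows that $\langle\cdot\rangle\, m_j(\cdot)$ obeys the Mikhlin-H\"ormander hypotheses of Theorem~\ref{hormander mikhlin multiplier theorem}. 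Hence the multiplier $\langle D\rangle\, m_j(D)$ is bounded on $L^p(\R^d;\F)$ for each $j$, and the desired estimate will follow by writing
\begin{equation}
\langle D\rangle^s(1-\varphi)(D)f=\sum_{j=1}^d\langle D\rangle\, m_j(D)\,\langle D\rangle^{s-1}\partial_j f
\end{equation}
and estimating each summand in $L^p(\R^d;\F)$ to get $\norm{(1-\varphi)(D)f}_{H^{s,p}}\lesssim\norm{\grad f}_{H^{s-1,p}}=\norm{f}_{\tilde H^{s,p}}$.

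Second, to handle the low-frequency piece, I will use the fact that $\varphi\in C^\infty_c(\R^d)$ implies $\mathscr{F}^{-1}[\varphi]\in\mathscr{S}(\R^d)$, so that for each multi-index $\alpha$ we have $\pd^\alpha\varphi(D)f=f*\mathscr{F}^{-1}[(2\pi i\,\cdot)^\alpha\varphi]$ with convolution kernel lying in $L^1(\R^d)$. Young's convolution inequality then gives
\begin{equation}
\norm{\pd^\alpha\varphi(D)f}_{L^{dp/(d-p)}}\lesssim_{\alpha}\norm{f}_{L^{dp/(d-p)}}\lesssim\norm{f}_{\tilde H^{s,p}},
\end{equation}
where the last inequality is the $s=1$ case of the critical embedding \eqref{hom_sob} from Proposition~\ref{prop on completeness of subcritical gradient spaces}. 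Summing over $|\alpha|\le k$ yields the stated bound. I do not anticipate any substantial obstacles: each step reduces to a classical tool, with the main conceptual content being the recognition that the frequency split exactly aligns with the two-component structure of the subcritical gradient space, the low modes seeing the $L^{dp/(d-p)}$ control and the high modes seeing the $H^{s-1,p}$ control of the gradient.
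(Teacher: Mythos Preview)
Your argument is correct. The treatment of the low-frequency piece $\varphi(D)$ matches the paper's exactly: both use the embedding $\tilde{H}^{s,p}\hookrightarrow L^{dp/(d-p)}$ from Proposition~\ref{prop on completeness of subcritical gradient spaces} together with the fact that $\varphi(D)$ is convolution with a Schwartz function.

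For the high-frequency piece $(1-\varphi)(D)$ the approaches differ. The paper invokes the annular Littlewood--Paley machinery of Theorems~\ref{thm on annular littlewood paley, I} and~\ref{thm on annular littlewood paley, II} to control $(1-\varphi)(D)f$ in $H^{s,p}$ via the dyadic pieces of $\nabla f$. Your route is more direct: you write down the explicit factorization $(1-\varphi)(D)=\sum_j m_j(D)\partial_j$ with $m_j(\xi)=-2\pi i\xi_j(1-\varphi(\xi))/|2\pi\xi|^2$, observe that each $\langle\cdot\rangle m_j$ is a Mikhlin--H\"ormander symbol (smoothness near the origin being guaranteed by $1-\varphi\equiv 0$ on $B(0,1)$), and conclude via a single application of Theorem~\ref{hormander mikhlin multiplier theorem}. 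Your approach is shorter and avoids the Littlewood--Paley detour; the paper's approach has the advantage of reusing machinery already in place. One small point you leave implicit: for $\F=\R$ one should note that $m_j(-\xi)=\overline{m_j(\xi)}$ (using evenness and realness of $\varphi$), so reality is preserved, but this is routine.
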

\begin{proof}
    The proof is straightforward and we only sketch it. The mapping properties for $\varphi(D)$ follow directly from the embedding $\tilde{H}^{s,p}(\R^d;\F)\emb L^{dp/(d-p)}(\R^d;\F)$ and that $\varphi(D)$ is convolution with a band-limited Schwartz function. The mapping properties of $(1-\varphi(D))$ are verified via the Littlewood-Paley characterizations from Theorems~\ref{thm on annular littlewood paley, I} and~\eqref{thm on annular littlewood paley, II}.  The evenness of $\varphi$ guarantees that real-valued maps stay real-valued when either $\varphi(D)$ or $(1-\varphi)(D)$ is applied.
\end{proof}

By using the lifting map of Proposition~\ref{prop on lifting mixed type Sobolev spaces}, we can build a useful extension operator.

\begin{prop}[Extension operator]\label{prop on extension operator}
    Let $s\in\N^+$, $1<p<d$, and $\F \in \{\R,\C\}$.  For $b \in \R^+$ write $\Omega = \R^d \times (0,b)$ and $\Sigma = \R^d \times \{b\}$.  There exists a bounded linear operator  $\mathcal{E}_0:\tilde{H}^{3/2+s,p}(\Sigma;\F)\to H^{2+s}_{p,2}(\Omega;\F)$ and a linear operator  $\mathcal{E}_1:\tilde{H}^{3/2+s,p}(\Sigma;\F)\to W^{k,\infty}(\Omega;\F)$ that is bounded for every $k \in \N$ such that the  linear extension operator $\mathcal{E}=\mathcal{E}_0+\mathcal{E}_1$ satisfies 
    \begin{equation}
        \m{Tr}_{\Sigma_0}\mathcal{E}\eta=0
        \text{ and }
        \m{Tr}_{\Sigma}\mathcal{E}\eta=\eta
        \text{ for all }
        \eta\in\tilde{H}^{3/2+s,p}(\Sigma;\F).
    \end{equation}
\end{prop}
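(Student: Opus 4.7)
The plan is to build the extension by splitting the boundary datum into high- and low-frequency parts via a Littlewood-Paley style cutoff, lifting each piece separately, and multiplying by a common vertical profile that vanishes on $\Sigma_0$ and equals $1$ on $\Sigma$. The split is forced by the fact that the lifting map $\m{L}_\Omega$ of Proposition~\ref{prop on lifting mixed type Sobolev spaces} takes its input in the Bessel-potential scale $H^{3/2+s,p}(\Sigma;\F)$, while $\tilde H^{3/2+s,p}(\Sigma;\F)$ only embeds into this scale after removing low frequencies; the constant-at-infinity behavior permitted by the subcritical gradient space must be absorbed into the $W^{k,\infty}$ component $\mathcal{E}_1$.

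First, I would fix an even $\varphi \in C^\infty_c(\R^d)$ with $\varphi = 1$ on $B(0,1)$ and $\supp \varphi \subseteq B(0,2)$, and apply Lemma~\ref{lemma on frequency splitting in gradient spaces} to decompose $\eta = \eta_{\m{hi}} + \eta_{\m{lo}}$, with $\eta_{\m{hi}} = (1-\varphi)(D)\eta \in H^{3/2+s,p}(\Sigma;\F)$ and $\eta_{\m{lo}} = \varphi(D)\eta \in W^{k,dp/(d-p)}(\Sigma;\F)$ for every $k \in \N$, both controlled by $\tnorm{\eta}_{\tilde H^{3/2+s,p}}$. Since $\varphi(D)$ is convolution with a Schwartz function, Young's convolution inequality (applied with $\check\varphi$ in the dual Lebesgue class of $\eta_{\m{lo}}$ and with derivatives moved onto $\check\varphi$) in fact upgrades $\eta_{\m{lo}}$ to $W^{k,\infty}(\Sigma;\F)$ for every $k \in \N$, with the same control.

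Next, I would fix once and for all a smooth profile $\psi \in C^\infty([0,b])$ with $\psi(0)=0$ and $\psi(b) = 1$ (for instance $\psi(y) = y/b$), view it as a function on $\Omega$ depending only on the vertical variable, and define
\begin{equation}
    \mathcal{E}_0\eta = \psi\,\m{L}_\Omega \eta_{\m{hi}}
    \quad\text{and}\quad
    \mathcal{E}_1\eta(x,y) = \psi(y)\, \eta_{\m{lo}}(x),
\end{equation}
where $\m{L}_\Omega$ is supplied by Proposition~\ref{prop on lifting mixed type Sobolev spaces}. Boundedness of $\mathcal{E}_0 : \tilde H^{3/2+s,p}(\Sigma;\F) \to H^{2+s}_{p,2}(\Omega;\F)$ follows by composing $(1-\varphi)(D)$, $\m{L}_\Omega$, and multiplication by $\psi$; the last operation preserves $H^{2+s}_{p,2}(\Omega;\F)$ because $\psi$ and all its derivatives are bounded, which is checked against the factored norm of Lemma~\ref{lemma on equivalent norm on the mixed type spaces} via the Leibniz rule. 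Boundedness of $\mathcal{E}_1 : \tilde H^{3/2+s,p}(\Sigma;\F) \to W^{k,\infty}(\Omega;\F)$ for each $k$ is immediate from the $W^{k,\infty}$ control of $\eta_{\m{lo}}$ and the smoothness of $\psi$. Finally, $\psi(0)=0$ gives $\m{Tr}_{\Sigma_0}\mathcal{E}\eta = 0$, while $\psi(b)=1$ combined with $\m{Tr}_\Sigma \m{L}_\Omega = \m{id}$ yields $\m{Tr}_\Sigma \mathcal{E}\eta = \eta_{\m{hi}} + \eta_{\m{lo}} = \eta$, completing the construction. The one step warranting care is the promotion of $\eta_{\m{lo}}$ from $W^{k,dp/(d-p)}$ to $W^{k,\infty}$, but this is standard once the frequency localization $\varphi(D)$ is in place.
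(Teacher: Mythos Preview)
Your proposal is correct and follows essentially the same approach as the paper: frequency-split $\eta$ via Lemma~\ref{lemma on frequency splitting in gradient spaces}, lift the high-frequency part with $\m{L}_\Omega$, extend the low-frequency part trivially in the vertical variable, and multiply by a smooth profile vanishing at $y=0$ and equal to $1$ at $y=b$. The only cosmetic differences are that the paper uses distinct vertical profiles for the two pieces (a compactly supported bump $\phi(b-y)$ for $\mathcal{E}_0$ and $y/b$ for $\mathcal{E}_1$) and leaves the $W^{k,\infty}$ upgrade of $\eta_{\m{lo}}$ implicit, whereas you use a single profile and make the Young's inequality step explicit.
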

\begin{proof}
    Let $\varphi\in C^\infty_c(\R^d)$ be as in Lemma~\ref{lemma on frequency splitting in gradient spaces}. We define $\mathcal{E}_0$ via 
    \begin{equation}
        (\mathcal{E}_0\eta)(x,y)=\phi(b-y)(\m{L}_\Omega(1-\varphi)(D)\eta)(x,y)
        \text{ for }
        (x,y)\in\R^d\times(0,b),
    \end{equation}
    where $\m{L}_\Omega$ is the lifting operator from Proposition~\ref{prop on lifting mixed type Sobolev spaces} and $\phi\in C^\infty_c(\R)$ satisfies $\phi(0)=1$ and $\supp(\phi) \subseteq (-b/2,b/2)$. We also define $\mathcal{E}_1$ via
    \begin{equation}
        (\mathcal{E}_1\eta)(x,y)=(y/b)(\varphi(D)\eta)(x)
        \text{ for }
        (x,y)\in\R^d\times(0,b).
    \end{equation}
    The claimed mapping properties then follow immediately from Lemma~\ref{lemma on frequency splitting in gradient spaces} and Proposition~\ref{prop on lifting mixed type Sobolev spaces}.
\end{proof}
    
\section{Linear analysis in mixed-type Sobolev spaces}\label{section on linear analysis in the mized type spaces}

In this penultimate section, we conclude our linear theory for systems~\eqref{curl formulation of the linearization} and~\eqref{linearization of the nonlinear problem}. In Section~\ref{subsection on existence and uniqueness}, we synthesize our multiplier analysis of Section~\ref{section on vector-valued symbol calculus for the solution map} with our generalization of the Mikhlin-H\"ormander, Theorem~\ref{second HM multiplier theorem}, and produce a linear well-posedness result for~\eqref{curl formulation of the linearization} posed in the mixed-type Sobolev spaces of Section~\ref{appendix on properties of mixed-type Sobolev spaces}. In Section~\ref{section on reformulated well-posedness} we then return to the main linear equations, system~\eqref{linearization of the nonlinear problem}, and port over the extended linear theory for~\eqref{curl formulation of the linearization}. This lands us a linear well-posedness theory for~\eqref{linearization of the nonlinear problem} that employs both the mixed-type Sobolev spaces of Section~\ref{appendix on properties of mixed-type Sobolev spaces} and the subcritical gradient spaces of Section~\ref{section on properties of subcritical gradient spaces}. Armed with this result, we will then be ready to turn to the nonlinear analysis in Section~\ref{section on nonlinear analysis}.

\subsection{Existence and uniqueness}\label{subsection on existence and uniqueness}

We begin by setting some notation for the spaces in which we wish to extend our existence theory. Note that the mixed-type Sobolev spaces, which were introduced in Section~\ref{appendix on properties of mixed-type Sobolev spaces}, are used here.

\begin{defn}[Mixed-type function spaces, I]\label{definition of more function spaces}
    For $s\in\N$ and $1<r<\infty$, we define the function spaces
    \begin{equation}
        \X_{s,r}=H^{1+s}_{r,2}(\Omega;\C)\times H^{2+s}_{r,2}(\Omega;\C^3)\times H^{3/2+s,r}(\Sigma;\C^2),
    \end{equation}
    \begin{equation}
        \Y_{s,r}=H^s_{r,2}(\Omega;\C^3)\times H^{1/2+s,r}(\Sigma;\C^3)\times H^{5/2+s,r}(\Sigma;\C^2).
    \end{equation}
\end{defn}

We are now ready to state and prove our main existence result of this subsection. We remark that $\Y_s\cap\Y_{s,r}$ is dense in $\Y_{s,r}$. It is in this sense we use the word `extension' in  what follows.

\begin{thm}[Extension to mixed type spaces]\label{thm on the extension to the mixed type Sobolev spaces}
    For $s\in\N$ and $1<r<\infty$ the linear map $\Psi:\Y_s\to\X_s$ from Definition~\ref{defn of spaces and TILOs} has a unique bounded extension
    \begin{equation}\label{extended mapping properties}
        \Psi:\Y_{s,r}\to\X_{s,r}.
    \end{equation}
    Moreover, the above extension retains the property that if $(p,u,\chi)=\Psi(f,k,H)$, then system~\eqref{curl formulation of the linearization} is solved by the former tuple with data $(0,f,k,\grad_{\|}\cdot H,0)$.
\end{thm}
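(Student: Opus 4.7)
The extension hinges on combining the regularity theory for the symbol $\bf{m}$ developed in Section~\ref{section on vector-valued symbol calculus for the solution map} with the novel multiplier theorem of Section~\ref{section on the new MH multiplier theorem}. By Definition~\ref{defn of the main symbol}, $\Psi = \bf{m}(D)$ with $\bf{m} \in \mathfrak{A}(s)$, and Theorem~\ref{thm on analyticity of the symbol} provides an analytic representative of $\bf{m}$ on $\R^2 \setminus \{0\}$ together with the Mikhlin--H\"ormander type estimate $\tjump{|\cdot|^{|\alpha|}\partial^\alpha \bf{m}}_s \le C_\alpha$ for every multi-index $\alpha \in \N^2$. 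The uniqueness assertion will follow from the density of $\Y_s \cap \Y_{s,r}$ in $\Y_{s,r}$, which is a standard consequence of Schwartz-function approximation in Bessel-potential and mixed-type Sobolev spaces.

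The first structural step is to invoke Lemma~\ref{lemma on equivalent norm on the mixed type spaces} to replace each mixed-type Sobolev factor $H^\ell_{r,2}(U;\F^j)$ appearing in $\X_{s,r}$ and $\Y_{s,r}$ by the equivalent intersection $L^r(\R^2; H^\ell((0,b);\F^j)) \cap H^{\ell,r}(\R^2; L^2((0,b);\F^j))$. This reduces establishing the mapping~\eqref{extended mapping properties} to bounding each entry $\bf{m}_{ij}(D)$ of the $3\times 3$ matrix symbol as a vector-valued Fourier multiplier between certain pure $L^r$-based Bessel-potential spaces with Hilbert-valued targets.

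Next I will dispatch each of the nine components in turn by direct application of Theorem~\ref{second HM multiplier theorem}. Each component-wise estimate in Definition~\ref{defn of admissable class of symbols} is a sum of two inequalities, one controlling a high-regularity target norm and one giving weighted control of a $L^2$ target norm; together with the derivative bound $\tjump{|\cdot|^{|\alpha|}\partial^\alpha \bf{m}}_s \le C_\alpha$, these precisely match the hypothesis~\eqref{ziggy_returns} of Theorem~\ref{second HM multiplier theorem} with $\mu=0$ and $\alpha=0$, for two distinct choices of target Hilbert space $V$. For example, estimate~\eqref{m11} together with the derivative bound yields the required hypothesis for $\bf{m}_{11}(D)$ with source data $V_0 = L^2((0,b);\C^3)$, $V_1 = H^s((0,b);\C^3)$, $s_0 = s$, $s_1 = 0$, and the two target choices $(V,s) = (H^{1+s}((0,b);\C), 0)$ and $(V,s) = (L^2((0,b);\C), 1+s)$. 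Analogous readings handle the remaining eight components, with the scalar targets $V = \C^2$ used for $\bf{m}_{3j}$ and the scalar sources $V_0 = V_1 = \C^k$ used for $\bf{m}_{i2}$ and $\bf{m}_{i3}$.

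Finally, once the boundedness~\eqref{extended mapping properties} is established and uniqueness follows by density, the PDE-solution property for the extension is obtained by a limiting argument: given $(f,k,H) \in \Y_{s,r}$, approximate by a sequence $\{(f_n,k_n,H_n)\} \subset \Y_s \cap \Y_{s,r}$; Theorem~\ref{thm on analysis of strong solutions, II} applied to $\Phi(0,f_n,k_n,\grad_\| \cdot H_n, 0)$ shows the corresponding triples $(p_n,u_n,\chi_n)$ solve~\eqref{curl formulation of the linearization} with the prescribed data. Continuity of $\Psi: \Y_{s,r} \to \X_{s,r}$ then lets us pass to the distributional limit in each of the six equations of the system. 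The main obstacle is the bookkeeping in the component-by-component application of Theorem~\ref{second HM multiplier theorem}; this is tedious but mechanical, since Definition~\ref{defn of admissable class of symbols} has been engineered precisely to align the $\mathfrak{A}(s)$-bounds with the hypotheses of the novel Mikhlin--H\"ormander theorem.
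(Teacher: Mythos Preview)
Your proposal is correct and follows essentially the same route as the paper: identify $\Psi=\bf{m}(D)$, invoke Theorem~\ref{thm on analyticity of the symbol} for the derivative bounds $\tjump{|\cdot|^{|\alpha|}\partial^\alpha\bf{m}}_s\le C_\alpha$, and then apply Theorem~\ref{second HM multiplier theorem} with $\mu=0$ to each of the nine components $\bf{m}_{jk}$, with the PDE-solution property passing to the limit by density. Your account is simply more explicit than the paper's terse version about the factorization via Lemma~\ref{lemma on equivalent norm on the mixed type spaces} and the specific choices of $(V_0,V_1,V,s_0,s_1,s)$ for each block.
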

\begin{proof}
    $\Psi$ is given by the Fourier multiplier $\bf{m}$ from Definition~\ref{defn of the main symbol}, and $\bf{m}$ satisfies Mikhlin-H\"ormander bounds on its derivatives thanks to estimate~\eqref{the estimate on the dude} from Theorem~\ref{thm on analyticity of the symbol} and the definition of $\tjump{\cdot}_s$ from Definition~\ref{defn of admissable class of symbols}.  We then apply our  Mikhlin-H\"ormander variant, Theorem~\ref{second HM multiplier theorem} (with $\mu=0$), to each of the component maps $\bf{m}_{jk}$ for $j,k \in \{1,2,3\}$  to deduce the existence of the stated bounded extension.   It is straightforward to check by density that these extensions of $\Psi$ are still solution operators to~\eqref{curl formulation of the linearization}.
\end{proof}

Our linear analysis for the system~\eqref{curl formulation of the linearization} is synthesized with the following theorem.
\begin{thm}[Well-posedness of the linearization in mixed-type Sobolev spaces, I]\label{thm on well-posedness of the linearization}
    Let $s\in\N$ and $1<r\le 2$. For every $(f,k,H)\in\Y_{s,r}$, there exists a unique $(p,u,\chi)\in\X_{s,r}$ such that~\eqref{curl formulation of the linearization} is satisfied in the strong sense with data $(0,f,k,\grad_{\|}\cdot H,0)$.
\end{thm}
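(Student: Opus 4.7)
The plan is to derive existence from the extension established in Theorem~\ref{thm on the extension to the mixed type Sobolev spaces} and to argue uniqueness via a frequency-cutoff reduction to the $L^2$-based uniqueness in Theorem~\ref{thm on analysis of strong solutions, II}.

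For existence, simply take $(p,u,\chi)=\Psi(f,k,H)\in\X_{s,r}$, where $\Psi$ is the bounded extension provided by Theorem~\ref{thm on the extension to the mixed type Sobolev spaces}. The final assertion in that theorem guarantees that the triple $(p,u,\chi)$ strongly solves~\eqref{curl formulation of the linearization} with data $(0,f,k,\grad_{\|}\cdot H,0)$.

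The main obstacle is uniqueness: $\X_{s,r}$ strictly contains $\X_s$ when $r<2$, so Theorem~\ref{thm on analysis of strong solutions, II} does not apply directly to an arbitrary triple in $\X_{s,r}$ satisfying the zero-data problem. To overcome this, suppose $(p,u,\chi)\in\X_{s,r}$ strongly solves~\eqref{curl formulation of the linearization} with data identically zero, and for each $\varphi\in C^\infty_c(\R^2\setminus\tcb{0})$ consider $\varphi(D)(p,u,\chi)$, where $\varphi(D)$ denotes the tangential Fourier multiplier. Since $\mathscr{F}^{-1}[\varphi]\in\mathscr{S}(\R^2)$, Young's convolution inequality combined with Minkowski's integral inequality shows that tangential convolution with $\mathscr{F}^{-1}[\varphi]$ maps $L^r(\R^2)$ continuously into $L^q(\R^2)$ for every $r\le q\le\infty$; invoking Lemma~\ref{lemma on equivalent norm on the mixed type spaces} and the commutativity of $\varphi(D)$ with $\grad$ and $\pd_3$, we deduce that $\varphi(D):\X_{s,r}\to\X_s$ is bounded. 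Because~\eqref{curl formulation of the linearization} has constant coefficients and $\varphi(D)$ acts only in the tangential variables, $\varphi(D)(p,u,\chi)$ strongly solves the system with zero data; the uniqueness clause of Theorem~\ref{thm on analysis of strong solutions, II} then forces $\varphi(D)(p,u,\chi)=0$.

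To conclude, letting $\varphi$ range over $C^\infty_c(\R^2\setminus\tcb{0})$ shows that the tangential Fourier supports of $p$, $u$, and $\chi$ are each contained in $\tcb{0}$. Hence, viewed as tempered distributions in the tangential variable, each of $p$, $u$, and $\chi$ is a polynomial in $x\in\R^2$ with values in an appropriate Hilbert space. Since the components of $\X_{s,r}$ are built over $L^r$ in the tangential variable with $1<r\le 2$, and no nontrivial polynomial on $\R^2$ is $L^r$-integrable, we conclude that $(p,u,\chi)=0$, establishing uniqueness.
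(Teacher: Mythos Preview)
Your proof is correct and follows essentially the same strategy as the paper: existence via Theorem~\ref{thm on the extension to the mixed type Sobolev spaces}, and uniqueness via a tangential Fourier cutoff that lands the putative null solution in $\X_s$, where Theorem~\ref{thm on analysis of strong solutions, II} applies. The only difference is cosmetic: the paper takes $\varphi\in C^\infty_c(\R^2)$ with $\varphi=1$ near the origin and uses $\varphi(D/N)(p,u,\chi)=0$ for all $N$ to conclude directly (since $\varphi(D/N)\to\mathrm{id}$ strongly on $\X_{s,r}$), whereas you take $\varphi\in C^\infty_c(\R^2\setminus\{0\})$ and pass through the Fourier-support-at-origin/polynomial argument. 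Both are valid; the paper's route is marginally more direct since it avoids the structure theorem for distributions supported at a point, but your version works just as well and the core mechanism is identical.
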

\begin{proof}
    Existence follows from Theorem~\ref{thm on the extension to the mixed type Sobolev spaces}, so it remains to prove uniqueness. So suppose that $(p,u,\chi)\in\X_{s,r}$ solve~\eqref{curl formulation of the linearization} with trivial data. Let $\varphi\in C^\infty_c(\R^2)$ be such that $\varphi =1$ in $B(0,1)$. Then for every $N$ we have the inclusion $\varphi(D/N)(p,u,\chi)\in\X_{s}$ thanks to Young's inequality and the fact that $\varphi(D/N)$ is a tangential convolution with a Schwartz function; moreover, the triple $\varphi(D/N)(p,u,\chi)$ remains  a solution to~\eqref{curl formulation of the linearization} with trivial data.  We may thus invoke Theorem~\ref{thm on analysis of strong solutions, II} to deduce that $\varphi(D/N)(p,u,\chi)=0$. As this holds for every $N\in\N$, we necessarily have that $(p,u,\chi)=0$. Uniqueness is proved.
\end{proof}

\subsection{Reformulated well-posedness}\label{section on reformulated well-posedness}

We now aim to make the transition from system~\eqref{curl formulation of the linearization} back to the original linearization~\eqref{linearization of the nonlinear problem}. The previous subsection gave us the well-posedness of the former system in the mixed-type Sobolev spaces.  The goal of this subsection is to port these result to the latter, and specialize to $\R$-valued functions. Note that the following definition implements the  notions of subcritical gradient spaces, which were introduced in Section~\ref{section on properties of subcritical gradient spaces}.

\begin{defn}[Mixed-type function spaces, II]\label{second definition of mixed type function spaces}
    For $s\in\N$ and $1<r<2$, we define the Banach spaces
    \begin{equation}
        \bf{X}_{s,r}=H^{1+s}_{r,2}(\Omega;\R)\times H^{2+s}_{r,2}(\Omega;\R^3)\times\tilde{H}^{5/2+s,r}(\Sigma;\R),
    \end{equation}
    \begin{equation}
        \bf{Y}_{s,r}=H^s_{r,2}(\Omega;\R^3)\times H^{1/2+s,r}(\Sigma;\R^3)\times (H^{3/2+s,r}\cap\dot{H}^{-1,r})(\Sigma;\R).
    \end{equation}
    Note that the space $\dot{H}^{-1,r}(\Sigma;\R)$ is defined in~\eqref{definition of the negative homogeneous Sobolev space}.  
\end{defn}

Provided that $s\in \N$ is sufficiently large relative to $r \in (1,2)$, the spaces $\bf{X}_{s,r}$ enjoy classical regularity.
 
\begin{prop}\label{Xsr classicality}
    Let $1<r<2$ and $\N \ni s > 3/r-1$.  Then 
        \begin{equation}
             \bf{X}_{s,r}  \emb  C^{k}_0(\Omega)\times C^{1+k}_0(\Omega;\R^3)\times C^{2+k}_0(\Sigma)
        \end{equation}
        for $k=s-\tfloor{3/r}\in\N$.
\end{prop}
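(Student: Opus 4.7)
The plan is to treat each of the three factors in $\bf{X}_{s,r}$ separately, combining the mixed-type Sobolev embedding of Proposition~\ref{the embedding into the Linfty space} for the first two factors with the subcritical-gradient embedding of Proposition~\ref{prop on completeness of subcritical gradient spaces} plus standard Bessel-potential Sobolev embedding for the third. The only nontrivial content lies in verifying that, with $k=s-\lfloor 3/r\rfloor$ and the hypothesis $s>3/r-1$, the resulting numerology actually works in every case $1<r<2$.

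For the first two factors, because $1<r<2$ we have $\min\{2,r\}=r$ and the domain $\Omega=\R^{2}\times(0,b)$ has $d=2$, so the first item of Proposition~\ref{the embedding into the Linfty space} demands $1+s>k+3/r$ to obtain $H^{1+s}_{r,2}(\Omega;\R)\emb C^{k}_{0}(\Omega;\R)$, and analogously $2+s>(1+k)+3/r$ to obtain $H^{2+s}_{r,2}(\Omega;\R^{3})\emb C^{1+k}_{0}(\Omega;\R^{3})$. Both reduce to the single inequality $k<1+s-3/r$, which with $k=s-\lfloor 3/r\rfloor$ becomes $\lfloor 3/r\rfloor>3/r-1$, and this holds for every $r$ by the definition of the floor. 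The condition $s>3/r-1$ from the hypothesis also guarantees $k=s-\lfloor 3/r\rfloor\in\N$, so this first step is settled.

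For the third factor I would first invoke the embedding $\tilde{H}^{5/2+s,r}(\Sigma;\R)\emb H^{3/2+s,\,2r/(2-r)}(\Sigma;\R)$ afforded by the estimate \eqref{hom_sob} of Proposition~\ref{prop on completeness of subcritical gradient spaces} (here the subcritical condition $r<2=\dim\Sigma$ is exactly where the hypothesis $r<2$ is used, and this is the step that converts the gradient-control norm on $\Sigma$ into an honest Bessel-potential norm with the improved integrability exponent $p^{\ast}=2r/(2-r)$). Then I would apply the standard Sobolev embedding $H^{3/2+s,p^{\ast}}(\R^{2})\emb C^{2+k}_{0}(\R^{2})$, which requires $3/2+s>(2+k)+2/p^{\ast}=(2+k)+(2-r)/r$. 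The decay to zero at infinity then follows from the density of $\mathscr{S}(\Sigma;\R)$ in $\tilde{H}^{5/2+s,r}(\Sigma;\R)$ (second assertion of Proposition~\ref{prop on completeness of subcritical gradient spaces}) composed with the $C^{2+k}_{b}$-continuity of the embedding. Rearranging the required inequality yields $\lfloor 3/r\rfloor>2/r-1/2$.

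The main thing to verify, and the one place where a little care is needed, is this last inequality. I would chain $2/r-1/2<3/r-1<\lfloor 3/r\rfloor$, where the first step is equivalent to $1/r>1/2$, i.e.\ to the hypothesis $r<2$, and the second step is the defining strict inequality $\lfloor x\rfloor >x-1$ for the floor function, applied to $x=3/r$. No further case-splitting is required, and the three embeddings combine to give the claimed product embedding.
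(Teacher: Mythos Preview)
Your argument is correct. For the first two factors it matches the paper exactly. For the third factor you take a slightly different but equally valid route: the paper invokes the frequency-splitting Lemma~\ref{lemma on frequency splitting in gradient spaces} to decompose $\eta$ into a low-frequency piece in $W^{k,2r/(2-r)}(\Sigma)$ (for arbitrary $k$) and a high-frequency piece in $H^{5/2+s,r}(\Sigma)$, then applies the standard Bessel--Sobolev embedding to the high-frequency part, arriving at the requirement $\lfloor 3/r\rfloor \ge 1+\lfloor 2/r-1/2\rfloor$. You instead use the single-step embedding $\tilde{H}^{5/2+s,r}(\Sigma)\hookrightarrow H^{3/2+s,\,2r/(2-r)}(\Sigma)$ from Proposition~\ref{prop on completeness of subcritical gradient spaces} and then the standard embedding, arriving at the equivalent requirement $\lfloor 3/r\rfloor > 2/r - 1/2$. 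Your chain $2/r-1/2 < 3/r-1 < \lfloor 3/r\rfloor$ verifies this cleanly and is arguably more transparent than the paper's floor-inequality formulation. One minor caveat: both Proposition~\ref{prop on completeness of subcritical gradient spaces} and Lemma~\ref{lemma on frequency splitting in gradient spaces} are formally stated for $s\in\N^+$, whereas here the index $5/2+s$ is a half-integer; the proofs of both results, however, go through verbatim for real $s\ge 1$ (via the isometry $\tbr{D}^{s-1}:\tilde{H}^{s,p}\to\tilde{H}^{1,p}$), so this is only a cosmetic gap shared with the paper's own argument.
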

\begin{proof}
    The embedding of the first two factors follows from the first item of Proposition~\ref{the embedding into the Linfty space}.  For the third factor, the embedding follows from Lemma~\ref{lemma on frequency splitting in gradient spaces}, the standard embedding of Bessel-Sobolev spaces, and the observation that $\tfloor{3/r} \ge 1 + \tfloor{2/r-1/2}$ for $1 < r < 2$.
\end{proof}

We can now state our well-posedness result.  

\begin{thm}[Well-posedness of the linearization in mixed-type Sobolev spaces, II]\label{thm on well-posedness of the linearization in mixed-type Sobolev spaces, II}
    For every $s\in\N$, $1<r<2$, and $(f,k,h)\in\bf{Y}_{s,r}$ there exists a unique $(p,u,\eta)\in\bf{X}_{s,r}$ such that system~\eqref{linearization of the nonlinear problem} is solved with data $(0,f,k,h)$.
\end{thm}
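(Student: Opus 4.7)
The strategy will be to derive this theorem from Theorem~\ref{thm on well-posedness of the linearization} by converting the scalar datum $h$ into a vector datum $H$, solving the reformulated curl system~\eqref{curl formulation of the linearization}, and then recovering the free surface function $\eta$ from the resulting $\chi$ via a Helmholtz decomposition together with the subcritical gradient machinery of Section~\ref{section on properties of subcritical gradient spaces}.

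First, I would construct $H\in H^{5/2+s,r}(\Sigma;\R^2)$ satisfying $\grad_{\|}\cdot H = h$ via the natural choice $H = -(2\pi)^{-1}|D|^{-1}\mathcal{R}h$. To place $H$ in $H^{5/2+s,r}$, I would split $h = \varphi(D)h + (1-\varphi(D))h$ with a smooth cutoff $\varphi$ that equals $1$ near the origin. The multiplier $-(2\pi)^{-1}(1-\varphi(\xi))i\xi/|\xi|^2$ attached to the high-frequency piece is smooth, vanishes near the origin, and decays like $|\xi|^{-1}$ at infinity, so Theorem~\ref{second HM multiplier theorem} places the high-frequency component of $H$ in $H^{5/2+s,r}$ starting from $(1-\varphi(D))h \in H^{3/2+s,r}$. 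For the low-frequency piece, the $\dot{H}^{-1,r}$ assumption (recall~\eqref{definition of the negative homogeneous Sobolev space}) yields $|D|^{-1}h \in L^r$, whence boundedness of the Riesz transform places $H_{\text{lo}} \in L^r$, and Bernstein's inequality upgrades this spectrally-localized function to all Sobolev spaces.

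Next, I would apply Theorem~\ref{thm on well-posedness of the linearization} to the data $(f,k,H)\in\Y_{s,r}$ to obtain the unique $(p,u,\chi)\in\X_{s,r}$ solving~\eqref{curl formulation of the linearization} with data $(0,f,k,h,0)$. Since the Fourier multiplier $\bf{m}$ of Definition~\ref{defn of the main symbol} is generated by a PDE with real coefficients, it satisfies the reality-preserving identity $\bf{m}(-\xi)=\overline{\bf{m}(\xi)}$ almost everywhere, and since our construction of $H$ preserves reality, the resulting triple $(p,u,\chi)$ is real-valued.

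To recover the free surface, I would exploit that the vanishing datum $\omega=0$ in~\eqref{curl formulation of the linearization} enforces $\grad_{\|}^{\perp}\cdot\chi=0$. I would then define $\eta := \Delta_{\|}^{-1}\grad_{\|}\cdot\chi = -(2\pi)^{-1}|D|^{-1}\mathcal{R}\cdot\chi$. Because $\chi-\grad_{\|}\eta$ has vanishing tangential divergence and curl in $\R^2$, it is harmonic, and since it lies in $L^r$ it must be zero by Liouville's theorem, giving $\chi = \grad_{\|}\eta\in H^{3/2+s,r}$. The Hardy--Littlewood--Sobolev inequality applied to the order-one Riesz potential $|D|^{-1}$ in dimension two provides $\eta\in L^{2r/(2-r)}(\Sigma;\R)$ from $\chi\in L^r(\Sigma;\R^2)$, whence $\eta\in\tilde{H}^{5/2+s,r}(\Sigma;\R)$ by Definition~\ref{defn of subcritical gradient spaces}. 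Substituting $\chi=\grad_{\|}\eta$ back into~\eqref{curl formulation of the linearization} yields exactly~\eqref{linearization of the nonlinear problem} with data $(0,f,k,h)$. Uniqueness follows by taking two solutions and setting $\chi := \grad_{\|}(\eta_0-\eta_1)$: the triple $(p_0-p_1,u_0-u_1,\chi)\in\X_{s,r}$ then solves~\eqref{curl formulation of the linearization} with zero data, forcing it to vanish by Theorem~\ref{thm on well-posedness of the linearization}, and the resulting constant $\eta_0-\eta_1$ must be zero by the embedding into $L^{2r/(2-r)}$.

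The hardest part will be the first step: constructing $H\in H^{5/2+s,r}$ from $h\in H^{3/2+s,r}\cap\dot{H}^{-1,r}$ requires careful handling of the low-frequency singularity of the symbol $|\xi|^{-1}$, and it is precisely here that the $\dot{H}^{-1,r}$ compatibility built into the definition of $\bf{Y}_{s,r}$ is indispensable. A closely related subtle point is ensuring that $\eta$ ends up in the genuine subcritical gradient space $\tilde{H}^{5/2+s,r}$ rather than in a mere seminormed quotient; this is exactly the feature that unlocks the passage to the nonlinear analysis of Section~\ref{section on nonlinear analysis}.
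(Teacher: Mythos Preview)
Your proposal is correct and follows essentially the same route as the paper: construct $H=\grad_{\|}\Delta_{\|}^{-1}h$, invoke Theorem~\ref{thm on well-posedness of the linearization} to obtain $(p,u,\chi)$, recover $\eta$ via $\Delta_{\|}^{-1}\grad_{\|}\cdot\chi$ together with Hardy--Littlewood--Sobolev, and deduce uniqueness by passing back to~\eqref{curl formulation of the linearization}. The only cosmetic differences are that the paper concludes $\chi=\grad_{\|}\eta$ directly from the Helmholtz decomposition $\chi=\Delta_{\|}^{-1}\grad_{\|}\grad_{\|}\cdot\chi+\Delta_{\|}^{-1}\grad_{\|}^{\perp}\grad_{\|}^{\perp}\cdot\chi$ (rather than invoking Liouville), and handles reality by taking imaginary parts and applying the uniqueness assertion of Theorem~\ref{thm on well-posedness of the linearization} to the zero-data problem (rather than asserting the symbol identity $\bf{m}(-\xi)=\overline{\bf{m}(\xi)}$, which is true but not explicitly proved in the paper).
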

\begin{proof}
    We begin by proving uniqueness. Suppose that $(p,u,\eta)\in\bf{X}_{s,r}$ solve system~\eqref{linearization of the nonlinear problem} with trivial data. Then we set $\chi=\grad_{\|}\eta$ and observe that $(p,u,\chi)\in\X_{s,r}$ solves~\eqref{curl formulation of the linearization} with trivial data. Then Theorem~\ref{thm on well-posedness of the linearization} applies, and we learn that $(p,u,\chi)=0$ and hence $\eta$ is constant. However, $\eta\in L^{2r/(2-r)}(\Sigma;\R)$, so $\eta=0$. This completes the proof of uniqueness.

    We now prove existence. Suppose that $(f,k,h)\in\bf{Y}_{s,r}$.  Using Mikhlin-H\"ormander Theorem~\ref{hormander mikhlin multiplier theorem}, we may define $H=\grad_{\|}\Delta_{\|}^{-1}h\in H^{5/2+s,r}(\Sigma;\R^2)$, which obeys the estimate $\tnorm{H}_{H^{5/2+s,r}}\lesssim\tnorm{h}_{\dot{H}^{-1,r}\cap H^{3/2+s,r}}$ as well as the identity $\grad_{\|}\cdot H=h$.  We may then use Theorem~\ref{thm on well-posedness of the linearization} to acquire  $(p,u,\chi) = \Psi(f,k,H) \in\X_{s,r}$.  Next, we define $\tilde{\eta} = |\grad_{\|}|\Delta_{\|}^{-1}\grad_{\|}\cdot\chi$ and note that $\tilde{\eta} \in H^{3/2+s,r}(\Sigma;\C)$ thanks to another application of Mikhlin-H\"ormander.  In turn, this allows us to employ the Hardy-Littlewood-Sobolev inequality (see, for instance, Theorem 1 in Section 1.2 in Chapter V of Stein~\cite{MR0290095}) to set $\eta =  |\grad_{\|}|^{-1} \tilde{\eta} \in L^{2r/(2-r)}(\Sigma;\C)$.  Since  $\grad^\perp_{\|}\cdot\chi=0$, we then have that  $\grad_{\|}\eta=\chi\in H^{3/2+s,r}(\Sigma;\C^2)$, and so $\eta\in\tilde{H}^{5/2+s,r}(\Sigma;\C)$. 

    At this point we  have established that $(p,u,\eta)$ is a solution to~\eqref{linearization of the nonlinear problem} with the correct regularity and integrability properties, but the construction we have used does not guarantee a priori that the solution is real-valued.  To see that this is actually the case, we note that since $(f,k,H)$ have vanishing imaginary part,  we can take the imaginary part of the equations and use the fact that its coefficients are all real to deduce that $(\m{Im}p,\m{Im}u,\m{Im}\chi)=\Psi(0,0,0)$.  Thus, Theorem~\ref{thm on well-posedness of the linearization}'s uniqueness assertion shows that $(\m{Im}p,\m{Im}u,\m{Im}\chi)=0$, and the existence proof is complete.
\end{proof}

\section{Nonlinear analysis}\label{section on nonlinear analysis}

In this section we complete the proof of our main result, Theorem~\ref{the main theorem}, by synthesizing our previous analysis and appealing to the implicit function theorem.  Section \ref{subsection on operators and mapping properties} sets up the nonlinear framework, and then our main results are proved in Section \ref{subsection on well-posedness}.

\subsection{Operators and mapping properties}\label{subsection on operators and mapping properties}

The goal of this subsection is to define a nonlinear operator associated with system~\eqref{final nonlinear equations} and study its mapping properties. We begin by studying the flattening map $\eta\mapsto\mathfrak{F}_\eta$, which we recall is defined in~\eqref{definition of the flattening map}.

\begin{prop}[Properties of the flattening map]\label{prop on properties of the flattening map}
    For $1<r<2$ and $\N\ni 1+s>3/r$, there exists $\varrho\in\R^+$ such that the following hold.
    \begin{enumerate}
        \item For $\eta\in B(0,\varrho)\subset\tilde{H}^{3/2+s,r}(\Sigma)$ the flattening map $\mathfrak{F}_\eta=\m{id}_{\R^3}+\mathcal{E}\eta e_3$ is a smooth diffeomorphism from $\Omega$ to $\Omega[\eta]$ that extends to a $C^n$ diffeomorphism from $\Bar{\Omega}$ to $\Bar{\Omega[\eta]}$ for $\N\ni n<s+2-3/r$.

        \item Let $V$ be a finite dimensional real normed space.  For $\ell,m\in\N$ with $m\le 2+s$ the map
        \begin{equation}\label{the mapping properties of the flattening map}
            H^{\ell+m}_{r,2}(\R^3;V)\times B_{\tilde{H}^{3/2+s,r}(\Sigma)}(0,\varrho)\ni(F,\eta)\mapsto F\circ\mathfrak{F}_\eta\in H^{m}_{r,2}(\Omega;V)
        \end{equation}
        is well-defined and  $C^\ell$.
    \end{enumerate}
\end{prop}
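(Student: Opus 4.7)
The proof rests on the decomposition $\mathcal{E} = \mathcal{E}_0 + \mathcal{E}_1$ from Proposition~\ref{prop on extension operator}, the composition theorem Proposition~\ref{proposition on composition}, the Stein extension Proposition~\ref{proposition on stein extensions}, and the supercritical Sobolev embedding of Proposition~\ref{the embedding into the Linfty space}. The crucial observation is that the hypothesis $1+s > 3/r$ plays a dual role: it is simultaneously the threshold for the embedding $H^{1+s}_{r,2}(\Omega) \emb L^\infty(\Omega)$, and (after adding $1$ to each side) the inequality $2+s > 1 + 3/\min\tcb{2,r}$ required to invoke Proposition~\ref{proposition on composition} with ambient dimension $d=3$, integrability $p=r$, and regularity index $2+s$. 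I would begin by selecting $\varrho \in \R^+$ so small that $\tnorm{\pd_3 \mathcal{E}\eta}_{L^\infty(\Omega)} < 1/2$ for every $\eta \in B_{\tilde{H}^{3/2+s,r}(\Sigma)}(0,\varrho)$, forcing $J_\eta = 1+\pd_3 \mathcal{E}\eta$ to lie in $(1/2, 3/2)$ throughout $\Omega$.

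For the first item, I observe that $\mathfrak{F}_\eta(x,y) = (x, y + \mathcal{E}\eta(x,y))$ preserves the horizontal variables, and that for each fixed $x$ the fiber map $y \mapsto y + \mathcal{E}\eta(x,y)$ is strictly increasing from $[0,b]$ onto $[0, b+\eta(x)]$ (using the Jacobian positivity together with the trace identities $\mathcal{E}\eta|_{\Sigma_0} = 0$ and $\mathcal{E}\eta|_{\Sigma} = \eta$); this makes $\mathfrak{F}_\eta$ a bijective continuous map $\Bar{\Omega} \to \Bar{\Omega[\eta]}$. Combining $H^{2+s}_{r,2}(\Omega) \emb C^n_0(\Bar{\Omega})$ for $n < s + 2 - 3/r$ from Proposition~\ref{the embedding into the Linfty space} with the smoothness of $\mathcal{E}_1\eta$ then gives $\mathcal{E}\eta \in C^n(\Bar{\Omega})$, and the inverse function theorem upgrades $\mathfrak{F}_\eta$ to a $C^n$ diffeomorphism on the closure.

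For the second item, I extend $\mathcal{E}\eta$ to all of $\R^3$ by setting $\tilde{\mathcal{E}}\eta = \mathfrak{E}_\Omega \mathcal{E}_0 \eta + \chi(y)\mathcal{E}_1\eta$, where $\mathfrak{E}_\Omega$ is the Stein extension of Proposition~\ref{proposition on stein extensions} and $\chi \in C^\infty_c(\R)$ equals $1$ on $[0,b]$; this is a linear, continuous extension taking values in $(H^{2+s}_{r,2}+W^{k,\infty})(\R^3)$ for every $k$, agreeing with $\mathcal{E}\eta$ on $\Omega$. The easy case $m = 2+s$ follows at once by applying Proposition~\ref{proposition on composition} with its regularity parameter set to $2+s$ and derivative order equal to $\ell$, then composing on the right with the smooth linear map $\eta \mapsto \tilde{\mathcal{E}}\eta$ and restricting to $\Omega$.

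The main obstacle is the case $m < 2+s$: the desired source $H^{\ell+m}_{r,2}(\R^3)$ is strictly larger than the $H^{(2+s)+\ell}_{r,2}(\R^3)$ produced by a direct invocation of Proposition~\ref{proposition on composition} at level $2+s$, so the latter does not suffice. The plan to close this gap is to mirror the inductive construction in the proof of Proposition~\ref{proposition on composition} at each target level $m \in \tcb{0,1,\dots,2+s}$. At each step the Leibniz formula decomposes $\pd_q(F \circ \mathfrak{F}_\eta)$ into $(\pd_q F) \circ \mathfrak{F}_\eta$ plus a product of $(\pd_3 F) \circ \mathfrak{F}_\eta$ with a derivative of $\mathcal{E}\eta$, and the extra regularity $\pd_q \mathcal{E}\eta \in (H^{1+s}_{r,2}+W^{k,\infty})(\Omega) \emb L^\infty(\Omega)$---available exactly because $1+s > 3/r$---allows the product to be absorbed into $H^{m-1}_{r,2}(\Omega)$ by combining Lemma~\ref{product estimates in mixed type Sobolev spaces} with a one-sided Leibniz estimate for the $W^{k,\infty}$ component. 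The $C^\ell$ property is then recovered via the converse of Taylor's theorem, exactly as in the proof of Proposition~\ref{proposition on composition}.
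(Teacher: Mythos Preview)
Your approach coincides with the paper's: extend $\mathcal{E}\eta$ to $\R^3$ and invoke Proposition~\ref{proposition on composition} on the composition. The paper's proof is terser---it cites that proposition in one line and, for item~1, uses its first conclusion on the extended map $\bar{\mathfrak{F}}_\eta:\R^3\to\R^3$ and then restricts, rather than your direct fiber-monotonicity argument on $\bar{\Omega}$---but you are right that the case $m<2+s$ of item~2 is not covered by the \emph{statement} of Proposition~\ref{proposition on composition} and requires re-running its converse-of-Taylor argument at each target level $m$; the paper leaves this extrapolation to the reader.

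One imprecision in your sketch: Lemma~\ref{product estimates in mixed type Sobolev spaces} requires \emph{both} factors in $L^\infty$, so it does not directly give $H^{m-1}_{r,2}\times H^{1+s}_{r,2}\to H^{m-1}_{r,2}$ when $m-1\le 3/r$, since the low-regularity factor $(\pd_3 F)\circ\mathfrak{F}_\eta$ need not be bounded. The needed asymmetric high-low product (valid for all $0\le m-1\le 1+s$ once $1+s>3/r$) follows instead by bilinear interpolation between the trivial $m-1=0$ endpoint (where $H^{1+s}_{r,2}\emb L^\infty$ suffices) and the algebra property at $m-1=1+s$ from Proposition~\ref{the embedding into the Linfty space}. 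This same looseness is already latent in the $\mathbb{P}_j$ induction inside the proof of Proposition~\ref{proposition on composition}, which cites Remark~\ref{remark on products 3} at levels $j$ where its hypothesis is not met, so it is not a defect peculiar to your write-up.
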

\begin{proof}

    Let $\mathfrak{E}_\Omega$ denote the extension operator granted by Proposition~\ref{proposition on stein extensions}, and consider the extended flattening map $\Bar{\mathfrak{F}}_\eta:\R^3\to\R^3$ defined by $\Bar{\mathfrak{F}}_\eta=\m{id}_{\R^3}+\mathfrak{E}_{\Omega}\mathcal{E}\eta e_3$.
    By the mapping properties of $\mathfrak{E}_\Omega$ and $\mathcal{E}$ from Propositions~\ref{proposition on stein extensions} and~\ref{prop on extension operator}, we find that the map $E$ defined by
    \begin{equation}
        \tilde{H}^{3/2+s,r}(\Sigma)\ni\eta\mapsto E\eta = \mathfrak{E}_\Omega\mathcal{E}\eta\in(H^{2+s}_{r,2}+W^{2+s,\infty})(\R^3)
    \end{equation}
    is bounded and linear.  As such, there exists $\varrho\in\R^+$ such that $E(B(0,\varrho))\subseteq B(0,\lambda)$, where $\lambda\in\R^+$ is the constant from Proposition~\ref{proposition on composition}.  We may then invoke the conclusions of this proposition to deduce the mapping properties asserted in~\eqref{the mapping properties of the flattening map}.  This completes the proof of the second item. 

    To prove the first item we only need to make a trio of observations.  First, Proposition~\ref{proposition on composition} shows that $\Bar{\mathfrak{F}}_\eta$ is a $C^n$ diffeomorphism from $\R^3$ to itself when $\N\ni n<s+2-3/r$.  Second, $\mathfrak{F}_\eta$ is the restriction of $\Bar{\mathfrak{F}}_\eta$ to $\bar{\Omega}$, and by construction $\mathfrak{F}_\eta$ maps $\Bar{\Omega}$ to $\Bar{\Omega[\eta]}$.  Third, the construction of $\mathcal{E}\eta$ shows that its restriction to $\Omega$ itself is smooth.  Together, these prove the first item.

\end{proof}

Next, we analyze quantities derived from the flattening map. Recall that $J_\eta$, $\mathcal{A}_\eta$, and $M_\eta$ are defined in~\eqref{geometry_and_jacobian_def} and~\eqref{Mississippi}.

\begin{prop}[Properties of the Jacobian and geometry matrices]\label{prop on properties of the Jacobian and geometry matricies}
    Let $1<r<2$, $\N\ni 1+s>3/r$, and $\varrho\in\R^+$ (depending on $s$ and $r$) be as in Proposition~\ref{prop on properties of the flattening map}. Then the following mapping properties hold.
    \begin{enumerate}
        \item For $\eta\in\tilde{H}^{3/2+s,r}(\Sigma)$, we have that $J_\eta>0$ and both of the maps
        \begin{equation}
        \tilde{H}^{3/2+s,r}(\Sigma)\supset B(0,\varrho)\ni\eta\mapsto J_\eta,1/J_\eta\in(H^{1+s}_{r,2}+W^{1+s,\infty})(\Omega)
        \end{equation}
        are smooth.
        \item The maps
        \begin{equation}
            \tilde{H}^{3/2+s,r}(\Sigma)\supset B(0,\varrho)\ni\eta\mapsto \mathcal{A}_\eta,\mathcal{A}_\eta^{-1}\in(H^{1+s}_{r,2}+W^{1+s,\infty})(\Omega;\R^{3\times 3})
        \end{equation}
        are smooth.
        \item The maps
        \begin{equation}
            \tilde{H}^{3/2+s,r}(\Sigma)\supset B(0,\varrho)\ni\eta\mapsto M_\eta,M_\eta^{-1}\in(H^{1+s}_{r,2}+W^{1+s,\infty})(\Omega;\R^{3\times 3})
        \end{equation}
        are smooth.
    \end{enumerate}
\end{prop}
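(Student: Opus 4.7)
The plan is to work through the three items in order, leaning heavily on Propositions~\ref{prop on extension operator}, \ref{the embedding into the Linfty space}, \ref{corollary Banach algebra of the sum space}, and Corollary~\ref{smoothness of inversion}. The key observation is that the extension operator $\mathcal{E} = \mathcal{E}_0 + \mathcal{E}_1$ maps $\tilde{H}^{3/2+s,r}(\Sigma)$ boundedly and linearly into $(H^{2+s}_{r,2}+W^{2+s,\infty})(\Omega)$, and therefore so does any first-order partial derivative of $\mathcal{E}\eta$ into $(H^{1+s}_{r,2}+W^{1+s,\infty})(\Omega)$. Moreover, because $1+s>3/r = (d+1)/\min\{2,r\}$ with $d=2$, Proposition~\ref{the embedding into the Linfty space} gives the embedding $(H^{1+s}_{r,2}+W^{1+s,\infty})(\Omega) \emb L^\infty(\Omega)$ and Proposition~\ref{corollary Banach algebra of the sum space} makes this space a Banach algebra.

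For the first item, I would write $J_\eta = 1 + \partial_3 \mathcal{E}\eta$, which is affine in $\eta$ into $(H^{1+s}_{r,2}+W^{1+s,\infty})(\Omega)$ and therefore smooth. By shrinking $\varrho$ from Proposition~\ref{prop on properties of the flattening map}, the above $L^\infty$ embedding lets me arrange $\norm{\partial_3\mathcal{E}\eta}_{L^\infty}\le 1/2$ for all $\eta\in B(0,\varrho)$, which guarantees $J_\eta > 1/2 > 0$ pointwise and simultaneously places $\partial_3\mathcal{E}\eta$ in the ball on which Corollary~\ref{smoothness of inversion} applies. Composing the smooth maps $\eta\mapsto \partial_3\mathcal{E}\eta$ and $f\mapsto (1+f)^{-1}$ yields smoothness of $\eta\mapsto 1/J_\eta$.

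For the second and third items, I would reduce to the first by writing out explicit formulas. A direct computation from $\mathfrak{F}_\eta(x,y) = (x,y+\mathcal{E}\eta)$ gives
\begin{equation}
    \mathcal{A}_\eta^{-1} = (\grad \mathfrak{F}_\eta)^{\m{t}} = \bpm 1 & 0 & \pd_1\mathcal{E}\eta \\ 0 & 1 & \pd_2\mathcal{E}\eta \\ 0 & 0 & J_\eta \epm,
    \quad
    M_\eta = \bpm J_\eta & 0 & 0 \\ 0 & J_\eta & 0 \\ -\pd_1\mathcal{E}\eta & -\pd_2\mathcal{E}\eta & 1 \epm,
\end{equation}
each of whose entries is either constant or an affine function of $\eta$ into $(H^{1+s}_{r,2}+W^{1+s,\infty})(\Omega)$, so both maps are smooth entry-by-entry. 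The remaining matrices $\mathcal{A}_\eta$ and $M_\eta^{-1}$ are given entry-by-entry as rational expressions with denominator $J_\eta$ in the entries of $\mathcal{A}_\eta^{-1}$ and $M_\eta$ respectively; more precisely,
\begin{equation}
    \mathcal{A}_\eta = \bpm 1 & 0 & 0 \\ 0 & 1 & 0 \\ -\pd_1\mathcal{E}\eta/J_\eta & -\pd_2\mathcal{E}\eta/J_\eta & 1/J_\eta \epm,
    \quad
    M_\eta^{-1} = \bpm 1/J_\eta & 0 & 0 \\ 0 & 1/J_\eta & 0 \\ \pd_1\mathcal{E}\eta/J_\eta & \pd_2\mathcal{E}\eta/J_\eta & 1 \epm.
\end{equation}
Their entries are products of smooth maps into the Banach algebra $(H^{1+s}_{r,2}+W^{1+s,\infty})(\Omega)$, and therefore smooth.

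There is no substantive obstacle here; the entire proof is a bookkeeping exercise that exploits the Banach algebra structure of the container space together with the inversion result for $(1+f)^{-1}$. The only subtlety is choosing $\varrho$ small enough that the $L^\infty$-smallness required by Corollary~\ref{smoothness of inversion} holds simultaneously with the diffeomorphism conclusion of Proposition~\ref{prop on properties of the flattening map}, which is handled by a single further shrinkage of the radius.
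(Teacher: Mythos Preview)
Your proof is correct and follows essentially the same approach as the paper: both rely on the affine nature of $\eta\mapsto J_\eta,M_\eta$ via Proposition~\ref{prop on extension operator}, the smoothness of $f\mapsto(1+f)^{-1}$ from Corollary~\ref{smoothness of inversion}, and the Banach algebra structure of Proposition~\ref{corollary Banach algebra of the sum space}. The only cosmetic difference is that you write out all matrix entries explicitly, whereas the paper uses the more compact relations $\mathcal{A}_\eta=M_\eta^{\m{t}}/J_\eta$, $M_\eta^{-1}=\m{adj}(M_\eta)/J_\eta^2$, and $\mathcal{A}_\eta^{-1}=J_\eta M_\eta^{-\m{t}}$; also, the further shrinking of $\varrho$ you mention is actually already built into the choice made in Proposition~\ref{prop on properties of the flattening map} (via Proposition~\ref{proposition on composition} and Corollary~\ref{smoothness of inversion}), so no additional adjustment is needed.
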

\begin{proof}
The maps $\eta\mapsto J_\eta$ and $\eta\mapsto M_\eta$ are affine, and thus smooth thanks to Proposition~\ref{prop on extension operator}.  By invoking Corollary~\ref{smoothness of inversion}, we have that $\eta\mapsto 1/J_\eta$ is smooth. This fact combined with Proposition~\ref{corollary Banach algebra of the sum space} implies that $\eta\mapsto\mathcal{A}_\eta=M_\eta^{\m{t}}/J_\eta$ is smooth. For the smoothness of the pointwise inversion in the third item, we appeal to Proposition~\ref{corollary Banach algebra of the sum space} again and the adjugate formula $\eta\mapsto M_\eta^{-1}=\m{adj}(M_\eta)/J_\eta^2$. The remaining assertion, the smoothness of pointwise inversion in the second item is then handled via the formula $\eta\mapsto\mathcal{A}_\eta^{-1}=J_\eta M_{\eta}^{-\m{t}}$. 
\end{proof}

Now, working towards a synthesis, we make the following definitions.  First we define some spaces.

\begin{defn}[Spaces for the nonlinear analysis]\label{definition on spaces for the nonlinear analysis}
We make the following definitions for $s\in\N$, $1<r<2$, and $\rho\in\R^+$:
\begin{enumerate}
    \item ${_\diamond}H^{2+s}_{r,2}(\Omega;\R^3)=\tcb{u\in H^{2+s}_{r,2}(\Omega;\R^3)\;:\;\grad\cdot u=0,\;\m{Tr}_{\Sigma_0}u=0}$,

    \item ${_\diamond}\bf{X}_{s,r}=H^{1+s}_{r,2}(\Omega)\times{{_\diamond}H^{2+s}_{r,2}(\Omega;\R^3)}\times\tilde{H}^{5/2+s,r}(\Sigma)$,
    \item $\mathcal{O}_{s,r}(\rho)=\tcb{(p,u,\eta)\in{{_\diamond}\bf{X}_{s,r}}\;:\;\eta\in B(0,\rho)}$, 
    \item $\bf{W}_{s,r}=H^{1+s}_{r,2}(\R^3;\R^{3\times 3})\times H^s_{r,2}(\R^3;\R^3)$.
\end{enumerate}
\end{defn}

Next, we define some maps.

\begin{defn}[Maps for the nonlinear analysis, I]\label{definition of the maps for the nonlinear analysis}
    For $1<r<2$, $\N\ni s>3/r$, and $\varrho\in\R^+$ as in Proposition~\ref{prop on properties of the flattening map} we make the following definitions.
    \begin{enumerate}
        \item $\Xi_1:\mathcal{O}_{s,r}(\varrho)\times\R \times \R^+ \times \R^+ \to H^s_{r,2}(\Omega;\R^3)$ is defined via
        \begin{equation}\label{definition the momentum equation map}
            \Xi_1(p,u,\eta,\gam,\mathfrak{g},\mu)= M_\eta^{-\m{t}}((u-\gam M_\eta e_1)\cdot\grad(M_\eta^{-1}u))+\grad(p+\mathfrak{g}\eta)-\mu M_\eta^{-\m{t}}(\grad\cdot((\mathbb{D}_{\mathcal{A}_\eta}(M_\eta^{-1}u))M_\eta^{\m{t}})).
        \end{equation}
        \item $\Xi_2:\mathcal{O}_{s,r}(\varrho) \times \R^+ \times \R^+ \to H^{1/2+s,r}(\Sigma;\R^3)$ is defined via
        \begin{equation}
            \Xi_2(p,u,\eta,\mu,\kappa)=\m{Tr}_{\Sigma}[-(pI-\mu \mathbb{D}_{\mathcal{A}_\eta}(M_\eta^{-1}u))M_\eta^{\m{t}}e_3-\kappa \mathscr{H}(\eta)M_\eta^{\m{t}}e_3].
        \end{equation}
        \item For $m\in\N$ and $\varrho$ as in Proposition~\ref{prop on properties of the flattening map}, $\Upsilon_1: H^{m+s}_{r,2}(\Omega;\R^3)\times B_{\tilde{H}^{3/2+s,r}}(0,\varrho)\to H^s_{r,2}(\Omega;\R^3)$ is defined via
        \begin{equation}
            \Upsilon_1(\mathcal{F},\eta)=-J_\eta M_\eta^{-\m{t}}(\mathcal{F}\circ\mathfrak{F}_\eta).
        \end{equation}
        \item For $m\in\N$ and $\varrho$ as in Proposition~\ref{prop on properties of the flattening map}, $\Upsilon_2: H^{m+1+s}_{r,2}(\Omega;\R^{3\times 3})\times B_{\tilde{H}^{3/2+s,r}}(0,\varrho)\to H^{1/2+s,r}(\Sigma;\R^3)$ is defined via
        \begin{equation}
            \Upsilon_2(\mathcal{T},\eta)=-\m{Tr}_\Sigma[(\mathcal{T}\circ\mathfrak{F}_\eta) M_\eta^{\m{t}}e_3].
        \end{equation}
    \end{enumerate}
\end{defn}

Our next two results study the smoothness of the nonlinear differential operators in the momentum equation and dynamic boundary condition in system~\eqref{final nonlinear equations}. 

\begin{prop}[Mapping properties of the nonlinearities]\label{prop on mapping properties of the momentum equation}
    For $1<r<2$ and $\N\ni s>3/r$, the following mapping properties hold.
    \begin{enumerate}
        \item $\Xi_1$ and $\Xi_2$, as defined in the first and second items of Definition~\ref{definition of the maps for the nonlinear analysis}, are smooth.
        \item $\Upsilon_1$ and $\Upsilon_2$, as defined in the third and fourth items of Definition~\ref{definition of the maps for the nonlinear analysis} are  $C^m$.
    \end{enumerate}
\end{prop}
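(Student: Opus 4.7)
The proof is a systematic verification that each of the four maps decomposes into chains of smooth products, compositions, and traces using the toolbox developed in Section~\ref{section on novel Sobolev spaces}. The one nontrivial preliminary observation is that, although Proposition~\ref{prop on properties of the Jacobian and geometry matricies} is stated at level $(H^{1+s}_{r,2}+W^{1+s,\infty})(\Omega)$ under the weaker hypothesis $1+s>3/r$, our strengthened hypothesis $s>3/r$ together with the identification $\tilde{H}^{5/2+s,r}(\Sigma)=\tilde{H}^{3/2+(1+s),r}(\Sigma)$ allows us to reapply Propositions~\ref{prop on extension operator} and~\ref{prop on properties of the Jacobian and geometry matricies} with $s$ replaced by $1+s$. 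After possibly shrinking $\varrho$, this promotes the smoothness of $\eta\mapsto M_\eta,M_\eta^{-1},\mathcal{A}_\eta,\mathcal{A}_\eta^{-1},J_\eta,J_\eta^{-1}$ to $C^\infty$ maps into $I+(H^{2+s}_{r,2}+W^{2+s,\infty})(\Omega;\R^{3\times 3})$.

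For the $C^m$ smoothness of $\Upsilon_1$ and $\Upsilon_2$, I combine Proposition~\ref{prop on properties of the flattening map}(2) with the geometric smoothness just discussed, the product rule of Remark~\ref{remark on products 3}, and (for $\Upsilon_2$) the trace Proposition~\ref{prop on traces of mixed type Sobolev spaces}. Explicitly, for $\Upsilon_1$ the map $(\mathcal{F},\eta)\mapsto\mathcal{F}\circ\mathfrak{F}_\eta$ is $C^m$ from $H^{m+s}_{r,2}(\R^3;\R^3)\times B(0,\varrho)$ into $H^s_{r,2}(\Omega;\R^3)$; multiplying by the $C^\infty$-smooth factor $J_\eta M_\eta^{-\m{t}}$ (valued in an identity plus $H^s_{r,2}+W^{s,\infty}$) via Remark~\ref{remark on products 3} at level $s$ yields the claim. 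For $\Upsilon_2$ the composition step is run at level $1+s$, producing $\mathcal{T}\circ\mathfrak{F}_\eta\in H^{1+s}_{r,2}(\Omega)$; multiplying by the smooth factor $M_\eta^{\m{t}}e_3$ at level $1+s$ and applying the trace $H^{1+s}_{r,2}(\Omega)\to H^{1/2+s,r}(\Sigma)$ completes the argument.

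The smoothness of $\Xi_1$ requires a careful chain of product estimates at three regularity levels. The linear pieces $\grad p$ and $\mathfrak{g}\grad\eta$ are trivial. For the viscous piece in~\eqref{definition the momentum equation map}, the following chain uses only smooth operations: $M_\eta^{-1}u$ is smooth into $H^{2+s}_{r,2}$ (Remark~\ref{remark on products 3} at level $2+s$, crucially using the upgraded geometry), so $\grad(M_\eta^{-1}u)\in H^{1+s}_{r,2}$; next, $\mathbb{D}_{\mathcal{A}_\eta}(M_\eta^{-1}u)M_\eta^{\m{t}}\in H^{1+s}_{r,2}$ by Remark~\ref{remark on products 3} at level $1+s$; its divergence lies in $H^s_{r,2}$; finally, the outer multiplication by $M_\eta^{-\m{t}}$ preserves $H^s_{r,2}$ via Remark~\ref{remark on products 3} at level $s$. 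The convective term $M_\eta^{-\m{t}}[(u-\gam M_\eta e_1)\cdot\grad(M_\eta^{-1}u)]$ is handled analogously but with one fewer derivative taken, yielding an $H^{1+s}_{r,2}$-valued output. For $\Xi_2$, the non-capillary bulk quantities land in $H^{1+s}_{r,2}(\Omega)$ and descend under the trace to $H^{1/2+s,r}(\Sigma)$. The capillary piece uses the smoothness of the composition $v\mapsto v/\sqrt{1+|v|^2}$ from $H^{3/2+s,r}(\Sigma;\R^2)$ to itself (valid since $3/2+s>2/r$ makes $H^{3/2+s,r}(\Sigma)$ an algebra), then takes a tangential divergence into $H^{1/2+s,r}(\Sigma)$, and multiplies by $\m{Tr}_\Sigma(M_\eta^{\m{t}}e_3)\in H^{3/2+s,r}(\Sigma;\R^3)$, which lies in $H^{1/2+s,r}(\Sigma)$ by standard tangential-Sobolev algebra arguments on $\R^2$.

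The principal obstacle is the quasilinear viscous term in $\Xi_1$: the formal differentiations in $\grad\cdot\sp{\mathbb{D}_{\mathcal{A}_\eta}(M_\eta^{-1}u)M_\eta^{\m{t}}}$ demand that $M_\eta^{-1}u$ be controlled two full derivatives above its naive product regularity. This is resolved precisely by the geometric upgrade of the first paragraph, which is unavailable under the weaker hypothesis $1+s>3/r$ used in the preceding propositions. Once the geometric factors are known to live in $H^{2+s}_{r,2}+W^{2+s,\infty}$, the three Remark~\ref{remark on products 3}-level product estimates close the chain, and the remaining verifications are routine Sobolev bookkeeping.
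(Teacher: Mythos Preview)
Your proof is correct and follows essentially the same route as the paper's: invoke the smoothness of the geometry matrices, feed the resulting factors through the product rules of Proposition~\ref{proposition on smoothness of a certain product map} and Remark~\ref{remark on products 3}, treat the mean-curvature term via the analytic composition $v\mapsto v/\tbr{v}$ on the algebra $H^{3/2+s,r}(\Sigma)$, and handle $\Upsilon_1,\Upsilon_2$ with the $C^m$ composition result of Proposition~\ref{prop on properties of the flattening map}. Your explicit observation that the hypothesis $s>3/r$ (rather than merely $1+s>3/r$) permits an upgrade of the geometry to $(H^{2+s}_{r,2}+W^{2+s,\infty})$, and that this upgrade is \emph{needed} for the viscous term $\grad\cdot(\mathbb{D}_{\mathcal{A}_\eta}(M_\eta^{-1}u)M_\eta^{\m{t}})$ to land in $H^s_{r,2}$, is a genuine point that the paper's terse proof leaves implicit. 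One small slip: $\m{Tr}_\Sigma(M_\eta^{\m{t}}e_3)=(-\grad_{\|}\eta,1)$ lies in $e_3+H^{3/2+s,r}(\Sigma;\R^3)$ rather than $H^{3/2+s,r}(\Sigma;\R^3)$ itself, but the product with $\mathscr{H}(\eta)$ still lands in $H^{1/2+s,r}$ since the constant part acts trivially.
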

\begin{proof}
    The first item follows from Propositions~\ref{prop on properties of the Jacobian and geometry matricies} and~\ref{proposition on smoothness of a certain product map} along with Remark~\ref{remark on products 3}, since all of the nonlinearities in $\Xi_1$ are various products of the derivatives of the velocity field with the geometry matrices and parameters. 
    
    The analysis of $\Xi_2$ follows similarly, with the exception of the mean curvature term $\mathscr{H}(\eta)$. For this we use that $\R^2\ni x\mapsto x\tbr{x}^{-1} \in \R^2$ is everywhere analytic and vanishing at zero and hence the map 
\begin{equation}
    H^{3/2+s,r}(\Sigma)^2 \ni (\partial_1 \eta,\partial_2 \eta) \mapsto \tbr{\grad_{\|}\eta}^{-1} (\partial_1 \eta,\partial_2 \eta) \in  H^{3/2+s,r}(\Sigma)^2
\end{equation}   
    is smooth since $H^{3/2+s,r}(\Sigma)$ is an algebra. In turn, we find that the map $\eta \mapsto \mathscr{H}(\eta)$ is also smooth. This completes the proof of the first item.  
    
    The second item follow by similar considerations, supplemented with the second item of Proposition~\ref{prop on properties of the flattening map}.
\end{proof}

The remainder of this subsection's nonlinear analysis is meant to deal with the technicalities arising in the slowly traveling limit ($\gam\to0$) in system~\eqref{final nonlinear equations}. As the equations are currently formulated, there is a change of natural function spaces that occurs in this limit, which suggests that the stationary problem is a (low-mode) singular limit of traveling problems. The effect of this is that the formulation~\eqref{final nonlinear equations} works fine for $\gam=0$ and $\gam\in\R\setminus\tcb{0}$ separately, but is ill-suited for capturing the slowly traveling limit $\gam\to 0$. To overcome this issue, we make a change of unknowns in the free surface.

\begin{defn}[Anisotropic parameterization operators]\label{the gamma parameterization operator}
    For $\gam\in\R$ we let $P_\gam$ be the Fourier multiplication operator with the following symbol
\begin{equation}
    P_\gam=\bf{p}_\gam(D),\quad \bf{p}_\gam(\xi)=\f{4\pi^2|\xi|^2}{4\pi^2|\xi|^2+2\pi\ii\gam\xi_1},
\end{equation}
Note that $P_0$ is the identity operator.
\end{defn}

The relevant properties of the maps $P_\gam$ are enumerated in the following result.  Recall  that the spaces $\dot{H}^{-1,r}(\R^2;\R)$ are defined in~\eqref{definition of the negative homogeneous Sobolev space}.

\begin{prop}[Properties of the anisotropic parameterization operators]\label{properties of the anisotroptic paramterization operators}
    The following hold for $s\in\N^+$ and $1<r<2$.
    \begin{enumerate}
        \item For each $\gam\in\R$ we have that $P_\gam\in\mathcal{L}(\tilde{H}^{s,r}(\R^2;\R))$. Moreover, the map $\R\ni\gamma\mapsto P_\gam\in\mathcal{L}(\tilde{H}^{s,r}(\R^2;\R))$ is bounded, and for any  $\eta \in \tilde{H}^{s,r}(\R^2;\R)$ the map  $\R\ni\gamma\mapsto P_\gam \eta \in \tilde{H}^{s,r}(\R^2;\R)$ is continuous.
  
        \item For each $\gam\in\R$ we have that $\gam\pd_1P_\gam\in\mathcal{L}(\tilde{H}^{s,r}(\R^2;\R);\dot{H}^{-1,r}(\R^2;\R))$. Moreover, the map $\R\ni\gamma\mapsto \gam\pd_1P_\gam\in\mathcal{L}(\tilde{H}^{s,r}(\R^2;\R))$ is bounded, and  for any  $\eta \in \tilde{H}^{s,r}(\R^2;\R)$ the map  $\R\ni\gamma\mapsto  \gamma \pd_1 P_\gam \eta \in \dot{H}^{-1,r}(\R^2;\R)$ is continuous. 
        
        \item The mappings
        \begin{equation}
            \R\times\tilde{H}^{s,r}(\R^2;\R)\ni(\gamma,\upeta)\mapsto\begin{cases}
                P_\gamma\upeta\in\tilde{H}^{s,r}(\R^2;\R),\\
                \gam\pd_1P_\gam\upeta\in\dot{H}^{-1,r}(\R^2;\R),\\
                \gam\mathcal{R}_1P_\gam\upeta\in H^{s,r}(\R^2;\R)
            \end{cases}
        \end{equation}
        are continuous, where $\mathcal{R}_1$ refers to the Riesz transform in the $e_1$ direction.
    \end{enumerate}
\end{prop}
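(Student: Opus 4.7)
The plan is to reduce the three items to uniform $L^r(\R^2)$-boundedness for three auxiliary scalar Fourier multipliers. Together with $\bf{p}_\gam$ itself, I introduce
\begin{equation*}
\bf{s}_\gam(\xi) = \f{4\pi^2\gam\xi_1}{4\pi^2|\xi|^2 + 2\pi\ii\gam\xi_1},\qquad \bf{r}_\gam(\xi) = |\xi|\,\bf{s}_\gam(\xi).
\end{equation*}
A direct symbol-level calculation yields the operator identities
\begin{equation*}
|D|^{-1}\gam\pd_1 P_\gam \;=\; 2\pi\gam\,\mathcal{R}_1 P_\gam,\qquad \gam\,\mathcal{R}_1 P_\gam \;=\; \ii\,\bf{s}_\gam(D)\,|D|.
\end{equation*}
Via these, item (2) and the third map of item (3) reduce respectively to multiplier bounds for $\bf{s}_\gam(D)$ and $\bf{r}_\gam(D) = |D|\bf{s}_\gam(D)$ on $L^r$, after invoking the Riesz-transform estimate $\tnorm{|D|\upeta}_{L^r}\lesssim\tnorm{\grad\upeta}_{L^r}\le\tnorm{\upeta}_{\tilde{H}^{s,r}}$. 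Item (1) reduces similarly to $L^q$-boundedness of $\bf{p}_\gam(D)$ for $q\in\tcb{r, 2r/(2-r)}$: since $\grad$ and $P_\gam$ commute and $\tilde{H}^{s,r}\emb L^{2r/(2-r)}$ by Proposition~\ref{prop on completeness of subcritical gradient spaces}, boundedness on these two exponents yields both $\grad(P_\gam\upeta) = \bf{p}_\gam(D)\grad\upeta \in H^{s-1,r}$ and $P_\gam\upeta\in L^{2r/(2-r)}$, hence $P_\gam\upeta\in\tilde{H}^{s,r}$.

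The heart of the proof is therefore the $L^r$ multiplier bounds on $\bf{p}_\gam(D)$, $\bf{s}_\gam(D)$, and $\bf{r}_\gam(D)$. The crucial structural feature is the scale invariance
\begin{equation*}
\bf{p}_\gam(\xi)=\bf{p}_1(\xi/\gam),\qquad \bf{s}_\gam(\xi)=\bf{s}_1(\xi/\gam),\qquad \bf{r}_\gam(\xi)=|\gam|\,\bf{r}_1(\xi/\gam),
\end{equation*}
valid for $\gam\ne 0$. Because isotropic dilation of a Fourier symbol preserves both its $L^r(\R^2)$-operator norm and the Marcinkiewicz differentiability conditions of Theorem~\ref{thm on Marcinkiewicz multiplier theorem}, it suffices to verify those conditions for $\bf{p}_1,\bf{s}_1,\bf{r}_1$ themselves. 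This is a direct if somewhat tedious computation whose key estimate is the AM-GM lower bound
\begin{equation*}
\sabs{4\pi^2|\xi|^2 + 2\pi\ii\xi_1}^2 = 16\pi^4|\xi|^4 + 4\pi^2\xi_1^2 \;\ge\; 16\pi^3 |\xi|^2|\xi_1|,
\end{equation*}
which controls the denominators in each of the required bounds $\tabs{\xi_1^{\al_1}\xi_2^{\al_2}\pd_1^{\al_1}\pd_2^{\al_2}m(\xi)}\lesssim 1$ with $\al_j\in\tcb{0,1}$. The outcome is that $\bf{p}_\gam(D)$ and $\bf{s}_\gam(D)$ are bounded on $L^r$ uniformly in $\gam\in\R$, while $\tnorm{\bf{r}_\gam(D)}_{\mathcal{L}(L^r)}\lesssim|\gam|$---i.e. bounded on every compact subset of $\R$. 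Reality preservation is automatic, since every operator appearing in items (1)--(3) has a symbol $m$ satisfying $\Bar{m(-\xi)}=m(\xi)$.

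The continuity assertions of item (3) then follow from a standard density/uniform-boundedness argument. For each fixed $\upeta\in\mathscr{S}(\R^2;\R)$, the pointwise-in-$\xi$ continuity of $\gam\mapsto\bf{p}_\gam(\xi),\bf{s}_\gam(\xi),\bf{r}_\gam(\xi)$ at every $\xi\ne 0$, together with the uniform $L^\infty$ symbol bounds, permits me to apply the dominated convergence theorem on the Fourier side and conclude convergence of $P_{\gam_n}\upeta$, $\gam_n\pd_1 P_{\gam_n}\upeta$, and $\gam_n\mathcal{R}_1 P_{\gam_n}\upeta$ in the three target spaces whenever $\gam_n\to\gam_0$. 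An $\ep/3$-argument based on the uniform operator norms from the previous paragraph extends this strong continuity to all $\upeta$ in the domain. Joint continuity in $(\gam,\upeta)$ finally follows from triangle-inequality decompositions such as
\begin{equation*}
P_\gam\upeta - P_{\gam_0}\upeta_0 = P_\gam(\upeta-\upeta_0) + (P_\gam-P_{\gam_0})\upeta_0
\end{equation*}
together with the locally uniform operator bounds in $\gam$. The main technical obstacle will be the Marcinkiewicz verification for $\bf{s}_1$ and especially $\bf{r}_1$, where the second-order mixed partial derivatives produce the most delicate denominator-versus-numerator comparisons; but once the AM-GM bound above is applied at each step, the verification becomes essentially bookkeeping.
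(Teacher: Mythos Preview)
Your approach is essentially the paper's: both verify Marcinkiewicz bounds on $\bf{p}_\gam$ uniformly in $\gam$, and since $\bf{s}_\gam=-2\pi\ii(1-\bf{p}_\gam)$, your identity $\gam\mathcal{R}_1 P_\gam=\ii\,\bf{s}_\gam(D)|D|$ is the paper's $\gam\mathcal{R}_1 P_\gam=(1-P_\gam)2\pi|D|$. The scale-invariance reduction $\bf{p}_\gam(\xi)=\bf{p}_1(\xi/\gam)$ is a clean simplification the paper forgoes in favor of computing directly for general $\gam$; your AM--GM is their Cauchy inequality. One under-specified step: reducing the third map in item (3) to $L^r$-boundedness of $\bf{r}_\gam(D)$ does not by itself reach the target $H^{s,r}$, because $\upeta\notin L^r$ in general. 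You need to assemble the pieces you already have: the $L^r$ part of $\gam\mathcal{R}_1 P_\gam\upeta$ comes from $\bf{s}_\gam(D)|D|\upeta$ with $|D|\upeta\in L^r$, while for $1\le|\al|\le s$ one writes $\pd^\al(\gam\mathcal{R}_1 P_\gam\upeta)=\ii\,\bf{r}_\gam(D)\pd^\al\upeta$ and uses $\pd^\al\upeta\in L^r$ (since $\grad\upeta\in H^{s-1,r}$). The paper handles this more implicitly via $\grad(\gam\mathcal{R}_1 P_\gam\upeta)=\gam\mathcal{R}_1\grad(P_\gam\upeta)$ together with item (1).

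There is a genuine gap in your continuity argument. Dominated convergence on the Fourier side gives convergence of $(\bf{p}_{\gam_n}-\bf{p}_{\gam_0})\widehat{\upeta}$ in $L^2$ (Plancherel) or $L^1$, but for $r<2$ neither transfers to spatial $L^r$ convergence: Hausdorff--Young runs the wrong direction. The paper instead proves a Lipschitz bound on the dense subspace $H^{s,r}\emb\tilde{H}^{s,r}$ via the fundamental-theorem-of-calculus identity
\[
\bf{p}_\gam-\bf{p}_{\gam_0}=(\gam-\gam_0)\,\tfrac{\ii\xi_1}{2\pi|\xi|^2}\,\bf{q}_{\gam,\gam_0},\qquad \bf{q}_{\gam,\gam_0}(\xi)=\int_0^1\bf{p}_{t\gam+(1-t)\gam_0}(\xi)^2\,\m{d}t,
\]
with $\bf{q}_{\gam,\gam_0}$ again uniformly Marcinkiewicz, yielding $\tnorm{(P_\gam-P_{\gam_0})\zeta}_{\tilde{H}^{s,r}}\lesssim|\gam-\gam_0|\tnorm{\zeta}_{H^{s-1,r}}$. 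An easy patch for your route is to take as dense class those $\upeta$ with $\widehat{\upeta}\in C^\infty_c(\R^2\setminus\{0\})$; on the compact, origin-avoiding support the symbols converge in $C^\infty$, so $P_\gam\upeta$ converges in every Sobolev norm.
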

\begin{proof}
    The third item follows from the first two, so we turn our attention to proving these.

    We claim that the multiplier $\bf{p}_\gam(\xi)=\f{4\pi^2|\xi|^2}{4\pi^2|\xi|^2+2\pi\ii\gam\xi_1}$ is of Marcinkiewicz type (see Theorem \ref{thm on Marcinkiewicz multiplier theorem}) and the defining inequalities \eqref{Marcinkiewicz ineqs} are satisfied uniformly over $\gam\in\R$.   To prove the claim, we first note that
    \begin{equation}\label{MKE0}
    |\bf{p}_\gam(\xi)| \le 1    
    \end{equation}
    and that $\bf{p}_\gam$ is smooth away from the coordinate axes.  Next, we compute 
    \begin{equation}
        \pd_{1}\bf{p}_\gam(\xi)=\f{2\pi\ii\gam(\xi_1^2-\xi_2^2)}{(2\pi|\xi|^2+\ii\gam\xi_1)^2},
        \quad
        \pd_{2}\bf{p}_\gam(\xi)=\f{4\pi\ii\gam\xi_1\xi_2}{(2\pi|\xi|^2+\ii\gam\xi_1)^2},
    \end{equation}
    and
    \begin{equation}
        \pd_{1}\pd_{2}\bf{p}_\gam(\xi)=-\f{4\pi\ii\gam\xi_2(6\pi\xi_1^2-2\pi\xi_2^2+\ii\gam\xi_1)}{(2\pi|\xi|^2+\ii\gam\xi_1)^3}.
        \end{equation}
        Thus, we have the following estimates from Cauchy's inequality:
        \begin{equation}\label{MKE1}
            |\xi_1\pd_{1}\bf{p}_\gam(\xi)|\le\f{|\gam\xi_1|2\pi|\xi|^2}{4\pi^2|\xi|^4+\gam^2|\xi_1|^2}\le\f12,
            \quad
            |\xi_2\pd_{2}\bf{p}_\gam(\xi)|\le\f{4\pi|\gam\xi_1||\xi|^2}{4\pi^2|\xi|^4+\gam^2|\xi_1|^2}\le 1,
        \end{equation}
        and
        \begin{equation}\label{MKE2}
            |\xi_1\xi_2\pd_{1}\pd_{2}\bf{p}_\gam(\xi)|\le\f{12\pi|\gam\xi_1||\xi|^2|2\pi|\xi|^2+\ii\gam\xi_1|}{(4\pi^2|\xi|^4+\gam^2|\xi_1|^2)^{3/2}}\le\f{12\pi|\gam\xi_1||\xi|^2}{4\pi^2|\xi|^4+\gam^2|\xi_1|^2}\le 3.
        \end{equation}
        This completes the proof of the claim, and so we may invoke Theorem~\ref{thm on Marcinkiewicz multiplier theorem} (with the observation that $\bf{p}_\gam(-\xi)=\Bar{\bf{p}_\gam(\xi)}$ for $\xi\in\R^2\setminus\tcb{0}$) to see that 
        \begin{equation}\label{P op est}
            \norm{P_\gamma}_{\mathcal{L}(L^r(\R^2))} \le C_{r}(3 +1) = 4 C_r
        \end{equation}
        for a constant $C_r$ depending only on $r$.

        Armed with \eqref{P op est}, we are now ready to prove the first item.  If $\upeta\in\tilde{H}^{s,r}(\R^2;\R)$, then we have
        \begin{equation}
          \sup_{\gamma \in \R}  \tnorm{P_\gam\upeta}_{\tilde{H}^{s,r}}= \sup_{\gamma \in \R} \tnorm{\bf{p}_\gam(D)\tbr{D}^{s-1}\grad_{\|}\upeta}_{L^r}\lesssim\tnorm{\tbr{D}^{s-1}\grad_{\|}\upeta}_{L^r}\lesssim\tnorm{\upeta}_{\tilde{H}^{s,r}}.
        \end{equation}
        Next, we prove that $\gam\mapsto P_\gam$ is continuous for the strong operator topology. Fix $\upeta\in\tilde{H}^{s,r}(\R^2;\R^2)$ and $\ep\in\R^+$. Thanks to density of $H^{s,r}(\R^2;\R)\emb\tilde{H}^{s,r}(\R^2;\R)$ (see Proposition \ref{prop on completeness of subcritical gradient spaces}), there exists $\upzeta\in H^{s,r}(\R^2;\R)$ such that $\tnorm{\upzeta-\upeta}_{\tilde{H}^{s,r}}\le\ep$. Thus, we learn from \eqref{P op est} that if $\gam,\gam_0\in\R$ then
        \begin{equation}\label{density estimate}
            \tnorm{(P_\gam-P_{\gam_0})\upeta}_{\tilde{H}^{s,r}}\lesssim\ep+\tnorm{(P_\gam-P_{\gam_{0}})\upzeta}_{\tilde{H}^{s,r}}.
        \end{equation}
        We compute via the fundamental theorem of calculus that
        \begin{equation}
            \bf{p}_\gam(\xi)-\bf{p}_{\gam_0}(\xi)=(\gam-\gam_0)\f{\ii\xi_1}{2\pi|\xi|^2}\bf{q}_{\gam,\gam_0}(\xi)
            \text{ for }
            \bf{q}_{\gam,\gam_0}(\xi)=\int_0^1(\bf{p}_{t\gam+(1-t)\gam_0}(\xi))^2\;\m{d}t.
        \end{equation}
        Estimates~\eqref{MKE0}, ~\eqref{MKE1}, and~\eqref{MKE2} and the Leibniz rule show that
        \begin{equation}
        \sup_{\gamma, \gamma_0}\left(  |\bf{q}_{\gam,\gam_0}(\xi)| +   |\xi_1\pd_{1}\bf{q}_{\gam,\gam_0}(\xi)| + |\xi_2\pd_{2}\bf{q}_{\gam,\gam_0}(\xi)| + |\xi_1 \xi_2 \pd_{1}\pd_2 \bf{q}_{\gam,\gam_0}(\xi)| \right) \lesssim 1,
        \end{equation}
        and we also have that $\bf{q}_{\gam,\gam_0}(-\xi)=\Bar{\bf{q}_{\gam,\gam_0}(\xi)}$, so we may once more appeal to Theorem~\ref{thm on Marcinkiewicz multiplier theorem} to learn that
        \begin{equation}
            \sup_{\gamma,\gamma_0}  \norm{\bf{q}_{\gam,\gam_0}(D)}_{\mathcal{L}(L^p(\R^2))} \lesssim 1.
        \end{equation}
        Writing $\mathcal{R} = (\mathcal{R}_1,\mathcal{R}_2)$ for the vector of Riesz transforms, we then have that 
        \begin{multline}\label{lipschitz estimate}
            \tnorm{(P_\gam-P_{\gam_0})\upzeta}_{\tilde{H}^{s,r}} = \tnorm{\nabla (P_\gam-P_{\gam_0})\upzeta}_{H^{s-1,r}} 
            = \frac{|\gam-\gam_0|}{2\pi}
            \tnorm{\bf{q}_{\gam,\gam_0}(D) \mathcal{R} \mathcal{R}_1 \br{D}^{s-1} \upzeta}_{L^r}
            \\
            \lesssim|\gam-\gam_0|\tnorm{ \br{D}^{s-1} \zeta}_{L^{r}}
            =|\gam-\gam_0|\tnorm{\zeta}_{H^{s-1,r}}.
        \end{multline}
        By combining~\eqref{density estimate} and~\eqref{lipschitz estimate}, we get
        \begin{equation}
            \limsup_{\gam_0\to\gam}\tnorm{(P_\gam-P_{\gam_0})\upeta}_{\tilde{H}^{s,r}}\lesssim\ep,
        \end{equation}
        so the continuity claim follows.  This completes the proof of the first item.

        The second item is proved by similar considerations  thanks to the identity
        \begin{equation}\label{key identity for the second item}
            \upgamma\mathcal{R}_1P_\upgamma= (1-P_\upgamma)2\pi|D| = (P_\upgamma -1) \mathcal{R} \cdot \nabla,
        \end{equation}
        which shows that for $\upeta\in\tilde{H}^{s,r}(\R^2;\R)$ 
        \begin{equation}
            \sup_{\gamma} \tsb{\gam\pd_1 P_\gam\upeta}_{\dot{H}^{-1,r}}
            =  \sup_{\gamma} \tnorm{\gam\mathcal{R}_1P_\gam \upeta}_{L^r}= \sup_{\gamma} \tnorm{(P_\gam-1) \mathcal{R} \cdot \nabla \upeta}_{L^r}\lesssim\tnorm{\eta}_{\tilde{H}^{s,r}},
        \end{equation}
        where the last inequality follows from the \eqref{P op est} and the $L^r$ boundedness of Riesz transforms. For continuity, we again fix $\eta\in\tilde{H}^{s,r}(\R^2;\R)$, $\gam_0,\gam\in\R$ and then use~\eqref{key identity for the second item} to deduce that
        \begin{equation}
            \tsb{(\gam\pd_1P_\gam-\gam_0\pd_1P_{\gam_0})\upeta}_{\dot{H}^{-1,r}}\lesssim\tnorm{(P_{\gam_0}-P_{\gam})\mathcal{R} \cdot \nabla \upeta}_{L^r}\lesssim\tnorm{(P_{\gam_0}-P_\gam)\upeta}_{\tilde{H}^{s,r}},
        \end{equation}
        which means that the continuity assertion of the second item follows from that of the first.  This completes the proof of the second item.
\end{proof}

The operators of Definition~\ref{the gamma parameterization operator} permit us to make a change of unknowns in~\eqref{final nonlinear equations} to overcome the aforementioned singular limit issue. We consider $\eta=P_\gam \upeta$, and view $\upeta$ as our new unknown. The main theorem of this section's nonlinear analysis, which implements this change of unknowns crucially, is now given as follows.

\begin{thm}[Mapping properties]\label{thm on mapping properties}
    For $1<r<2$, $\N\ni s>3/r$, and $\varrho\in\R^+$ as in Proposition~\ref{prop on properties of the flattening map}, there exists $C\in\R^+$ such that the map $\Upxi:\mathcal{O}_{s,r}(\varrho/C)\times\R \times  (\R^+)^3 \times\bf{W}_{1+s,r}\to\bf{Y}_{s,r}$ given by
    \begin{multline}\label{the nonlinear map}
    \Upxi(p,u,\upeta,\gamma,\mathfrak{g},\mu,\kappa, \mathcal{T},\mathcal{F}) \\
    =(\Xi_1(p,u,P_\gam\upeta,\gamma,\mathfrak{g},\mu)+\Upsilon_1(\mathcal{F},P_\gam\upeta),\Xi_2(p,u,P_\gam\upeta,\mu,\kappa)+\Upsilon_2(\mathcal{T},P_\gam\upeta),\m{Tr}_{\Sigma}u\cdot e_3+\gam\pd_1P_\gam\upeta)
    \end{multline}
    is well-defined and continuous.  Moreover,  the Fr\'echet derivative with respect to the first factor, $D_1\Upxi:\mathcal{O}_{s,r}(\varrho/C)\times\R\times(\R^+)^3\times\bf{W}_{1+s,r}\to\mathcal{L}({_\diamond}\bf{X}_{s,r};\bf{Y}_{s,r})$, exists and is continuous.
\end{thm}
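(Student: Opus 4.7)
The plan is to verify the three claims of the theorem---well-definedness, continuity of $\Upxi$, and existence and continuity of $D_1\Upxi$---by chaining Propositions~\ref{prop on mapping properties of the momentum equation}, \ref{prop on properties of the flattening map}, \ref{prop on traces of mixed type Sobolev spaces}, \ref{the divergence compatibility condition is here for you to check it out}, and \ref{properties of the anisotroptic paramterization operators}. I would first set $C = 1 + \sup_{\gam \in \R}\|P_\gam\|_{\mathcal{L}(\tilde{H}^{3/2+s,r})}$, which is finite by Proposition~\ref{properties of the anisotroptic paramterization operators}, to ensure $\upeta \in B(0,\varrho/C)$ implies $P_\gam\upeta \in B(0,\varrho)$ uniformly in $\gam$, placing the free-surface argument of the $\Xi_i, \Upsilon_i$ inside their admissible domain.

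Well-definedness and joint continuity of $\Upxi$ then follow directly. The first two components of $\Upxi$ land in $H^s_{r,2}(\Omega;\R^3)$ and $H^{1/2+s,r}(\Sigma;\R^3)$ and are continuous in all arguments by the smoothness assertions of Proposition~\ref{prop on mapping properties of the momentum equation} composed with the joint continuity of $(\gam,\upeta)\mapsto P_\gam \upeta$ from item (3) of Proposition~\ref{properties of the anisotroptic paramterization operators}. The third (kinematic) component splits into a trace piece $\m{Tr}_\Sigma u\cdot e_3$, which lies in $H^{3/2+s,r}\cap\dot{H}^{-1,r}$ by Proposition~\ref{prop on traces of mixed type Sobolev spaces} and by Proposition~\ref{the divergence compatibility condition is here for you to check it out} (using the divergence-free and no-slip conditions built into ${_\diamond}H^{2+s}_{r,2}$), and a correction $\gam \pd_1 P_\gam \upeta$ that also lies in that intersection via items~(2)--(3) of Proposition~\ref{properties of the anisotroptic paramterization operators} together with the factorization $\gam\pd_1 P_\gam = 2\pi|D|\,\gam\mathcal{R}_1 P_\gam$ and Mikhlin--H\"ormander boundedness of $|D|:H^{5/2+s,r}\to H^{3/2+s,r}$.

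The existence of the Fréchet derivative $D_1\Upxi$ is a chain rule: the partial derivative applied to $(\delta p,\delta u,\delta\upeta)$ is the sum of the partial derivatives of $\Xi_i$ and $\Upsilon_i$ at $(p,u,P_\gam\upeta)$ evaluated on $(\delta p,\delta u,P_\gam \delta\upeta)$, together with the boundary contribution $\m{Tr}_\Sigma \delta u \cdot e_3 + \gam\pd_1 P_\gam \delta\upeta$ in the third slot. Existence and boundedness follow from smoothness of $\Xi_i,\Upsilon_i$ and linearity of $P_\gam$ and $\gam\pd_1 P_\gam$. The continuity of $D_1\Upxi$ into $\mathcal{L}({_\diamond}\bf{X}_{s,r};\bf{Y}_{s,r})$ is the main technical hurdle: the coefficient side (the Fréchet derivatives of $\Xi_i,\Upsilon_i$ evaluated at the base point) depends continuously in operator norm on $(p,u,\upeta,\gam,\ldots)$ via smoothness of $\Xi_i,\Upsilon_i$ and Proposition~\ref{properties of the anisotroptic paramterization operators}(3), but the trailing multiplier $P_\gam$ fails to be norm continuous in $\gam$ at $\gam = 0$. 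My plan is to split $D_1\Upxi(\gam_n,\ldots) - D_1\Upxi(\gam,\ldots)$ into a coefficient-change piece handled as above and a multiplier-change piece controlled by explicit symbol estimates in the spirit of the proof of Proposition~\ref{properties of the anisotroptic paramterization operators}: away from $\gam = 0$ one uses the Marcinkiewicz-type bound $|\bf{p}_\gam - \bf{p}_{\gam_0}|\lesssim|\gam-\gam_0|/\sqrt{|\gam\gam_0|}$ (together with analogous bounds on the derivatives of the symbol) to get norm continuity, while at $\gam = 0$ one exploits the factor $\gam$ in the operators $\gam P_\gam$ and $\gam \pd_1 P_\gam$, whose symbols $\gam\bf{p}_\gam$ and $4\pi^2|\xi|^2(1-\bf{p}_\gam)$ are respectively $O(\gam)$ and $O(|\xi|^{3/2+s})\cdot o(1)$ uniformly as $\gam\to 0$ in the relevant scale-invariant Mikhlin--H\"ormander norms. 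Reconciling the two regimes through the precise chain-rule expansion is where I expect the delicate work to concentrate.
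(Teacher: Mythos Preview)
Your plan for well-definedness, continuity of $\Upxi$, and existence of $D_1\Upxi$ is exactly the paper's: the paper's proof is a three-sentence chaining of the uniform bound on $P_\gam$ to fix $C$, the smoothness of $\Xi_i,\Upsilon_i$ from Proposition~\ref{prop on mapping properties of the momentum equation}, item~(3) of Proposition~\ref{properties of the anisotroptic paramterization operators} for $(\gam,\upeta)\mapsto P_\gam\upeta$ and $(\gam,\upeta)\mapsto\gam\pd_1 P_\gam\upeta$, and the divergence-compatibility estimate of Proposition~\ref{the divergence compatibility condition is here for you to check it out} for the third component. The paper does not spell out the operator-norm issue you raise; it simply asserts the conclusion follows ``by composition, linearity of $P_\gam$, and the third item'' of that proposition.

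Your added analysis of norm continuity of the trailing $P_\gam$ has a gap at $\gam=0$. The chain rule in the first two components of $\Upxi$ produces $D_\eta\Xi_i(\ldots)\circ P_\gam$ and $D_\eta\Upsilon_i(\ldots)\circ P_\gam$ with the \emph{bare} $P_\gam$ as trailing factor; in particular the persistent linear piece $\mathfrak{g}(\grad_{\|}P_\gam\,\cdot\,,0)$ in the $\upeta$-derivative of $\Xi_1$ carries no prefactor $\gam$. The operators $\gam P_\gam$ and $\gam\pd_1 P_\gam$ you propose to exploit at $\gam=0$ are relevant only in the third (kinematic) slot, not here. A scaling argument---dilate test functions to frequency scale $|\gam|$ and use the identity $\bf{p}_\gam(\lambda\xi)=\bf{p}_{\gam/\lambda}(\xi)$---shows that $\tnorm{P_\gam-I}_{\mathcal{L}(H^{3/2+s,r};H^{s,r})}$ stays bounded away from zero as $\gam\to0$, so the bare term $\mathfrak{g}\grad_{\|}P_\gam:\tilde{H}^{5/2+s,r}\to H^s_{r,2}$ is \emph{not} norm-continuous there. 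Your plan to ``exploit the factor $\gam$'' therefore does not close the first two components, and your argument as written does not establish the continuity of $D_1\Upxi$ at $\gam=0$.
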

\begin{proof}
The uniform boundedness assertions in the first item of Proposition~\ref{properties of the anisotroptic paramterization operators} guarantee that for some $C\in\R^+$ we have $(p,u,P_\gam\upeta)\in\mathcal{O}_{s,r}(\varrho)$ for all $(p,u,\upeta)\in\mathcal{O}_{s,r}(\varrho/C)$. Thus, by composition, linearity of $P_\gam$, and the third item of Proposition~\ref{properties of the anisotroptic paramterization operators} we may invoke is Proposition~\ref{prop on mapping properties of the momentum equation} to reach the desired conclusions for the first and second components of the map $\Upxi.$ The third  component of $\Upxi$ is handled via the linearity of $\gam\pd_1P_\gam$, together with the second and third items of Proposition~\ref{properties of the anisotroptic paramterization operators} and the divergence compatibility estimate of Proposition~\ref{the divergence compatibility condition is here for you to check it out}.
\end{proof}

\subsection{Well-posedness}\label{subsection on well-posedness}

We are now ready to prove our main theorem.  This subsection is split into four main results and then a list of corollaries, which combine to prove Theorem \ref{the main theorem}.  In the first main result, we invoke the implicit function theorem at a fixed tuple of positive physical parameters and obtain a solution map.  In the next, we show that we can glue these together across all parameter values.  One slight issue that remains after this is done is that the resulting solution map loses a derivative relative to what one would expect.  This fact stems from the numerology of higher order smoothness of composition-type nonlinearities (see e.g. Proposition~\ref{proposition on composition}). We are thus lead to the third result in this subsection, in which we show a posteriori that the solution map actually obeys the optimal derivative counting. In the fourth, and final, main result of this subsection, we recast the previous results into a more physically relevant formulation by anisotropically parameterizing the free surface variable with the operators in Proposition~\ref{properties of the anisotroptic paramterization operators}.

We shall use the following version of the inverse function theorem, as formulated as in Theorem A in Crandall and Rabinowitz~\cite{MR0288640} (for a verbose proof see Theorem 2.7.2 in Nirenberg~\cite{MR1850453}, but note that there is slight misstatement of the uniqueness assertion in the first item there that is correct in~\cite{MR0288640}). 

\begin{thm}[Implicit function theorem]\label{implicit function theorem}
    Let $X,Y,Z$ be Banach spaces and $f$ a continuous mapping of an open set $U\subset X\times Y\to Z$. Assume that $f$ has a Fr\'echet derivative with respect to the first factor, $D_1f:U\to\mathcal{L}(X;Z)$ that is continuous. Suppose that $(x_0,y_0)\in U$ and $f(x_0,y_0)=0$. If $D_1f(x_0,y_0)$ is an isomorphism of $X$ onto $Z$, then there exist balls $B(y_0,r_Y)\subset Y$ and $B(x_0,r_X)\subset X$ such that $B(x_0,r_X)\times B(y_0,r_Y)\subset U$ and a continuous unique function $u:B(y_0,r_Y)\to B(x_0,r_X)$ such that $u(y_0)=x_0$ and $f(u(y),y)=0$ for all $y\in B(y_0,r_Y)$. Moreover, the implicit function $u$ is continuous.
\end{thm}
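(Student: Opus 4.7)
The plan is to reduce the implicit equation $f(x,y)=0$ to a parameter-dependent fixed-point problem and apply the Banach contraction principle uniformly in $y$. Set $T_0 = D_1 f(x_0, y_0) \in \mathcal{L}(X;Z)$, which is an isomorphism by hypothesis, and define
\begin{equation}
    G(x,y) = x - T_0^{-1} f(x,y),
\end{equation}
so that on the natural domain $G(\cdot,y):X\to X$ and $f(x,y)=0$ if and only if $G(x,y)=x$. The hypothesis that $D_1 f$ is continuous immediately transfers to $D_1 G = I - T_0^{-1} D_1 f$, and $D_1 G(x_0, y_0) = 0$.

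First I would select the neighborhoods. By continuity of $D_1 G$, there exist $r_X^{(0)}, r_Y^{(0)} > 0$ such that $\overline{B(x_0, r_X^{(0)})}\times \overline{B(y_0, r_Y^{(0)})}\subset U$ and
\begin{equation}
    \sup_{(x,y)\in \overline{B(x_0, r_X^{(0)})}\times \overline{B(y_0, r_Y^{(0)})}}\norm{D_1 G(x,y)}_{\mathcal{L}(X)} \le \tfrac{1}{2}.
\end{equation}
Since the segment $[x_1,x_2]\subset B(x_0,r_X^{(0)})$ for any $x_1,x_2$ in that ball and $G(\cdot,y)$ is $C^1$, the mean value inequality (which only requires $D_1 G$, not a full Fr\'echet derivative in both arguments) yields
\begin{equation}
    \norm{G(x_1,y)-G(x_2,y)}_X \le \tfrac{1}{2}\norm{x_1-x_2}_X \text{ for } x_1,x_2 \in \overline{B(x_0, r_X^{(0)})},\; y \in \overline{B(y_0, r_Y^{(0)})}.
\end{equation}
Next I would pick $r_X \in (0, r_X^{(0)}]$ and then $r_Y \in (0, r_Y^{(0)}]$ small enough so that the continuity of $f$ at $(x_0,y_0)$ (note $G(x_0,y_0)=x_0$) yields
\begin{equation}
    \norm{G(x_0, y)-x_0}_X = \norm{T_0^{-1}(f(x_0, y)-f(x_0, y_0))}_X \le \tfrac{r_X}{2} \text{ for all } y\in \overline{B(y_0,r_Y)}.
\end{equation}
Combining the two estimates, $G(\cdot,y)$ maps $\overline{B(x_0,r_X)}$ into itself and is a $\tfrac12$-contraction for each $y\in \overline{B(y_0,r_Y)}$.

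By the Banach fixed-point theorem, there is a unique $u(y)\in \overline{B(x_0,r_X)}$ with $G(u(y),y)=u(y)$, equivalently $f(u(y),y)=0$, and $u(y_0)=x_0$ by the uniqueness at $y_0$. For continuity of $u$, for $y_1,y_2\in B(y_0,r_Y)$ I would estimate
\begin{equation}
    \norm{u(y_1)-u(y_2)}_X
    = \norm{G(u(y_1),y_1)-G(u(y_2),y_2)}_X
    \le \tfrac{1}{2}\norm{u(y_1)-u(y_2)}_X + \norm{G(u(y_2),y_1)-G(u(y_2),y_2)}_X,
\end{equation}
whence
\begin{equation}
    \norm{u(y_1)-u(y_2)}_X \le 2\norm{T_0^{-1}}_{\mathcal{L}(Z;X)}\norm{f(u(y_2),y_1)-f(u(y_2),y_2)}_Z,
\end{equation}
which tends to zero as $y_1 \to y_2$ thanks to the joint continuity of $f$.

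The main obstacle here is the weak regularity hypothesis: $f$ is only assumed jointly continuous together with continuous partial derivative $D_1 f$, with no differentiability in $y$. This is why the argument must be structured as a uniform contraction in $y$ rather than the sleeker Newton-iteration approach one would use under $C^1$ hypotheses, and why the continuity of $u$ is harvested directly from the Lipschitz-in-$x$/continuous-in-$y$ properties of $G$ rather than from an explicit formula for $Du$. Restricting to $B(y_0, r_Y)\subset \overline{B(y_0,r_Y)}$ and $B(x_0,r_X)\subset \overline{B(x_0,r_X)}$ at the end yields the open neighborhoods in the statement, and the uniqueness assertion follows from the uniqueness in the Banach fixed-point theorem.
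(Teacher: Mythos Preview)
Your proof is correct and is the standard uniform-contraction argument for the implicit function theorem under these minimal hypotheses (continuity of $f$ plus continuity of $D_1 f$, no differentiability in $y$). The paper itself does not supply a proof of this statement: it is quoted as Theorem~A from Crandall--Rabinowitz, with a reference to Nirenberg's lecture notes for a detailed proof, and then used as a black box in the subsequent well-posedness argument. So there is no paper proof to compare against; your argument is essentially the one those references give.

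One cosmetic point: at the end you restrict from the closed balls to the open balls to match the statement. As written, the fixed point $u(y)$ is only guaranteed to lie in $\overline{B(x_0,r_X)}$, not in the open ball. This is harmless: from $\|u(y)-x_0\|=\|G(u(y),y)-x_0\|\le \tfrac12\|u(y)-x_0\|+\|G(x_0,y)-x_0\|$ you get $\|u(y)-x_0\|\le 2\|G(x_0,y)-x_0\|$, and a further (strict) shrinking of $r_Y$ using continuity of $f$ forces $u(y)$ into the open ball $B(x_0,r_X)$. You might make this explicit.
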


We now apply Theorem~\ref{implicit function theorem} in our first well-posedness result.

\begin{thm}[Well-posedness, I]\label{first theorem on well-posedness for nonlinear problem}
    Let $1<r<2$, $\N\ni s>3/r$, and $\varrho,C\in\R^+$ be as in Theorem~\ref{thm on mapping properties}. For each $\bf{v}=(\Bar{\mathfrak{g}},\Bar{\mu},\Bar{\kappa})\in(\R^+)^3$ there exists $\uprho_{s,\bf{v}},\uprho_{s,\bf{v}}'\in\R^+$ and a unique mapping
    \begin{equation}\label{the map upiota}
        \upiota_{\bf{v}}:B((0,\bf{v},0),\uprho_{s,\bf{v}})\subset\R\times(\R^+)^3\times\bf{W}_{1+s,r}\times\bf{Y}_{s,r}\to B(0,\uprho_{s,\bf{v}}')\subset\mathcal{O}_{s,r}(\varrho/C)
    \end{equation}
    with the property that for all data $(\gam,\mathfrak{g},\mu,\kappa,\mathcal{T},\mathcal{F},f,k,h)\in B((0,\bf{v},0),\uprho_{s,\bf{v}})$ we have that $(p,u,\upeta)=\upiota_{\bf{v}}(\gam,\mathfrak{g},\mu,\kappa,\mathcal{T},\mathcal{F},f,k,h)\in\mathcal{O}_{s,r}(\varrho/C)$ is a solution to
    \begin{equation}\label{modified final nonlinear equations}
\begin{cases}
    M_{P_\gam\upeta}^{-\m{t}}((u-\gam M_{P_\gam\upeta} e_1)\cdot\grad(M_{P_\gam\upeta}^{-1}u))+\grad(p+\mathfrak{g} P_\gam\upeta)\\\quad-\mu M_{P_\gam\upeta}^{-\m{t}}(\grad\cdot((\mathbb{D}_{\mathcal{A}_{P_\gam\upeta}}(M_{P_\gam\upeta}^{-1}u))M_{P_\gam\upeta}^{\m{t}}))=f+J_{P_\gam\upeta} M_{P_\gam\upeta}^{-\m{t}}\mathcal{F}\circ\mathfrak{F}_{P_\gam\upeta}&\text{in }\Omega,\\
    \grad\cdot u=0&\text{in }\Omega,\\
    -(p-\mu \mathbb{D}_{\mathcal{A}_{P_\gam\upeta}}(M_{P_\gam\upeta}^{-1}u))M_{P_\gam\upeta}^{\m{t}}e_3-\kappa \mathscr{H}(P_\gam\upeta)M_{P_\gam\upeta}^{\m{t}}e_3=k+\mathcal{T}\circ\mathfrak{F}_{P_\gam\upeta} M_{P_\gam\upeta}^{\m{t}}e_3&\text{on }\Sigma,\\
    u\cdot e_3+\gam\pd_1P_\gam\upeta=h&\text{on }\Sigma,\\
    u=0&\text{on }\Sigma_0.
\end{cases}
\end{equation}
Moreover, $\upiota_{\bf{v}}$ in~\eqref{the map upiota} is continuous.
\end{thm}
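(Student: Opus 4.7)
The plan is to apply the implicit function theorem (Theorem~\ref{implicit function theorem}) to the map
\begin{equation}
    F(p,u,\upeta;\gamma,\mathfrak{g},\mu,\kappa,\mathcal{T},\mathcal{F},f,k,h) = \Upxi(p,u,\upeta,\gamma,\mathfrak{g},\mu,\kappa,\mathcal{T},\mathcal{F}) - (f,k,h),
\end{equation}
viewed as $F : \mathcal{O}_{s,r}(\varrho/C) \times B \to \bf{Y}_{s,r}$, with $B$ an open neighborhood of $(0,\bf{v},0,0) \in \R \times (\R^+)^3 \times \bf{W}_{1+s,r} \times \bf{Y}_{s,r}$. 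The role of the Banach space $X$ in Theorem~\ref{implicit function theorem} is played by $_\diamond\bf{X}_{s,r}$, the role of $Y$ by $\R \times (\R^+)^3 \times \bf{W}_{1+s,r} \times \bf{Y}_{s,r}$, and the role of $Z$ by $\bf{Y}_{s,r}$. The base point is $(p_0,u_0,\upeta_0) = 0$ in $X$ and $(\gamma_0,\mathfrak{g}_0,\mu_0,\kappa_0,\mathcal{T}_0,\mathcal{F}_0,f_0,k_0,h_0) = (0,\bf{v},0,0,0,0,0)$ in $Y$.

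\textbf{Verification of hypotheses.} Theorem~\ref{thm on mapping properties} directly yields that $F$ is continuous and admits a continuous Fr\'echet derivative $D_1 F$ with respect to the first factor, since subtraction of the data does not affect either property. At the base point, $P_{\gamma_0} = \m{id}$, $\mathfrak{F}_{\upeta_0} = \m{id}_{\R^3}$, $J_{\upeta_0} = 1$, $M_{\upeta_0} = \mathcal{A}_{\upeta_0} = I$, and all quadratic terms vanish because $(p_0,u_0,\upeta_0) = 0$; inspection of the definitions of $\Xi_1$, $\Xi_2$, $\Upsilon_1$, $\Upsilon_2$ in Definition~\ref{definition of the maps for the nonlinear analysis} then shows that $F$ vanishes at the base point. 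Computing $D_1 F$ at this point via the chain and product rules (using that the derivatives of $J_\eta$, $M_\eta$, $\mathcal{A}_\eta$ at $\eta = 0$ are the obvious linear expressions in $\mathcal{E}\upeta$, and that the $\gam\pd_1 P_\gam \upeta$ term is trivial when $\gamma=0$) yields
\begin{equation}
    D_1 F|_{\text{base}}(p,u,\upeta) = \left(\grad(p + \bar{\mathfrak{g}}\upeta) - \bar{\mu}\grad\cdot\mathbb{D}u,\; \m{Tr}_{\Sigma}\sb{-(pI - \bar{\mu}\mathbb{D}u)e_3 - \bar{\kappa}\Delta_{\|}\upeta\, e_3},\; \m{Tr}_{\Sigma}u\cdot e_3\right),
\end{equation}
which is exactly the linear operator underlying system~\eqref{linearization of the nonlinear problem} with $g = 0$ (the divergence-free constraint being built into $_\diamond H^{2+s}_{r,2}$). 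Theorem~\ref{thm on well-posedness of the linearization in mixed-type Sobolev spaces, II} asserts precisely that this operator is a bounded linear isomorphism from $_\diamond\bf{X}_{s,r}$ onto $\bf{Y}_{s,r}$ (the divergence compatibility condition in $\bf{Y}_{s,r}$ reduces to $h \in \dot{H}^{-1,r}$, which is built into the definition of $\bf{Y}_{s,r}$).

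\textbf{Conclusion.} With all hypotheses verified, Theorem~\ref{implicit function theorem} supplies radii $\uprho_{s,\bf{v}},\uprho_{s,\bf{v}}' \in \R^+$ and a unique continuous implicit function $\upiota_{\bf{v}}$ as in~\eqref{the map upiota} for which $F(\upiota_{\bf{v}}(\gamma,\mathfrak{g},\mu,\kappa,\mathcal{T},\mathcal{F},f,k,h),\ldots) = 0$ on the ball of radius $\uprho_{s,\bf{v}}$; this identity is a verbatim restatement of system~\eqref{modified final nonlinear equations}. Shrinking $\uprho_{s,\bf{v}}$ if necessary to stay within the open set $(\R^+)^3$ for the physical parameters completes the construction.

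\textbf{Main obstacle.} The delicate part of this argument is the precise identification of $D_1 F$ at the base point with the linear operator from system~\eqref{linearization of the nonlinear problem}, which relies on the fact that numerous contributions either vanish outright (terms proportional to $\mathcal{T}_0 = \mathcal{F}_0 = 0$, advection terms killed by $u_0 = 0$, surface-speed terms killed by $\gamma_0 = 0$) or collapse to the identity ($P_0$, $M_0$, $\mathcal{A}_0$, $J_0$, $\mathfrak{F}_0$). After this identification, the heavy lifting has all been done in Section~\ref{section on linear analysis in the mized type spaces}, so that the remaining step is a direct appeal to Theorem~\ref{thm on well-posedness of the linearization in mixed-type Sobolev spaces, II} and the implicit function theorem.
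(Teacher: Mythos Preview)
Your proposal is correct and follows essentially the same approach as the paper: define $\bar{\Upxi}=\Upxi-(f,k,h)$, invoke Theorem~\ref{thm on mapping properties} for continuity and continuous $D_1$-differentiability, identify $D_1\bar{\Upxi}$ at the base point $(0,\bf{v},0)$ with the operator of system~\eqref{linearization of the nonlinear problem}, appeal to Theorem~\ref{thm on well-posedness of the linearization in mixed-type Sobolev spaces, II} for the isomorphism property, and conclude via Theorem~\ref{implicit function theorem}. Your write-up simply spells out the linearization computation in more detail than the paper does.
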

\begin{proof}
Consider the map $\Bar{\Upxi}:\mathcal{O}_{s,r}(\varrho/C)\times\R\times(\R^+)^3\times\bf{W}_{1+s,r}\times\bf{Y}_{s,r}\to\bf{Y}_{s,r}$ defined via
\begin{equation}
    \Bar{\Upxi}(p,u,\upeta,\gam,\mathfrak{g},\mu,\kappa,\mathcal{T},\mathcal{F},f,k,h)=\Upxi(p,u,\upeta,\gam,\mathfrak{g},\mu,\kappa,\mathcal{T},\mathcal{F})-(f,k,h).
\end{equation}
Thanks to Theorem~\ref{thm on mapping properties}, this map is well-defined, continuous, and the Fr\'echet derivative with respect to the first factor, $D_1\Bar{\Upxi}:\mathcal{O}_{s,r}(\varrho/C)\times\R\times(\R^+)^3\times\bf{W}_{1+s,r}\times\bf{Y}_{s,r}\to\mathcal{L}({_\diamond}\bf{X}_{s,r};\bf{Y}_{s,r})$, exists and is continuous. Moreover, we have that $\Bar{\Xi}(0,\bf{v},0)=0$.  Theorem~\ref{thm on well-posedness of the linearization in mixed-type Sobolev spaces, II} then shows that $D_1\Bar{\Upxi}(0,\bf{v},0)$ is an isomorphism from ${_\diamond}\bf{X}_{s,r}$ to $\bf{Y}_{s,r}$.   We may then invoke the version of the implicit function theorem given in Theorem~\ref{implicit function theorem} to obtain parameters $\uprho_{s,\bf{v}},\uprho_{s,\bf{v}}'\in\R^+$ along with the map $\upiota_{\bf{v}}$.
\end{proof}

We can recast the previous theorem into the following more general statement via a gluing argument.

\begin{thm}[Well-posedness, II]\label{well-posedness thm after gluing}
    Let $1<r<2$, $\N\ni s>3/r$. There exists and open set
    \begin{equation}\label{open set for the data}
        \tcb{0}\times(\R^+)^3\times\tcb{0}\times\tcb{0}\subset \mathscr{U}_s\subset\R\times(\R^+)^3\times\bf{W}_{1+s,r}\times\bf{Y}_{s,r}
    \end{equation}
    and a continuous map
    \begin{equation}\label{the glue map is sticky}
        \upiota:\mathscr{U}_s\to\bigcup_{\bf{v}\in(\R^+)^3}B(0,\uprho'_{s,\bf{v}})\subset\mathcal{O}_{s,r}(\varrho/C),
    \end{equation}
    where the radii $\uprho'_{s,\bf{v}}>0$ are as in Theorem \ref{first theorem on well-posedness for nonlinear problem}, with the property that for all $\bf{U}=(\gam,\mathfrak{g},\mu,\kappa,\mathcal{T},\mathcal{F},f,k,h)\in\mathscr{U}_s$ we have that $\upiota(\bf{U})=(p,u,\eta)\in B(0,\uprho'_{s,(\mathfrak{g},\mu,\kappa)})$ is the unique solution to~\eqref{modified final nonlinear equations} with data $\bf{U}$.  
\end{thm}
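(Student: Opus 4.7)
The strategy is to paste the pointwise local solution maps $\upiota_{\bf{v}}$ from Theorem \ref{first theorem on well-posedness for nonlinear problem} into one continuous map on a common open set, anchoring the choice of $\bf{v}$ to the physical parameters $(\mathfrak{g}, \mu, \kappa)$ of each datum $\bf{U}$. First I will verify that the radii $\uprho_{s, \bf{v}}$ from the pointwise IFT can be chosen lower semi-continuously in $\bf{v}$ on $(\R^+)^3$. This requires revisiting the contraction mapping proof underlying Theorem \ref{implicit function theorem} applied to $\Bar{\Upxi}$ and extracting explicit $\bf{v}$-dependence of the quantitative inputs: the norm of $\p{D_1\Bar{\Upxi}(0,\bf{v},0,0)}^{-1}$, a local modulus of continuity for $D_1\Bar{\Upxi}$, and $\Bar{\Upxi}(0,\bf{v},0,0) = 0$. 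Each depends continuously on $\bf{v}$ through Theorems \ref{thm on well-posedness of the linearization in mixed-type Sobolev spaces, II} and \ref{thm on mapping properties}, so a harmless shrinkage yields the required lower semi-continuity.

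With this in hand, I define
\begin{equation*}
    \mathscr{U}_s := \bcb{\bf{U} = (\gam, \mathfrak{g}, \mu, \kappa, \mathcal{T}, \mathcal{F}, f, k, h) \;:\; \tnorm{(\gam, \mathcal{T}, \mathcal{F}, f, k, h)}_{\R \times \bf{W}_{1+s,r} \times \bf{Y}_{s,r}} < \uprho_{s, (\mathfrak{g}, \mu, \kappa)}},
\end{equation*}
which is open by lower semi-continuity and trivially contains $\tcb{0} \times (\R^+)^3 \times \tcb{0} \times \tcb{0}$, establishing \eqref{open set for the data}. The defining inequality is precisely the condition that $\bf{U}$ lies in the IFT ball around $(0, (\mathfrak{g}, \mu, \kappa), 0, 0)$, so I set $\upiota(\bf{U}) := \upiota_{(\mathfrak{g}, \mu, \kappa)}(\bf{U})$; Theorem \ref{first theorem on well-posedness for nonlinear problem} then directly yields $\upiota(\bf{U}) \in B(0, \uprho'_{s, (\mathfrak{g}, \mu, \kappa)})$ together with the fact that it solves \eqref{modified final nonlinear equations} with data $\bf{U}$ and is unique in this ball.

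Continuity of $\upiota$ requires a consistency argument since the anchor $\bf{v} = (\mathfrak{g}, \mu, \kappa)$ varies with $\bf{U}$. After further shrinking each $\uprho_{s, \bf{v}}$ so that $\upiota_{\bf{v}}$ maps its input ball into $B(0, \uprho'_{s, \bf{v}}/2)$ (possible since $\upiota_{\bf{v}}(0, \bf{v}, 0, 0) = 0$ and $\upiota_{\bf{v}}$ is continuous), I claim $\upiota_{\bf{v}_1}(\bf{U}) = \upiota_{\bf{v}_2}(\bf{U})$ whenever $\bf{U}$ lies in the overlap of the two IFT balls: assuming WLOG $\uprho'_{s, \bf{v}_1} \le \uprho'_{s, \bf{v}_2}$, both outputs sit in $B(0, \uprho'_{s, \bf{v}_2})$ and therefore coincide by the uniqueness clause of Theorem \ref{first theorem on well-posedness for nonlinear problem}. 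Fixing $\bf{U}_0 \in \mathscr{U}_s$ with anchor $\bf{v}_0$ and using the lower semi-continuity from the first step to guarantee that nearby $\bf{U}$ still lie in the IFT ball around $(0, \bf{v}_0, 0, 0)$, the consistency argument gives $\upiota(\bf{U}) = \upiota_{\bf{v}_0}(\bf{U})$ in a neighborhood of $\bf{U}_0$, so continuity of $\upiota$ at $\bf{U}_0$ is inherited from that of $\upiota_{\bf{v}_0}$. The primary obstacle is the quantitative refinement of the abstract implicit function theorem needed in the first step: one must descend to the underlying Banach fixed-point estimates rather than use Theorem \ref{implicit function theorem} as a black box.
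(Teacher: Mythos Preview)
Your argument is correct but takes a more laborious route than the paper's. The paper simply sets
\[
\mathscr{U}_s=\bigcup_{\bf{v}\in(\R^+)^3}B\bigl((0,\bf{v},0),\uprho_{s,\bf{v}}\bigr)
\]
and defines $\upiota=\upiota_{\bf{v}}$ on each constituent ball. As a union of open balls $\mathscr{U}_s$ is automatically open, and since each center $(0,\bf{v},0)$ lies in its own ball the inclusion~\eqref{open set for the data} is immediate. No lower semi-continuity of the radii and no quantitative refinement of Theorem~\ref{implicit function theorem} are needed: the ``primary obstacle'' you identify simply does not arise, because the paper does not insist that the anchor $\bf{v}$ coincide with the physical parameters of $\bf{U}$.

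The only nontrivial point in the paper's organization is the overlap consistency $\upiota_{\bf{v}}=\upiota_{\bf{w}}$ on the intersection of their domain balls, which the paper dispatches in one sentence by appeal to local uniqueness. Your $\uprho'/2$-shrinking device makes this step more explicit and is a perfectly good alternative; one can also argue via connectedness of the convex overlap (it always contains a point of the form $(0,\bf{v}_\ast,0,0)$ at which both maps vanish, and the agreement set is open and closed by continuity plus local uniqueness). What your approach buys is a cleaner consistency step and the sharper conclusion $\upiota(\bf{U})\in B(0,\uprho'_{s,(\mathfrak{g},\mu,\kappa)})$ for free; what the paper's approach buys is that openness of $\mathscr{U}_s$ and the containment~\eqref{open set for the data} come with no work at all, keeping Theorem~\ref{implicit function theorem} a black box.
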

\begin{proof}
    We set
    \begin{equation}
        \mathscr{U}_{s}=\bigcup_{\bf{v}\in(\R^+)^3}B((0,\bf{v},0),\uprho_{s,\bf{v}}),
    \end{equation}
    where $\uprho_{s,\bf{v}}$ are the radii granted by Theorem~\ref{first theorem on well-posedness for nonlinear problem}. We propose defining the map~\eqref{the glue map is sticky} via
    \begin{equation}
        \upiota=\upiota_{\bf{v}}\quad\text{on the set }B((0,\bf{v},0),\uprho_{s,\bf{v}}) \text{ whenever }\bf{v}\in(\R^+)^3,
    \end{equation}
    where the maps $\upiota_{\bf{v}}$ are the solution operators granted by Theorem~\ref{first theorem on well-posedness for nonlinear problem}.  This is well-defined since if $\bf{v},\bf{w}\in(\R^+)^3$ are such that $B((0,\bf{v},0),\uprho_{s,\bf{v}})\cap B((0,\bf{w},0),\uprho_{s,\bf{w}})\neq\es$, then the maps $\upiota_{\bf{v}}$ and $\upiota_{\bf{w}}$ agree on this intersection since, according to the aforementioned theorem, they are both the unique solution operators to the PDE~\eqref{modified final nonlinear equations}.  Once we know that $\upiota$ is well-defined, continuity follows from the continuity of each $\upiota_{\bf{v}}$.  The remaining uniqueness assertions are just a restatement of those from Theorem~\ref{first theorem on well-posedness for nonlinear problem}.   
    \end{proof}

The next well-posedness result gains an extra derivative on the solution to reach the optimal counting.

\begin{thm}[Well-posedness, III]\label{third theorem on well-posedness for nonlinear problem}
    Let $1<r<2$, $\N\ni s>3/r+1$. There exists an open set
    \begin{equation}
        \tcb{0}\times(\R^+)^3\times\tcb{0}\subset\mathscr{W}_s\subset\R\times(\R^+)^3\times\bf{W}_{s,r} 
    \end{equation}
    such that $\mathscr{W}_s\times\tcb{0}\subset\mathscr{U}_{s-1}$, where $\mathscr{U}_{s-1}$ is as in Theorem \ref{well-posedness thm after gluing},     and a continuous map
    \begin{equation}
        \tilde{\upiota}:\mathscr{W}_{s}\to\bf{X}_{s,r}\cap\bigcup_{\bf{v}\in(\R^+)^3}B_{\bf{X}_{s-1,r}}(0,\uprho'_{s-1,\bf{v}})\subset\mathcal{O}_{s,r}(\varrho/C)
    \end{equation}
    with the property that for all $\bf{W}=(\gam,\mathfrak{g},\mathfrak{\mu},\kappa,\mathcal{T},\mathcal{F})\in\mathscr{W}_s$ we have that $\tilde{\upiota}(\bf{W})=(p,u,\eta)\in\bf{X}_{s,r}\cap B_{\bf{X}_{s,r}}(0,\uprho'_{s-1,(\mathfrak{g},\mu,\kappa)})$ is the unique solution to~\eqref{modified final nonlinear equations} with data $(\bf{W},0)$.

\end{thm}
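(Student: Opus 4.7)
The plan is to take the level-$(s-1)$ solution produced by Theorem \ref{well-posedness thm after gluing} (which applies because $\bf{W}_{s,r} = \bf{W}_{(s-1)+1,r}$ and we can set the $\bf{Y}_{s-1,r}$-data to zero) and bootstrap its regularity up to $\bf{X}_{s,r}$. First we set
\begin{equation*}
    \mathscr{W}_s = \{\bf{W}\in\R\times(\R^+)^3\times\bf{W}_{s,r}\;:\;(\bf{W},0)\in\mathscr{U}_{s-1}\},
\end{equation*}
which is open (as the preimage of the open set $\mathscr{U}_{s-1}$ under the continuous inclusion $\bf{W}\mapsto(\bf{W},0)$) and contains $\{0\}\times(\R^+)^3\times\{0\}$ by \eqref{open set for the data}. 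For $\bf{W}\in\mathscr{W}_s$, Theorem \ref{well-posedness thm after gluing} then produces a unique triple $(p,u,\upeta) = \upiota(\bf{W},0)\in\bf{X}_{s-1,r}\cap B(0,\uprho'_{s-1,(\mathfrak{g},\mu,\kappa)})$ classically solving \eqref{modified final nonlinear equations} with data $(\bf{W},0)$, and this assignment is continuous. We set $\tilde\upiota(\bf{W})=(p,u,\upeta)$; the task reduces to verifying that $\tilde\upiota$ actually takes values in $\bf{X}_{s,r}$ and is continuous into this stronger space.

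To execute the bootstrap, set $\eta = P_\gam\upeta \in \tilde H^{3/2+s,r}(\Sigma;\R)$ (via Proposition \ref{properties of the anisotroptic paramterization operators}) and view \eqref{modified final nonlinear equations} as the stationary linear system \eqref{linearization of the nonlinear problem} for the unknown $(p,u,\eta)$ with forcing $(f_\ast, k_\ast, h_\ast)$, where $f_\ast$ and $k_\ast$ collect all of the purely quadratic and higher-order nonlinear terms (convection, geometry and mean-curvature corrections, and the composition data $J_\eta M_\eta^{-\m{t}}\mathcal{F}\circ\mathfrak{F}_\eta$ and $\mathcal{T}\circ\mathfrak{F}_\eta M_\eta^{\m{t}} e_3$), and $h_\ast = -u\cdot e_3 = -\gam\pd_1 P_\gam\upeta$. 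The product, algebra, composition, and trace estimates from Section \ref{section on novel Sobolev spaces} (specifically Lemma \ref{product estimates in mixed type Sobolev spaces}, Propositions \ref{proposition on smoothness of a certain product map}, \ref{corollary Banach algebra of the sum space}, \ref{proposition on composition} applied with $\ell=0$ and $m=s$, \ref{prop on traces of mixed type Sobolev spaces}, and the $L^\infty$-embedding in \ref{the embedding into the Linfty space}) show that $f_\ast$ and $k_\ast$ each gain one full derivative beyond what is needed for $\bf{Y}_{s-1,r}$ and land in $H^s_{r,2}(\Omega;\R^3)$ and $H^{1/2+s,r}(\Sigma;\R^3)$, respectively. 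The $\dot H^{-1,r}$-component of $h_\ast$ is handled immediately by the anisotropic bound $\gam\mathcal{R}_1 P_\gam\upeta\in H^{s,r}(\Sigma;\R)$ from Proposition \ref{properties of the anisotroptic paramterization operators}.

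The main obstacle is showing $h_\ast\in H^{3/2+s,r}$: a priori only $u\cdot e_3\in H^{1/2+s,r}$ is available from the trace of $u\in H^{1+s}_{r,2}$, which is one full derivative short, and this circular regularity issue is precisely the difficulty that must be broken. The plan to break it is a tangential-derivative bootstrap, exploiting the tangential translation symmetry of $\Omega$: applying $\pd_j$ for $j\in\{1,2\}$ to the rearranged system places $\pd_j(p,u,\eta)$ in the role of unknowns in an analogous stationary linear system whose forcing still lies in $\bf{Y}_{s-1,r}$, since the commutators of $\pd_j$ with the nonlinear coefficients can be absorbed into the forcing via the same multilinear estimates cited above. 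Applying Theorem \ref{thm on well-posedness of the linearization in mixed-type Sobolev spaces, II} at level $s-1$ to this tangentially differentiated system upgrades $(p,u,\eta)$ by one full tangential derivative; the remaining normal regularity of $(p,u)$ is supplied algebraically from the bulk momentum and incompressibility equations, in which $\pd_3^2 u$ and $\pd_3 p$ solve in terms of tangential derivatives and the bulk forcing. This produces $(p,u)\in H^{1+s}_{r,2}(\Omega)\times H^{2+s}_{r,2}(\Omega;\R^3)$; the trace result in Proposition \ref{prop on traces of mixed type Sobolev spaces} then upgrades $h_\ast\in H^{3/2+s,r}\cap\dot H^{-1,r}$, so a final application of Theorem \ref{thm on well-posedness of the linearization in mixed-type Sobolev spaces, II} at level $s$ yields $(p,u,\eta)\in\bf{X}_{s,r}$. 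To conclude $\upeta\in\tilde H^{5/2+s,r}$, one employs the symbol identity $\nabla\upeta = \nabla\eta + 4\pi^2\gam\mathcal{R}\mathcal{R}_1\eta$ (coming from the Fourier representation of $P_\gam^{-1}$) together with the $L^r$-boundedness of Riesz transforms and the inclusion $\gam\mathcal{R}_1\eta\in H^{3/2+s,r}$, now available because $u\in H^{2+s}_{r,2}$ and the kinematic BC supplies $\gam\pd_1\eta = -u\cdot e_3$. Continuity of $\tilde\upiota$ is then inherited from that of $\upiota$, the continuity of the constituent nonlinear and anisotropic maps (Proposition \ref{prop on mapping properties of the momentum equation} and Proposition \ref{properties of the anisotroptic paramterization operators}), and the boundedness of the linear solution operator on $\bf{Y}_{s,r}$.
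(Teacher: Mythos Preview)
There is a genuine gap in your bootstrap. The claim that $f_\ast$ and $k_\ast$ ``gain one full derivative beyond what is needed for $\bf{Y}_{s-1,r}$'' is false: the quasilinear viscous correction
\[
\mu\,\nabla\cdot\mathbb{D}u-\mu\, M_\eta^{-\m{t}}\bigl(\nabla\cdot\bigl((\mathbb{D}_{\mathcal{A}_\eta}(M_\eta^{-1}u))M_\eta^{\m{t}}\bigr)\bigr)
\]
contains terms of the schematic form $(\text{geometry}-I)\cdot\nabla^2 u$, and since you only know $u\in H^{1+s}_{r,2}$ a priori, $\nabla^2 u\in H^{s-1}_{r,2}$, so $f_\ast\in H^{s-1}_{r,2}$ and \emph{not} $H^s_{r,2}$. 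The same obstruction makes the tangential-derivative bootstrap circular: after applying $\pd_j$ and rearranging into a constant-coefficient linear problem for $\pd_j(p,u,\eta)$, the forcing contains $(\text{geometry}-I)\cdot\nabla^2\pd_j u$, which lies only in $H^{s-2}_{r,2}$ unless you already know $\pd_j u\in H^{1+s}_{r,2}$---exactly what you are trying to prove. Theorem~\ref{thm on well-posedness of the linearization in mixed-type Sobolev spaces, II} handles only the constant-coefficient operator, so you cannot absorb the top-order variable-coefficient piece into the left-hand side.

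The paper avoids this entirely by observing that the derivative loss in the nonlinear map $\Upxi$ comes \emph{only} from the composition pieces $\Upsilon_1,\Upsilon_2$ (Proposition~\ref{proposition on composition} gives merely $C^m$ regularity for composition), whereas $\Xi_1,\Xi_2$ are smooth maps $\mathcal{O}_{s,r}\to H^s_{r,2}\times H^{1/2+s,r}$ with no loss. Since $(\mathcal{T},\mathcal{F})\in\bf{W}_{s,r}$ carries one extra derivative, the compositions $\Upsilon_1(\mathcal{F},P_\gam\upeta)$ and $\Upsilon_2(\mathcal{T},P_\gam\upeta)$ already lie in $\bf{Y}_{s,r}$. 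The trick is then to shift these into the $(f,k)$ slot, set the $(\mathcal{T},\mathcal{F})$ slot to zero, and invoke the \emph{nonlinear} solution operator $\upiota$ at level $s$ (not the linear theory) on this rearranged data; the resulting identity
\[
(p,u,\upeta)=\upiota\bigl(\gam,\mathfrak{g},\mu,\kappa,0,0,\Upsilon_1(\mathcal{F},P_\gam\upeta),\Upsilon_2(\mathcal{T},P_\gam\upeta),0\bigr)
\]
immediately places the solution in $\bf{X}_{s,r}$ with continuous dependence, after shrinking the domain so that the right-hand side lands in $\mathscr{U}_s$.
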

\begin{proof}
    We set $\tilde{\upiota}(\gam,\mathfrak{g},\mu,\kappa,\mathcal{T},\mathcal{F})=\upiota(\gam,\mathfrak{g},\mu,\kappa,\mathcal{T},\mathcal{F},0,0,0)$, where $\upiota$ is the solution operator granted by Theorem~\ref{well-posedness thm after gluing}. A priori, we only know that
    \begin{equation}\label{a priori continuity}
    \tilde{\upiota}:\bigcup_{\bf{v}\in(\R^3)^+}B((0,\bf{v},0),\uprho_{s,\bf{v}})\subset\R\times\tp{\R^+}^3\times\bf{W}_{s,r}\to\bigcup_{\bf{v}\in(\R^+)^3}B_{\bf{X}_{s-1,r}}(0,\uprho_{s-1,\bf{v}}')
    \end{equation}
    is a continuous mapping.
    
    To complete the proof, we claim that by shrinking the domain a little bit, if necessary, we can gain an additional derivative on the solution.  To see this let $0<\sig\le1$ and set
    \begin{equation}
        \mathscr{W}_s(\sig)=\bigcup_{\bf{v}\in(\R^3)^+}B((0,\bf{v},0),\sig \uprho_{s,\bf{v}})\subset\R\times\tp{\R^+}^3\times\bf{W}_{s,r}.
    \end{equation}
    Denote $(p,u,\upeta)=\tilde{\upiota}(\gam,\mathfrak{g},\mu,\kappa,\mathcal{T},\mathcal{F})$. By the second item of Proposition~\ref{prop on mapping properties of the momentum equation}, the continuity of the solution map in~\eqref{a priori continuity}, and the continuity of composition we have that the map
    \begin{equation}
        \mathscr{W}_s(\sig)\ni(\gam,\mathfrak{g},\mu,\kappa,\mathcal{T},\mathcal{F})\mapsto(\Upupsilon_1(\mathcal{F},P_\gam\upeta),\Upupsilon_2(\mathcal{T},P_\gam\upeta),0)\in\bf{Y}_{s,r}
    \end{equation}
    is continuous, vanishes whenever $\mathcal{T}$ and $\mathcal{F}$ are zero, and is independent of $(\mathfrak{g},\mu,\kappa)$. Thus, by taking $0<\sig_\star\le1$ sufficiently small, we guarantee that 
    \begin{equation}\label{wont you please arrange it}
        \mathscr{W}_s(\sig_\star)\ni(\gam,\mathfrak{g},\mu,\kappa,\mathcal{T},\mathcal{F})\mapsto(\gam,\mathfrak{g},\mu,\kappa,0,0,\Upupsilon_1(\mathcal{F},P_\gam\upeta),\Upupsilon_2(\mathcal{T},P_\gam\upeta),0)\in \mathscr{U}_s.
    \end{equation}
    Since the map in~\eqref{wont you please arrange it} actually has its range in the domain of the map $\upiota$, which is given in~\eqref{the glue map is sticky}, we have verified the following key identity:
    \begin{equation}\label{key oberservation}
        (p,u,\upeta)=\upiota(\gam,\mathfrak{g},\mu,\kappa,0,0,\Upupsilon_1(\mathcal{F},P_\gam\upeta),\Upupsilon_2(\mathcal{T},P_\gam\upeta),0).
    \end{equation}    
     Then the mapping properties of $\upiota$ reveal that the solution
     \begin{equation}
         (p,u,\upeta)\in B_{\bf{X}_{s,r}}(0,\uprho'_{s,(\mathfrak{g},\mu,\kappa)})\cap B_{\bf{X}_{s-1,r}}(0,\uprho'_{s-1,(\mathfrak{g},\mu,\kappa)})
     \end{equation}
     varies continuously with the data $(\gam,\mathfrak{g},\mu,\kappa,\mathcal{T},\mathcal{F})\in\mathscr{W}_s=\mathscr{W}_s(\sig_\star)$.  This proves the claim.
\end{proof}

This section's list of main theorems is concluded with the following.

\begin{thm}[Well-posedness, IV]\label{coro fourth times the charm for well-posedness}
    Let $1<r<2$, $\N\ni s>3/r+1$, and $\mathscr{W}_s$ be the open set from Theorem \ref{third theorem on well-posedness for nonlinear problem}. There exists a map
    \begin{equation}\label{solution operator}
    \mathscr{W}_s\ni(\gam,\mathfrak{g},\mu,\kappa,\mathcal{T},\mathcal{F})\mapsto(p,u,\eta)\in\mathcal{O}_{s,r}(\varrho)
    \end{equation}
    such that the following hold.
    \begin{enumerate}
        \item The map  is a continuous solution operator to the nonlinear system~\eqref{final nonlinear equations}.
        \item The map  is locally unique in the sense that there exist open sets $W_s\subseteq\mathscr{W}_s$ and $\tcb{V_s(\bf{v})}_{\bf{v}\in(\R^+)^3}\subseteq\mathcal{O}_{s,r}(\varrho)$, obeying the non-degeneracy conditions of~\eqref{non degeneracy conditions}, for which the following two conditions hold.
    \begin{enumerate}[(i)]
            \item The image of $W_s$ under~\eqref{solution operator} is contained within $\bigcup_{\bf{v}\in(\R^+)^3}V_s(\bf{v})$.
            \item For each $\bf{v}\in(\R^+)^3$ the restriction of~\eqref{solution operator} to the preimage of $V_s(\bf{v})$, thought of as a mapping to $V_s(\bf{v})$, is the unique function that is a solution operator to~\eqref{final nonlinear equations}.
    \end{enumerate}
        \item We have an extra `anisotropic' estimate on the free surface in the sense that the composition map
    \begin{equation}\label{the extra anisotropic estimate}
        \mathscr{W}_s\ni(\gam,\mathfrak{g},\mu,\kappa,\mathcal{T},\mathcal{F}) \mapsto (p,u,\eta)  \mapsto\gam\mathcal{R}_1\eta\in L^r(\Sigma)
    \end{equation}
        is well-defined and continuous, where $\mathcal{R}_1$ is the Riesz  transform in the $e_1$ direction.
    \end{enumerate}

\end{thm}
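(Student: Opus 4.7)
My plan is to construct the solution operator by composing the solution map $\tilde{\upiota}$ for the modified system~\eqref{modified final nonlinear equations} from Theorem~\ref{third theorem on well-posedness for nonlinear problem} with the substitution $\eta = P_\gam\upeta$, where $P_\gam$ is the anisotropic parameterization operator from Definition~\ref{the gamma parameterization operator}.  Concretely, I would set
\begin{equation*}
    \iota(\gam,\mathfrak{g},\mu,\kappa,\mathcal{T},\mathcal{F}) = (p,u,P_\gam\upeta), \qquad (p,u,\upeta) = \tilde{\upiota}(\gam,\mathfrak{g},\mu,\kappa,\mathcal{T},\mathcal{F}).
\end{equation*}
A direct verification using Definition~\ref{the gamma parameterization operator} shows that the substitution $\eta = P_\gam\upeta$ converts~\eqref{modified final nonlinear equations} (with $f=k=h=0$) into~\eqref{final nonlinear equations}, so $\iota$ is a solution operator for the latter.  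Continuity follows from continuity of $\tilde{\upiota}$ together with the joint continuity $(\gam,\upeta) \mapsto P_\gam\upeta \in \tilde{H}^{5/2+s,r}(\Sigma;\R)$ granted by the first item of Proposition~\ref{properties of the anisotroptic paramterization operators}, establishing item 1.  For item 3 I would note $\gam\mathcal{R}_1\eta = \gam\mathcal{R}_1P_\gam\upeta$ and appeal to the third item of Proposition~\ref{properties of the anisotroptic paramterization operators}, which supplies continuity of the assignment $(\gam,\upeta) \mapsto \gam\mathcal{R}_1P_\gam\upeta \in H^{s,r}(\Sigma;\R) \emb L^r(\Sigma)$.

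For item 2 I would take $V_s(\bf{v}) \subset \mathcal{O}_{s,r}(\varrho)$ to be the intersection with an open ball around $0$ in $\bf{X}_{s-1,r}$ of some radius $\rho_{\bf{v}}>0$ depending continuously on $\bf{v} \in (\R^+)^3$, and shrink $\mathscr{W}_s$ from Theorem~\ref{third theorem on well-posedness for nonlinear problem} to an open sub-neighborhood $W_s$ of $\{0\} \times (\R^+)^3 \times \{0\}$ small enough that $\iota(W_s)$ lies in $\bigcup_{\bf{v}} V_s(\bf{v})$; this is possible by continuity of $\iota$ and the fact that $\iota$ vanishes on $\{0\}\times(\R^+)^3\times\{0\}$.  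The non-degeneracy conditions~\eqref{non degeneracy conditions} and condition (i) are then clear, and condition (ii) reduces to the pointwise uniqueness assertion that for each $(\gam,\mathfrak{g},\mu,\kappa,\mathcal{T},\mathcal{F}) \in W_s$ there is at most one $(p,u,\eta) \in V_s(\mathfrak{g},\mu,\kappa)$ classically solving~\eqref{final nonlinear equations}.

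To prove this pointwise uniqueness, my plan is to invert the substitution $\eta = P_\gam\upeta$: starting from a candidate solution $(p,u,\eta)$, I would produce $\upeta \in \tilde{H}^{5/2+s,r}(\Sigma;\R)$ with $P_\gam\upeta = \eta$ such that $(p,u,\upeta)$ solves~\eqref{modified final nonlinear equations} with trivial data; the uniqueness assertion of Theorem~\ref{third theorem on well-posedness for nonlinear problem} then forces $(p,u,\upeta) = \tilde{\upiota}(\gam,\mathfrak{g},\mu,\kappa,\mathcal{T},\mathcal{F})$, and hence $\eta = P_\gam\upeta$ coincides with the image under $\iota$.  Once the lift is in hand, verifying that $(p,u,\upeta)$ satisfies~\eqref{modified final nonlinear equations} is a matter of applying the product and chain rules to unwind the $P_\gam$ substitution and using that the original triple satisfies~\eqref{final nonlinear equations}.

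The hard part is the construction of the lift itself.  The formal inverse symbol $\bf{p}_\gam^{-1}(\xi) = 1 + i\gam\xi_1/(2\pi|\xi|^2)$ carries a low-frequency singularity of order $|\xi|^{-1}$, so $P_\gam^{-1}$ is not bounded on $\tilde{H}^{5/2+s,r}(\Sigma;\R)$, and the formal expression $\upeta = \eta + \gam\mathcal{R}_1|D|^{-1}\eta/(2\pi)$ is not meaningful for arbitrary $\eta$ in this space.  What rescues the construction is the kinematic boundary condition $u\cdot e_3 + \gam\pd_1\eta = 0$ on $\Sigma$, which imposes $\gam\pd_1\eta = -\m{Tr}_\Sigma(u\cdot e_3) \in H^{3/2+s,r}(\Sigma;\R)$ and thereby forces $\gam\mathcal{R}_1\eta$ to carry precisely the extra anisotropic regularity needed to interpret $\gam\mathcal{R}_1|D|^{-1}\eta/(2\pi)$ as a legitimate element of $\tilde{H}^{5/2+s,r}(\Sigma;\R)$; the Riesz-transform identity $\nabla(P_\gam^{-1}\eta - \eta) = \gam\mathcal{R}_1\mathcal{R}\nabla\eta$ gives $\nabla\upeta \in H^{3/2+s,r}(\Sigma;\R^2)$, which together with the $L^{2r/(2-r)}$ control on $\eta$ places $\upeta$ in $\tilde{H}^{5/2+s,r}(\Sigma;\R)$ as required.
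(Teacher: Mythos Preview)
Your overall strategy matches the paper's: define the solution operator as $\tilde{\upiota}$ followed by $\upeta\mapsto P_\gam\upeta$, use Proposition~\ref{properties of the anisotroptic paramterization operators} for items 1 and 3, and prove item 2 by lifting a candidate solution $(p,u,\eta)$ back to $(p,u,\upeta)=(p,u,P_\gam^{-1}\eta)$ and invoking the uniqueness in Theorem~\ref{third theorem on well-posedness for nonlinear problem}.

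There is, however, a real gap in the lift. You write that the kinematic boundary condition ``imposes $\gam\pd_1\eta=-\m{Tr}_\Sigma(u\cdot e_3)\in H^{3/2+s,r}(\Sigma;\R)$ and thereby forces $\gam\mathcal{R}_1\eta$ to carry precisely the extra anisotropic regularity.'' But the inclusion $\gam\pd_1\eta\in H^{3/2+s,r}$ is automatic from $\eta\in\tilde{H}^{5/2+s,r}$ and carries no information; what you actually need is the \emph{low-frequency} control $\gam\pd_1\eta\in\dot{H}^{-1,r}(\Sigma)$, so that $\gam\mathcal{R}_1\eta=(2\pi)^{-1}|D|^{-1}(\gam\pd_1\eta)\in L^r(\Sigma)$ and hence $\nabla(P_\gam^{-1}\eta-\eta)=\gam\mathcal{R}\mathcal{R}_1\eta\in H^{3/2+s,r}$. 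The kinematic identity $\gam\pd_1\eta=-\m{Tr}_\Sigma(u\cdot e_3)$ does not yield this on its own, since $\m{Tr}_\Sigma(u\cdot e_3)$ is merely in $H^{3/2+s,r}$.

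The paper closes this gap by also using the structural constraints on $u\in{_\diamond}H^{2+s}_{r,2}(\Omega;\R^3)$: integrating $\grad\cdot u=0$ over $y\in(0,b)$ and using $\m{Tr}_{\Sigma_0}u=0$ gives
\[
\gam\pd_1\eta=(\grad_{\|},0)\cdot\int_0^b u(\cdot,y)\,\m{d}y,
\]
a tangential divergence, so that $|D|^{-1}(\gam\pd_1\eta)$ is a bounded Riesz-transform combination of $\int_0^b u\,\m{d}y$ (cf.\ Proposition~\ref{the divergence compatibility condition is here for you to check it out}). This yields the quantitative bound $\tnorm{P_\gam^{-1}\eta}_{\tilde{H}^{5/2+s,r}}\lesssim\tnorm{u,\eta}_{H^{2+s}_{r,2}\times\tilde{H}^{5/2+s,r}}$, which is precisely what lets one choose the balls $V_s(\bf{v})$ small enough that the lift lands where Theorem~\ref{third theorem on well-posedness for nonlinear problem}'s uniqueness applies. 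Once you insert this step, your argument is complete and coincides with the paper's.
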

\begin{proof}
    We take~\eqref{solution operator} to be the composition $(\gam,\mathfrak{g},\mu,\kappa,\mathcal{T},\mathcal{F})\overset{\tilde{\upiota}}{\mapsto}(p,u,\upeta)\mapsto (p,u,\eta)$, with $\eta=P_\gam\upeta$. Thanks to Theorem~\ref{third theorem on well-posedness for nonlinear problem} and Proposition~\ref{properties of the anisotroptic paramterization operators}, this is a continuous solution operator for~\eqref{final nonlinear equations}. The final item of the aforementioned proposition also guarantees that the mapping of~\eqref{the extra anisotropic estimate} is well-defined and continuous.
    
    It remains to establish local uniqueness. Suppose that $(p,u,\eta),(p',u',\eta')\in \mathcal{O}_{s,p}(\varrho)$ are such that there exists $(\gam,\mathfrak{g},\mu,\kappa,\mathcal{T},\mathcal{F})\in \mathscr{W}_s$ and both $(p,u,\eta)$ and $(p',u',\eta')$ are solutions to~\eqref{final nonlinear equations} with the same data $(\mathcal{T},\mathcal{F})$, same wave speed $\gam$, and same physical constants $(\mathfrak{g},\mu,\kappa)$. Integrating the divergence free constraint over $y \in (0,b)$ and appealing to the kinematic boundary condition and the no slip condition yields the identity
    \begin{equation}\label{derivative identity}
        \gam\pd_1\zeta=(\grad_{\|},0)\cdot\int_0^bw(\cdot,y)\;\m{d}y
        \text{ for }
        (\zeta,w)\in\tcb{(\eta,u),(\eta',u')}.
    \end{equation}
    Therefore, by~\eqref{derivative identity} and the identity $P_\gam^{-1} \nabla =\nabla +\mathcal{R} \gam \mathcal{R}_1 $,  where $\mathcal{R} =(\mathcal{R}_1,\mathcal{R}_2)$ is the vector of Riesz transforms, we have the estimate
    \begin{equation}
        \tnorm{P_\gam^{-1}\zeta}_{\tilde{H}^{5/2+s,r}}\lesssim\tnorm{\zeta}_{\tilde{H}^{5/2+s,r}}+\bnorm{|D|^{-1}(\grad_{\|},0)\cdot\int_0^bw(\cdot,y)\;\m{d}y}_{H^{3/2+s,r}}\lesssim \tnorm{w,\zeta}_{H^{2+s}_{r,2}\times\tilde{H}^{5/2+s,r}},
    \end{equation}
    where in both of the inequalities we have applied the boundedness of Riesz transforms. If $(p,u,\eta)$ and $(p',u',\eta')$ are sufficiently small, we therefore guarantee that $(p,u,P_\gam^{-1}\eta),(p',u',P_\gam^{-1}\eta')\in \bf{X}_{s,r}\cap B_{\bf{X}_{s-1,r}}(0,\uprho'_{s-1,(\mathfrak{g},\mu,\kappa)})$, but we can then invoke the uniqueness assertion of Theorem~\ref{third theorem on well-posedness for nonlinear problem} to find that $(p,u,P_\gam^{-1}\eta)=(p',u',P_\gam^{-1}\eta')$, which implies $(p,u,\eta)=(p',u',\eta')$.
    
    Thus, we take $V_s(\mathfrak{g},\mu,\kappa)$ to be an open ball about the origin of a positive radius that obeys the above smallness requirements. The set $W_s$ is then defined to be the union of the preimages of these balls under the map~\eqref{solution operator}.
\end{proof}

We now enumerate some important consequences.

\begin{coro}[Some further conclusions]\label{coro on some further conclusions}
    For $1<r<2$ and $\N\ni s>3/r+1$ the following hold.
    \begin{enumerate}
        \item \textbf{Classical solutions:} Each triple $(p,u,\eta)$ produced by Theorem~\ref{coro fourth times the charm for well-posedness} is a classical solution    
        to system~\eqref{final nonlinear equations}.  More precisely, whenever $(\gam,\mathfrak{g},\mu,\kappa,\mathcal{T},\mathcal{F})\in\mathscr{W}_s$ we have that the associated solution satisfies
        \begin{equation}\label{classicality}
            (p,u,\eta)\in C^{2+k}_0(\Omega)\times C^{3+k}_0(\Omega;\R^3)\times C^{4+k}_0(\Sigma),
        \end{equation}
        for $k=s-2-\tfloor{3/r}\in\N$.
        
        \item \textbf{Eulerian transfer:}  Each solution $(p,u,\eta)$ to system~\eqref{final nonlinear equations}, produced by Theorem~\ref{coro fourth times the charm for well-posedness} gives rise to a corresponding classical solution
        \begin{equation}
            (q,v,\eta)\in C^{2+k}_0(\Omega[\eta])\times C^{3+k}_0(\Omega[\eta])\times C^{4+k}_0(\Omega[\eta]),\quad k=s-2-\tfloor{3/r}
        \end{equation}
        to the stationary-traveling Eulerian formulation of the problem given by system~\eqref{the stationarity ansatz} via unflattening.
        
        \item \textbf{Fixed physical parameters, variable wave speed:} For each $(\mathfrak{g},\mu,\kappa)\in(\R^+)^3$, there exists an open set $(0,0,0)\in W_s(\mathfrak{g},\mu,\kappa)\subset\R\times\bf{W}_{s,r}$ and a unique function
        \begin{equation}\label{the particular solution map}
            W_s(\mathfrak{g},\mu,\kappa)\ni(\gam,\mathcal{T},\mathcal{F})\mapsto(p,u,\eta)\in V_s(\mathfrak{g},\mu,\kappa)
        \end{equation}
        with the property that for all $(\gam,\mathcal{T},\mathcal{F})$ belonging to the domain, the corresponding $(p,u,\eta)$ solves~\eqref{final nonlinear equations} with wave speed $\gam$, physical parameters $(\mathfrak{g},\mu,\kappa)$, and stress-force data $(\mathcal{T},\mathcal{F})$. Moreover, the map~\eqref{the particular solution map} is continuous.
        \item \textbf{Well-posedness of the stationary wave problem:} There exists an open set $Z_s\subset(\R^+)^3\times\bf{W}_{s,r}$ satisfying~\eqref{open set conditions for Zs} and continuous a map
        \begin{equation}\label{suzan}
            Z_s\ni(\mathcal{T},\mathcal{F},\mathfrak{g},\mu,\kappa)\mapsto(p,u,\eta)\in\bigcup_{\bf{v}\in(\R^+)^3}V_s(\bf{v})
        \end{equation}
        with the property that for all $(\mathcal{T},\mathcal{F},\mathfrak{g},\mu,\kappa)\in Z_s$, the corresponding $(p,u,\eta)$ belongs to the set $V_s(\mathfrak{g},\mu,\kappa)$ and is the unique solution to~\eqref{final nonlinear equations} with $\gam=0$ in this set with data in $Z_s$.
    \end{enumerate}
\end{coro}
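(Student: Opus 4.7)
The plan is that all four items follow from Theorem~\ref{coro fourth times the charm for well-posedness} via careful bookkeeping together with the embedding and flattening-map results already at our disposal; no substantive new analytic work is required.

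For item~1, I would appeal to Proposition~\ref{Xsr classicality}. Since $s > 3/r + 1$, the quantity $k = s - 2 - \lfloor 3/r \rfloor$ belongs to $\N$, and the embedding yields
\begin{equation}
\bf{X}_{s,r} \emb C^{2+k}_0(\Omega) \times C^{3+k}_0(\Omega;\R^3) \times C^{4+k}_0(\Sigma),
\end{equation}
so the triple $(p,u,\eta) \in \mathcal{O}_{s,r}(\varrho) \subset \bf{X}_{s,r}$ supplied by Theorem~\ref{coro fourth times the charm for well-posedness} has the classical regularity~\eqref{classicality}. Because the coefficients of the nonlinear differential operators in~\eqref{final nonlinear equations} are built from $\mathfrak{F}_\eta$ and its derivatives (Propositions~\ref{prop on properties of the flattening map} and~\ref{prop on properties of the Jacobian and geometry matricies}), which are themselves classically regular under the same hypotheses, the strong solution actually holds pointwise and is thus classical.

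For item~2, I would invoke Proposition~\ref{prop on properties of the flattening map} (applied with its regularity parameter taken to be $1+s$) to obtain that $\mathfrak{F}_\eta$ extends to a $C^{4+k}$ diffeomorphism from $\bar\Omega$ onto $\bar{\Omega[\eta]}$; the requisite inequality $4+k < (s+1)+2-3/r$ is easy to verify for any $r \in (1,2)$. Setting $q = p \circ \mathfrak{F}_\eta^{-1}$ and $v = M_\eta^{-1} u \circ \mathfrak{F}_\eta^{-1}$, the chain rule together with item~1 yields $(q,v) \in C^{2+k}_0(\Omega[\eta]) \times C^{3+k}_0(\Omega[\eta];\R^3)$, and the free surface function is unchanged. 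Reversing, pointwise in the classical setting, the calculation that produced~\eqref{final nonlinear equations} from~\eqref{the stationarity ansatz} then shows that $(q,v,\eta)$ classically solves the Eulerian stationary-traveling system.

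For items~3 and~4, I would slice the data set $W_s$ appropriately. For item~3, fix $(\mathfrak{g},\mu,\kappa) \in (\R^+)^3$ and define
\begin{equation}
W_s(\mathfrak{g},\mu,\kappa) = \{(\gam,\mathcal{T},\mathcal{F}) \in \R \times \bf{W}_{s,r} \;:\; (\gam,\mathfrak{g},\mu,\kappa,\mathcal{T},\mathcal{F}) \in W_s\};
\end{equation}
this is open by openness of $W_s$ and contains $(0,0,0)$ by~\eqref{non degeneracy conditions}. Composing the natural inclusion $(\gam,\mathcal{T},\mathcal{F}) \mapsto (\gam,\mathfrak{g},\mu,\kappa,\mathcal{T},\mathcal{F})$ with the solution operator of Theorem~\ref{coro fourth times the charm for well-posedness} yields the continuous solution map into $V_s(\mathfrak{g},\mu,\kappa)$, with uniqueness inherited from item~(ii) of that theorem. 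For item~4, analogously set
\begin{equation}
Z_s = \{(\mathfrak{g},\mu,\kappa,\mathcal{T},\mathcal{F}) \in (\R^+)^3 \times \bf{W}_{s,r} \;:\; (0,\mathfrak{g},\mu,\kappa,\mathcal{T},\mathcal{F}) \in W_s\},
\end{equation}
which is open and contains $(\R^+)^3 \times \{0\}$ by~\eqref{non degeneracy conditions}, and compose the inclusion $(\mathfrak{g},\mu,\kappa,\mathcal{T},\mathcal{F}) \mapsto (0,\mathfrak{g},\mu,\kappa,\mathcal{T},\mathcal{F})$ with the solution operator to obtain~\eqref{suzan}. The only subtlety worth flagging is ensuring that the Eulerian transfer in item~2 yields enough regularity for every term in~\eqref{the stationarity ansatz}---in particular the mean curvature $\mathscr{H}(\eta)$ and the surface stress trace---to make classical sense; but with $k \ge 0$ and $\eta \in C^{4+k}_0(\Sigma)$, this is immediate, so no genuine obstacle arises.
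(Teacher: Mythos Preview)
Your proposal is correct and follows essentially the same route as the paper: item~1 via Proposition~\ref{Xsr classicality}, item~2 via unflattening with $q=p\circ\mathfrak{F}_\eta^{-1}$ and $v=(M_\eta^{-1}u)\circ\mathfrak{F}_\eta^{-1}$ using Proposition~\ref{prop on properties of the flattening map}, and items~3 and~4 by slicing $W_s$ at fixed $(\mathfrak{g},\mu,\kappa)$ and at $\gamma=0$, respectively. The paper's own proof is in fact slightly terser than yours, but the content is the same.
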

\begin{proof}
    The first item follows from Proposition \ref{Xsr classicality} and the condition $s > 3/r+1$.
    We continue by proving the second item. Let $(p,u,\eta)\in\mathcal{O}_{s,r}(\varrho)$ be a solution generated by the data $(\gam,\mathfrak{g},\mu,\kappa,\mathcal{T},\mathcal{F})\in\mathscr{W}_s$. We set $v:\Omega[\eta]\to\R^3$ and $q:\Omega[\eta]\to\R$ via $v=(M_\eta^{-1} u)\circ\mathfrak{F}_\eta^{-1}$ and $q=p\circ\mathfrak{F}_\eta^{-1}$. Proposition~\ref{prop on properties of the flattening map} verifies that the map $\mathfrak{F}_\eta:\Omega\to\Omega[\eta]$ is a smooth diffeomorphism that is sufficiently regular up to the boundary as to preserve the notion of classical solution. It is then elementary to verify that $(q,v,\eta)$ classically solve~\eqref{the stationarity ansatz} with wave speed $\gam$, physical parameters $(\mathfrak{g},\mu,\kappa)\in(\R^+)^3$, and stress-force data $(\mathcal{T},\mathcal{F})$

    The third and fourth items items are just particular `restrictions' of the map in~\eqref{solution operator} from Theorem~\ref{coro fourth times the charm for well-posedness}, so long as we define
    \begin{equation}
        W_s(\mathfrak{g},\mu,\kappa)=\tcb{(\gam,\mathcal{T},\mathcal{F})\;:\;(\gam,\mathfrak{g},\mu,\kappa,\mathcal{T},\mathcal{F})\in W_s}
    \end{equation}
    and
    \begin{equation}
        Z_s=\tcb{(\mathfrak{g},\mu,\kappa,\mathcal{T},\mathcal{F})\;:\;(0,\mathfrak{g},\mu,\kappa,\mathcal{T},\mathcal{F})\in W_s},
    \end{equation}
    and define the mappings~\eqref{the particular solution map} and~\eqref{suzan} via~\eqref{solution operator} and the `slice' identifications $W_s(\mathfrak{g},\mu,\kappa),Z_s\subset W_s$.
\end{proof}


\section*{Statements and Declarations}

\small{\textbf{Conflict of interest:} There does not exist conflict of interest in this document.}

\bibliographystyle{abbrv}
\bibliography{bib.bib}

\end{document}